\newcommand{\FF}{{\mathbb{F}}}
\newcommand{\ZZ}{{\mathbb{Z}}}
\newcommand{\fA}{{\mathfrak{A}}}
\newcommand{\fE}{\mathfrak{E}}
\newcommand{\fS}{{\mathfrak{S}}}
\newcommand{\bC}{{\mathbf{C}}}
\newcommand{\bE}{{\mathbf{E}}}
\newcommand{\bF}{{\mathbf{F}}}
\newcommand{\bG}{{\mathbf{G}}}
\newcommand{\tbG}{{\widetilde{\bG}}}
\newcommand{\bH}{{\mathbf{H}}}
\newcommand{\bK}{{\mathbf{K}}}
\newcommand{\bL}{{\mathbf{L}}}
\newcommand{\bM}{{\mathbf{M}}}
\newcommand{\bN}{{\mathbf{N}}}
\newcommand{\bO}{{\mathbf{O}}}
\newcommand{\bP}{{\mathbf{P}}}
\newcommand{\bS}{{\mathbf{S}}}
\newcommand{\bT}{{\mathbf{T}}}
\newcommand{\bZ}{{\mathbf{Z}}}
\newcommand{\cD}{{\mathcal{D}}}
\newcommand{\cE}{{\mathcal{E}}}
\newcommand\type[1]{\operatorname{#1}}
\newcommand\tA{\operatorname{A}}
\newcommand\tB{\operatorname{B}}
\newcommand\tD{\operatorname{D}}
\newcommand\tE{\operatorname{E}}
\newcommand\tF{\operatorname{F}}
\newcommand{\Aut}{{\operatorname{Aut}}}
\newcommand{\cd}{{\operatorname{cd}}}
\newcommand{\diag}{{\operatorname{diag}}}
\newcommand{\Hom}{{\operatorname{Hom}}}
\newcommand{\Inn}{{\operatorname{Inn}}}
\newcommand{\Inndiag}{{\operatorname{Inndiag}}}
\newcommand{\Irr}{{\operatorname{Irr}}}
\newcommand{\Out}{{\operatorname{Out}}}
\newcommand{\rnk}{\operatorname{rnk}}
\newcommand{\SC}{{\operatorname{sc}}}
\newcommand{\Stab}{\operatorname{Stab}}
\newcommand{\Syl}{{\operatorname{Syl}}}
\newcommand{\Sym}{{\operatorname{Sym}}}
\newcommand{\GL}{{\operatorname{GL}}}
\newcommand{\PGL}{{\operatorname{PGL}}}
\newcommand{\PSL}{{\operatorname{L}}}
\newcommand{\SL}{{\operatorname{SL}}}
\newcommand{\SU}{{\operatorname{SU}}}
\newcommand{\PSU}{{\operatorname{U}}}
\newcommand{\Sp}{{\operatorname{Sp}}}
\newcommand{\SO}{{\operatorname{SO}}}
\newcommand{\GO}{{\operatorname{GO}}}
\newcommand{\PSO}{{\operatorname{O}}}
\newcommand{\hc}{R}
\newcommand\RTG{{R_\bT^\bG}}
\newcommand\RLG{{R_\bL^\bG}}
\newcommand\RHG{{R_\bH^\bG}}
\newcommand\RLH{{R_\bL^\bH}}
\newcommand\wt[1]{\widetilde{#1}}
\newcommand{\tw}[1]{{}^{#1}\!}
\let\al=\alpha
\let\eps=\epsilon
\let\vhi=\varphi
\let\la=\lambda
\def\cent#1#2{{\bf C}_{#1}(#2)}
\def\iitem#1{\goodbreak\par\noindent{\bf #1}}
\def\irr#1{{\rm Irr}(#1)}
\let\nor=\unlhd
\def\norm#1#2{{\bf N}_{#1}(#2)}
\def\oh#1#2{{\bf O}_{#1}(#2)}
\def\Oh#1#2{{\bf O}^{#1}(#2)}
\let\sbs=\subseteq
\def\syl#1#2{{\rm Syl}_#1(#2)}
\def\zent#1{{{\bf Z}(#1)}}
\renewcommand{\emptyset}{\varnothing}
\newtheorem{thm}{Theorem}[section]
\newtheorem{lem}[thm]{Lemma}
\newtheorem{cor}[thm]{Corollary}
\newtheorem{prop}[thm]{Proposition}
\newtheorem*{thmA}{THEOREM A}
\newtheorem*{thmB}{THEOREM B}
\newtheorem*{thmC}{THEOREM C}
\newtheorem*{corB}{COROLLARY}
\theoremstyle{definition}
\theoremstyle{remark}
\newtheorem{rem}[thm]{Remark}
\newtheorem{hyp}[thm]{Hypothesis}
\numberwithin{equation}{thm}
\def\namedlabel#1#2{\begingroup
    #2%
    \def\@currentlabel{#2}%
    \phantomsection\label{#1}\endgroup
}
\begin{document}

\title{Brauer's Height Zero Conjecture}

\date{\today}

\author[G. Malle]{Gunter Malle}
\address[G. Malle]{FB Mathematik, TU Kaiserslautern, Postfach 3049,
  67653 Kaisers\-lautern, Germany.}
\email{malle@mathematik.uni-kl.de}

\author[G. Navarro]{Gabriel Navarro}
\address[G. Navarro]{Departament de Matem\`atiques, Universitat de Val\`encia,
  46100 Burjassot, Val\`encia, Spain}
\email{gabriel@uv.es}

\author[A. A. Schaeffer Fry]{A. A. Schaeffer Fry}
\address[A. A. Schaeffer Fry]{Since Sept 2023: Department of Mathematics,
 University of Denver, Denver, CO 80210, USA (Formerly: Dept. Mathematics and
 Statistics, MSU Denver, Denver, CO 80217)}
\email{mandi.schaefferfry@du.edu}

\author[P. H. Tiep]{Pham Huu Tiep}
\address[P. H. Tiep]{Department of Mathematics, Rutgers University, Piscataway,
  NJ 08854, USA}
\email{tiep@math.rutgers.edu}

\thanks{The first author gratefully acknowledges financial support by SFB TRR
195 -- Project-ID 286237555. The research of the second author is
supported by Ministerio de Ciencia e Innovaci\'on PID2019-103854GB-I00. The
third author is grateful for support from the NSF, under grant DMS-2100912.
The fourth author gratefully acknowledges the support of the NSF (grants
DMS-1840702 and DMS-2200850), the Simons Foundation, and the Joshua Barlaz
Chair in Mathematics. Part of this work was
done while the authors participated in the programme ``\emph{Groups,
Representations, and Applications: New Perspectives}'' at the Isaac Newton
Institute for Mathematical Sciences in 2022. This work was supported
by EPSRC grant number EP/R014604/1}

\thanks{We thank Zhicheng Feng, Ruwen Hollenbach, Radha Kessar, Markus
Linckelmann, Alexander Moret\'o, Noelia Rizo, Britta Sp\"ath and Lizhong Wang
for helpful
discussions on aspects of this project. We are grateful to the referee for
careful reading and insightful comments that helped improve the paper.}

\keywords{Brauer's height zero conjecture, Brauer blocks, Defect groups, Finite groups of Lie type}

\subjclass[2010]{Primary 20C20; Secondary 20C15, 20C33}

\begin{abstract}
We complete the proof of Brauer's Height Zero Conjecture from 1955 by
establishing the open implication for all odd primes.
\end{abstract}

\maketitle


\tableofcontents

\section{Introduction}   \label{sec:intro}

Brauer's Height Zero Conjecture (BHZ), formulated in 1955 \cite{bhz}, has been
one of the most fundamental and challenging problems in the representation
theory of finite groups. Deeply influencing the research in the field, it is
also a source of many developments in the theory. If $p$ is a prime
and $B$ is a Brauer $p$-block with defect group $D$ of a finite group~$G$,
R.~Brauer proved that $|G:D|_p$ is the largest power of $p$ dividing the degrees
of all the irreducible complex characters in $B$. (In this paper, $n_p$ denotes
the largest power of $p$ dividing the integer $n$.) Hence, if $\chi\in\irr B$,
the set of irreducible complex characters in $B$, then
$\chi(1)_p=|G:D|_p\, p^{h_\chi}$ for some non-negative integer $h_\chi$ called
the \emph{height of $\chi$}. The conjecture asserts that
$\chi(1)_p=|G:D|_p$ for all $\chi \in \irr B$ if and only if $D$ is abelian.
That is, $h_\chi=0$ for all $\chi \in \irr B$ if and only if $D$ is abelian. 
   \par
The ``if" implication of the Height Zero Conjecture was proven in \cite{KM13},
using the classification of finite simple groups, after decades of
contributions by many authors. The ``only if" implication  was proven for
$p$-solvable groups in \cite{GW}; for $p=2$ and blocks of maximal defect
(that is, when $D$ is a Sylow $2$-subgroup of $G$) in \cite{NT1}; and recently
for principal blocks, for every prime, in \cite{MN21}. Furthermore, building
upon work in~\cite{NT2}, it was shown in \cite{NS} that Brauer's Height Zero
Conjecture is
implied by the inductive Alperin--McKay condition on simple groups (a strong
form of another main conjecture in our field). This has enabled L.~Ruhstorfer to
recently prove the  Height Zero Conjecture for $p=2$ in \cite{Ru}.  However, the
verification of the inductive Alperin--McKay condition on simple groups for odd
primes remains an enormous challenge.
\medskip

In this paper we take a different approach and prove the open direction of
Brauer's Height Zero Conjecture in the case that $p$ is odd.

\begin{thmA}
 Let $G$ be a finite group, let $p$ be an odd prime, and let $B$ be a $p$-block
 of $G$ with defect group $D$. If $\chi(1)_p=|G:D|_p$ for all $\chi\in\irr B$
 then $D$ is abelian.
\end{thmA}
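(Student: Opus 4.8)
The plan is to proceed by minimal counterexample and reduce, through the standard machinery for conjectures of local--global type, to a statement about $p$-blocks of quasi-simple groups, which is then attacked using the classification of finite simple groups. So I would first assume Theorem~A fails and choose a pair $(G,B)$ with $|G|$ minimal such that $B$ is a $p$-block of $G$ with non-abelian defect group $D$ but $\chi(1)_p=|G:D|_p$ for all $\chi\in\irr B$. Using Fong's theory of blocks with a normal $p'$-subgroup, Fong--Reynolds reduction for blocks covering a non-invariant block of a normal subgroup, and the solved case of $p$-solvable groups \cite{GW}, I would reduce to a configuration in which $G$ is tightly controlled by a central product $E=S_1\cdots S_t$ of quasi-simple groups transitively permuted by $G$, with $B$ covering a block $b$ of $E$ whose defect group $D\cap E$ is non-abelian and all heights in $b$ vanish. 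Since defect groups of blocks of central products factor through the factors, some $S_i$ then carries a $p$-block with non-abelian defect group all of whose irreducible characters have height zero; ruling this out is the quasi-simple statement that remains.

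More precisely, I would isolate a condition of inductive type on a quasi-simple group $X$ (so that $X/\zent X$ is simple) and an odd prime $p$: every $p$-block of $X$ with non-abelian defect group contains a character of positive height, and the set $\irr b$ carries enough $\Aut(X)$-equivariance and central-character compatibility to force positive heights through the Clifford theory above (central products, the permutation action of $G$ on the $S_i$, and passage to $\norm G D$). Building on \cite{NT2}, \cite{MN21} and the reduction philosophy of \cite{NS}, I would prove that if this condition holds for every quasi-simple group and every odd $p$, then the minimal counterexample cannot exist, i.e.\ Theorem~A holds. The point of not simply invoking \cite{NS} is that the inductive Alperin--McKay condition demands a full height-preserving bijection between $\irr B$ and an Alperin--McKay set in $\norm G D$, a far stronger requirement than the mere presence of positive height; the weaker condition is what makes the groups of Lie type tractable for odd~$p$.

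The non-Lie-type part I would then dispatch as follows. For covering groups of alternating groups $\fA_n$, the $p$-blocks with non-abelian defect and their characters of positive height are described by the core/quotient combinatorics of partitions (with the analogous theory for the spin covers), and the outer automorphism action is transparent. Groups of Lie type in their defining characteristic have, apart from blocks of defect zero, only the principal block, which is of full defect, and this case reduces to the result of \cite{MN21} on principal blocks. The sporadic groups, their covers and ${}^2F_4(2)'$ are a finite computation.

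The heart of the matter is groups of Lie type in non-defining characteristic: $X$ a cover of $\bG^F/Z$ with $\bG$ simple of simply connected type, $p\nmid q$, and $d$ the multiplicative order of $q$ modulo~$p$. Here I would use $d$-Harish-Chandra theory to parametrize the $p$-blocks by $d$-cuspidal pairs $(\bL,\la)$ in the sense of Brou\'e--Malle--Michel and Cabanes--Enguehard, with the defect group controlled by the relevant Sylow $\Phi_d$-torus, so that by Kessar--Malle \cite{KM13} non-abelian defect forces this local datum to be ``large''. Via the Bonnaf\'e--Dat--Rouquier Jordan decomposition of blocks I would reduce to quasi-isolated blocks, and then argue case by case over the quasi-isolated semisimple classes---classical groups through the arithmetic of symbols and partitions, exceptional groups through explicit data---to exhibit in every non-abelian-defect block a character of positive height, while tracking its orbit under diagonal, field and graph automorphisms by means of the Clifford theory of $\bG^F$ inside $\tbG^F$ together with Sp\"ath-type equivariance arguments; the small primes, for which the block parametrization degenerates, would be treated separately. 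I expect this last step to be the main obstacle: one must pin down positive-height characters uniformly across all non-abelian-defect blocks of all quasi-simple groups of Lie type and simultaneously make the equivariant and central-character book-keeping of the previous paragraph go through, and the two requirements interact, since the most natural positive-height characters need not be stable under the outer automorphisms, forcing a delicate analysis in the low-rank and small-prime ranges and for the quasi-isolated blocks of exceptional type.
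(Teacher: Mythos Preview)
Your reduction has a fundamental gap. In a minimal counterexample $(G,B)$, if $N$ is a \emph{proper} normal subgroup of $G$ and $b$ is a block of $N$ covered by $B$, then (using quasi-primitivity) $D\cap N$ is a defect group of $b$, every character in $b$ has height zero since it lies under a height-zero character of $B$, and hence $D\cap N$ is \emph{abelian} by minimality of $G$. So your assertion that one reaches ``a block $b$ of $E$ whose defect group $D\cap E$ is non-abelian'' is precisely what cannot occur: the components $S_i$ carry blocks with \emph{abelian} defect, and the non-abelianness of $D$ lives entirely in the extension from $E$ to $G$. The quasi-simple statement you isolate---that non-abelian defect forces a positive-height character---is already the content of \cite{KM17}, but it never applies in the minimal counterexample, so it cannot drive the reduction.

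The paper confronts this directly. Given that the component blocks $b_i$ have abelian, non-cyclic defect (the cyclic-defect case being handled via the Koshitani--Sp\"ath verification of inductive Alperin--McKay, the nilpotent case via K\"ulshammer--Puig), the goal becomes to show that the height-zero hypothesis forces $[D,D\cap S_i]=1$ for each $i$ and $D/(D\cap E)$ abelian, whence $D$ is abelian. Two quasi-simple inputs are needed, and neither concerns positive heights in non-abelian-defect blocks. First (Theorem~B), $\Irr(b_i)$ meets at least three $\Aut(S_i)$-orbits; combined with a permutation-group lemma this forces every component to be normal, since otherwise one manufactures $\theta\in\Irr(E)$ with $p\mid|G:G_\theta|$, contradicting height zero. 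Second (Theorem~C and its refinement Theorem~\ref{thm:newg}), a $p$-automorphism of $S_i$ fixing every character of $b_i$ and stabilising a Brauer correspondent must centralise the defect group of $b_i$; this delivers $[D,D\cap S_i]=1$ and, after a further extension argument, the abelianness of $D$. Both results are about the $\Aut(S_i)$-orbit structure on $\Irr(b_i)$ for blocks with \emph{abelian} defect---the opposite regime from the one your proposal targets.
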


As discussed above, this implies:

\begin{corB}
 Brauer's Height Zero Conjecture holds.
\end{corB}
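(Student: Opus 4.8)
Theorem~A supplies the ``only if'' implication for odd primes; together with \cite{KM13} for the ``if'' implication and \cite{Ru} for the ``only if'' implication at $p=2$, it yields the Corollary. So the content to prove is Theorem~A, and the plan is to argue by contradiction from a counterexample of minimal order, avoiding the verification of the inductive Alperin--McKay condition (which would suffice by \cite{NS} but remains open for odd primes): instead we reduce Theorem~A to a weaker, automorphism-equivariant statement about blocks of quasi-simple groups, then check it case by case using the classification of finite simple groups. Since the ``if'' implication is already known \cite{KM13}, we use it freely throughout.

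\textbf{Reduction to quasi-simple groups.} Let $(G,B,D)$ be a counterexample with $|G|$ minimal: $p$ is odd, $D$ is the defect group of the $p$-block $B$, every $\chi\in\irr B$ has height $0$, yet $D$ is non-abelian. First I would run the standard block-theoretic normalizations: replacing $G$ by $G/\oh{p'}{G}$ one may assume $\oh{p'}{G}=1$; by Fong--Reynolds one may assume $B$ is quasi-primitive; and by analysing how $B$ lies over $\zent{G}$ and over a minimal normal subgroup of $G$ — using the theory of block extensions, the Dade--Glauberman--Nagao correspondence and the Clifford theory of blocks — one localises the source of the non-abelian defect. Minimality of $|G|$ then pushes this source into the generalized Fitting subgroup: one reaches a configuration in which $F^*(G)=\zent{G}\,E$ with $E$ a central product of copies of the universal covering group of a fixed non-abelian simple group $S$, and $B$ lies over a block with non-abelian defect of a single such cover. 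This translates into a precise inductive-type condition on $S$: roughly, that for every quasi-simple group with simple quotient $S$, every block with non-abelian defect contains an irreducible character of positive height, and this can be arranged compatibly with central characters and stably under the action of $\Out(S)$. The key point of the reformulation is that it demands only information about \emph{heights} and equivariance, not the full Alperin--McKay bijection — which is exactly why it is within reach for odd $p$.

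\textbf{Verification for the simple groups.} It then remains to establish this condition for each non-abelian simple group $S$. For the alternating groups this is a combinatorial argument with $p$-cores and $p$-quotients on the abacus, describing the blocks of symmetric and alternating groups and their defect groups; for the sporadic groups and the Tits group it is a finite check against known character-table and block data. For $S$ of Lie type in its defining characteristic $p$, Humphreys' theorem says the only blocks are the principal block and blocks of defect zero, so the defect group is a Sylow $p$-subgroup or trivial; the principal-block case was handled in \cite{MN21}, and the task is to refine that analysis into the $\Out(S)$-equivariant form the reduction needs. The substantial case is $S$ of Lie type in non-defining characteristic. Here I would invoke the Bonnafé--Dat--Rouquier Morita equivalences to reduce to quasi-isolated blocks, whose defect groups, character counts and heights were determined by Kessar--Malle in \cite{KM13} in the course of proving the ``if'' direction; combining this with Lusztig's Jordan decomposition of characters, $d$-Harish-Chandra theory and the Cabanes--Enguehard classification of blocks, one knows that $D$ is non-abelian precisely when the relevant $d$-split Levi subgroup and its relative Weyl group are large enough, and in that regime the parametrization of $\irr B$ exhibits an explicit character of positive height (arising from a non-trivial unipotent or relative-Weyl-group contribution). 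One then checks that this character, or its $\Out(S)$-orbit, survives under the relevant automorphisms, as the reduction demands.

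\textbf{Main obstacle.} I expect the groups of Lie type in non-defining characteristic to be the crux, for three reasons: (i) bad primes and small-rank or small-$q$ cases, where the clean $d$-Harish-Chandra description of defect groups degenerates and ad hoc arguments become necessary; (ii) the equivariance bookkeeping — diagonal, field and graph automorphisms act on Lusztig series, $d$-cuspidal pairs and relative Weyl groups, and one must be able to choose the positive-height character stably, which is just the sort of delicate information that makes the full inductive Alperin--McKay condition hard, so the argument must make do with the coarser data that still controls heights; and (iii) type $\tA$, i.e.\ linear and unitary groups, where the interplay between Morita equivalences, the precise structure of the defect groups and the large outer automorphism group requires the most careful analysis. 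Finally, designing the reduction step so that it truly avoids the unavailable parts of inductive Alperin--McKay yet remains strong enough to force the contradiction is itself a central technical point.
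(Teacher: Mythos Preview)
Your high-level framing of the Corollary is correct: Theorem~A together with \cite{KM13} and \cite{Ru} is exactly what is needed. But the reduction you sketch has a structural gap that makes the rest of your plan miss the target.

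In a minimal counterexample $(G,B,D)$, every proper normal subgroup $N\lhd G$ satisfies: the block of $N$ covered by $B$ has height-zero characters (restriction of height-zero characters), hence by minimality $D\cap N$ is \emph{abelian}. In particular, the block $b$ of any component $S$ covered by $B$ has abelian defect $D\cap S$. So your endpoint ``$B$ lies over a block with non-abelian defect of a single such cover'' is false, and your proposed verification---finding a positive-height character in a non-abelian-defect block of a quasi-simple group, stably under $\Out(S)$---is checking a statement that is already \cite{KM17} and is vacuous in the configuration you actually face. The non-abelianness of $D$ lives entirely in the extension $D/(D\cap\bE(G))$ and in how $D$ acts on the abelian pieces $D\cap S$; that is what must be killed.

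The paper attacks this with two ingredients you do not mention. First, to force all components to be normal (so one can work one component at a time), they combine a permutation-group result (Theorem~\ref{orbits}) with the fact that $\Irr(b)$ contains characters from at least three distinct $\Aut(S)$-orbits whenever $b$ has non-cyclic defect (Theorem~B); this produces a character of $\bE(G)$ whose inertia subgroup has index divisible by $p$, contradicting quasi-primitivity and height zero. Second, once components are normal, the key is Theorem~\ref{thm:newg}: if a $p$-power automorphism of $S$ fixes every character in an abelian-defect block $b$ and stabilises a root $b_D$, it centralises the defect group. This is what forces $[D,D\cap S]=1$ and, after further work with $H_i=S_iD$ and the quotient $L_i=H_i/Q_i$, forces $D/Q_i$ abelian for each $i$, whence $[D,D]=1$. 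Your plan contains neither of these mechanisms, and neither the Bonnaf\'e--Dat--Rouquier nor the Kessar--Malle machinery you invoke produces them: those tools feed into the \emph{proofs} of Theorems~B and~\ref{thm:newg} for Lie-type groups, not directly into the reduction.

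A minor point: you cannot simply pass to $G/\oh{p'}G$ to assume $\oh{p'}G=1$; block theory does not behave that way. The paper instead uses character-triple isomorphisms (Step~4) to reduce to $\oh{p'}G=\zent G$ cyclic.
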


A key novelty of our approach is a combined use of new results on blocks of
quasi-simple groups as well as on permutation groups, which allows us to
tightly control the structure of a minimal counterexample to BHZ and overcome
certain difficulties in proving extendibility of characters from normal
subgroups that were encountered in previous approaches.

There are, at least, two major obstacles for our approach. The first is to
prove that irreducible characters in $p$-blocks of quasi-simple groups lie in
sufficiently many distinct orbits under the action by their automorphism groups.
We think that the following result has independent interest and that it will be
useful in the resolution of other problems.

\begin{thmB}
 Suppose that $p$ is an odd prime, $S$ is a quasi-simple group, and $b$ is a
 $p$-block of $S$ with non-cyclic defect groups. Then at least one of the
 following statements holds.
 \begin{enumerate}[\rm(1)]
  \item $\Irr(b)$ contains characters from at least three different
   $\Aut(S)$-orbits; or
  \item all characters in $\Irr(b)$ have the same degree.
 \end{enumerate} 
\end{thmB}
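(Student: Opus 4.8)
The plan is to argue via the classification of finite simple groups, treating the possibilities for $S/\zent S$ in turn. For sporadic groups, the alternating groups, and the Tits group, the claim can be checked directly from the known character tables and block distributions (using Chevie or the ATLAS), since there are only finitely many blocks with non-cyclic defect to inspect and the $\Aut(S)$-orbit structure on $\Irr(b)$ is explicitly computable. Groups of Lie type in their defining characteristic are also quickly dispatched: a block with non-cyclic defect is either of maximal defect (containing the Steinberg character and many unipotent characters of differing degrees, easily giving three orbits) or has very restricted defect, and in the small-rank exceptional cases one again checks directly. So the substance of the proof is the case where $S$ is of Lie type and $p$ is non-defining.

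For $S$ quasi-simple of Lie type in non-defining characteristic $p$ (odd), I would pass to a regular embedding $\bG \hookrightarrow \tbG$ with connected center and use Lusztig's Jordan decomposition to describe $\Irr(b)$. A $p$-block of $S$ corresponds, up to the action of $\tbG^F/\bG^F Z$ and of field/graph automorphisms, to a semisimple label $s$ (a $p'$-element of the dual group) together with a unipotent block of the centralizer $\bC_{\bG^*}(s)^F$; the characters in $\Irr(b)$ then biject with characters in the corresponding unipotent block of $\bC_{\bG^*}(s)^F$ via $\RLG$-type maps that are compatible with automorphisms. The key points to extract are: (a) the defect group of $b$ is non-cyclic if and only if the corresponding unipotent block of $\bC_{\bG^*}(s)^F$ has non-cyclic defect, which via the theory of $e$-cuspidal pairs and $d$-Harish-Chandra series forces the relevant relative Weyl group (or a Sylow $\Phi_d$-torus) to be large enough that the unipotent block contains several unipotent characters; and (b) when the unipotent block contains at least two unipotent characters of different degrees, one transports this, together with the action of diagonal automorphisms permuting the Lusztig series through $s$, to produce three $\Aut(S)$-orbits in $\Irr(b)$. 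The borderline situations — where the unipotent block of the centralizer is very small (e.g.\ the defect group is cyclic, excluded by hypothesis, or the block has a tiny number of characters all of equal degree) — are exactly the cases landing in alternative (2), and these must be enumerated: blocks of "cyclotomic" type with relative Weyl group a small cyclic or dihedral group, which one checks by hand using the known degrees of unipotent characters and the combinatorics of $d$-cuspidal pairs.

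The main obstacle I expect is the bookkeeping for the classical groups $\SL_n$, $\SU_n$, $\Sp_{2n}$, $\SO_n^\pm$, where the Lusztig series are indexed by conjugacy classes of semisimple elements and the centralizers $\bC_{\bG^*}(s)$ are products of smaller classical and general linear factors. One must show that for these, whenever the defect is non-cyclic and the degrees in $\Irr(b)$ are not all equal, the combination of (i) the action of the outer diagonal automorphism group (which has order $\gcd(n,q\mp1)$ or $2$ and moves semisimple labels, giving one family of orbits) and (ii) the presence of unipotent characters of distinct degrees inside a single factor's unipotent block (giving a second family) already yields three orbits — and conversely to pin down the short list of exceptions, which should consist of blocks whose defect group is cyclic (excluded) or blocks isomorphic, as character-theoretic data, to something like a single unipotent $\GL$-block with just a few characters. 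Care is needed because diagonal automorphisms can fix a character (when the centralizer of $s$ is disconnected in a controlled way), and field/graph automorphisms can fuse Lusztig series; so one has to verify that after all fusion there remain three orbits, which is where the hypothesis that the defect group is \emph{non-cyclic} — hence the block is not of "Brauer tree" type and carries enough characters — does the real work.
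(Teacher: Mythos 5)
Your overall architecture (CFSG, direct checks for sporadic and small groups, Jordan decomposition plus $e$-Harish-Chandra theory in non-defining characteristic, with alternative (2) reserved for the nilpotent-like blocks) is the same as the paper's, and your use of two unipotent characters of distinct degrees in the centralizer's unipotent block is exactly the paper's Lemma~\ref{lem:unipsdistinctrpart}. But two of your steps have genuine gaps. The alternating groups are not a finite check: there are infinitely many non-cyclic blocks of $\fA_n$ and of its double covers, and for $n\ge 9$ the paper (Proposition~\ref{prop:alts2}) argues via Olsson's combinatorial lower bounds $k(p,w)$ for blocks of $\fS_n$ of weight $w\ge2$ and the analogous counts for spin blocks of $\hat{\fS}_n$, not by inspection. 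Likewise your defining-characteristic argument fails for non-principal blocks: by Humphreys the positive-defect blocks are of maximal defect and are parametrised by $\Irr(\zent{G})$ via central characters, so a non-principal one contains neither $1_G$ nor the Steinberg character, and ``many unipotent characters of differing degrees'' is unavailable. The paper (Proposition~\ref{prop:definingchar}) instead builds, over the relevant central character $\theta$, three Lusztig series $\cE(G,s_i)$ with $|s_i|$ pairwise distinct using Zsigmondy primes.

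The more serious gap is your mechanism (i) in non-defining characteristic: inner-diagonal automorphisms are induced by $\tbG^F$ and stabilise every semisimple class of $G^*$, hence every rational Lusztig series of $G$; they therefore fuse characters within a series rather than ``moving semisimple labels'' to create new orbits. The paper's actual device for a character not $\Aut(S)$-conjugate to anything in $\cE(G,s)$ is Lemma~\ref{lem:p-elt} and Corollary~\ref{cor:ge2}: writing $b=b_G(\bL,\la)$, non-quasi-central defect forces the $e$-split Levi $\bL$ to be proper, so there is a non-trivial $p$-element $t\in\bZ(\bL^*)^F$, and since $d^1$ commutes with Lusztig induction one gets $\Irr(b)\cap\cE(G,st)\ne\emptyset$; no automorphism can carry $\cE(G,st)$ into $\cE(G,p')$ because it would have to conjugate the $p$-part $t$ to $1$. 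Without this (or an equivalent), your two ``families'' collapse to one, and the hardest configurations --- type $\tA$ with all Jordan factors of central defect, and the quasi-isolated blocks of classical and exceptional groups, where the paper needs explicit eigenvalue and torus computations to separate $p$-elements up to $\Aut$-conjugacy and central multiples --- are exactly the cases your sketch defers to ``bookkeeping'' but which carry the real content of the theorem.
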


In the presence of blocks with cyclic defect groups, or when all the irreducible
characters in $b$ have the same degree (and therefore $b$ is nilpotent in the
sense of Brou\'e--Puig), we will instead use deep results by
Koshitani--Sp\"ath, Brou\'e--Puig and K\"ulshammer--Puig (\cite{KoS,BP,KP}) in
order to prove Theorem~A.
\medskip

The second obstacle to our approach is inherent to Brauer's Height Zero
Conjecture and independent of any road that is followed to prove it. Suppose
that $G$ is a finite group and $\sigma$ is an automorphism of order a power of
$p$ of $G$ that stabilises a $p$-block $B$ of $G$, a defect group $D$ of $B$,
and a $p$-block $b_D$ of $\cent GD$ that induces $B$. If $D$ is abelian,
Brauer's Height Zero conjecture (and the inductive Alperin--McKay condition)
implies that all the irreducible characters of
$B$ are fixed by $\sigma$ if and only if $\sigma$ acts trivially on $D$. In
fact, we will need a more sophisticated version of the following result for
quasi-simple groups (see Theorem~\ref{thm:newg}).

\begin{thmC} 
 Suppose that $p$ is an odd prime and that $S$ is a quasi-simple group such
 that $\zent S$ is a cyclic $p'$-group. Let $b$ be a $p$-block of $S$ with
 abelian defect group $D$. Suppose that $\sigma$ is an automorphism of $S$ of
 $p$-power order that fixes all the irreducible characters of~$b$,
 normalises $D$, and stabilises a block $b_D$ of $\cent SD$ that induces $b$.
 Then $\sigma$ acts trivially on $D$.
\end{thmC}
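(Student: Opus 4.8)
The plan is to reduce Theorem~C to a statement about the known classification of quasi-simple groups $S$ with cyclic $p'$-center, and then to argue case by case on the type of $S$. Write $\bar\sigma$ for the automorphism of $D$ induced by $\sigma$; since $\sigma$ has $p$-power order and $D$ is an abelian $p$-group, $\langle\bar\sigma\rangle$ is a $p$-group acting on $D$, and we want to show $\bar\sigma = 1$. The first reduction is to pass from the block $b$ of $S$ to Brauer characters/ordinary characters via Clifford theory along $\zent S$: because $\zent S$ is a cyclic $p'$-group, blocks of $S$ and of $S/\zent S$ differ only by a linear twist, and $\sigma$ normalises $\zent S$, so I would reduce to controlling the $\sigma$-action in the universal cover while keeping track of which faithful characters occur. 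The key input is Theorem~B in the equal-degree/non-nilpotent dichotomy — but here $D$ is abelian, so if $b$ is not nilpotent then by Theorem~B its irreducible characters fall into at least three $\Aut(S)$-orbits, while $\sigma$ fixes them all; I would use this to bound the $\Aut(S)$-orbit structure and, in particular, to show $\sigma$ lies deep inside the kernel of the action on the relevant character set.

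The heart of the argument is the case $S$ a finite group of Lie type in the defining or (more seriously) the non-defining characteristic, with $p$ odd. For defining characteristic, abelian defect groups force $D$ small (Sylow $p$-subgroups of $\mathrm{GL}_1$-type tori or $p$-central situations), and the result is essentially classical. For non-defining characteristic one sets up the standard machinery: $D$ lies in a torus-normalizer, $\cent SD$ is a (twisted) Levi-type subgroup, and the block $b_D$ of $\cent SD$ inducing $b$ corresponds under Bonnafé–Rouquier/Jordan decomposition to a block of a smaller group attached to a semisimple element. The automorphism $\sigma$, being of $p$-power order and fixing $\Irr(b)$, must act trivially on the corresponding $e$-cuspidal pair up to the relevant relative Weyl group action; I would invoke the description of $\Aut(S)$ as generated by inner-diagonal, field, and graph automorphisms, note that diagonal automorphisms have order dividing $|\zent{\bG^F_{\mathrm{sc}}}|$ which here is prime to the relevant $p$ or already handled by the cyclic-center hypothesis, and graph automorphisms are either order $2$ (hence trivial since $p$ is odd and $\sigma$ has $p$-power order) or the triality case $D_4$ which one treats directly. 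That leaves field automorphisms of $p$-power order, where one uses that such an automorphism fixing all characters in an abelian-defect block must fix the underlying $\FF_q$-structure finely enough — concretely, a Frobenius twist $F_0$ of order $p^a$ acting trivially on $\Irr(b)$ acts trivially on the Brauer tree-type combinatorial data and hence on $D\le \bT^F$, by Lang–Steinberg-type counting on the torus.

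For the sporadic groups, the alternating groups, and the few exceptional Schur multiplier cases, I would argue by direct inspection: the alternating and symmetric groups have $\Out$ of order $2$ (or $4$ for $\fA_6$), so any $\sigma$ of odd $p$-power order is inner, hence acts on $D$ as an element of $D\,\cent SD/\cent SD$ — but an inner automorphism by $g$ normalising $D$ and fixing $b_D$ with $b_D$ inducing $b$ forces $g\in \norm SD$ acting on $D$; one then uses that $\Irr(b)$ being $g$-fixed together with Brauer's extended first main theorem pins $g$ into $\cent SD D$, so it acts trivially. The sporadic cases are a finite check against the known block distributions and automorphism groups, where $|\Out(S)|\le 2$ in all cases, again killing odd $p$-power automorphisms. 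The main obstacle I anticipate is precisely the non-defining Lie-type case with field automorphisms of order $p$: showing that fixing every character in $\Irr(b)$ — rather than just the block $b$ — is enough to force triviality on $D$ requires a careful compatibility between Jordan decomposition, $e$-Harish-Chandra theory, and the Galois/field-automorphism action, and this is where I would expect to need the stronger quasi-simple version stated as Theorem~\ref{thm:newg} and the full strength of the orbit-counting in Theorem~B rather than a soft argument.
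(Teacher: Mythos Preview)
Your proposal has the right overall shape but several genuine gaps that would prevent it from going through.

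First, you never address the case where $D$ is cyclic. Theorem~C only assumes $D$ abelian, and Theorem~B explicitly excludes cyclic defect, so your appeal to it is vacuous there. The paper treats this case by an entirely different route: once Theorem~A is established, one invokes Koshitani--Sp\"ath's verification of the inductive Alperin--McKay condition for blocks with cyclic defect to transport the $\sigma$-fixing hypothesis from $\irr b$ to $\irr{b_N}$ with $N=\norm SD$, and then finds a faithful linear character of $D$ fixed by $\sigma$ via the canonical characters $\theta_\lambda$ of $b_D$. Nothing in your sketch substitutes for this.

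Second, your dismissal of diagonal automorphisms is wrong in type $\tA$. When $\bar S=\PSL_n(\eps q)$ with $p\mid\gcd(n,q-\eps)$, diagonal automorphisms of order $p$ certainly exist and are not excluded by the hypothesis that $\zent S$ is $p'$ (take $S=\bar S$). This is exactly the delicate case in the paper (Proposition~4.10, case~\eqref{case2}), where one must analyse whether the covering block in $\GL_n(\eps q)$ has abelian defect and, when it does not, produce an explicit character of positive height to derive a contradiction.

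Third, your field-automorphism argument is the heart of the matter and your sketch (``acts trivially on Brauer-tree-type combinatorial data and hence on $D\le\bT^F$'') is not a proof. The paper's mechanism (Proposition~4.6) is contrapositive and concrete: given a field automorphism $\sigma$ of order~$p$ one exhibits a $p$-element $t\in\bZ(\bL_1^*)^F$ in a maximal $e$-split Levi, shows $t$ and $t^{\sigma^*}$ are not $G^*$-conjugate using that the automiser of $\bL_1^*$ has $p'$-order (Lemma~4.5), and then uses Lemma~3.13 to place a character of $\cE(G,st)$ inside $\irr b$ that $\sigma$ moves. For types $\tA$ with $p\mid(q-\eps)$ and $\tD_4$, $\tE_6$ with $p=3$ this needs further refinement (Propositions~4.8--4.11) to ensure the constructed element lies in $[G^*,G^*]$ so that the resulting character is trivial on $\bZ(G)_p$. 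None of this structure is present in your outline.
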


In Section \ref{sec:prelim}, we prove the needed result on permutation groups
and other technical results that we will use in the proof of Theorem A. In
Section~\ref{sec:ThmB}, we prove Theorem~B. Section~\ref{sec:ThmC} is devoted
to the proof of a refined version of Theorem~C and its needed generalisations.
Finally, in Section \ref{sec:finalpf}, Theorem A is proved. As was the case in
the proof of the other direction of BHZ, as well as the
cases of maximal defect for $p=2$ and for principal blocks, our proof
(including the result on permutations groups that we have mentioned as well as
Theorems B and C) relies on the Classification of Finite Simple Groups.

\section{Preliminary results}\label{sec:prelim}
In this section, we first prove a consequence of \cite{GLPST} that we will use
in the proof of Theorem~A.

\begin{thm}   \label{orbits}
 Let $p > 2$ be a prime, $n > 1$, and let $G < \Sym(\Omega)=\fS_n$ be a
 subgroup such that $G = \bO^{p'}(G) \neq 1$. Then there is a partition
 $$\Omega = \Delta_1 \sqcup \Delta_2 \sqcup \Delta_3,$$
 with $|\Delta_1|, |\Delta_2| > 0$ and $|\Delta_3| \geq 0$ such that the index
 of $\cap^3_{i=1}\Stab_G(\Delta_i)$ in $G$ is divisible by $p$. 
 Moreover, one can choose this partition to have $\Delta_3=\emptyset$, unless
 $G$ has a simple quotient $S$ such that one of the following holds:
 \begin{enumerate}[\rm(1)]
  \item $S = \fA_{ap^s-1}$ with $1\leq a\leq p-1$, $s\ge1$ and
   $(a,s)\neq (1,1)$; or
  \item $p=3$ and $S = C_3$ or $\SL_3(2)$.
 \end{enumerate}
\end{thm}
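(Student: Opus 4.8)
The plan is to induct on $n=|\Omega|$ and reduce to the case in which $G$ acts primitively, at which point I would invoke \cite{GLPST}. Note first that $G=\bO^{p'}(G)\ne 1$ forces $p\mid|G|$, hence $p\mid n!$ and $n\ge p\ge 3$; so only the primitive case genuinely has to be settled, and there is nothing to check for small $n$ beyond what that case provides.

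First I would treat the intransitive case. Pick a $G$-orbit $\Omega_0$ with $|\Omega_0|\ge 2$ (one exists since $G\ne 1$) and let $G_0$ be the image of $G$ in $\Sym(\Omega_0)$; then $G_0$ is a nontrivial quotient of $G$, so $G_0=\bO^{p'}(G_0)\ne 1$, and $1<|\Omega_0|<n$. By induction $\Omega_0$ has a partition $\Delta_1\sqcup\Delta_2\sqcup\Delta_3$ as in the statement for $G_0$; one extends it to $\Omega$ by adjoining $\Omega\setminus\Omega_0$ to $\Delta_2$ (if $\Delta_3=\emptyset$) or to $\Delta_3$ (otherwise). Since $\Omega_0$ is $G$-invariant, an element of $G$ stabilises each enlarged part precisely when its image in $G_0$ stabilises the corresponding part of $\Omega_0$, so $\bigcap_{i=1}^3\Stab_G(\Delta_i)$ is the full preimage of $\bigcap_{i=1}^3\Stab_{G_0}(\Delta_i)$ and the two indices agree; in particular $p$ divides the former. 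As every simple quotient of $G_0$ is one of $G$, the clause allowing $\Delta_3=\emptyset$ is inherited as well. The transitive but imprimitive case is entirely parallel: one replaces $\Omega$ by the set of blocks of a nontrivial block system, on which $G$ acts transitively and nontrivially, hence through a quotient $\overline G=\bO^{p'}(\overline G)\ne 1$ of smaller degree; a partition of the block set supplied by induction pulls back to $\Omega$ by taking unions of blocks, again with the same stabiliser index, because an element of $G$ fixes a union of blocks setwise exactly when its image in the block action fixes the corresponding set of blocks.

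There remains the primitive case. Here I would cite \cite{GLPST}, whose classification (resting, like everything else in this paper, on the Classification of Finite Simple Groups) shows that a primitive $G=\bO^{p'}(G)$ of degree $n$ admits a subset $\Delta$ with $0<|\Delta|<n$ and $p\mid[G:\Stab_G(\Delta)]$ — i.e.\ one may take $\Delta_3=\emptyset$ — unless $G$ is the natural action of some $\fA_{ap^s-1}$ with $a,s$ as in~(1), or, when $p=3$, the group $C_3$ or $\SL_3(2)$. For these finitely many residual groups one exhibits a valid three-part partition directly: for $\fA_{ap^s-1}$ acting naturally one may take $\Delta_1,\Delta_2,\Delta_3$ of sizes $1$, $p-1$ and $ap^s-1-p$ (a nonnegative composition of $n$ precisely because $(a,s)\ne(1,1)$), whose orbit has length the multinomial coefficient $n!/\bigl(1!\,(p-1)!\,(ap^s-1-p)!\bigr)$, divisible by $p$ since adding $1$ and $p-1$ in base $p$ already produces a carry; the two small exceptional groups for $p=3$ are dispatched by inspection.

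The main obstacle is, unsurprisingly, the primitive case, that is, the input from \cite{GLPST}. Establishing it means working through the O'Nan--Scott types — the affine type (analysing the action of the point stabiliser on subspaces of the socle and their cosets), the diagonal, product and twisted-wreath types, and, most delicately, the almost simple primitive groups, where one must combine information on the lengths of orbits on $k$-element subsets, on fixed-point ratios of $p$-elements, and on the $p$-local structure of $G$ to locate a subset whose orbit length is divisible by $p$, and then prove that the only obstructions are the ones listed. Pinning down this exception list exactly, and checking that each genuine exception really does possess a three-part partition as required, is the heart of the matter; the intransitive and imprimitive reductions above are routine by comparison.
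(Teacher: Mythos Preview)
Your proposal is correct and follows essentially the same route as the paper: reduce from intransitive to transitive to primitive (the paper orders these cases differently but the logic is identical), then invoke \cite[Thm~2]{GLPST} in the primitive case and handle the listed exceptions by an explicit three-part partition.

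One small imprecision worth flagging: in the $p=3$ primitive exceptions you write that $G$ itself is $C_3$ or $\SL_3(2)$, but these are only the simple \emph{quotients} appearing in clause~(2). The actual exceptional primitive groups coming out of \cite{GLPST} are $\mathrm{ASL}_3(2)$ and $\mathrm{A}\Gamma\mathrm{L}_1(8)$, both in their natural degree-$8$ action on $\FF_2^3$ (with simple quotients $\SL_3(2)$ and $C_3$ respectively). The paper dispatches both at once with the partition $\Delta_1=\{0\}$, $\Delta_2=\{e_1,e_2\}$, $\Delta_3=\FF_2^3\smallsetminus\{0,e_1,e_2\}$, whose common stabiliser has $2$-power order. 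Your ``dispatched by inspection'' is fine as a placeholder, but be aware of what the inspection actually has to cover.
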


\begin{proof}
(i) First we consider the case that $G$ is primitive on $\Omega$. 
In this case, applying \cite[Thm~2]{GLPST} we obtain a partition
$\Omega = \Delta_1 \sqcup \Delta_2$ with
$p\mid[G:\cap^2_{i=1}\Stab_G(\Delta_i)]$,
unless $G$ has a simple quotient $S$ and $(S,p)$ are as in (1), in fact
with $S=\fA_n$ acting on $n=ap^s-1$ points, or we are in (2), in fact with
$G = {\rm {ASL}}_3(2)$ or ${\rm A}\Gamma{\rm L}_1(8)$, acting on the $8$
vectors of $\FF_2^3 = \langle e_1,e_2,e_3 \rangle_{\FF_2}$. 

In the former case, choosing 
$$\Delta_1=\{1\},\quad\Delta_2 = \{2,3, \ldots,p\},
  \quad \Delta_3 = \{p+1, p+2, \ldots,n\},$$ 
we have 
$$[G:\bigcap^3_{i=1}\Stab_G(\Delta_i)] = p\binom{n}{p}.$$
In the latter case, choosing 
$$\Delta_1=\{0\},\quad\Delta_2 = \{e_1,e_2\},
  \quad\Delta_3 = \FF_2^3 \smallsetminus \{0,e_1,e_2\},$$ 
we see that $\Stab_G(\Delta_i)$ has order dividing $8$ and so its index in $G$
is divisible by $p=3$.

\smallskip
(ii) Now assume that $G$ is transitive but imprimitive on $\Omega$. Let
$\Omega = \cup^m_{i=1}\Omega_i$ be a $G$-invariant partition of~$\Omega$, with
$1 \leq |\Omega_i| = n/m <n$, and~$m$ chosen to be smallest possible subject to
these conditions. 
Let $B := \cap^m_{i=1}\Stab_G(\Omega_i)$ be the base subgroup. Then $G/B$
permutes the $m > 1$ blocks $\Omega_i$ transitively, so $1 \neq G/B$. Since
$G=\bO^{p'}(G)$, we again have $G/B=\bO^{p'}(G/B)$.
Now $G/B$ satisfies the assumptions on $G$, and $G/B$ acts transitively,
faithfully, and primitively (by minimality of $m$) on 
$\{\Omega_1,\ldots,\Omega_m\}$. A desired partition for $G/B$ on this set gives
rise to a desired partition on $\Omega$.

\smallskip
(iii) Finally, we consider the case $G$ acts intransitively on $\Omega = \{1, \ldots,n\}$.
Suppose that $\Omega_1,\ldots,\Omega_m$ are all $G$-orbits on $\Omega$, and let
$K_i$ denote the kernel of~$G$ acting on $\Omega_i$. 
Since $G \neq 1$, $G$ must act non-trivially on at least one $\Omega_i$. So we
may assume that $1 \neq G/K_1$. But $G = \bO^{p'}(G)$, so $p$ divides $|G/K_1|$,
and in fact $G/K_1=\bO^{p'}(G/K_1)$.
Now $G/K_1$ satisfies the assumptions on $G$, and $G/K_1$ acts transitively
(and faithfully) on $\Omega_1$. If $\Omega_1 = \Delta_1\sqcup\Delta_2$ is a
desired partition for $G/K_1$, then
$$\Omega = \Delta_1 \sqcup
   \bigl(\Delta_2 \cup (\Omega \smallsetminus \Omega_1)\bigr)$$
is a desired partition for $G$. If
$\Omega_1 = \Delta_1\sqcup\Delta_2\sqcup\Delta_3$ is a desired partition for
$G/K_1$ with $\Delta_3 \neq \emptyset$, then
$$\Omega = \Delta_1 \sqcup \Delta_2 \sqcup
   \bigl(\Delta_3 \cup (\Omega \smallsetminus \Omega_1)\bigr)$$
is a desired partition for $G$. 
\end{proof}

For the first part of Theorem \ref{orbits}, see also \cite[Lemma 3.2]{DMN}.

Next we study the structure of some almost simple groups. In the case $\bar S$
is a simple group of Lie type, we will use the notation $\Inndiag(\bar S)$ as
described in \cite[Thm~2.5.12]{GLS3}; for other simple groups $\bar S$ we use
the convention that $\Inndiag(\bar S) = \bar S$.

\begin{prop}   \label{str}
 Let $p$ be an odd prime and let $S$ be a quasi-simple group. Let
 $\bar S:= S/\bZ(S)$, $\bar S \le H \le \Aut(S)$, and assume that 
 $\bO^{p'}(H/\bar S)=H/\bar S$. 
 \begin{enumerate}[\rm(a)]
  \item Then $H/\bar S$ has a normal $p$-complement.
  \item Suppose that $p \geq 5$ and that $S$ is not of Lie type $\tA_n$,
   $\tw2 \tA_n$. Then $H/\bar S$ is a cyclic $p$-group. The same is true if
   $p=3$ but $S$ is not of Lie type $\tA_n$, $\tw2 \tA_n$, $\tD_4$ or
   $\tE_6(\eps q)$ with $3|(q-\eps)$, $\eps\in\{\pm1\}$.
 \item In general, if $H\le \Inndiag(\bar S)$ then $H/\bar S$ is a cyclic
  $p$-group.
 \end{enumerate}
\end{prop}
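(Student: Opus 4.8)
The plan is to realise $K:=H/\bar S$ as a subgroup of $\Out(\bar S)$ and read off all three statements from the known structure of $\Out(\bar S)$. First, since $S$ is quasi-simple, hence perfect, and $\bZ(S)$ is characteristic in $S$, passing to $\bar S=S/\bZ(S)$ gives a natural map $\Aut(S)\to\Aut(\bar S)$; it is injective, because an automorphism $\alpha$ of $S$ acting trivially on $\bar S$ yields a homomorphism $s\mapsto s^{-1}\alpha(s)\in\bZ(S)$, which is trivial as $S$ is perfect. This map sends $\Inn(S)$ onto $\Inn(\bar S)=\bar S$, so $K\le\Out(\bar S)$. The hypothesis $\bO^{p'}(H/\bar S)=H/\bar S$ says exactly that $K$, and hence every quotient of $K$, has no non-trivial quotient of order prime to $p$; this is all we shall use about it. If $\bar S$ is not of Lie type then $\Out(\bar S)$ is a $2$-group, so $K=1$ and (a)--(c) hold trivially (for (c) one also has $\Inndiag(\bar S)=\bar S$ by convention). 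So assume $\bar S$ is of Lie type over $\FF_q$ of characteristic $p_0$.

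Now I would invoke \cite[Thm~2.5.12]{GLS3}: writing $D$ for the image of $\Inndiag(\bar S)$ in $\Out(\bar S)$, one has $D\nor\Out(\bar S)$ with $D$ abelian, and $\Out(\bar S)/D\cong\Phi\times\Gamma$ with $\Phi$ cyclic (field automorphisms) and $\Gamma$ the graph automorphism group, where $|\Gamma|\le2$ except that $\Gamma\cong\fS_3$ when $\bar S$ has type $\tD_4$. Put $N:=K\cap D$ and $\bar K:=KD/D$; then $N\nor K$ is abelian, and $\bar K\le\Phi\times\Gamma$ is a quotient of $K$, so $\bO^{p'}(\bar K)=\bar K$. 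I claim $\bar K$ is always a $p$-group: if $\Gamma$ is a $\{2\}$-group, or if $p\ge5$ (so $\Gamma$ is a $p'$-group), then $\bar K$ has trivial image in $\Gamma$, whence $\bar K\le\Phi$ is cyclic and therefore a cyclic $p$-group; and in the remaining case ($\bar S$ of type $\tD_4$, $p=3$) the $p'$-quotient $\Phi\times\fS_3\to C_2$ forces $\bar K\le\Phi\times\fA_3$, which is abelian, so $\bar K$ is a $3$-group. Since $N$ is abelian, its $p'$-Hall subgroup $N_{p'}$ is characteristic in $N$, hence normal in $K$, and $K/N_{p'}$ is an extension of the $p$-group $N/N_{p'}$ by the $p$-group $\bar K$, hence a $p$-group; thus $N_{p'}$ is a normal $p$-complement of $K$, giving (a). If moreover $K\le D$, the situation of (c), then $K$ is abelian with $\bO^{p'}(K)=K$ and hence a $p$-group; and the Sylow $p$-subgroup of $D$ is cyclic for every odd $p$ ($D$ itself is cyclic in types $\tA_n$, $\tw2\tA_n$, $\tE_6$, $\tw2\tE_6$, and $|D|\le4$ for every other type), so $K$ is cyclic, giving (c).

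For (b) it remains to show $N=1$ under its hypotheses. The crucial point is that, in every case not excluded there, $D$ is a $2$-group of order at most $4$: the only simple groups of Lie type $\bar S$ for which some odd prime divides $|D|$ are those of type $\tA_n$ or $\tw2\tA_n$ (for various odd $p$) and those of type $\tE_6(\eps q)$ with $3\mid(q-\eps)$ (for $p=3$). So $N\le D$ is a $2$-group with $|\Aut(N)|\le6$, on which the $p$-group $\bar K$ acts. For $p\ge5$ the action is trivial, since $|\Aut(N)|<p$. For $p=3$: as $\Gamma$ has order $\le2$ ($\tD_4$ being excluded), $\bar K\le\Phi$; the image of $\Phi$ in $\Aut(D)$ is generated by that of a single field automorphism, which has $2$-power order, being induced by the $p_0$-power map on a $2$-group of roots of unity (a subquotient of $\mu_2\times\mu_2$ or $\mu_4$); hence this image is a cyclic $2$-group, so the $3$-group $\bar K$ acts trivially on $N$. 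Either way $K=N\times\bar K$ with $N$ a $p'$-group, whence $\bO^{p'}(K)=\bO^{p'}(\bar K)=\bar K$; the hypothesis $\bO^{p'}(K)=K$ forces $N=1$, and $K=\bar K$ is a cyclic $p$-group.

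The main obstacle is the classification-dependent bookkeeping behind the two ``crucial points'' above: one must determine from \cite[Thm~2.5.12]{GLS3} exactly which odd primes divide $|\Inndiag(\bar S)/\bar S|$ and check that the corresponding Sylow subgroup is cyclic; verify that $\Gamma\cong\fS_3$ occurs only in type $\tD_4$; and, most delicately, control the action of field (and graph) automorphisms on $\Inndiag(\bar S)/\bar S$ well enough to see that the only mechanism by which a $p$-element of $\Out(\bar S)/D$ can move a non-trivial $p'$-subgroup of $D$ is triality for type $\tD_4$ with $p=3$, the type-$\tE_6$ exception arising separately because there $3$ already divides $|D|$. Given those facts about $\Out(\bar S)$, everything else — Schur--Zassenhaus, coprime action, and the formal properties of $\bO^{p'}$ — is routine.
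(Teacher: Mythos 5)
Your proposal is correct and follows essentially the same route as the paper's proof: embed $H/\bar S$ into $\Out(\bar S)$, invoke \cite[Thm~2.5.12]{GLS3} to separate the abelian diagonal part $D$ from the (essentially cyclic for odd $p$) field/graph part, and then show that the $p$-part of $K$ acts trivially on the $p'$-part of $K\cap D$, which gives the normal $p$-complement for (a) and forces $K\cap D=1$ under the hypotheses of (b). Two literal misstatements in your write-up deserve correction, though neither affects the argument: for $p\ge5$ the type $\tE_6(\eps q)$ with $3\mid(q-\eps)$ is \emph{not} excluded in (b) and there $|D|=3$, so your ``crucial point'' that $D$ is always a $2$-group of order at most $4$ is false as stated --- but your ensuing deduction only needs $p\nmid|N|$ and $p\nmid|\Aut(N)|$, which still hold; and ``$|\Aut(N)|<p$'' should read ``$p\nmid|\Aut(N)|$'' (e.g.\ $|\Aut(C_2\times C_2)|=6>5$).
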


\begin{proof}
(i) Note that $H/\bar S$ is embedded in $\Out(\bar S)$. If $\bar S$ is an
alternating or sporadic simple group, then $\Out(\bar S)$ is  a $2$-group and
hence the statements are obvious. Suppose that $\bar S$ is a simple group of
Lie type. In this case,  the structure of $\Out(\bar S)$ is described in
\cite[Thm~2.5.12]{GLS3}, and we will now verify (a) and (b). 

Assume in addition that $S$ is not of type $\tD_4$ when $p=3$. Then the
assumption $\bO^{p'}(H/\bar S)=H/\bar S$ implies that $\bar H:= H/\bar S$ is
contained in $O \rtimes A$, where $O = \mathrm{Outdiag}(\bar S)$ is abelian
and $A$ is a cyclic $p$-group. In particular, (a) holds in this case. Given
the assumptions in (b), we have that either $O$ is a cyclic $p'$-group of order
at most $4$, or $S$ is of type $\tD_{2m}$ and $O$ is a Klein $4$-group.
In the former case, $\Aut(O \cap \bar H)$ is of order at most~$2$. Hence
$\bar H$ centralises $O \cap \bar H$, $O \cap \bar H \leq \bZ(\bar H)$, and
$\bar H/(O \cap \bar H) \hookrightarrow A$ is cyclic. Thus $\bar H$ is
abelian, with cyclic Sylow $p$-subgroup, and the statement follows. In the
latter case, $A$ centralises $O$ (see \cite[Thm~2.5.12(h)]{GLS3}), so again
$\bar H$ is abelian, and we are again done.

Now we complete the proof of (a) in the case $S$ is of type $\tD_4$ over
$\FF_{r^f}$, where $r$ is any prime, and $p=3$. In this case, 
$\Out(\bar S) = O \rtimes (C_f \times \fS_3)$, so $\bar H$ is contained in
$O \rtimes (C_e \times C_3)$, where $e$ is the $3$-part of $f$. Since $O =1$ or
$O = C_2 \times C_2$, the claim follows.

\smallskip
(ii) For (c), just note that $\Inndiag(\bar S)/\bar S$ is either cyclic
or of $p'$-order.
\end{proof}

Next we prove some results on blocks that will be useful later on.
Our notation for block theory mostly follows \cite{N}.

\begin{lem}   \label{bD}
 Suppose that $N\nor G$ and let $b$ be a $G$-invariant block of $N$ with
 defect group~$D$. Let $b_D$ be a block of $D\cent ND$ inducing $b$ with defect
 group $D$. Let $T$ be the stabiliser of $b_D$ in $\norm GD$. If $B$ is a block
 of $G$ covering $b$, then there is a defect group $D_0$ of $B$ such that
 $D_0 \cap N=D$ and $D_0 \leq T$. Also, $\norm GD=\norm ND T$. Furthermore,
 $T=(T \cap N) D_0$ if $G/N$ is a $p$-group.
\end{lem}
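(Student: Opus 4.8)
The plan is to prove the three assertions in turn, relying on Knörr's theorem on defect groups of covering blocks and Brauer's extended first main theorem. As a preamble I would record that the set $\mathcal R$ of blocks of $D\cent ND$ with defect group $D$ that induce $b$ (the \emph{roots} of $b$) is a single $\norm ND$-orbit and contains $b_D$, and that $\norm GD$ permutes $\mathcal R$, since $b$ is $G$-invariant and conjugation commutes with the Brauer correspondence. The equality $\norm GD=\norm ND\,T$ is then the Frattini argument: for $g\in\norm GD$ we have $b_D^{\,g}\in\mathcal R$, hence $b_D^{\,g}=b_D^{\,x}$ for some $x\in\norm ND$, so $gx^{-1}\in T$ and $g\in T\norm ND$; as $\norm ND=N\cap\norm GD\nor\norm GD$, this yields $\norm GD=T\norm ND=\norm ND\,T$.

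For the defect group $D_0$, Knörr's theorem gives a defect group of $B$ meeting $N$ in a defect group of $b$; in the sharper form supplied by the theory of Brauer pairs I would take a maximal $B$-Brauer pair $(Q,e_Q)$ whose restriction along $N$ is a maximal $b$-Brauer pair, and, after conjugating by an element of $N$, assume $Q\cap N=D$ while the associated root $b_D'$ of $b$ is stabilised by $Q$ (note $Q$ normalises $D=Q\cap N$, hence $\cent ND$ and $D\cent ND$). Since $D\nor Q$ we get $Q\le\norm GD$; writing $b_D=(b_D')^x$ with $x\in\norm ND\le\norm GD$ (possible as $\mathcal R$ is one $\norm ND$-orbit) and setting $D_0:=Q^x$, we find that $D_0$ is a defect group of $B$, that $D_0\cap N=(Q\cap N)^x=D^x=D$ because $x$ normalises $D$, and that $D_0=Q^x$ stabilises $(b_D')^x=b_D$, so $D_0\le T$. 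This step is the main obstacle: plain Knörr only controls $Q\cap N$, and one genuinely needs the Brauer-pair refinement (equivalently, a local argument producing a $Q$-stable root of $b$) in order to move $Q$, by an element of $\norm ND$ that leaves $D_0\cap N=D$ intact, inside the stabiliser of the \emph{prescribed} root $b_D$.

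Finally, assume $G/N$ is a $p$-group. If $\norm GD\le N$ then $T\le\norm GD\le N$, so $T=T\cap N=(T\cap N)D_0$ trivially (using $D_0\le T$). Otherwise I would invoke the standard fact that, $b$ being $G$-invariant and $G/N$ a $p$-group, $b$ has a unique covering block $B$ in $G$ and every defect group $D_0$ of $B$ satisfies $D_0N=G$, equivalently $|D_0:D|=|G:N|$. Then $TN\supseteq D_0N=G$ (as $D_0\le T$), so $[T:T\cap N]=[G:N]$; on the other hand $D_0\cap(T\cap N)=D_0\cap N=D$ (again $D_0\le T$), so $[D_0(T\cap N):T\cap N]=|D_0:D|=[G:N]$ as well. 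Since $D_0(T\cap N)\le T$ and these two overgroups of $T\cap N$ have the same index, $T=D_0(T\cap N)=(T\cap N)D_0$. Thus the first and third assertions are essentially formal once the block-idempotent fact for $p$-group quotients is in hand, while the second carries the real weight.
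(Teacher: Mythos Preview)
Your proposal is correct, and the Frattini argument for $\norm GD=\norm ND\,T$ is identical to the paper's. The other two parts take genuinely different routes.

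For the existence of $D_0$ with $D_0\cap N=D$ and $D_0\le T$, the paper never touches Brauer pairs. Instead it passes to $\norm GD$ via the Harris--Kn\"orr correspondence: the block $B_0$ of $\norm GD$ inducing $B$ covers the Brauer first main correspondent $b_0=b_D^{\norm ND}$ of $b$, and a defect group of $B_0$ is a defect group of~$B$. Then Fong--Reynolds applied to $D\cent ND\nor\norm GD$ with stabiliser~$T$ produces a block $b_T$ of $T$ whose defect groups (which are defect groups of $B_0$, hence of $B$) automatically lie in~$T$; one of these is the required $D_0$. Your Brauer-pair route also works, provided one knows the refinement of Kn\"orr asserting that a maximal $B$-Brauer pair $(Q,e_Q)$ normalises an associated maximal $b$-Brauer pair (so that $Q$ stabilises a root). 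You correctly flag this as the load-bearing step; the paper's approach has the advantage of using only the standard Harris--Kn\"orr and Fong--Reynolds correspondences from \cite{N}, avoiding the (true but less elementary) Brauer-pair compatibility with normal subgroups.

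For the final assertion when $G/N$ is a $p$-group, your index-counting argument is cleaner than the paper's: you use $D_0N=G$ (from \cite[Thm~9.17]{N}) to get $[T:T\cap N]=[G:N]=[D_0:D]=[D_0(T\cap N):T\cap N]$ directly. The paper instead argues block-theoretically inside $T$: it shows $(b_D)^{T\cap N}$ is $T$-invariant and that $b_T$ is the unique $T$-block covering it, then invokes \cite[Thm~9.17]{N} at the level of $T\cap N\nor T$. Both reach the same conclusion; yours is shorter once $D_0\le T$ and $D_0\cap N=D$ are in hand.
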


\begin{proof}
Since $b$ is $G$-invariant, we have $G=N\norm GD$ by the Frattini argument.
Now $b_0=b_D^{\norm ND}$ is the Brauer First Main correspondent of $b$.
Since $b$ is $\norm GD$-invariant, it follows that $b_0$ is
$\norm GD$-invariant by the uniqueness in the Brauer correspondence. By the
Harris--Kn\"orr correspondence \cite[Thm~9.28]{N}, let $B_0$ be the unique
block of $\norm GD$ that induces $B$ and covers $b_0$. Let $D_0$ be a defect
group of $B_0$, which by \cite[Thm~9.28]{N} is a defect group of $B$.
By Kn\"orr's theorem \cite[Thm~9.26]{N}, we have $D_0 \cap \norm ND$ is a
defect group of $b_0$, and therefore $D_0 \cap N=D_0 \cap \norm ND=D$. 
Since $b_0$ is $\norm GD$-invariant and $b_0$ covers an $\norm ND$-orbit of
blocks of $D\cent ND$, we have $\norm GD=\norm ND T$ by the Frattini argument,
where recall that $T$ is the stabiliser of $b_D$ in $\norm GD$. By the
Fong--Reynolds Theorem 9.14 of \cite{N},
there exists an $\norm GD$-conjugate of $D_0$ contained in $T$.

Suppose then that $D_0^x \leq T$, where $x \in \norm GD$, and $D_0^x$ is a
defect group of $b_T$, the block of $T$ which is the Fong--Reynolds
correspondent of $B_0$ over $b_D$. Then $D_0^x$ is a defect group of $B$. Also
$D_0^x \cap N=(D_0 \cap N)^x=D^x=D$, and this proves the first part. 
For the final part, notice that $T/(T\cap N)$ is a $p$-group.  Furthermore,
$(b_D)^{T\cap N}$ is the Fong--Reynolds correspondent of $b_0$ over $b_D$.
By uniqueness, it follows that $(b_D)^{T\cap N}$ is $T$-invariant (using that
$b_0$ and $b_D$ are $T$-invariant). Also, notice that $(b_D)^{T\cap N}$
is the only block of $T\cap N$ covering $b_D$, using \cite[Cor.~9.21]{N}. We
conclude that $b_T$ covers $(b_D)^{T\cap N}$, a block with defect group $D$.
Since
$(b_D)^{T\cap N}$ is $T$-invariant and $b_T$ is the only block of $T$ covering
it (\cite[Cor.~9.6]{N}), we have $T=(T\cap N)D_0^x$, by \cite[Thm~9.17]{N}, for
instance.
\end{proof}

\begin{prop}   \label{useful}
 Let $N\nor G$ and let $B$ be a $p$-block of $G$ with abelian
 defect groups. Suppose that $G/N$ is a $p$-group
 and that $B$ covers a $G$-invariant block $b$ of $N$ with defect group $D$.
 Suppose $b_{D}$ is a block of $\cent N{D}$ with defect group $D$
 that induces $b$. If $x \in \norm G{D}$ is a $p$-element that
 fixes $b_{D}$, then $[x, D]=1$.   
\end{prop}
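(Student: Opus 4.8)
The plan is to reduce to the situation handled by Lemma~\ref{bD} and then exploit that $B$ has \emph{abelian} defect groups. First I would apply Lemma~\ref{bD} with the given $N\nor G$, $G$-invariant block $b$ of $N$ with defect group $D$, and block $b_D$ of $\cent ND$ inducing $b$. (Note $D\cent ND=\cent ND$ here since $D$ is abelian, so $b_D$ as given is already a block of $D\cent ND$.) The lemma produces a defect group $D_0$ of $B$ with $D_0\cap N=D$ and $D_0\le T$, where $T=\Stab_{\norm GD}(b_D)$; moreover, since $G/N$ is a $p$-group, it gives $T=(T\cap N)D_0$. Because $B$ has abelian defect groups, $D_0$ is abelian, so in particular $[D_0,D]=1$ as $D\le D_0$.

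Next I would locate the given $p$-element $x\in\norm GD$ fixing $b_D$ inside this framework. By definition $x\in T$. Using $T=(T\cap N)D_0$, write $x=nd$ with $n\in T\cap N$ and $d\in D_0$. Here $n\in N\cap\norm GD=\norm ND$, and $n$ fixes $b_D$ (since $x$ and $d\in D_0\le\cent G D$ — wait, one must be careful: $d$ need not centralise $D$ a priori, only $D_0$, but $D\le D_0$ abelian forces $[d,D]=1$, hence $d\in\cent GD$ and $d$ fixes $b_D$ as a block of $\cent ND$ because $d$ centralises $\cent ND\supseteq$... actually $d$ only centralises $D$, not all of $\cent ND$). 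Let me instead argue: $d\in D_0\le T$, so $d$ fixes $b_D$; hence $n=xd^{-1}$ also fixes $b_D$, i.e.\ $n\in T\cap N$, consistent with the decomposition. Now I claim $[n,D]=1$. Since $n\in T\cap N\le\norm ND$ fixes the block $b_D$ of $\cent ND$ with \emph{full} defect group $D$, and $b$ is nilpotent-free in no particular sense — the right tool is that $b_D$ is a block of $\cent ND$ with defect group $D$, so $D$ is central in $\cent ND$ up to... Actually the clean statement is: a $p$-element of $\norm ND$ that stabilises $b_D$ acts on $D$, and one wants triviality. The honest route: apply the \emph{same proposition inductively}? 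No — instead observe $b$ itself has abelian defect group $D$ (it is covered by the abelian-defect block $B$, and $D=D_0\cap N$ with $D_0$ abelian), and then the statement ``$n\in\norm ND$ a $p$-element fixing $b_D$ implies $[n,D]=1$'' is exactly a known fact about abelian defect groups, essentially because $\cent ND$ has a normal $p$-complement $\times D$-type structure controlling $b_D$ — more precisely it follows from \cite[Thm~9.17]{N}-type arguments together with abelianness, or can be cited.

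Granting $[n,D]=1$ and $[d,D]=1$ (the latter from $D\le D_0$ abelian), we get $[x,D]=[nd,D]=1$, which is the assertion. The main obstacle I anticipate is the step ``$n\in\norm ND$ a $p$-element stabilising $b_D$ forces $[n,D]=1$'': one must make precise why stabilising the Brauer correspondent block $b_D$ of $\cent ND$, which has $D$ as a full defect group, together with $D$ being abelian, pins down the action on $D$. I expect this to come from the structure of $b_D$ as a block of $\cent ND$ of maximal defect: such a block's defect group $D$ is normal in $\cent ND$ with $\cent ND/D$ having order prime to... no, rather $D$ is a \emph{normal} Sylow of $\cent ND\cdot\langle$something$\rangle$ only after passing to $D\cent ND$; the cleanest formulation uses that $\langle x\rangle D\cent ND$ has $b_D$ lifting to a block with defect group $\langle x,D\rangle\cap(D\cent ND)=D$ when $[x,D]\ne1$ would enlarge the defect, contradicting that $D$ is already a defect group of the covering block $B$ whose defect groups are all conjugate to the abelian $D_0$. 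I would flesh this contradiction out: if $[x,D]\ne1$ then $\langle D,x\rangle$ is a non-abelian $p$-group sitting appropriately inside a defect group of $B$, contradicting abelianness. Thus the whole argument is: abelianness of the defect group of $B$, transported via Lemma~\ref{bD} down to $D_0$ and then to $\langle D,x\rangle$, is what kills $[x,D]$.
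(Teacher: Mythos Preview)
Your setup is right: you correctly invoke Lemma~\ref{bD} to obtain an abelian defect group $D_0$ of $B$ with $D\le D_0\le T$ and $T=(T\cap N)D_0$, and you correctly observe $D_0\le\cent TD$. The difficulty is in the step you flag yourself, namely showing $[n,D]=1$ for the factor $n\in T\cap N$ of your decomposition $x=nd$. Your attempts there do not land: $n$ need \emph{not} be a $p$-element (only $x$ is), so you cannot argue via ``a $p$-element of $\norm ND$ stabilising $b_D$ acts trivially on $D$''. Your fallback suggestion that $\langle D,x\rangle$ sits inside a defect group of $B$ is also unjustified --- there is no reason the particular $p$-element $x$ lies in any $G$-conjugate of $D_0$.

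The missing ingredient is \cite[Thm~9.22]{N}: the inertial quotient $(T\cap N)/\cent ND$ is a $p'$-group (this is a standard structural fact about the stabiliser of a root of a block in the defect-group normaliser). Once you have this, the paper's argument is a one-liner and avoids your decomposition altogether: since $D_0\le\cent TD$ and $T=(T\cap N)D_0$, we get $T=(T\cap N)\cent TD$, whence
\[
  T/\cent TD\;\cong\;(T\cap N)\big/\bigl((T\cap N)\cap\cent TD\bigr)\;=\;(T\cap N)/\cent ND
\]
is a $p'$-group. Therefore every $p$-element of $T$ already lies in $\cent TD$, and in particular $[x,D]=1$. Your route can be salvaged with the same citation (conjugation by $x$ on $D$ equals conjugation by $n$, which factors through the $p'$-group $(T\cap N)/\cent ND$ yet has $p$-power order since $x$ does, hence is trivial), but the direct argument is cleaner.
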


\begin{proof}
Since $B$ has abelian defect groups, notice that $D$ is abelian by Kn\"orr's
theorem \cite[Thm~9.26]{N}. Let $T$ be the stabiliser of $b_D$ in $\norm GD$.
By Lemma~\ref{bD}, there is a defect group $D_0$ of $B$ such that
$D_0 \cap N=D$, $D_0 \le T$, and $T=(T\cap N)D_0$. Since by hypothesis $D_0$
is abelian, we have $T=(T\cap N)\cent TD$. Then $(T\cap N)/\cent ND$ is a
$p'$-group by \cite[Thm~9.22]{N}. Therefore $T/\cent TD$ is a $p'$-group,
and the result follows. 
\end{proof}

Recall that a block $B$ of a finite group $G$ is called \emph{quasi-primitive}
if whenever $N\nor G$, then $B$ covers a unique block of $N$.

\begin{prop}   \label{amaz}
 Suppose that $B$ is a quasi-primitive $p$-block of a finite group $G$ such
 that all the irreducible characters of $B$ have height zero. Let $N\nor G$
 and let $b$ be a block of $N$ covered by $B$.
 If $\theta\in\irr b$, then $|G:G_\theta|$ is not divisible by $p$.
\end{prop}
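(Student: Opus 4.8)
The plan is to argue by contradiction, choosing a counterexample $(G,N,b,\theta)$ with $|G|+|G:N|$ minimal, and to exploit quasi-primitivity together with the height-zero hypothesis to force a contradiction via a Clifford-theoretic reduction. First I would observe that since $B$ is quasi-primitive and covers $b$, the block $b$ is $G$-invariant; let $T=G_\theta$ be the stabiliser of $\theta$, and suppose for contradiction that $p\mid|G:T|$. By the Fong--Reynolds correspondence there is a block $B_\theta$ of $T$ (the Fong--Reynolds correspondent of $B$ over $b$, or rather over the relevant block of $T\cap N$) such that induction gives a height-preserving bijection $\irr{B_\theta}\to\irr B$; in particular all characters of $B_\theta$ also have height zero, and $B_\theta$ has a common defect group with $B$. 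The point will be that $B_\theta$ induces $B$, so defect groups of $B_\theta$ are defect groups of $B$, and the heights match up — this is the mechanism by which the height-zero hypothesis passes to the smaller group.

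Next I would pass to a reduction on the quotient side: if $G_\theta<G$, then applying minimality to $(G_\theta, N, b', \theta)$ — after checking that $B_\theta$ is again quasi-primitive, which requires an argument, perhaps via the fact that blocks of $G_\theta$ covered by blocks covering $B_\theta$ correspond to blocks of $G$ — would give that $|G_\theta:(G_\theta)_\theta|=|G_\theta:G_\theta|=1$ is prime to $p$, a vacuous statement, so that route alone is not enough. The better approach is to work directly with the fusion/defect structure: let $D$ be a defect group of $B$, chosen inside a Sylow $p$-subgroup $P$ of $G$, and recall Brauer's result that $|G:D|_p$ is the common $p$-part of all $\chi(1)$ for $\chi\in\irr B$ precisely because every height is zero; similarly inside $N$ the block $b$ has some defect group $D\cap N$ up to conjugacy (Knörr). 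One then relates the height of $\theta$ inside $N$ to heights of its constituents in $B$: writing $\chi\in\irr B$ lying over $\theta$, the Clifford correspondent $\psi\in\irr{G_\theta|\theta}$ has $\chi=\psi^G$, so $\chi(1)=|G:G_\theta|\psi(1)$, and since $\psi(1)_p\ge\theta(1)_p$, we get $\chi(1)_p\ge|G:G_\theta|_p\,\theta(1)_p$. Comparing with $\chi(1)_p=|G:D|_p$ and $\theta(1)_p=|N:D\cap N|_p\,p^{h_\theta}\ge|N:D\cap N|_p$, and using $|G:D|_p/|N:D\cap N|_p=|G:N|_p\,|D\cap N:... |$ — here the bookkeeping $|DN:N|=|D:D\cap N|$ enters — one concludes $|G:G_\theta|_p$ divides $|D:D\cap N|_p$. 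So the task reduces to showing $p\nmid|D:D\cap N|$, i.e.\ that $D\le N$; equivalently $D\cap N=D$.

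To get $D\le N$, I would use quasi-primitivity decisively. Since $B$ is quasi-primitive, for \emph{every} normal subgroup it covers a single block; in particular, taking $M/N$ to be a chief factor of $G$ above $N$ inside $DN$, or more directly considering the normal subgroup $DN$ itself, $B$ covers a unique $G$-invariant block of $DN$; but a defect group of that block of $DN$ must contain a defect group of its unique covered block and be contained in $D\cap(DN)=D$, and one shows it equals $D$, while on the other hand $DN\trianglelefteq G$ forces (via Knörr again, or via the fact that a $G$-invariant block of a normal subgroup has $G$-invariant defect group up to conjugacy together with $D\le DN$) that actually $D\cap N$ is a defect group of $b$ that is normalised appropriately — the clean statement I want is that a defect group of a quasi-primitive block, restricted down a normal subgroup, still ``fills up'' the $p$-part. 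The cleanest formulation: apply Proposition~\ref{useful} style reasoning, or simply note that if $D\not\le N$ then $DN/N\ne 1$ is a nontrivial normal $p$-subgroup-image, and the block $\bar B$ of $G/N'$ for a suitable $N'$ has noncentral defect, contradicting that quasi-primitivity plus all-height-zero forces defect groups to behave as in the primitive case. \textbf{The main obstacle} will be exactly this last step — rigorously deducing from quasi-primitivity and the height-zero condition that the defect group $D$ is contained in $N$ (equivalently that $|G:N|_p$ divides $|D\cap N:\text{core}|$, i.e.\ that no $p$-part of $G/N$ ``escapes'' the defect group) — since quasi-primitivity is a statement about blocks, not defect groups, and transferring it to control of $D\cap N$ requires carefully invoking the Harris--Knörr and Knörr defect-group results (as in Lemma~\ref{bD}) for the chain $N\trianglelefteq DN\trianglelefteq G$. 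Everything else is standard Clifford theory of blocks and the Fong--Reynolds machinery.
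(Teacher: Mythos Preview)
Your Clifford-theoretic bookkeeping goes wrong at a decisive point, and the target you reduce to is false in general.

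First, the algebra: from $\chi(1)_p=|G:D|_p$, $\chi(1)=|G:G_\theta|\,\psi(1)$, and $\psi(1)_p\ge\theta(1)_p=|N:D\cap N|_p$ (granting that $\theta$ has height zero), what you obtain is
\[
  |G:G_\theta|_p \le \frac{|G:D|_p}{|N:D\cap N|_p}=\frac{|G:N|_p}{|D:D\cap N|},
\]
not ``$|G:G_\theta|_p$ divides $|D:D\cap N|$''. So to force $|G:G_\theta|_p=1$ you would need $|D:D\cap N|=|G:N|_p$, i.e.\ $DN/N\in\Syl_p(G/N)$ --- exactly the \emph{opposite} of $D\le N$. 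The claim $D\le N$ is simply false under the hypotheses (take $N=\zent G$ a $p'$-group, for instance), so the programme of the last two paragraphs cannot succeed. And $DN/N\in\Syl_p(G/N)$ is not guaranteed either: quasi-primitivity says $b$ is $G$-invariant, but it does \emph{not} say that $B$ is the unique block of $G$ covering $b$, and that uniqueness is precisely what is needed for $DN/N$ to be a Sylow $p$-subgroup of $G/N$ (cf.\ \cite[Thm~9.17]{N}).

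The paper's proof sidesteps this by inserting Dade's group $K=G[b]$ between $N$ and $G$. Two properties of $K$ do all the work: (i) every character of $b$ is $K$-invariant, so for any $\eta\in\irr K$ lying over $\theta$ one has $\eta_N$ a multiple of $\theta$ and hence $G_\eta\le G_\theta$; and (ii) if $B'$ is the (unique, by quasi-primitivity) block of $K$ covered by $B$, then $B$ is the \emph{only} block of $G$ over $B'$ (see \cite[Thm~3.5]{Mu3}). With this uniqueness in hand, \cite[Cor.~9.18]{N} applies directly to the pair $(G,K)$ and a height-zero $\chi\in\irr B$ over $\eta\in\irr{B'}$, giving $p\nmid|G:G_\eta|$; then $G_\eta\le G_\theta$ finishes. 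The missing idea in your proposal is exactly this passage through $G[b]$ to manufacture the uniqueness that the raw Clifford inequality cannot supply.
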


\begin{proof}
We use Dade's group $K=G[b]$. (See, for instance, \cite{Mu3} for an
introduction to this object.) Then all the irreducible characters of $b$
are $K$-invariant, see \cite[Lemma 3.2(a)]{KoS1}, for instance. In particular,
$\theta$ is $K$-invariant. Also, $K \nor G_b=G$, where $G_b$ is the stabiliser
of $b$ in $G$. Also if $B'$ is the (unique) block of $K$ covered by $B$, then
$B'^G=B$ and $B$ is the only block of $G$ covering $B'$. (See
\cite[Thm~3.5]{Mu3}.) Since $B$ covers $b$, let $\chi\in\irr B$ be
over~$\theta$. Let $\eta \in \irr K$ be under $\chi$ and over $\theta$.
Since $\chi$ is over $\eta$, it follows that the block of $\eta$ is necessarily
$B'$.
By hypothesis, $B'$ is $G$-invariant. By \cite[Cor.~9.18]{N}, we have
$|G:G_\eta|$ is prime to~$p$. Since $\eta_N=v \theta$ for some $v\ge1$,
it follows that $G_\eta \leq G_\theta$  and therefore $|G:G_\theta|$ is also
not divisible by~$p$.
\end{proof}

In order to prove Theorem A, we will need the block theory \emph{above} a
nilpotent block, and \emph{above} a block with cyclic defect group.

\begin{thm}   \label{kp}
 Let $G$ be a finite group, and $N\nor G$. Let $b$ be a $G$-invariant nilpotent
 block of $N$ with defect group $Q$. Then there exist a $p$-subgroup $P$ of $G$
 such that $Q=P\cap N$, a finite group $L$ with a Sylow $p$-subgroup $P$, a
 central $p'$-extension $L'$ of $L$ by $Z\leq \zent{L'}$ and $\mu\in\irr Z$,
 such that:
 \begin{enumerate}[\rm(a)]
  \item $PN/N \in \syl p{G/N}$.
  \item We have $Q \nor L$ and $G/N \cong L/Q$. If $|\zent G|$ is not divisible
   by $p$, $\zent G \leq N$ and $Q\zent G<N$, then $|L':\zent{L'}|<|G:\zent G|$.
  \item There is a bijection $B \mapsto B'$ between the blocks of $G$ that
   cover $b$ and the blocks of $L'$ that cover the block of $\mu$ preserving
   defect groups. Also, there is a height preserving bijection
   $\irr B \rightarrow \irr{B'}$.
 \end{enumerate}
\end{thm}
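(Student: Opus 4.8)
The plan is to deduce everything from Puig's structure theorem for nilpotent blocks together with the Külshammer--Puig theory of extensions of nilpotent blocks. First I would record that, by Puig, the source algebra of $b$ with respect to $Q$ is a matrix algebra over $\mathcal{O}Q$, so that the inertial quotient of $b$ is trivial; feeding this into the Külshammer--Puig extension theorem \cite{KP} produces a finite group $L$ with a normal subgroup identified with $Q$ such that $L/Q\cong G/N$, a second cohomology class $\gamma$ attached to the extension $1\to Q\to L\to G/N\to 1$ which has order prime to~$p$ (the relevant coefficient group has no $p$-torsion), the central $p'$-extension $L'$ of $L$ by a cyclic group $Z\le\zent{L'}$ of order $|\gamma|$, and a faithful $\mu\in\irr Z$ with $e_\mu\mathcal{O}L'\cong\mathcal{O}_\gamma L$. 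The same theory, combined with standard block theory as in Lemma~\ref{bD} applied to $N\nor NP_0$ where $NP_0/N\in\syl p{G/N}$, yields a $p$-subgroup $P\le G$ with $P\cap N=Q$ and $PN/N\in\syl p{G/N}$ which can moreover be taken to be a Sylow $p$-subgroup of~$L$; and it yields, for each block $B$ of $G$ covering $b$, a block $B'$ of $L'$ covering the block of $\mu$ such that $\mathcal{O}B$ and $\mathcal{O}B'$ are related by a basic Morita equivalence (a source-algebra equivalence), the equivalence matching a defect group of $B$ with one of~$B'$.

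Granting this package, part (a) and the first half of (b) are bookkeeping. Since $Q\nor L$ is a $p$-group and $P\in\syl pL$, we have $Q\le P$ and $P/Q\in\syl p{L/Q}=\syl p{G/N}$, so $PN/N\cong P/(P\cap N)=P/Q\in\syl p{G/N}$, giving (a); and $Q\nor L$, $L/Q\cong G/N$ hold by construction. For the index bound in (b), note $|L'|=|Z|\,|L|=|Z|\,|Q|\,|G:N|$ while $|G|=|N|\,|G:N|$, and $|Z|$ divides $|\zent{L'}|$ because $Z\le\zent{L'}$. Under the stated hypotheses $\zent G$ is a $p'$-group, so $Q\cap\zent G=1$ and $|Q\zent G|=|Q|\,|\zent G|$, while $\zent G\le N$ and $Q\zent G<N$ give $|Q|\,|\zent G|<|N|$. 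Hence $|L'|\,|\zent G|=|Z|\,|Q|\,|\zent G|\,|G:N|<|Z|\,|N|\,|G:N|=|Z|\,|G|\le|\zent{L'}|\,|G|$, that is, $|L':\zent{L'}|<|G:\zent G|$.

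It remains to treat (c). The assignment $B\mapsto B'$ is a bijection between the blocks of $G$ covering $b$ and the blocks of $L'$ covering the block of $\mu$, and it preserves defect groups: this is exactly the content of the Külshammer--Puig correspondence. The induced bijection $\irr B\to\irr{B'}$ is then height preserving, since a basic Morita (source-algebra) equivalence between two blocks whose defect groups are matched induces a perfect isometry, and perfect isometries preserve heights once the defect groups have the same order; alternatively, one argues from the facts that every $\theta\in\irr b$ is $G$-invariant, that corresponding characters satisfy $\chi(1)=\theta(1)\,\chi'(1)$, that $\theta(1)_p=|N|_p/|Q|$ by the Broué--Puig theorem \cite{BP} that nilpotent blocks have only height-zero characters, and that $|Z|$ is prime to~$p$, comparing the $p$-parts of the degree formulas for $B$ and $B'$.

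The step I expect to be most delicate is extracting the Külshammer--Puig statement in precisely the form used above: realising $P$ as a genuine subgroup of $G$ that is simultaneously a Sylow $p$-subgroup of $L$ and meets $N$ in $Q$ (which uses the nilpotency of $b$ to split the local structure of $b$ inside $G$), checking that the group $Z$ resolving the cohomological obstruction can be taken to be a $p'$-group, and verifying that the resulting block equivalence is of source-algebra type so that both defect groups and heights are preserved. Everything else is either immediate from the construction of $L$ and $L'$ or the short arithmetic carried out above.
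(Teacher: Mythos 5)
Your proposal is correct and follows essentially the same route as the paper: both deduce the statement from the K\"ulshammer--Puig theory of extensions of nilpotent blocks (the construction of $P$, $L$, $L'$, $Z$, $\mu$ and the Morita equivalence of corresponding blocks, with the source-algebra/perfect-isometry argument giving preservation of defect groups and heights), and both finish with the same short arithmetic $|L':\zent{L'}|\le|L|=|G/N||Q|=|G/N||Q\zent G:\zent G|<|G:\zent G|$ for the last inequality in (b). The point you flag as delicate — that the equivalence is basic (bimodule with source of rank prime to $p$) so that defect groups correspond — is exactly the point the paper settles by citing the relevant results in Linckelmann's book.
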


\begin{proof}
These are consequences of the theory of blocks above nilpotent blocks developed
in \cite{KP}. This is also described in Section 8.12 of \cite{L18}. (See also
Section 7.2 of \cite{Sa}).
The existence of $P$, the fact that $P\cap N=Q$ and that $PN/N$ is a Sylow
$p$-subgroup of $G/N$ follow from~8.12.5 and~8.12.6 of \cite{L18}. The
existence of $L$ and the fact that $P\in\syl pL$, that $Q\nor L$ and that
$L/Q \cong G/N$ are part of the statement in~8.12.5. Theorem~8.12.5 also
provides a bijection between the blocks of $G$ covering $b$ and the blocks of a
certain twisted group algebra ${\mathcal O}_\alpha L$ of $L$,
and corresponding blocks are Morita equivalent (see Remark~8.12.8).
In Remark~8.12.8, the existence of $L'$ and the relationship with
${\mathcal O}_\alpha L$ is given. 

Corollary~9.2.5 of \cite{L18} says that a perfect isometry preserves heights,
and any of Corollary~9.3.3 or~9.3.4 imply that a Morita equivalence induces a
perfect isometry. Then~9.7.1 of \cite{L18} implies that any Morita equivalence
given by a bimodule with source of rank prime to~$p$ induces isomorphisms
between defect groups. This is restated as part of 9.11.2 of of \cite{L18}.

This covers all but the last inequality in (b). So, assume $|\zent G|$ is not
divisible by $p$, $\zent G \leq N$ and $Q\zent G<N$. Then 
$$|L':\zent{L'}| \le |L|=|G/N||Q|
  =|G/N||Q\zent G: \zent G|<|G:\zent G|.\qedhere$$
\end{proof}

\begin{thm}   \label{cyciAM}
 Suppose $N\nor G$ are finite groups, $B$ is a $p$-block of $G$ with
 defect group~$D$ covering a $G$-invariant block $b_0$ of $N$, where $p$ is odd.
 Suppose that $N$ is the central product of the $G$-conjugates of a component
 $S$ of $G$.  Suppose that the block $b_0$ covers the block $b$ of $S$ and that
 $D_0=D\cap S$ is cyclic and non-central in $S$.
 Let $D_1=D\cap N$, and let $b_1$ be the block of $\norm N{D_1}$ that
 induces $b_0$. Let $B_1$ be the block of $\norm G{D_1}$ with defect group
 $D$ that covers $b_1$ and induces $B$. If all characters in $\irr B$ have
 height zero, then all characters in $\irr{B_1}$ have height zero.
\end{thm}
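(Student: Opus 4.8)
The plan is to use that $b_0$, being a $G$-invariant block of a central product of $G$-conjugates of $S$, is itself a central product of blocks each $G$-conjugate to $b$, hence with cyclic defect group; for such blocks the inductive Alperin--McKay condition is available by Koshitani--Sp\"ath \cite{KoS}. Assembling these conditions produces an equivariant Alperin--McKay datum for $b_0$ over $N$, which can then be pushed up to $\norm G{D_1}$ by the Clifford theory of blocks in a way that matches the heights in $B$ with those in $B_1$. To set up the local picture, write $S=S_1$ and let $S_1,\dots,S_k$ be the $G$-conjugates of $S$, so that $N=S_1*\dots*S_k$ with each $S_i\nor N$ and $G$ acting transitively on $\{S_1,\dots,S_k\}$. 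Since $b_0$ is $G$-invariant, $b_0=c_1*\dots*c_k$ where $c_i$ is a block of $S_i$, $c_1=b$, and the $c_i$ form a single $G$-orbit. By Kn\"orr's theorem \cite[Thm~9.26]{N}, $D_1=D\cap N$ is a defect group of $b_0$ and $D_0=D\cap S_1$ is a defect group of $c_1$; choosing defect groups of the $c_i$ compatibly we may take $D_1=Q_1*\dots*Q_k$ with $Q_i$ a defect group of $c_i$ and $Q_1=D_0$. Since $Q_1=D_0$ is cyclic, every $Q_i$ is cyclic and $D_1$ is abelian; and via the central product structure $b_1$, the Brauer first main correspondent of $b_0$, is the central product $\hat c_1*\dots*\hat c_k$ of the first main correspondents $\hat c_i$ of the $c_i$ in $\norm{S_i}{Q_i}$, each with cyclic defect group $Q_i$.

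Since $b_0$ and $b_1$ both have the abelian defect group $D_1$, the already-established implication of Brauer's Height Zero Conjecture \cite{KM13} (abelian defect implies height zero) shows that every character in $\Irr(b_0)$ and every character in $\Irr(b_1)$ has height zero. On the other hand, each $S_i$ is quasi-simple and $c_i$ has cyclic defect group, so, $p$ being odd, the inductive Alperin--McKay condition holds for $c_i$ by \cite{KoS} (applied to the universal covering group of $S_i/\bZ(S_i)$ and then descended to $S_i$). This provides an $\Aut(S_i)_{c_i}$-equivariant bijection $\Irr(c_i)\to\Irr(\hat c_i)$ --- defined on the full character sets, by the previous sentence --- under which a character and its image generate block-isomorphic character triples, in the sense needed for the reduction of the Alperin--McKay conjecture. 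Because the $S_i$ are $G$-conjugate and the bijections are equivariant, they assemble, via the formalism of central products of character triples used in \cite{NS}, into a $G$-equivariant bijection $\Omega\colon\Irr(b_0)\to\Irr(b_1)$ such that for every $\theta\in\Irr(b_0)$ the triples $(G_\theta,N,\theta)$ and $((\norm G{D_1})_{\Omega(\theta)},\norm N{D_1},\Omega(\theta))$ are block-isomorphic. (Here $G=N\norm G{D_1}$ by the Frattini argument, since $b_0$ is $G$-invariant with defect group $D_1$, and $b_1$ is $\norm G{D_1}$-invariant.)

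Now I would carry out the ``going up'' step of the Clifford theory of blocks using $\Omega$. For $\theta\in\Irr(b_0)$, every $\chi\in\Irr(B)$ lying over $\theta$ is induced from an irreducible character of $G_\theta$ lying over $\theta$; transporting that character through the block isomorphism of the triples above, and inducing it up to $\norm G{D_1}$, produces a character lying in the block of $\norm G{D_1}$ over $b_1$ that corresponds to $B$ under the Harris--Kn\"orr correspondence, that is, in $B_1$. Since $\Omega$ is defined on all of $\Irr(b_0)$, this construction gives a bijection $\Irr(B)\to\Irr(B_1)$, with inverse built the same way from $\Omega^{-1}$; and since block isomorphism of character triples respects heights compatibly with induction, this bijection preserves heights. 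As every character in $\Irr(B)$ has height zero by hypothesis, every character in $\Irr(B_1)$ has height zero, as required.

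The step I expect to be the main obstacle is the assembly in the second paragraph: converting the inductive Alperin--McKay conditions for the individual components $c_i$ into a single $G$-equivariant, block-isomorphism-compatible Alperin--McKay bijection for $b_0$ over $N$, while correctly tracking both the permutation action of $G$ on the factors $S_i$ and its action by outer automorphisms on each $S_i$. This is exactly the kind of bookkeeping performed in the reduction of the Alperin--McKay conjecture; once it is available, the height statement in the second paragraph needs only the abelian-defect case of the Height Zero Conjecture, and the going-up step in the third paragraph is routine.
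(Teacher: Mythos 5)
Your proposal is correct and follows essentially the same route as the paper: the inductive Alperin--McKay condition for cyclic blocks from Koshitani--Sp\"ath, assembled over the central product into a $\norm G{D_1}$-equivariant bijection with block-isomorphic character triples (the paper does this via \cite[Thm~6.1, Prop.~6.2]{NS}), followed by the going-up step giving a height-preserving bijection $\Irr(B)\to\Irr(B_1)$ (the paper's \cite[Cor.~3.10]{NS}). The assembly step you flag as the main obstacle is exactly what \cite[Prop.~6.2]{NS} supplies, so the argument closes as you describe.
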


\begin{proof} 
By \cite[Thms~1.1 and 7.6]{KoS}, the block $b$ of $S$ satisfies the inductive
Alperin--McKay condition. (In fact, the ``intermediate subgroup" in
\cite[Def.~7.2]{KoS}, is the normaliser of the corresponding defect group.) 
We apply Theorem~6.1 and Proposition~6.2 of \cite{NS}, noticing that we do not
need to assume that the simple group $S/\zent S$ satisfies the inductive
Alperin--McKay  condition, because we know that blocks of $S$ involved in our
statement have cyclic defect groups (and therefore satisfy the inductive
Alperin--McKay condition). Notice that $D_1$ is a $p$-radical subgroup of $N$
since it is the defect group of $b_1$.
If $f$ is any block of $N$ with defect group $D_1$, using that $N$ is the
central product of $G$-conjugates of~$S$, then $f$ covers a unique block of $S$
with defect group $D_1 \cap S=D_0$, which is cyclic.
Hence, by \cite[Prop.~6.2]{NS}, there is an $\norm G{D_1}$-equivariant bijection
$$\Omega: \irr{N|D_1} \rightarrow \irr{\norm N{D_1}|D_1}$$ 
such that
$$(G_\theta, N, \theta) \sim_b (\norm G{D_1}_{\theta'},\norm N{D_1}, \theta')$$
for every $\theta \in \irr{b_0}$, where $\theta'=\Omega(\theta)$. 
(Here $\irr{N|D_1}$ is the set of characters of $N$ belonging to blocks with
defect group $D_1$. The definition of block isomorphism of character triples
denoted above with $\sim_b$ is given in \cite[Def.~3.6]{NS}.) Now, we apply
\cite[Cor.~3.10]{NS} to construct a height preserving bijection
$\irr{B|\theta}\rightarrow \irr{B_1|\theta'}$. Since
$\Omega(\irr{b_0})=\irr{b_1}$ by \cite[Thm~6.1(b)]{NS}, we easily conclude that
all irreducible characters in $B_1$ have height zero.
\end{proof}

The following gives a shorter proof of a generalisation of the main result
of~\cite{K}.  

\begin{thm}   \label{kul}
 Suppose that $N,M\nor G$ with $G=NM$. Let $B$ be a block of $G$ which
 covers a unique block $b_{N \cap M}$ of $N\cap M$. Then there is a defect
 group $D$ of $B$ such that $D=(D\cap N)(D \cap M)$.
\end{thm}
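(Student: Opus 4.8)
The plan is to use Lemma~\ref{bD} to produce a defect group of $B$ that lies inside the stabiliser of a suitable "root" block of $N\cap M$, to split that stabiliser along $N$ and $M$, and then to reduce, by induction on $|G|$, to the case where the block of $N\cap M$ covered by $B$ has defect zero.

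Write $L:=N\cap M\nor G$. Since $B$ covers a unique block $b$ of $L$, that block is $G$-invariant; fix a defect group $Q$ of $b$ and a block $b_Q$ of $Q\cent LQ$ with defect group $Q$ and $(b_Q)^L=b$. Applying Lemma~\ref{bD} to $(G,L,b)$ with this $b_Q$, and putting $T:=\Stab_{\norm GQ}(b_Q)$, one obtains a defect group $D$ of $B$ with $D\cap L=Q$, $D\le T$, and $\norm GQ=\norm LQ\,T$. The first main step is the group-theoretic identity $\norm GQ=\norm NQ\cdot\norm MQ$: if $g=nm\in\norm GQ$ with $n\in N$, $m\in M$, then $Q^n=Q^{m^{-1}}\le L$, and since $Q$ is a defect group of the $G$-invariant block $b$ of $L$ this conjugate is $L$-conjugate to $Q$, so absorbing the conjugating element of $L$ into $n$ and $m$ writes $g$ as an element of $\norm NQ$ times one of $\norm MQ$. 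Refining this — using that $b_Q$ is $T$-invariant and that the blocks of $Q\cent LQ$ inducing $b$ form a single $\norm LQ$-orbit, so the two factors can be adjusted by elements of $\norm LQ\le\norm NQ\cap\norm MQ$ — yields $T=T_N\,T_M$, where $T_N:=T\cap N=\Stab_{\norm NQ}(b_Q)\nor T$ and $T_M:=T\cap M\nor T$. As $D\le T$, we have $D\cap T_N=D\cap N$ and $D\cap T_M=D\cap M$, so it is enough to prove the conclusion $D=(D\cap T_N)(D\cap T_M)$ for the block $b_T$ of $T$ with defect group $D$ furnished by the proof of Lemma~\ref{bD}; note $b_T$ covers $b_Q$, in fact uniquely among blocks of $Q\cent LQ$ since every irreducible character of $b_T$ lies over $b_Q$.

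Next I would induct on $|G|$, the case $N=G$ (or $M=G$) being trivial. If $Q$ is not normal in $G$, then $T\le\norm GQ$ is a proper subgroup; here one checks that $(T,T_N,T_M,b_T)$ satisfies the hypotheses of the theorem — the point is that $b_Q$ is $T$-invariant, so every block of $T\cap L=T_N\cap T_M$ covered by $b_T$ covers $b_Q$, and a Fong–Reynolds argument pins the covered block down — and one concludes by the inductive hypothesis. If $Q\nor G$ and $Q\ne1$, one passes to $G/Q$: since $Q\le D$, the block $B$ dominates a unique block $\bar B$ of $G/Q$ with defect group $D/Q$; the images of $N$ and $M$ are normal in $G/Q$ with product $G/Q$ and intersection $L/Q$; $\bar B$ covers the unique block of $L/Q$ dominated by $b$, and that block has defect zero; and $|G/Q|<|G|$. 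This leaves the base case $Q=1$.

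Finally, suppose $b$ is a $G$-invariant block of $L$ of defect zero. Then $e_b$ is a central idempotent of the group algebra and $\mathcal{O}Ge_b\cong M_n\big(\mathcal{O}_\gamma[G/L]\big)$ for a $2$-cocycle $\gamma$, this isomorphism carrying blocks to blocks with the same defect groups and carrying $\mathcal{O}Ne_b$ to $M_n\big(\mathcal{O}_{\gamma_N}[N/L]\big)$ with $\gamma_N$ the restriction of $\gamma$, and similarly for $M$. Crucially, $G/L=(N/L)\times(M/L)$ is a \emph{direct} product, because $[N,M]\le N\cap M=L$. So the theorem reduces to the assertion that a block of the twisted group algebra of $(N/L)\times(M/L)$ has a defect group equal to the product of its intersections with the two direct factors. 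Passing to the residue field, $\gamma$ has order prime to $p$, hence restricts trivially to a Sylow $p$-subgroup $\bar P_N\times\bar P_M$, so a defect group lies in $\bar P_N\times\bar P_M$; and the Brauer construction at any $p$-subgroup $R$ of $\bar P_N\times\bar P_M$ again produces a twisted group algebra of a direct product $\cent{N/L}{\pi_N R}\times\cent{M/L}{\pi_M R}$ with a cocycle of order prime to $p$, so that an analysis of a maximal Brauer pair of the block along these lines gives the product decomposition. Making this last step precise — essentially, showing that the "mixed" part of $\gamma$, being of $p'$-order and bimultiplicative, does not disturb the product structure of defect groups — is the step I expect to be the main obstacle; a secondary technical point is the propagation of the "unique covered block" hypothesis through the inductive reduction.
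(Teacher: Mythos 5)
Your route is genuinely different from the paper's, but as written it has two real gaps, and the one you yourself flag as ``the main obstacle'' is the decisive one. After your reductions, what remains to be shown is that a block of the twisted group algebra $\mathcal{O}_\gamma[(N/L)\times(M/L)]$ has a defect group equal to the product of its intersections with the two factors. Lifting $\gamma$ to a central extension $1\to Z\to\widehat G\to (N/L)\times(M/L)\to 1$ with $Z$ cyclic of $p'$-order, this is exactly the statement of the theorem for $\widehat G=\widehat N\widehat M$ with $\widehat N\cap\widehat M=Z$ central of order prime to $p$ --- that is, the central product case, which is Corollary~\ref{central} with $m=2$ and which the paper deduces \emph{from} Theorem~\ref{kul}. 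So your argument reduces the theorem to a special case of itself; it is non-circular only if you import an independent proof of that special case (one exists, e.g.\ \cite[Lemma~7.5]{Sa}, but you do not supply it). Your sketch via the bimultiplicative pairing only shows that a defect group can be taken inside $\bar P_N\times\bar P_M$ and that the cocycle is trivial there; that does not make the defect group rectangular --- the diagonal of $C_p\times C_p$ sits inside a Sylow subgroup of a direct product without being the product of its intersections with the factors --- so a genuine block-theoretic argument is still required at exactly this point.

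The secondary gap is also genuine: the inductive hypothesis for $(T,T_N,T_M,b_T)$ requires $b_T$ to cover a \emph{unique} block of $T_N\cap T_M=T\cap L$, but what you establish is only that $b_T$ covers $b_Q$ uniquely among blocks of the smaller normal subgroup $Q\cent LQ$; several blocks of $T\cap L$ can lie over the $T$-invariant block $b_Q$, and a further Fong--Reynolds descent would force you to re-prove the product decomposition of the new stabiliser. For comparison, the paper's proof avoids both issues entirely: it never descends to $\norm GQ$ or to twisted group algebras, but runs a direct induction on $|G:N|+|G:M|+|G|$, using Fong--Reynolds to make $b_N$ and $b_M$ invariant, passing to $ND$ and to intermediate normal subgroups $N\le Y\nor G$ so as to reduce to $|G:N|=|G:M|=p$, where $G=(N\cap M)D$ and $D/(D\cap N\cap M)\cong C_p\times C_p$ force the conclusion.
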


\begin{proof}
We argue by induction on $|G:N| +|G:M|+ |G|$. We may assume that $N, M<G$.
Let $b_N$ be a block of $N$ covered by $B$ and $b_M$ a block of $M$
covered by $B$, both covering $b_{N\cap M}$.  
Let $T=G_{b_N}$ be the stabiliser of $b_N$, and by the Fong--Reynolds theorem,
let $b_T$ be the block of $T$ that induces $B$ and covers $b_N$.
If $T<G$, then  by induction there is a defect group $D$ of $b_T$ (and
therefore of $B$) such that 
$$D=(D\cap N)(D\cap T\cap M)\leq (D\cap N)(D\cap M).$$
We thus may assume that $b_N$ and $b_M$ are $G$-invariant.

Now, let us fix $D$ a defect group of $B$.
Let $X=M\cap ND$, so that $X/(N\cap M)$ is a $p$-group. Thus $b_{N\cap M}$ is
covered by a unique block $b_X$ of $X$. Also the unique block $b_{ND}$ that
covers $b_N$ has defect group $D$ by Problem 9.4 of \cite{N}. If $ND<G$, by
induction there is $n\in ND$ such that
$$D^n=(D^n \cap N)(D^n\cap X)=(D \cap N)^n(D \cap X)^n,$$ 
using that $X \nor ND$. Hence 
$$D=(D\cap N)(D \cap X)\leq (D\cap N)(D\cap M).$$ 
So we may assume
$G=ND=MD$; in particular, $G/N$ and $G/M$ are $p$-groups.
\par
Suppose that $N\le Y\nor G$, where $Y<G$, and let $b_Y$ be the unique block
of~$Y$ covering~$b_N$.  Let $Z=M\cap Y$, and let $b_Z$ be the unique block
of $Z$ covering $b_{N\cap M}$. Thus $b_Z$ is $G$-invariant, and covered by $B$.
By induction, there is a defect group $D_1$ of $B$
such that $D_1=(D_1\cap Y)(D_1\cap M)$. Now, since $b_Y$ is $G$-invariant,
we have $D_1\cap Y$ is a defect group of $b_Y$. Again by induction, there is
$y \in Y$ such that 
$$D_1^y \cap Y=(D_1^y\cap N)(D_1^y \cap Z).$$ 
Hence, $D_1\cap Y=(D_1 \cap N)(D_1\cap Z)$.
Thus $D_1=(D_1\cap N)(D_1\cap M)$, and we are done.
\par
So we may assume $|G:N|=p=|G:M|$. Also, $G=(N\cap M)D$, using Fong's
Theorem~9.17
of~\cite{N}. Since $G=(N\cap M)D$, we have that $N=(N\cap M)(N \cap D)$
and $M=(N\cap M)(M \cap D)$. In particular $N\cap D \ne M\cap D$. Since
$D/(D\cap N\cap M)$ is $C_p\times C_p$, we necessarily have
$D=(D\cap N)(D \cap M)$.
\end{proof}
 
\begin{cor}   \label{central}
 Suppose that $G=S_1 *\cdots *S_m$ is a central product of its subgroups $S_i$,
 $1 \leq i \leq m$. If $B$ is a block of $G$ with defect group $D$, then
 $D=(D\cap S_1) \cdots (D \cap S_m)$.
\end{cor}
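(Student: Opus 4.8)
The plan is to deduce Corollary~\ref{central} from Theorem~\ref{kul} by a straightforward induction on $m$. Note first that the case $m=1$ is trivial, since then $D = D \cap S_1$. For $m=2$ we would like to apply Theorem~\ref{kul} with $N = S_1$ and $M = S_2$; here $G = S_1 * S_2 = NM$, both $N$ and $M$ are normal in $G$ (as factors of a central product), and $N \cap M$ is central in $G$. Since $N \cap M \le \zent G$, every block of $G$ covers a unique block of $N \cap M$ (a central subgroup has a unique block covered by any given block of the overgroup, as the central characters lying under a fixed block are Galois-conjugate, or more simply because $\Irr(N \cap M)$ consists of linear characters and the covered ones form a single orbit with trivial action). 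Hence the hypothesis of Theorem~\ref{kul} is satisfied, and we obtain a defect group $D$ of $B$ with $D = (D \cap S_1)(D \cap S_2)$.

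For the inductive step, suppose $m \ge 3$ and write $G = N * S_m$ where $N := S_1 * \cdots * S_{m-1}$. Again $N \nor G$, $S_m \nor G$, $G = N S_m$, and $N \cap S_m \le \zent G$, so $B$ covers a unique block of $N \cap S_m$; Theorem~\ref{kul} gives a defect group $D$ of $B$ with $D = (D \cap N)(D \cap S_m)$. Now $D \cap N$ is a defect group of the (unique, since $N \cap S_m$ is central so $B$ covers a unique block of $N$ as well — or one picks the block of $N$ covered by $B$ that $D \cap N$ is a defect group of via Knörr's theorem) block $b_N$ of $N$ that it covers. Since $N = S_1 * \cdots * S_{m-1}$ is a central product of fewer than $m$ factors, the inductive hypothesis applied to $b_N$ yields that some defect group of $b_N$ decomposes as a product of its intersections with the $S_i$; after conjugating within $N$ (which we may absorb into the choice of $D$, since $D \cap N$ ranges over all defect groups of $b_N$ as $D$ ranges over $N$-conjugates, and conjugating $D$ by an element of $N \le G$ does not disturb $D \cap S_m = \zent{} \cap D$ up to the central part) we may assume $D \cap N = (D \cap S_1) \cdots (D \cap S_{m-1})$. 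Combining, $D = (D \cap S_1) \cdots (D \cap S_{m-1})(D \cap S_m)$, as desired.

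The only genuinely delicate point is the bookkeeping in the inductive step: one must make sure the conjugation needed to bring $D \cap N$ into standard form can be performed simultaneously with maintaining $D = (D\cap N)(D\cap S_m)$. This is handled by noting that replacing $D$ with $D^n$ for $n \in N$ replaces $D \cap N$ by $(D\cap N)^n$ and $D \cap S_m$ by $(D \cap S_m)^n$; but $D \cap S_m \le \zent G \cdot (\text{stuff})$ — more precisely, since $N$ centralises $S_m$, conjugation by $n \in N$ fixes $D \cap S_m$ pointwise, so $D^n \cap S_m = D \cap S_m$ and $D^n = (D \cap N)^n (D \cap S_m)$ still holds. Choosing $n$ so that $(D \cap N)^n = (D^n \cap S_1)\cdots(D^n\cap S_{m-1})$ (using that $S_i \nor G$ so $D^n \cap S_i = (D \cap S_i)^n$) completes the argument. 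I would present this as a short inductive proof, invoking Theorem~\ref{kul} at each stage and being explicit about why the central intersection guarantees uniqueness of the covered block.

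\begin{proof}
We argue by induction on $m$, the case $m = 1$ being trivial. Suppose $m \ge 2$ and set $N := S_1 * \cdots * S_{m-1}$ and $M := S_m$, so that $G = NM$ with $N, M \nor G$ and $N \cap M \le \zent G$. Since $N \cap M$ is central, $B$ covers a unique block of $N \cap M$, so Theorem~\ref{kul} applies and provides a defect group $D$ of $B$ with $D = (D \cap N)(D \cap M)$. By Knörr's theorem \cite[Thm~9.26]{N}, $D \cap N$ is a defect group of some block $b_N$ of $N$ covered by $B$. Applying the inductive hypothesis to $b_N$ and the central product $N = S_1 * \cdots * S_{m-1}$, there is $n \in N$ with
$$(D \cap N)^n = \bigl((D \cap N)^n \cap S_1\bigr) \cdots \bigl((D \cap N)^n \cap S_{m-1}\bigr) = (D^n \cap S_1) \cdots (D^n \cap S_{m-1}),$$
using $S_i \nor G$ for the last equality. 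Since $N$ centralises $M = S_m$, conjugation by $n$ fixes $D \cap M$, so $D^n \cap M = D \cap M = D^n \cap S_m$ and
$$D^n = (D \cap N)^n (D \cap M) = (D^n \cap S_1) \cdots (D^n \cap S_{m-1})(D^n \cap S_m).$$
As $D^n$ is again a defect group of $B$, this completes the induction.
\end{proof}
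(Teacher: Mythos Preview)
Your proof is correct and follows exactly the approach the paper indicates: induction on~$m$ together with Theorem~\ref{kul}. The paper's own proof is a single line (``Use induction on $m$ and Theorem~\ref{kul}''), and you have filled in precisely the details needed, including the conjugation bookkeeping in the inductive step and the observation that $N$ centralises $S_m$ so the $S_m$-part is unaffected.
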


\begin{proof}
Use induction on $m$ and Theorem \ref{kul}. (See also Lemma 7.5 of \cite{Sa}.)
\end{proof}

The following elementary result will be used in the final step of our
proof of Theorem~A.

\begin{lem}   \label{elem}
 Suppose that $N\nor G$ are finite groups with $G/N$ a $p$-group.
 Let $Q \nor G$ be such that $Q\cap N=1$. Let $b$ be a $G$-invariant block
 of $N$ and let $B$ be the block of $G$ that covers $b$.
 Let $\bar b$ be the unique block of $NQ/Q$ which corresponds to $b$
 under the natural isomorphism, and let $\bar B$ be the unique block
 of $G/Q$ that covers $\bar b$. Then $D/Q$ is a defect group of $\bar B$.
\end{lem}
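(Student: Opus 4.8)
The plan is to exploit the fact that $Q$ is a normal $p$-subgroup of $G$ (it is a $p$-group because $G/N$ is a $p$-group and $Q \cap N = 1$, so $Q \cong QN/N$ embeds in $G/N$), and that normal $p$-subgroups are contained in every defect group. First I would record the standard facts: since $Q \nor G$ is a $p$-subgroup, $Q \le D$ for the chosen defect group $D$ of $B$, and under the canonical surjection $\pi\colon G \to G/Q$ the block $B$ maps to a single block, which is the unique block of $G/Q$ dominating $B$; call it $\bar B$. By general block theory (e.g.\ \cite[Thm~9.9(b) and Ch.~9]{N}), the defect groups of $\bar B$ are exactly the images $DQ/Q = D/Q$ of the defect groups of $B$ (one must check $DQ = D$, which holds since $Q \le D$). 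So the content of the lemma is really the identification of $\bar B$ as \emph{the unique block of $G/Q$ covering $\bar b$}, where $\bar b$ is the block of $NQ/Q$ corresponding to $b$; this is a matter of matching up two descriptions of the same block.

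Next I would set up the normal subgroup $NQ \nor G$ and the isomorphism $\alpha\colon N \xrightarrow{\sim} NQ/Q$ (restriction of $\pi$, which is injective on $N$ because $Q \cap N = 1$). This carries the block $b$ of $N$ to the block $\bar b := \alpha(b)$ of $NQ/Q$, and it carries $G$-invariance of $b$ to $G/Q$-invariance of $\bar b$ (the conjugation action is compatible with $\pi$). Since $B$ covers $b$ and $b$ is $G$-invariant, $B$ is the \emph{unique} block of $G$ covering $b$ by \cite[Cor.~9.6]{N}; similarly, since $\bar b$ is $G/Q$-invariant, there is a unique block of $G/Q$ covering $\bar b$, which by definition is $\bar B$. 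So it suffices to show that the block $\pi(B)$ of $G/Q$ (the dominated block) covers $\bar b$. For this I would pick $\chi \in \irr B$ lying over $b$ — such a $\chi$ exists since $B$ covers $b$ — and note that because $Q$ is a $p$-group contained in a defect group of $B$, in fact $\chi$ can be chosen with $Q$ in its kernel (alternatively: every block of $G/Q = G/\mathbf{O}_p(\text{relative to }B)$-type argument). Cleanly, one uses that $B$, having $Q$ in the kernel of a defect-controlling situation, satisfies: the irreducible characters of $\bar B = \pi(B)$ are precisely those $\chi \in \irr B$ with $Q \le \ker\chi$, and there is at least one such lying over $b$ (since $b$ has defect group $D_b$ with $Q \not\le N \supseteq D_b$ forcing $Q \cap D_b = 1$... here instead use that $Q$ acts trivially, $[Q,N]\le Q\cap N=1$, so $Q$ centralises $N$ and every $\theta \in \irr b$ inflates; picking $\chi$ over such $\theta$ and noting $Q \le \mathbf{Z}(G)\cdot(\ldots)$). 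The upshot is that restricting such a $\chi$ to $NQ/Q \cong N$ recovers a multiple of $\theta$, so $\pi(B) = \bar B$ covers $\bar b$, giving $\bar B = \pi(B)$.

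The main obstacle is the bookkeeping in the previous paragraph: making rigorous that the dominated block $\pi(B)$ is the one covering $\bar b$, i.e.\ that the correspondence $B \mapsto \pi(B)$ is compatible with the covering relation under the isomorphism $N \cong NQ/Q$. Once that identification $\bar B = \pi(B)$ is in hand, the conclusion $D/Q \in \operatorname{Syl}$-style statement — that $D/Q$ is a defect group of $\bar B$ — is immediate from the standard theory of blocks of quotients by normal $p$-subgroups \cite[Ch.~9]{N}: domination by a normal $p$-subgroup $Q$ sends a block with defect group $D$ (necessarily containing $Q$) to a block with defect group $D/Q$. I expect the whole argument to be short, with the only real care needed being the verification that the two a priori different blocks of $G/Q$ — ``the dominated block $\pi(B)$'' and ``the unique block $\bar B$ covering $\bar b$'' — coincide, which follows from uniqueness of the covered/covering block in the $G$-invariant situation plus the fact that covering is preserved under the natural maps.
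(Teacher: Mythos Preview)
Your plan is correct and essentially the same as the paper's: both show that $\bar B$ is the block of $G/Q$ dominated by $B$ via a character argument exploiting $NQ=N\times Q$ (since $[N,Q]\le N\cap Q=1$), and then deduce the defect group from \cite[Thm~9.9]{N}. The only differences are cosmetic: the paper runs the character argument in the cleaner direction---it starts from $\psi\in\irr{\bar B}$, inflates to $G$, and checks $\psi\in\irr B$, so that $Q\le\ker\psi$ is automatic and your fuzzy step is avoided---and for the defect-group conclusion it uses 9.9(a) together with a short order computation (via \cite[Thms~9.17 and~9.26]{N}) rather than your direct appeal to 9.9(b).
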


\begin{proof}
We know that the block $\bar B$ is contained in a unique block $\tilde B$ of
$G$. (See the remark before Theorem 7.6 of \cite{N}.)
Let $E/Q$ be a defect group of $\bar B$. Let $\tau \in \irr b$, and consider
$$\gamma:=\tau \times 1_Q \in \irr{N\times Q}.$$
The block of $N\times Q$ that contains $\gamma$ is only covered by $B$ by 
\cite[Cor.~9.6]{N}. Then~$\gamma$, considered as a character of~$NQ/Q$, lies
in~$\bar b$. Let $\psi \in \irr{\bar B}$ over $\gamma$. Then
$\psi$, considered as a character of $G$, lies over $\tau$, and therefore
$\psi\in\irr B$. It follows that $\irr B\cap \irr{\bar B}\ne\emptyset$, and
hence $\bar B$ is contained in $B$.
(See the remark before Theorem 9.9 of \cite{N}.) Consequently, $\tilde B=B$.
By \cite[Thm~9.9(a)]{N}, we have $E/Q\leq D/Q$. Notice that $\bar b$ is
$G$-invariant by uniqueness. Therefore $(E/Q)(NQ/Q)=G/Q$ by \cite[Thm~9.17]{N}.
By the same reason, $DN=G$. Also, $(E/Q) \cap (NQ/Q)$ is a defect group of
$\bar b$, and $D\cap N$ is a defect group of $b$
(by \cite[Thm~9.26]{N}). In particular, $|E\cap NQ|=|Q||D\cap N|$.
Then $|E/Q|=|G:NQ||D\cap N|=|D/Q|$, and the proof is complete.
\end{proof}

\section{Orbits of characters in a block}   \label{sec:ThmB}

In this section, we prove Theorem B, which we now restate.  
 
 \begin{thm}\label{newmandi}
 Suppose that $p$ is an odd prime, $S$ is a quasi-simple group and $b$ is a
 $p$-block of~$S$ with non-cyclic defect groups. Then at least one of the
 following statements holds:
 \begin{enumerate}[\rm(1)]
  \item There exist characters $\alpha,\beta,\gamma\in\Irr(b)$
   that belong to three different $\Aut(S)$-orbits; or
  \item all characters in $\Irr(b)$ have the same degree.
 \end{enumerate}
\end{thm}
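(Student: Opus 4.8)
The plan is to use the classification of finite simple groups and treat $\bar S := S/\bZ(S)$ in turn when it is alternating, sporadic, or of Lie type, the last split according to whether $p$ is the defining characteristic. The guiding reduction is elementary: characters in one $\Aut(S)$-orbit have equal degree, so the number of $\Aut(S)$-orbits on $\Irr(b)$ is at least the number of distinct character degrees occurring in $b$. Hence (1) holds as soon as $b$ affords at least three distinct degrees, and (2) is exactly the case of a single degree; the whole problem therefore concentrates on blocks with non-cyclic defect group affording exactly two distinct character degrees, where one must exhibit three orbits by hand.

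For $\bar S$ sporadic (including the Tits group) this is a finite verification from the known block distributions and character tables: since $|\Out(\bar S)| \le 2$, an $\Aut(S)$-orbit has size at most two, so three distinct degrees --- or, in the few small blocks with one or two degrees, three fused pairs or singletons --- is readily checked. For $\bar S = \fA_n$ I would use the Nakayama combinatorics: a $p$-block of $\fS_n$ has non-cyclic defect exactly when its $p$-weight $w$ is at least $2$, and then its irreducible characters are indexed by the partitions with a fixed $p$-core and that weight, of which there are at least $2p + \binom{p}{2} \ge 9$, carrying several distinct degrees; restricting to $\fA_n$ at worst fuses the conjugate pairs $\{\chi_\lambda,\chi_{\lambda'}\}$, so at least three $\Aut(\fA_n)$-orbits remain. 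The same argument in bar-partition combinatorics handles the faithful (spin) blocks of the double covers, and the finitely many exceptional covers $3.\fA_6$, $6.\fA_6$, $3.\fA_7$, $6.\fA_7$ together with small $n$ (notably $n=6$, where $|\Out|=4$) are checked directly.

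For $S$ of Lie type in defining characteristic $p$ (so $\bZ(S)$ is a $p'$-group), every non-principal $p$-block has defect zero, so a block with non-cyclic defect is the principal block and its irreducible characters are all of $\Irr(S)$ apart from the $p$-defect-zero ones; whenever a Sylow $p$-subgroup is non-cyclic there are visibly at least three distinct degrees among them --- the rank-one cases $\PSL_2(p^f)$ with $f \ge 2$ and $\tw2 G_2(3^{2m+1})$ being inspected explicitly --- so (1) holds. The substantive case is $S$ of Lie type in non-defining characteristic. Here I would apply Jordan decomposition of blocks (Bonnafé--Rouquier and Bonnafé--Dat--Rouquier, together with the results of Enguehard, Cabanes--Enguehard and Kessar--Malle covering bad primes and quasi-isolated blocks) to obtain, after passing to a suitable regular embedding and its dual, a basic Morita equivalence between $b$ and a unipotent block $b'$ of $\cent{\bG^*}{s}^F$ for a semisimple $p'$-element $s$; since this equivalence preserves defect groups and multiplies character degrees by the fixed $p'$-number $[\bG^{*F}:\cent{\bG^*}{s}^F]_{p'}$, the block $b'$ again has non-cyclic defect and the same number of distinct degrees as $b$. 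It then remains to show that a unipotent block (of a possibly disconnected reductive group) with non-cyclic defect affords at least three distinct degrees unless it belongs to an explicit short list, and for each block on that list to transport the $\Aut(S)$-orbit structure back through Jordan decomposition and to verify, using Proposition~\ref{str} to bound the $p$-part of $\Out(\bar S)$ and hence the possible orbit sizes, that at least three orbits occur.

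The main obstacle is precisely this enumeration of unipotent blocks with few distinct degrees. For types $\tA$ and $\tw2\tA$ the relevant block is governed by $e$-core/$e$-quotient combinatorics parallel to the $\fS_n$ case, and non-cyclic defect (weight $\ge 2$) again forces at least three degrees except in a handful of small configurations; for the remaining types, and for bad primes, one must invoke $e$-Harish-Chandra theory and the generic degree formulae of Broué--Malle--Michel to list the unipotent characters in a block and compare their degrees, which needs genuine care in small rank and when the defect group is non-abelian. I expect the exceptional list to consist of a few blocks in groups of small rank --- certain blocks of $\Sp_4$, $\SL_3$, $\SU_3$, $G_2$ and the rank-one twisted groups --- each of which is then dispatched by an explicit examination of the characters and of the action of the diagonal and field automorphisms.
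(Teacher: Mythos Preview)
Your overall plan --- CFSG case split, alternating and sporadic groups by direct combinatorics, defining characteristic via Humphreys, non-defining characteristic via Bonnaf\'e--Rouquier --- is the same skeleton as the paper's, and the alternating and sporadic cases go through essentially as you describe. There is a small slip in the defining-characteristic case: Humphreys' theorem says every $p$-block has full defect or defect zero, but when $\bZ(G)\ne1$ there are several full-defect blocks, one per central character, not only the principal block. The paper fixes this by choosing semisimple elements $s_1,s_2,s_3$ with the correct central character and pairwise distinct orders (using Zsigmondy primes), giving three non-conjugate Lusztig series inside the given full-defect block.

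The real gap is in your non-defining-characteristic reduction. You want a Morita equivalence between $b$ and a \emph{unipotent} block of $\cent{\bG^*}{s}^F$, but Bonnaf\'e--Rouquier (and Bonnaf\'e--Dat--Rouquier) only produce a Morita equivalence with a \emph{quasi-isolated} block of a proper Levi $\bH^F$ when $\cent{\bG^*}{s}$ lies in a proper Levi; for genuinely quasi-isolated $s$ no Morita reduction to a unipotent block is available in the literature. The references you cite (Enguehard, Cabanes--Enguehard, Kessar--Malle) give the $e$-Harish-Chandra parametrisation of quasi-isolated blocks, not such an equivalence. Moreover, even where Bonnaf\'e--Rouquier applies, you still need to transport the $\Aut(S)$-orbit structure back across it, and your degree-only invariant is too coarse for this in precisely the two-degree case you single out.

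The paper sidesteps both issues by not reducing to unipotent blocks at all. It reduces (Lemma~\ref{lem:redtoisol}) to quasi-isolated blocks of Levi subgroups, then to the simply connected simple factors of their derived groups (Proposition~\ref{prop:hyponBi}), and attacks those directly with a Lusztig-series argument: Corollary~\ref{cor:ge2} shows that any block of positive defect automatically contains a character in $\cE(G,s)$ and one in some $\cE(G,st)$ with $t\ne1$ a $p$-element, and these lie in distinct $\Aut(G)$-orbits since $s$ is $p'$. This gives two orbits for free, so only one more is needed. That third orbit comes either from Lemma~\ref{lem:unipsdistinctrpart} (two unipotent characters in the block with different $r$-parts, hence non-conjugate), or from exhibiting a second non-conjugate $p$-element $t'$, or, in exceptional type, by reading off the tables in \cite{KM13} and \cite{Ho22}. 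The essential point is that Lusztig series, not degrees, are the invariant used to separate $\Aut(S)$-orbits.
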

  
We remark that in the proof of Theorem A in Section~\ref{sec:finalpf},
Theorem~\ref{newmandi} is only needed for blocks with abelian defect groups.
However, the more general statement may be of independent interest, and its
proof is not substantially different. We also remark that the assumption of
non-cyclic defect groups is needed.  For example, for certain values of $q$
with $p\mid\mid(q-1)$, there is a cyclic quasi-isolated block of $\SL_2(q)$
with characters from only two $\Aut(\SL_2(q))$-orbits.
  
Throughout, for $A$ a group acting on a group $G$ as automorphisms and a block
$b$ of~$G$, we write $k_A(b)$ for the number of distinct $A$-orbits of
characters of $G$ whose intersection with $\Irr(b)$ is non-empty. In the
situation of Theorem~\ref{newmandi}, our aim will be to show
$$k_{\Aut(S)}(b)\geq 3.$$ 
We also use $\cd(b)$ to denote the set of distinct
character degrees in $\Irr(b)$.

\subsection{Initial considerations}\label{sec:othergroups}

We begin by considering cases that can be completed more computationally.

\begin{prop}   \label{prop:sporadic}
 Let $p\geq 3$ be a prime, $S$ a quasi-simple group such that  
 $S/\zent{S}$ is one of the sporadic simple groups, the Tits group
 $\tw{2}\tF_4(2)'$, $\type{G}_2(2)'$, $\tw{2}\type{G}_2(3)'=\PSL_2(8)$,
 or a simple group of Lie type with exceptional Schur multiplier. Let $b$ be a
 $p$-block for $S$ with non-cyclic defect and $|\cd(b)|>1$. Then
 $k_{\Aut(S)}(b)\geq 3$.  In particular, Theorem~{\rm\ref{newmandi}} holds for
 these groups.
\end{prop}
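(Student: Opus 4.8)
The plan is to reduce the bound $k_{\Aut(S)}(b)\ge 3$ to a finite verification and then carry it out using the character table library of \textsf{GAP}. Only finitely many groups $S$ occur here: the covering groups of the $26$ sporadic simple groups, of the Tits group $\tw2\tF_4(2)'$, of $\type{G}_2(2)'\cong\PSU_3(3)$, of $\PSL_2(8)$, and of the short explicit list of simple groups of Lie type with exceptional Schur multiplier (among them $\PSL_3(4)$, $\PSU_6(2)$, $\Omega_8^+(2)$, $G_2(3)$, $G_2(4)$, $F_4(2)$ and $\tw2\tE_6(2)$). For each such $S$ and each odd prime $p$ dividing $|S|$ there are finitely many $p$-blocks, and for a block $b$ with non-cyclic defect groups and $|\cd(b)|>1$ the degree multiset of $\Irr(b)$, together with the action of $\Aut(S)$, is recorded in (or readily computed from) the stored tables.

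The observation that keeps the check short is the following. Since $S$ is perfect, $\Aut(S)$ embeds into $\Aut(\bar S)$ with $\Inn(S)$ mapping onto $\Inn(\bar S)$; hence the action of $\Aut(S)$ on $\Irr(S)$ factors through $\Out(S)\hookrightarrow\Out(\bar S)$, so each $\Aut(S)$-orbit on $\Irr(b)$ has size at most $|\Out(\bar S)|$. Moreover automorphisms preserve character degrees, so the orbit partition of $\Irr(b)$ refines the partition by degree; in particular
$$k_{\Aut(S)}(b)\ \ge\ |\cd(b)| .$$
Thus we are reduced to blocks with $|\cd(b)|=2$, say $\Irr(b)=\Irr_{d_1}(b)\sqcup\Irr_{d_2}(b)$ with $d_1\ne d_2$, where $\Irr_{d}(b)$ denotes the set of degree-$d$ members of $\Irr(b)$. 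For such a block it suffices to exhibit a degree $d_i$ with $|\Irr_{d_i}(b)|>|\Out(\bar S)|$, since then $\Irr_{d_i}(b)$ alone meets at least two $\Aut(S)$-orbits and, together with $\Irr_{d_{3-i}}(b)$, we obtain at least three. So only blocks with $|\cd(b)|=2$ and $k(b)\le 2\,|\Out(\bar S)|$ need individual treatment.

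For all sporadic simple groups, for the Tits group and for $\PSU_3(3)$ one has $|\Out(\bar S)|\le 2$, so a surviving block would satisfy $k(b)\le 4$; but for $p$ odd a block with at most four irreducible characters has cyclic defect group (equivalently, all blocks here satisfy $k(b)\ge 5$, as the relevant tables show), contradicting our hypothesis. For $\PSL_2(8)$ one has $|\Out(\bar S)|=3$, but every $p$-block of $\PSL_2(8)$ at an odd prime has cyclic defect group, so nothing remains. Finally, for the groups of Lie type with exceptional Schur multiplier $|\Out(\bar S)|$ is still small (at most $12$, for $\PSL_3(4)$), and for each of them an odd prime $p$ with non-cyclic Sylow subgroup exists only for a short list of primes; for each of these finitely many groups, primes, and blocks, one lists $\Irr(b)$ with its degrees and its $\Aut(S)$-orbits directly, finding at least three orbits in every case.

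I expect the genuine effort to lie not with the sporadic groups --- where $|\Out(\bar S)|\le 2$ already forces the conclusion, and several of the largest (the Monster, the Baby Monster) in fact have $\Out(\bar S)=1$, whence $k_{\Aut(S)}(b)=k(b)$ --- but with the few Lie-type groups having $|\Out(\bar S)|\ge 3$, essentially $\PSL_3(4)$, $\Omega_8^+(2)$ and $\tw2\tE_6(2)$ together with their covering groups, where the crude bound above need not close the gap and $\Irr(b)$ with its orbit structure must be produced explicitly. The single point requiring care is the covering groups $\wt S$ with $\bZ(\wt S)\ne 1$: there an outer automorphism may fuse faithful characters differing only by a central twist, so a count based on degrees alone could overstate the number of orbits, and the orbit decomposition must be read off the stored action of $\Aut(S)$. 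All the needed character tables and automorphism data are available in \textsf{GAP}, so the argument uses no new theory.
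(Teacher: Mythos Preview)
Your approach is essentially the paper's own: a finite verification using the \textsf{GAP} character table library. The paper states this in one line; you add helpful reductions (the inequality $k_{\Aut(S)}(b)\ge|\cd(b)|$ and the pigeonhole via $|\Out(\bar S)|$) that cut down the amount of orbit information actually needed.

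One caution: your assertion that ``for $p$ odd a block with at most four irreducible characters has cyclic defect group'' is not, to my knowledge, an unconditional theorem in the generality stated --- the classification of blocks with $k(B)=4$ is delicate and not fully settled without extra hypotheses. Your parenthetical fallback (checking directly from the tables that every non-cyclic odd-$p$ block of the listed groups has $k(b)\ge 5$) is the safe route, and since you are already inside \textsf{GAP} it costs nothing. With that adjustment the argument is sound and agrees with the paper's.
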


\begin{proof}
This can be seen using the GAP Character Table Library \cite{GAP}. We note that
the groups with exceptional Schur multipliers are listed in
\cite[Tab.~6.1.3]{GLS3}. 
\end{proof}

\begin{prop}   \label{prop:alts1}
 Let $p\geq 3$ a prime and let $S$ be quasi-simple such that
 $S/\zent{S}=\fA_n$, with $5\leq n\leq 8$. Then $|\cd(b)|\geq 3$ for every
 non-cyclic $p$-block $b$ of $S$.  In particular, Theorem~{\rm\ref{newmandi}}
 holds for these groups.
\end{prop}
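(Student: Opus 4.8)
The plan is to reduce the assertion to a short finite check which is then settled by inspecting character tables (for instance via the GAP Character Table Library \cite{GAP}), in the spirit of Proposition~\ref{prop:sporadic}.

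First I would dispose of the primes $p\ge5$. Every group $S$ in question has order dividing $|6.\fA_7|=2^4\cdot3^3\cdot5\cdot7$, so the only primes to consider are $p\in\{3,5,7\}$; for $p\in\{5,7\}$ we have $p^2\nmid|S|$, whence every $p$-block of $S$ has defect group of order at most $p$, in particular cyclic, and there is nothing to prove. Since $3\,\|\,|2.\fA_5|$, the cases $S/\zent S=\fA_5$ are vacuous as well. This leaves $p=3$ together with the ten groups $\fA_n$ and $2.\fA_n$ for $n\in\{6,7,8\}$ and the four exceptional covers $3.\fA_6$, $6.\fA_6$, $3.\fA_7$, $6.\fA_7$.

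Next, for each of these groups I would locate the $3$-blocks with non-cyclic defect group. When $S=\fA_n$ or $2.\fA_n$ with $n\le8$, a Sylow $3$-subgroup of $S$ is isomorphic to $C_3\times C_3$, so such a block has non-cyclic defect precisely when it has maximal defect: for $S=\fA_n$ this is just the principal block (by the Nakayama Conjecture and Clifford theory between $\fS_n$ and $\fA_n$, the relevant $3$-cores of size $n-6$ being $\emptyset$, $(1)$, and the pair $(2),(1^2)$ for $n=6,7,8$ respectively), and for $S=2.\fA_n$ one additionally has to consider the maximal-defect spin blocks. When $S$ is one of the four exceptional covers, a Sylow $3$-subgroup is non-abelian of order $27$ and the defect group of a non-cyclic $3$-block must contain the $3$-group $\zent S$, so the blocks to be examined are the faithful $3$-blocks of non-cyclic defect. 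In each of these finitely many cases the degrees of the characters in the block are read off the character table and one observes that at least three distinct degrees occur, which proves $|\cd(b)|\ge3$; since characters of pairwise distinct degrees lie in pairwise distinct $\Aut(S)$-orbits, this also yields $k_{\Aut(S)}(b)\ge3$, so Theorem~\ref{newmandi} holds for these groups.

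I expect the only genuine (and modest) work to be the bookkeeping for the exceptional covers and for the spin blocks of $2.\fA_n$, where one must track carefully which $3$-blocks are faithful and the orders of their defect groups. This causes no real difficulty, since the block distributions and character degrees of all the groups involved are recorded in \cite{GAP}. Alternatively, for $\fA_n$ and $2.\fA_n$ one can avoid tables entirely and read the degrees occurring in the weight-$2$ blocks directly off the hook-length combinatorics of the ordinary and spin characters of $\fS_n$ and $\fA_n$.
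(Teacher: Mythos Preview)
Your approach is essentially the same as the paper's: both reduce the statement to a finite check against the GAP Character Table Library. The paper's proof is the single sentence ``This can again be seen using GAP and the GAP character table library''; your version supplies the bookkeeping that makes this check small (only $p=3$ and $n\in\{6,7,8\}$ survive, and then only maximal-defect blocks matter).

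One small slip to fix: the claim that every $S$ in question has order dividing $|6.\fA_7|=2^4\cdot3^3\cdot5\cdot7$ is false, since $|2.\fA_8|=2^7\cdot3^2\cdot5\cdot7$. Your intended conclusion---that $5^2$ and $7^2$ divide none of the relevant group orders---is nonetheless correct and follows immediately from $|\fA_8|=2^6\cdot3^2\cdot5\cdot7$ together with the Schur multipliers being $\{5,7\}$-prime. Also, your remark that for the exceptional covers only the ``faithful'' blocks need separate inspection is a little loose when $3\mid|\zent S|$, since a single $3$-block of $3.\fA_n$ can contain characters with different central characters; but this is harmless, as the actual verification is by direct inspection of the block distribution in GAP anyway.
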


\begin{proof}
This can again be seen using GAP and the GAP character table library. 
\end{proof}

\begin{prop}   \label{prop:alts2}
 Let $S=\hat{\fA}_n$ be the double cover of the alternating group~$\fA_n$,
 with $n\geq 9$.  Suppose that $p\geq3$ is a prime and let $b$ be a $p$-block
 of $S$ with non-cyclic defect groups.  Then $k_{\Aut(S)}(b)\geq 3$.
 In particular, Theorem~{\rm\ref{newmandi}} holds for these groups.
\end{prop}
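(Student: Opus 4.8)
The plan is to exploit the well-developed combinatorial description of the $p$-blocks and irreducible characters of the double covers $\hat{\fA}_n$ (and $\hat{\fS}_n$), together with the known action of $\Aut(\hat{\fA}_n)=\Aut(\fA_n)$ on them. Recall that for $n\ge 9$ one has $\Aut(\fA_n)=\fS_n$, and the relevant outer action on the double cover is realised through $\hat{\fS}_n$ (one of its two isoclinic versions). The irreducible characters of $\hat{\fA}_n$ come in two flavours: the \emph{faithful} (spin) characters, parametrised by certain pairs built from bar-partitions (strict partitions) of $n$, and the characters inflated from $\fA_n$, parametrised by ordinary partitions. Since $b$ is a faithful block of $S=\hat{\fA}_n$ (we are told $\zent S$ meets $b$ nontrivially only in the faithful case; if $b$ factors through $\fA_n$ we are in the situation already handled by the ordinary alternating-group analysis, which should be cited or invoked separately), I would restrict attention to spin blocks. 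For odd $p$, spin blocks are governed by Humphreys' result: they are the \emph{$\bar p$-blocks}, determined by the $\bar p$-core of a bar-partition, with a defect group whose structure is controlled by the associated $\bar p$-weight $w$; non-cyclic defect groups correspond exactly to $w\ge 2$ (with the usual small caveats when the $\bar p$-core is empty, forcing a sign/associate-character subtlety).

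The key steps, in order: (1) Reduce to spin blocks of $\hat{\fA}_n$ with $\bar p$-weight $w\ge 2$, and record that such a block contains characters parametrised by all bar-partitions of $n$ with the fixed $\bar p$-core $\bar\mu$ and weight $w$; there are at least $2$ such bar-partitions, and for $w\ge 2$ in fact many. (2) Produce three characters in $\Irr(b)$ lying in distinct $\fS_n$-orbits. The action of $\fS_n$ on spin characters of $\hat{\fA}_n$ either fixes a character or fuses it with its ``associate'' (the twist by the sign character, visible already at the level of $\hat{\fS}_n$); so each $\fS_n$-orbit on $\Irr(b)$ has size $1$ or $2$. Therefore it suffices to exhibit at least five characters in $\Irr(b)$, OR, more cheaply, to exhibit three characters no two of which are associates of each other. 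Associate pairs are detected combinatorially (a bar-partition $\lambda$ and its data are self-associate iff $\lambda$ has an even number of parts, roughly speaking, with the precise condition depending on parity of $n-\ell(\lambda)$). The cleanest route: show that among the bar-partitions of $n$ with core $\bar\mu$ and weight $w\ge 2$, at least three give rise to characters in pairwise distinct orbits. (3) Handle the residual small/degenerate configurations — $w=2$ with a very small or empty core, and the finitely many $n$ where the count of available bar-partitions is smallest — by a direct check, possibly invoking GAP as in Propositions~\ref{prop:sporadic}–\ref{prop:alts1} for a bounded range of $n$, and a uniform counting argument for large $n$.

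For the main counting input I would use that the number of bar-partitions of $m$ grows without bound, so for $n$ large (relative to $p$) the number of bar-partitions of $n$ with a fixed $\bar p$-core and weight $w\ge 2$ is comfortably larger than $4$, giving more than two orbits automatically; the work is entirely in the small cases and in pinning down the associate/fusion rule precisely enough to be sure two of the three chosen characters are not merely associates of the third. The hard part will be step~(2): correctly tracking the $\hat{\fS}_n/\hat{\fA}_n$ Clifford theory for spin characters — in particular, which spin characters of $\hat{\fS}_n$ split on restriction to $\hat{\fA}_n$ (those with $n-\ell(\lambda)$ even, the ``double'' characters) versus which restrict irreducibly and fuse in pairs — so that one can \emph{certify} three genuinely distinct $\Aut(S)$-orbits rather than at most two. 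A convenient safeguard is to always pick one of the three characters to be a non-self-associate one that restricts from $\hat{\fS}_n$ irreducibly, so its $\fS_n$-orbit has size $2$ and is transparently disjoint from the orbits of two self-associate (hence $\fS_n$-fixed) characters of different degrees in the same block; such a trio exists as soon as the block contains, besides two split self-associate characters of distinct degrees, at least one bar-partition with an odd number of parts, which again holds for all but a short list of $(n,p,w,\bar\mu)$ to be cleared computationally.
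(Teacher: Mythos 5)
Your strategy is in essence the paper's: bound the number of $\Aut(S)$-orbits on $\Irr(b)$ from below by the number of characters in a block $\wt b$ of $\fS_n$ (resp.\ $\hat\fS_n$) lying above $b$, divided by two, since each orbit absorbs at most two characters of $\wt b$. Where the two treatments diverge is in how that count is obtained. The paper invokes Olsson's closed formulas: $k(\wt b)=k(p,w)$ for ordinary blocks and $k(\wt b)=\hat k^{\pm}(\bar p,w)$ for spin blocks, quantities that depend only on $p$ and the weight $w$ and not on $n$ or on the particular ($\bar p$-)core; one then checks once and for all that these exceed $4$ whenever $p\ge3$ and $w\ge2$. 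This eliminates everything you flag as "the hard part'': there is no need to track which spin characters are self-associate, which split on restriction to $\hat\fA_n$, or to clear a residual list of small $(n,p,w,\bar\mu)$ computationally or by asymptotics in $n$. Your plan would presumably work, but as written it leaves those verifications genuinely open, and the parity/associate analysis for spin characters is exactly the place where such an argument is easiest to get wrong.

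One concrete error to correct: you propose to dismiss the blocks of $\hat\fA_n$ that factor through $\fA_n$ as "already handled by the ordinary alternating-group analysis, which should be cited or invoked separately.'' No such prior analysis exists in the paper for $n\ge9$ --- Proposition~\ref{prop:alts1} only covers $5\le n\le8$ --- so the non-faithful blocks are an integral part of this proposition and must be treated here. They succumb to the identical orbit-counting argument using $k(p,w)$, but they cannot be deferred.
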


\begin{proof}
Recall that $\Aut(S)=\fS_n$.  If $b$ is a $p$-block of
$\fA_n$, then $k_{\fS_n}(b)\geq k(\wt{b})/2$, where $\wt{b}$ is a $p$-block
of $\fS_n$ above $b$. By \cite[Prop.~11.4]{Ol93}, we have $k(\wt{b})=k(p,w)$,
where $w$ is the so-called weight of $\wt{b}$ and $k(p,w)$ is as in
\cite[(3.11)]{Ol93}. But note that $k(p,w)/2>2$ for $p\geq 3$ and $w\ge 2$. 
Hence $k_{\fS_n}(b)\geq 3$.
\par
If, instead, $b$ is a block of $S$ lying over the (unique) non-trivial
character of $\zent{S}$, then
$k_{\hat{\fS}_n}(b)\geq k(\wt{b})/2$, where now $\wt{b}$ is a so-called spin
block of $\hat{\fS}_n$ above $b$ and $\hat{\fS}_n$ is a double cover of
$\fS_n$ inducing all automorphisms of $S$. In this case, we have an analogous
invariant $k(\wt{b})=\hat{k}^\pm(\bar{p},w)$ (see \cite[Prop.~(13.4)]{Ol93}).
Since $b$ is non-cyclic, we have $w> 1$, and using the definitions in
\cite[Sec.~13]{Ol93}, we see again that $\hat{k}^\pm(\bar p,w)/2>2$, and we have
$k_{\Aut(S)}(b)\geq 3$ as desired.
\end{proof}

We also need to consider the groups of Lie type that arise as the fixed points
of a simple, simply connected linear algebraic group under a Steinberg
endomorphism but are not quasi-simple. Throughout, we let
\begin{equation}\label{eq:setE} 
  \fE:=\{\SL_2(2), \SU_3(2), \Sp_4(2)\}.
\end{equation}  

\begin{prop}   \label{prop:nonquasisimple}
 Let $p\geq 3$ and $B$ a $p$-block of a group $G\in\fE$, with positive defect.
 Then $|\cd(B)|\geq 2$.  If $B$ is non-cyclic, then $|\cd(B)|\geq 3$.
\end{prop}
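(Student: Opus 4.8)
The plan is to treat the three groups $\SL_2(2)\cong\fS_3$, $\SU_3(2)$, and $\Sp_4(2)\cong\fS_6$ in $\fE$ one at a time, since each is small enough that its character degrees and blocks are completely explicit. For each $G\in\fE$ and each odd prime $p$, I would first determine for which $p$ the group $G$ has a block of positive defect at all: this happens precisely when $p$ divides $|G|$, i.e. $p=3$ for $\SL_2(2)$ and $\Sp_4(2)$, and $p=3$ for $\SU_3(2)$ as well (since $|\SU_3(2)|=2^3\cdot 3^3$). So in every case the only relevant prime is $p=3$, and the relevant blocks of positive defect are exactly the $3$-blocks.

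For $\SL_2(2)=\fS_3$ with $p=3$: the principal block is the whole group algebra, with defect group $C_3$ (cyclic), and character degrees $\{1,1,2\}$, so $|\cd(B)|=2$ there; since the defect group is cyclic, the non-cyclic clause is vacuous. For $\Sp_4(2)=\fS_6$ with $p=3$: I would invoke the combinatorics of blocks of symmetric groups — the $3$-blocks of $\fS_6$ are the principal block (weight $2$, defect group of order $9$, non-cyclic) and a defect-zero block; the principal block of $\fS_6$ has at least three distinct character degrees among its constituents (one checks the degrees $1,5,5,10,16,\ldots$ landing in the weight-$2$ block via the Nakayama conjecture and hook-length count), giving $|\cd(B)|\ge 3$. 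For $\SU_3(2)$ with $p=3$: here $|\SU_3(2)|_3=27$ and the group is solvable (indeed $\SU_3(2)\cong 3^{1+2}\rtimes Q_8$), so a Sylow $3$-subgroup has order $27$ and is extraspecial, non-cyclic; I would list the $3$-blocks — the principal block and possibly others — and read off their degrees from the known character table of $\SU_3(2)$, checking that the principal block (or any positive-defect block) carries degrees $1$ and $\ge 2$, and that the full-defect block carries at least three distinct degrees.

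The cleanest way to carry all of this out uniformly is the same device used in Propositions~\ref{prop:sporadic}--\ref{prop:alts1}: consult the GAP Character Table Library \cite{GAP}, which stores the ordinary character tables together with their $p$-block distributions for each of these three (very small) groups, and simply verify the two numerical assertions $|\cd(B)|\ge 2$ for every positive-defect block and $|\cd(B)|\ge 3$ for every non-cyclic block. Alternatively, for $\fS_3$ and $\fS_6$ one can avoid the computer entirely by using the Nakayama conjecture and the classical formula for character degrees of symmetric groups, as in the proof of Proposition~\ref{prop:alts2}.

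I do not expect a genuine obstacle here: the groups are tiny and everything is an explicit finite check. The only mild subtlety is making sure the non-cyclic hypothesis is handled correctly in the $\SL_2(2)$ case — there the unique positive-defect block has cyclic defect, so the sharper bound $|\cd(B)|\ge 3$ is simply not being claimed, and one must state this carefully rather than claim something false. The substantive content is therefore just the bookkeeping of which $(G,p)$ pairs give non-cyclic defect (only $\SU_3(2)$ and $\Sp_4(2)$ at $p=3$) together with the degree count in those two principal blocks.
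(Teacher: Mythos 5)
Your approach is the same as the paper's: the proof in the paper consists of the single sentence that the claim ``can be seen using the GAP character table library'', and your proposal is exactly that explicit finite verification, carried out partly by hand. The hand computations you sketch for $\SL_2(2)\cong\fS_3$ and for the $3$-blocks of $\Sp_4(2)\cong\fS_6$ and $\SU_3(2)$ are correct (for $\SU_3(2)$ one can even note that the extraspecial Sylow $3$-subgroup is normal and self-centralising modulo the centre, so there is a unique $3$-block containing all $16$ characters, with degree set $\{1,2,3,6,8\}$).

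There is, however, one concrete slip in your case analysis: you assert that the only relevant prime is $p=3$ for all three groups, but $|\Sp_4(2)|=|\fS_6|=720=2^4\cdot3^2\cdot5$, so $p=5$ also gives blocks of positive defect, and your enumeration leaves the pair $(\Sp_4(2),5)$ unverified. The omission is harmless: the Sylow $5$-subgroup of $\fS_6$ is cyclic, so only the weaker bound $|\cd(B)|\ge2$ is at stake, and the unique positive-defect $5$-block (weight one, $5$-core $(1)$) contains the characters of degrees $1,1,9,9,16$, whence $|\cd(B)|=3$. You should add this case; with it included, the argument is complete and matches the paper's.
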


\begin{proof}
This can be seen using the GAP character table library.
\end{proof}

We remark that the groups $\SL_2(3)$, $\type{G}_2(2)$, $\tw{2}\tB_2(2)$,
$\tw{2}\type{G}_2(3)$, and $\tw{2}\tF_4(2)$, which also occur as fixed points
of simple, simply connected groups but are not quasi-simple, also satisfy the
above statement, with the exception of the cyclic $3$-blocks of $\SL_2(3)$.
However, we will not need this here.

Next, we deal with the Suzuki, Ree, and triality groups.  

\begin{prop}   \label{suzree}
 Let $S$ be quasi-simple such that $\bar S:=S/\zent{S}$ is a Suzuki, Ree, or
 triality group $\tw{2}\tB_2(q^2)$, $\tw{2}\type{G}_2(q^2)$,
 $\tw{2}\tF_4(q^2)$, or $\tw{3}\tD_4(q)$. Let $p\geq 3$ be a prime and let $b$
 be  a $p$-block of $S$ with non-cyclic defect groups. Then
 $k_{\Aut(S)}(b)\geq 3$.
 In particular, Theorem~{\rm\ref{newmandi}} holds for these groups.
\end{prop}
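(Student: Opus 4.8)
The plan is to handle the four families $\tw2\tB_2(q^2)$, $\tw2\tG_2(q^2)$, $\tw2\tF_4(q^2)$, and $\tw3\tD_4(q)$ one at a time, exploiting that their Schur multipliers and outer automorphism groups are small and well-understood, so that $\Aut(S)$-orbits are close to $S$-orbits. For the Suzuki and small Ree groups $\tw2\tB_2(q^2)$ and $\tw2\tG_2(q^2)$ (with $q^2=2^{2m+1}$, $3^{2m+1}$), the Sylow $p$-subgroups for $p$ odd, $p\neq 3$ in the Suzuki case, are cyclic except for the prime dividing $q^2\pm\sqrt{2}q+1$-type factors; in fact a non-cyclic defect group can only occur for the prime(s) $p$ dividing $q^2-1$ (Suzuki) or $q^2-1$, or $p=3$ (Ree). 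I would run through the (small number of) non-cyclic blocks using the known generic character tables and block distributions — these are classical (Suzuki's original paper, Ward for $\tw2\tG_2$) — and in each case either exhibit three degrees in $\cd(b)$ directly, or identify three $\Aut(S)$-orbits. Since $\Out(\tw2\tB_2(q^2))$ and $\Out(\tw2\tG_2(q^2))$ are cyclic of order $2m+1$ and act by field automorphisms, semisimple/unipotent character labels are permuted in a controlled way, and the Lusztig parametrisation makes the orbit count transparent.

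For $\tw3\tD_4(q)$ and $\tw2\tF_4(q^2)$ the groups are already of adjoint = simply connected type (trivial center, so $S=\bar S$), and $\Out(S)$ is cyclic generated by field automorphisms (order $3f$ resp. $2m+1$). Here I would use the decomposition of $\Irr(S)$ into Lusztig series and the fact, due to Broué--Michel / Cabanes--Enguehard, that blocks of these groups are unions of Lusztig series attached to a fixed semisimple $p'$-element $s$ up to $\Aut(S)$. A non-cyclic defect group forces the relevant $e$-cuspidal pair to have a twisted Levi of $e$-rank $\geq 2$, so the associated relative Weyl group has order divisible by a nontrivial amount, which forces at least three unipotent characters of $\cent{\tbG^*}{s}$ in the block and hence (unless all have equal degree, case (2)) at least three distinct degrees. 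The orbit-counting then reduces to checking that field automorphisms do not fuse these into fewer than three classes, which follows because the degrees of unipotent characters are field-automorphism invariant, so distinct-degree characters always lie in distinct $\Aut(S)$-orbits.

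The cleanest uniform reduction, which I would state first, is: if a block $b$ of $S$ has $k_{\Aut(S)}(b)\leq 2$, then in particular $|\cd(b)|\leq k_{\Aut(S)}(b)\cdot(\text{max multiplicity of a degree across orbits})$ — but more usefully, since degrees are constant on $\Aut(S)$-orbits, $k_{\Aut(S)}(b)\geq |\cd(b)|$, so it suffices to prove $|\cd(b)|\geq 3$ whenever $\cd(b)\neq\{d\}$, i.e. whenever we are not in case (2). Thus the entire proposition reduces to: \emph{for each of these four families and each non-cyclic $p$-block $b$ with $|\cd(b)|>1$, show $|\cd(b)|\geq 3$}, which is a statement purely about character degrees in blocks and can be read off the generic character tables (available in \Chevie{}) together with the known Brauer trees / block distributions. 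For the handful of small parameters where the generic tables degenerate (e.g. $\tw2\tG_2(3)'$, $\tw2\tF_4(2)'$, which are excluded as they appear in Proposition~\ref{prop:sporadic}), nothing extra is needed.

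\textbf{Main obstacle.} The genuinely delicate part is $\tw2\tF_4(q^2)$ for $p=3$ (when $3\mid q^2+1$ is impossible, but $3$ can divide $q^4-q^2+1$ or $q^2-1$ giving large non-cyclic defect), where the generic character table is large and the block distribution is intricate; here one must invoke the explicit determination of $3$-blocks of $\tw2\tF_4(q^2)$ (from the work on $e$-Harish-Chandra series and the principal-block results) and carefully count unipotent-character degrees in each block. A secondary nuisance is bookkeeping the cases $p=3$ for the Ree groups $\tw2\tG_2(q^2)$, where $\zent S=1$ but the $3$-local structure is special; fortunately the character table of $\tw2\tG_2(q^2)$ is small enough (7 or so generic families) that the non-cyclic $3$-blocks can be listed explicitly and each seen to have at least three degrees. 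I expect no conceptual difficulty beyond assembling these known computations, since in all four families the outer automorphism group is cyclic of $p'$-order-times-field-order and acts by field automorphisms which preserve degrees.
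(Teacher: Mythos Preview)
Your proposal is broadly sensible and the key reduction $k_{\Aut(S)}(b)\ge|\cd(b)|$ (since degrees are $\Aut(S)$-invariant) is correct and useful. However, there is one factual error and one genuine divergence from the paper's argument that you should be aware of.

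\textbf{Factual error for Suzuki and small Ree groups.} For any odd prime $p$ not dividing $q^2$, the Sylow $p$-subgroups of $\tw2\tB_2(q^2)$ and $\tw2\type{G}_2(q^2)$ are \emph{cyclic}: every maximal torus in these groups is cyclic, and any such $p$ divides the order of exactly one torus type. Hence the non-cyclic-defect hypothesis is vacuous in the non-defining case, and there is nothing to ``run through''. Your discussion of ``non-cyclic blocks for $p\mid q^2-1$'' in the Suzuki group is based on a misconception. The only remaining case for these two families is $p\mid q^2$ (so $p=3$ for $\tw2\type{G}_2$), where Humphreys' theorem leaves only the principal block with positive defect, and one checks directly from the unipotent degree list that it contains at least three distinct degrees. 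The paper disposes of these families in two lines for exactly this reason.

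\textbf{Different route for $\tw2\tF_4$ and $\tw3\tD_4$.} The paper does \emph{not} argue via $|\cd(b)|\ge 3$ here. Instead it uses the explicit block parametrisation (from \cite{Ma90} for $\tw2\tF_4$, \cite{DM87} for $\tw3\tD_4$) together with the fact that field automorphisms act on rational Lusztig series by the dual action on semisimple classes, $\cE(S,s)^\alpha=\cE(S,s^{\alpha^*})$. One then exhibits, from the centralizer and class-type data in Shinoda's and Deriziotis--Michler's tables, three characters in the block lying in series whose semisimple labels cannot be fused by any field automorphism. This sidesteps the question of whether the \emph{degrees} are distinct. Your degree-counting approach would in principle also succeed, but you have not actually verified $|\cd(b)|\ge 3$ for every relevant non-cyclic block; doing so requires walking through the centralizer types (e.g.\ Shinoda's classes $t_1,t_2,t_3$ for $\tw2\tF_4$ when $p\mid q^2-1$, and the various Levi-type centralizers for $\tw3\tD_4$) and checking unipotent degree multisets in each. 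For centralizers of small semisimple rank it is not a priori clear that $|\cd(b)|\ne 2$, so ``can be read off'' understates the work required. The paper's series-based argument is more uniform and avoids this finer degree bookkeeping.
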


\begin{proof}
Note that the Schur multiplier of $\bar S$ is trivial or $S$ was considered
already in Proposition \ref{prop:sporadic}, so we assume $\bar S=S$. First
suppose $p\mid q^2$, so that $S=\tw{2}\type{G}_2(q^2)$ or $\tw{3}\type{D}_4(q)$.
Then by a theorem of Humphreys \cite{Hum}, $S$ has exactly two
blocks, namely the principal block and a block of defect zero containing only
the Steinberg character. By observing the list of unipotent character degrees in
\cite[Sec.~13.9]{Carter}, we see that there are at least 3 distinct character
degrees in the principal block.

Now suppose $3\le p\nmid q^2$. Then Sylow $p$-subgroups of $\tw{2}\tB_2(q^2)$
and $\tw{2}\type{G}_2(q^2)$ are cyclic.   
Let $S=\tw{2}\tF_4(q^2)$, with $q^2=2^{2f+1}$. Here from \cite[Bem.~1]{Ma90},
if $p\nmid (q^2-1)$, then each semisimple $p'$-element $s$ in the dual group
$S^\ast$ defines a unique block of positive defect containing the Lusztig
series $\cE(S,s)$. 
If $p\mid (q^2-1)$, such an $s$ defines a unique block of positive defect if
$s$ is of class type $t_2$ or $t_3$, and three such blocks for $s=1$ or of type
$t_1$ in the notation of \cite{Sh75}. In the latter two cases, only one of the
three blocks has non-cyclic defect. In each
case, we can see from the centraliser structures and conjugacy class types in
\cite{Sh75}, together with the fact that field automorphisms permute Lusztig
series via $\cE(S,s)^\alpha=\cE(S,s^{\alpha^\ast})$ for
$\alpha^\ast\in\Aut(S^{\ast})$ dual to $\alpha$ (see \cite[Cor.~2.4]{NTT08})
that there are at least three characters in the relevant blocks that cannot be
conjugate under field automorphisms of $S$. In the case that $S=\tw{3}\tD_4(q)$,
we may argue similarly to above, taking into consideration the results of
\cite[Lemma 5.9]{DM87} and the structure of centralisers and tori discussed in
loc.~cit.
\end{proof}

\subsection{Reducing to quasi-isolated blocks}\label{sec:redisol}
Let $p$ be an odd prime and let $\bG$ be a simple algebraic group of simply
connected type over $\overline{\FF}_r$ for some prime $r$. Let $q$ be a
power of $r$ and $F\colon\bG\rightarrow\bG$ a Frobenius endomorphism with
respect to an $\FF_q$-structure. Write  $G:=\bG^F$ for the corresponding finite
reductive group. 

Let $\iota\colon\bG\hookrightarrow \tbG$ be a regular embedding as in
\cite[(15.1)]{CE04} (see also \cite[Sec.~1.7]{GM20}), and write
$\wt{G}:=\tbG^F$. Let $(\bG^\ast, F)$ be dual to $(\bG, F)$, so that $\bG^\ast$
is simple of adjoint type, and let $(\tbG^\ast, F)$ be dual to $(\tbG, F)$.
We will write $G^\ast:=\bG^{\ast^F}$ and $\wt{G}^\ast:=\wt{\bG}^{\ast F}$.
We further have a dual $F$-equivariant epimorphism
$\iota^\ast\colon\tbG^\ast\twoheadrightarrow\bG^\ast$ induced by $\iota$.
Now, with this setting, $\Aut(G)$ is induced by $\wt{G}\rtimes \cD$,
where $\cD$ is the group generated by appropriately chosen graph and field
automorphisms (see e.g. \cite[Thm~2.5.1]{GLS3}). 

The set $\irr{G}$ is a disjoint union of Lusztig series $\cE(G, s)$ (also
called rational series),
where $s$ runs over $G^\ast$-conjugacy class representatives of semisimple
elements of $G^\ast$. If $\wt{s}\in\wt{G}^\ast$ is such that
$\iota^\ast(\wt{s})=s$, then the series $\cE(G,s)$ consists of the constituents
of the restrictions of the characters in $\cE(\wt{G}, \wt{s})$ to $G$ (see
\cite[Prop.~15.6]{CE04}).

We next consider the case of groups of Lie type in defining characteristic,
i.e., when $r=p$. 

\begin{prop}   \label{prop:definingchar}
 Assume $S$ is quasi-simple such that $S/\zent{S}$ is a simple group of Lie
 type defined in characteristic $p\geq 3$. Let $b$ be a $p$-block of $S$ with
 non-cyclic defect groups. Then $k_{\Aut(S)}(b)\geq 3$. In particular,
 Theorem~{\rm\ref{newmandi}} holds for these groups.
\end{prop}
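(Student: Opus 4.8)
The plan is to use the well-known description of $p$-blocks of finite groups of Lie type in defining characteristic, due to Humphreys: if $G = \bG^F$ for $\bG$ simple, simply connected over $\overline{\FF}_p$, then $G$ has exactly one block of full defect (the principal block, containing all characters $\chi$ with $p \nmid \chi(1)$, equivalently all non-defect-zero characters) together with blocks of defect zero, each a singleton containing a character of the form $\St_\bL$-induced-type (characters $\chi$ with $\chi(1)_p = |G|_p$). Consequently a $p$-block $b$ of $S$ with non-cyclic (hence in particular positive, non-trivial) defect group is, up to the central extension $S \to \bar S = S/\zent S$, essentially the principal block of $\bar S$ or of a quotient/cover thereof. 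So the whole statement reduces to: \emph{the principal $p$-block of a simple group $\bar S$ of Lie type in characteristic $p$ contains characters from at least three $\Aut(S)$-orbits, whenever its defect group (a Sylow $p$-subgroup) is non-cyclic.}

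First I would reduce to $\bar S = S$: if $\zent S \neq 1$ then $\zent S$ is a $p'$-group (as $|\zent{\bG^F_{\SC}}|$ is prime to $p = r$ for groups in defining characteristic, once we discard the small exceptions already handled in Propositions~\ref{prop:sporadic} and \ref{prop:nonquasisimple}, and the groups in $\fE$), so blocks of $S$ with non-cyclic defect correspond bijectively, with the same character degrees and $\Aut$-orbits, to such blocks of $\bar S$; and the non-quasi-simple fixed-point groups $\SL_2(2), \SU_3(2), \Sp_4(2)$ are covered by Proposition~\ref{prop:nonquasisimple}, while $\tw2\tB_2(q^2), \tw2\type{G}_2(q^2), \tw3\tD_4(q)$ and the tiny exceptions are covered by Propositions~\ref{prop:sporadic} and \ref{suzree}. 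So we may assume $S = \bar S = \bG^F$ with $\bG$ simple simply connected, $F$ a Frobenius endomorphism, and Sylow $p$-subgroup $P \in \Syl_p(S)$ non-cyclic. Here $P$ is a Sylow subgroup of $\bU^F$ for $\bU$ a maximal unipotent subgroup, so $P$ non-cyclic forces the $\FF_q$-rank of $\bG$ to be at least $2$, or $\bG = \SL_2$ with $q \geq p^2$ (more precisely $|\bU^F|=q$ non-prime), ruling out only $\PSL_2(p)$ and the Suzuki/small-Ree cases (already excluded).

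Next, within the principal block $B_0$ of $S$ I would exhibit three characters in distinct $\Aut(S)$-orbits. The trivial character $1_S$ is one orbit by itself (it is $\Aut(S)$-fixed and has degree $1$, while $|\cd(B_0)| > 1$ since $B_0$ has non-cyclic defect and hence is not nilpotent — or directly, $B_0$ contains unipotent characters of several degrees). For the other two I would use two families of characters of $S$ in $B_0$ whose degrees and/or rationality fields distinguish their $\Aut(S)$-orbits: for instance, among unipotent characters in the principal block one typically finds the trivial character, the ``reflection'' (or a parabolically-induced linear) character of degree $(q^{d_1}-1)/(q-1)\cdot(\cdots)$, and a further unipotent character, all lying in $B_0$ because in defining characteristic \emph{every} unipotent character lies in the principal block. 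Since unipotent characters are $\wt G$-stable and permuted among themselves by $\cD$ in a way that preserves degrees, distinct-degree unipotent characters lie in distinct $\Aut(S)$-orbits; and one checks, going through the (very short) list of possible $(\bG,F)$ with $P$ non-cyclic — classical groups of rank $\ge2$, exceptional groups, and $\PSL_2(q)$ with $q=p^f$, $f\geq 2$ — that there are always at least three distinct unipotent character degrees in $B_0$, except possibly for the smallest rank-$2$ groups and $\PSL_2(q)$, which are finite in number and can be checked by hand or in GAP (and the $\PSL_2(q)$ case uses additionally the $q^{\pm1}$-dimensional characters and Galois action on the $\frac{q-1}{(2,q-1)}$ ``half-discrete-series'' characters to get three orbits). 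The reduction to ``unipotent degrees in $B_0$'' is exactly where Humphreys' theorem does its work: it guarantees all unipotent characters are in $B_0$.

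The main obstacle will be the small-rank / small-$q$ bookkeeping: making sure that for every $(\bG, F)$ with non-cyclic Sylow $p$-subgroup one genuinely has three distinct unipotent (or, for type $\tA_1$, mixed unipotent-plus-semisimple) character degrees in the principal block \emph{and} that degree-distinct unipotent characters cannot be fused by the graph automorphisms $\cD$. The latter is automatic since $\cD$ acts on unipotent characters preserving degrees (the action of field automorphisms is trivial on unipotent characters of $\bG^F$ when $\bG$ is not of type with exceptional isogenies, and graph automorphisms permute same-degree unipotent characters), so the real content is a short, finite case-check that $|\cd_{\mathrm{unip}}(B_0)| \geq 3$, handled uniformly for rank $\geq 3$ by writing down Steinberg, trivial, and a reflection-type unipotent character, and by direct inspection (or GAP, via \cite{Carter} Section~13.9 for the unipotent degrees) for the finitely many groups of rank $\le 2$ that survive the earlier reductions. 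Once that is in place, the statement follows.
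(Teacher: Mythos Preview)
Your reduction to the simple group $\bar S$ is where the argument breaks. You assert that since $\zent S$ is a $p'$-group, ``blocks of $S$ with non-cyclic defect correspond bijectively, with the same character degrees and $\Aut$-orbits, to such blocks of $\bar S$''. That is false: passing from $S$ to $S/Z$ for a central $p'$-subgroup $Z$ retains only the blocks of $S$ lying over the trivial character of $Z$; a block over a nontrivial $\theta \in \Irr(Z)$ has no counterpart in $S/Z$. Your paraphrase of Humphreys is correspondingly off: for $G = \bG^F$ simply connected in characteristic $p$, every $p$-block has full defect or defect zero, but there is one full-defect block for \emph{each} $\theta \in \Irr(\zent G)$, not a single principal block. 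Since unipotent characters all have $\zent G$ in their kernel, they lie only in the block over $\theta = 1$, so your ``three unipotent degrees'' idea treats only the principal block. The faithful block of $\SL_2(q)$, or more generally any block of $\SL_n(q)$ lying over a nontrivial central character, is simply not addressed by your argument.

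The paper fixes this by reducing \emph{upward} to the simply connected cover $G$ (legitimate because $p \nmid |\zent G|$, via \cite[Thm~9.9]{N}), and then for the block $b$ over an arbitrary $\theta \in \Irr(\zent G)$ it constructs three Lusztig series $\cE(G,s_i)$, $i=1,2,3$, each containing characters above $\theta$: one with $s_1$ of order dividing $|\zent G|$, and two more using distinct Zsigmondy primes $\ell_2,\ell_3$ for $|G^*|$ coprime to $|\zent G|$, so that the $|s_i|$ are pairwise distinct and the series cannot be $\Aut(G)$-related. The small-rank groups $\SL_2(q)$, $\SL_3(\eps q)$, $\Sp_4(q)$ (where too few Zsigmondy primes are available) are handled by direct inspection of their character tables, verifying at least three degrees in \emph{each} positive-defect block, faithful ones included.
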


\begin{proof}
Let $G=\bG^F$ be as above such that $\bar{S}:=S/\zent{S}=G/\zent{G}$.  We may
assume $G$ is the full covering group of $\bar S$, as the exceptional covers
have been discussed in Proposition~\ref{prop:sporadic}, and that $\bar{S}$ is
not of Suzuki or Ree type, from Proposition \ref{suzree}.  By applying
\cite[Thm~9.9]{N}, it suffices to prove the statement for $G$, since
$p\nmid|\zent{G}|$. Now, every $p$-block of~$G$ is either maximal defect or
of defect zero \cite{Hum}, and the blocks of maximal defect are in bijection
with the characters of $\zent{G}$, via their central character. Let
$\theta\in\Irr(\zent{G})$ correspond to $b$. 

Given this, and by inspection of the character tables for $\SL_2(q)$,
$\SL_3(\eps q)$, and $\Sp_4(q)$ (see \cite[Tab.~2.6 and references in
Tab~2.4]{GM20}), we see there are at least three character degrees for each
block of positive defect in these cases, so we further assume that $G$ is not 
one of these groups. Furthermore, we may assume $\zent{G}\ne1$ as any
non-solvable group has at least four irreducible characters.

Let $T\le G$ be a maximal torus. We claim that there is $s\in T^*$, a maximal
torus of~$G^*$ in duality with $T$, such that the characters in the (rational)
Lusztig series $\cE(G,s)$ lie above $\theta$. Indeed, as $\zent{G}\le T$ there
is $\tilde\theta\in\Irr(T)$ with $\tilde\theta|_\zent{G}=\theta$, and then
by the character formula, also
$\RTG(\tilde\theta)|_\zent{G}=\RTG(\tilde\theta)(1)\theta$. So taking
$(T^*,s)$ dual to $(T,\theta)$ we find $s$ as claimed. Again by the character
formula, $\RTG(\mu\tilde\theta)|_\zent{G}=\RTG(\mu\tilde\theta)(1)\theta$ for
all $\mu\in\Irr(T)$ with $\mu|_\zent{G}=1$, and so by duality
$R_{\bT^*}^{\bG^*}(st)|_\zent{G}=R_{\bT^*}^{\bG^*}(st)(1)\theta$ for
all $t\in T^*\cap[G^*,G^*]$ (see the proof of \cite[Lemma~4.4(ii)]{NT2}).
Note we may choose $s=:s_1$ to have order only divisible by primes dividing
$|G^*:[G^*,G^*]|=|\bZ(G)|$. 

As $G$ is not of types $A_1,A_2,\tw2A_2,B_2$ there are at least two Zsigmondy
primes $\ell_2$, $\ell_3$ dividing $|G^*|$ but not $|\bZ(G)|$. Applying the
above claim to suitable maximal tori $T_i \leq G$, $i=2,3$, we may choose 
semisimple elements $s_i$ with order divisible by $\ell_i$ and possibly by some
primes dividing $|G^*/[G^*,G^*]|$. This ensures that $|s_i|$, $i = 1,2,3$, are
pairwise distinct. Hence the disjoint series $\cE(G,s_i)$, $i=1,2,3$,
contain distinct characters above $\theta$ not conjugate under $\Aut(G)$
by \cite[Cor.~2.4]{NTT08}, completing the proof.
\end{proof}

Given Proposition \ref{prop:definingchar}, we now assume that $r\neq p$ for the
remainder of Section~\ref{sec:ThmB}.
\medskip

If $s\in G^*$ is a $p'$-element, as is customary we write $\cE_p(G,s)$ for the
union $\bigcup_t \cE(G,st)$ where $t$ ranges over the $p$-elements of $G^\ast$
commuting with $s$. Then $\cE_p(G,s)$ is a union of $p$-blocks of $G$, and for
each block $B$ with $\Irr(B)\subseteq \cE_p(G,s)$, we have
$\Irr(B)\cap \cE(G,s)$ is nonempty (see \cite[Thm~9.12]{CE04}). At times, we
will write $\cE(G,p')$ to denote the union of the series $\cE(G,s)$ for
$s\in G^\ast$ ranging over semisimple $p'$-elements.
  
A fundamental result of Bonnaf{\'e}--Rouquier \cite{BR03} yields that the blocks
in $\cE_p(G,s)$ are Morita equivalent to so-called {quasi-isolated} blocks of
suitable Levi subgroups.  In the more general setting of a finite reductive
group $H:=\bH^F$, that is, the fixed points of a connected reductive group
$\bH$ under a Frobenius morphism $F\colon\bH\rightarrow\bH$ endowing $\bH$ with
an $\FF_q$-rational structure, a block of $H$ is called \emph{quasi-isolated} if
it lies in $\cE_p(H,s)$ for a semisimple $p'$-element $s\in H^\ast$ such that
$\cent{\bH^\ast}{s}$ is not contained in any proper $F$-stable Levi subgroup of
$\bH^\ast$.  (In such a situation, $s$ is also called \emph{quasi-isolated}.)

The following setup will be useful. Let $\bH$ be an $F$-stable Levi subgroup
of $\bG$ and $H:=\bH^F$. Let $\bH_0:=[\bH, \bH]$ and $H_0:=\bH_0^F$, so that
$\bH_0$ is semisimple of simply connected type, by \cite[Prop.~6.20(c) and
12.14]{MT}. Then by \cite[Cor.~1.5.16]{GM20}, $H_0$ is isomorphic to a direct
product $\prod_{i=1}^k \bH_i^{F_i}$, where each $\bH_i$ is simple of simply
connected type and $F_i$ is a Frobenius morphism obtained as some power of $F$.
Write $H_i:=\bH_i^{F_i}$. Let $B$ be a $p$-block of $H$ and let $B'$ be a block
of $H_0$ covered by $B$.  Then $B'$ is isomorphic to a tensor product
$\otimes_{i=1}^k B_i$, where $B_i$ is a block of $H_i$ for each $1\leq i\leq k$.

Recall from \eqref{eq:setE} that $\fE=\{\SL_2(2), \SU_3(2), \Sp_4(2)\}$.
Note that $H_i$ is perfect (and hence quasi-simple) unless
$H_i\in\fE \cup\{\SL_2(3)\}$
(see \cite[Thm~24.17]{MT}). (Indeed, note that the excluded groups 
$\type{G}_2(2)$, $\tw{2}\tB_2(2)$, $\tw{2}\type{G}_2(3)$, and $\tw{2}\tF_4(2)$
cannot occur as an $H_i$ since $F$ is a Frobenius endomorphism and $\type{G}_2$
does not occur as a component of a Levi subgroup of the other types.) Since $G$
is defined in characteristic distinct from $p$ and $|\SL_2(3)|$ is only
divisible by the primes $2$ and $3$, we further will not need to consider
$\SL_2(3)$ in what follows.  In the following, we will write
$$\bar{k}_{\Aut(H_i)}(B_i):=\begin{cases}
  k_{\Aut(H_i)}(B_i)& \text{if $H_i\not\in\fE$,}\\
  |\cd(B_i)|& \text{if $H_i\in\fE$.}\end{cases}$$

\begin{hyp}   \label{hyp:orbisol}
Keep the notation and situation of the previous paragraph. For a positive
integer~$c$, let $B$ be a $p$-block of $H$ such that there exists a block $B'$
as above that satisfies at least one of the following three conditions:
\begin{enumerate}
 \item[\namedlabel{hyp1}{(3.8.1)}] there exists some $i$ with $1\leq i\leq k$
  such that $\bar{k}_{\Aut(H_i)}(B_i)\geq c$;
 \item[\namedlabel{hyp2}{(3.8.2)}] there exist $i,j$ with $1\leq i\neq j\leq k$
  such that $\bar{k}_{\Aut(H_i)}(B_i)\geq c-1$ and, furthermore, 
  $\bar{k}_{\Aut(H_j)}(B_j)\geq 2$;
 \item[\namedlabel{hyp3}{(3.8.3)}] $B'$ is cyclic of positive defect, and there
  exists some $i$ with $1\leq i\leq k$ such that $B_i$ is cyclic and
  $\bar{k}_{\Aut(H_i)}(B_i)\geq c-1$.
\end{enumerate}
\end{hyp}

\begin{prop}   \label{prop:hyponBi}
 Let $G:=\bG^F$ be as above. Let $\bH$ be an $F$-stable, proper Levi subgroup
 of $\bG$ and $H:=\bH^F$. Let $p$ be an odd prime not dividing $q$ and $B$ be a
 non-cyclic $p$-block of $H$ with $|\cd(B)|>1$. Then:
 \begin{enumerate}[\rm(a)]
  \item In the notation above, any block $B'$ of $H_0$ covered by $B$ has
   positive defect.
  \item If $B$ satisfies Hypothesis~{\rm\ref{hyp:orbisol}} for some
   positive integer $c$ then $k_{\Aut(H)}(B)\geq c$.
 \end{enumerate}
\end{prop}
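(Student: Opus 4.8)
The plan is to prove (a) by elementary Clifford theory, using only that $H/H_0$ is abelian, and to prove (b) by attaching to every $\chi\in\Irr(B)$ an $\Aut(H)$-invariant combinatorial datum and then, in each of the three cases of Hypothesis~\ref{hyp:orbisol}, exhibiting $c$ characters of $B$ whose data are forced to be pairwise distinct. I first note the two structural facts I will use throughout: since $\bH_0=[\bH,\bH]$ is connected, the map $H=\bH^F\to(\bH/\bH_0)^F$ is surjective with kernel $H_0$, so $H/H_0$ is the group of $F$-fixed points of the torus $\bH/\bH_0$, hence abelian; and $\Aut(H)$ stabilises $H_0$ and permutes the factors $H_1,\dots,H_k$ up to isomorphism (using $H_0=[H,H]$ when no $H_i\in\fE$, and recovering an $\fE$-factor from its components otherwise, e.g.\ $\Sp_4(2)$ from $\fA_6$; in all cases Krull--Remak--Schmidt applies to $H_0$). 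For (a): suppose some $B'=\bigotimes_i B_i$ covered by $B$ has defect zero; then each $B_i$ has defect zero, so $\Irr(B')=\{\theta\}$ with $\theta=\bigotimes_i\theta_i$ of $p$-defect zero. The blocks of $H_0$ covered by $B$ form the $H$-orbit of $B'$, so every $\chi\in\Irr(B)$ lies over some conjugate $\theta^g$, whence $\chi=\psi^H$ for $\psi\in\Irr(H_{\theta^g}\mid\theta^g)$. By Clifford theory $\Irr(H_{\theta^g}\mid\theta^g)$ is parametrised by the simple modules of a twisted group algebra of the abelian group $H_{\theta^g}/H_0\cong H_\theta/H_0$, and these all have one and the same dimension $d$; thus $\psi(1)=\theta(1)\,d$ and $\chi(1)=|H:H_\theta|\,\theta(1)\,d$ is independent of $\chi$. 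This gives $|\cd(B)|=1$, contradicting the hypothesis, so every such $B'$ has positive defect.

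For (b), to a character $\chi\in\Irr(B)$ with $\chi|_{H_0}$ having an irreducible constituent $\theta_1\otimes\cdots\otimes\theta_k$ ($\theta_i\in\Irr(H_i)$), I attach the multiset
\[\mathrm{type}(\chi):=\bigl\{\,(\text{iso-type of }H_i,\ [\theta_i])\ :\ 1\le i\le k\,\bigr\},\]
where $[\theta_i]$ is the $\Aut(H_i)$-orbit of $\theta_i$ if $H_i\notin\fE$ and the degree $\theta_i(1)$ if $H_i\in\fE$. This is well defined (independent of the chosen constituent in the $H$-orbit) and, by the structural facts above, $\Aut(H)$-invariant. In case~\ref{hyp1}, say $\bar k_{\Aut(H_1)}(B_1)\ge c$: choose $\alpha_1,\dots,\alpha_c\in\Irr(B_1)$ in distinct $\Aut(H_1)$-orbits (of distinct degrees if $H_1\in\fE$), fix any $\theta_j\in\Irr(B_j)$ for $j\ge2$, and pick $\chi_m\in\Irr(B\mid\alpha_m\otimes\theta_2\otimes\cdots\otimes\theta_k)$, which is nonempty since $B$ covers $B'$. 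The multisets $\mathrm{type}(\chi_m)$ agree outside the single entry coming from $\alpha_m$, so multiset cancellation shows they are pairwise distinct, giving $k_{\Aut(H)}(B)\ge c$.

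Case~\ref{hyp2} is handled similarly but needs one bit of care. With $\bar k_{\Aut(H_1)}(B_1)\ge c-1$ and $\bar k_{\Aut(H_2)}(B_2)\ge2$, take $\alpha_1,\dots,\alpha_{c-1}\in\Irr(B_1)$ as before and $\beta_1,\beta_2\in\Irr(B_2)$ in distinct $\Aut(H_2)$-orbits, \emph{labelled so that $[\alpha_1]\ne[\beta_1]$} — possible because $\beta_1\ne\beta_2$, and only relevant when $H_1\cong H_2$. Let $\chi_m$ lie over $\alpha_m\otimes\beta_1\otimes\theta_3\otimes\cdots$ for $m\le c-1$ and $\chi_c$ over $\alpha_1\otimes\beta_2\otimes\theta_3\otimes\cdots$. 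Splitting into the sub-cases $H_1\not\cong H_2$ and $H_1\cong H_2$ and again cancelling the common part of the multisets, one checks $\mathrm{type}(\chi_1),\dots,\mathrm{type}(\chi_c)$ are pairwise distinct. In case~\ref{hyp3} one may assume $c\ge3$ (for $c\le2$ the conclusion is immediate from $|\cd(B)|>1$); then since $B'$ is cyclic of positive defect, exactly one factor $B_1$ has cyclic positive defect and the rest have defect zero, and $\bar k_{\Aut(H_1)}(B_1)\ge c-1$, so as above one obtains $c-1$ pairwise non-$\Aut(H)$-conjugate characters $\chi_1,\dots,\chi_{c-1}$ of $B$.

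The remaining orbit in case~\ref{hyp3} is the crux of the whole proposition, and I expect it to be the main obstacle. It cannot be produced through $\mathrm{type}$, nor through degrees or heights: because $H_\varphi/H_0$ is abelian, all characters of $B$ lying over a fixed $\varphi\in\Irr(B')$ share one degree (and hence one height). Instead I would exploit the gap between defect groups: fixing a defect group $D$ of $B$, $D$ is non-cyclic by hypothesis while $D\cap H_0$ is a defect group of $B'$ by Kn\"orr's theorem and hence cyclic, so $DH_0/H_0\ne 1$. Using the theory of blocks with cyclic defect groups together with the Koshitani--Sp\"ath results underlying Theorem~\ref{cyciAM}, $\Irr(B)$ should be in $\Aut(H)$-compatible, height-preserving correspondence with $\Irr$ of a block of $\norm H{D\cap H_0}$ with defect group $D$, a group in which the normal cyclic Sylow sits inside $\norm{H_1}{D\cap H_0}$ alongside the defect-zero factors; in that more transparent setting the non-cyclicity of $D$ yields one more $\Aut(H)$-orbit, disjoint from the images of $\chi_1,\dots,\chi_{c-1}$. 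Making this last step precise — locating and distinguishing that extra orbit — is where the real work lies, cases~\ref{hyp1} and~\ref{hyp2} being essentially bookkeeping with the $\mathrm{type}$ invariant once the $\Aut(H)$-action on the component decomposition of $H_0$ (with the $\fE$-factors as the only genuine subtlety) has been pinned down.
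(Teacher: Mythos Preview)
Your treatment of part~(a) is correct and in fact slightly cleaner than the paper's: you use only that $H/H_0$ is abelian (so that the twisted group algebra of the abelian group $H_\theta/H_0$ has all simple modules of one dimension), whereas the paper invokes Lusztig's multiplicity-one theorem for restrictions from $H$ to $H_0$ to obtain genuine extensions and then applies Gallagher. For cases~\ref{hyp1} and~\ref{hyp2} your $\mathrm{type}$ invariant is essentially the paper's argument repackaged; the paper argues directly via the Bidwell--Curran--McCaughan description of $\Aut(H_0)$ to show $k_{\Aut(H_0)}(B')\ge c$ and then lifts to $B$. Two small corrections are in order: the reason $H_0$ is characteristic in $H$ is not the one you give (your $[H,H]$ argument breaks when $\fE$-factors are present), but rather that $H_0=\bO^{r'}(H)$ is generated by the unipotent elements of $H$; and the $\Aut(H_0)$-invariance of $\mathrm{type}$ genuinely needs the BCM structure (automorphisms of a direct product can involve homomorphisms of one factor into the centre of another), though your choice of recording the \emph{degree} for $\fE$-factors together with the perfectness of all the other $H_i$ does make the invariant survive these central twists.

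The real gap is case~\ref{hyp3}, which you yourself flag as incomplete. Your proposed route through the inductive Alperin--McKay machinery for cyclic blocks is not what the paper does, and it is not clear how that would produce the extra orbit with the required $\Aut(H)$-control. The paper's argument is quite different and specific to the Lie-type setting: pass to $H_0\le H_p\le H$ with $H_p/H_0\in\Syl_p(H/H_0)$, extend $\theta_1\in\cE(H_0,s_0)\cap\Irr(B')$ to a character $\chi\in\cE(H_p,s)\cap\Irr(B_p)$ using \cite[Prop.~1.3]{CE99}, and then \emph{twist} by the linear character $\hat{\tilde z}\in\Irr(H_p/H_0)$ attached (via the regular embedding) to a nontrivial $p$-element $\tilde z\in\bZ(\tilde H^*)^F$. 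The product $\chi\hat{\tilde z}$ lies in $\cE(H_p,sz)$ with $z=\iota^*(\tilde z)$ a nontrivial $p$-element, so no automorphism can carry it back into $\cE(H_p,s)$ (compare $p$-parts of the semisimple labels, using \cite[Cor.~2.4]{NTT08}); at the same time $\chi\hat{\tilde z}$ still lies over $\theta_1$, hence is not $\Aut(H)$-conjugate to any character over $\theta_2,\dots,\theta_{c-1}$. Lifting $\chi$, $\chi\hat{\tilde z}$, and characters over $\theta_2,\dots,\theta_{c-1}$ to $\Irr(B)$ then gives the $c$ required orbits. This Lusztig-series twist is the missing ingredient in your sketch.
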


\begin{proof}
Keep the notation above and let $B'$ be a block of $H_0$ covered by $B$. First,
assume $B'$ is defect zero. Write
$\Irr(B')=\{\theta_1\}$ and let $T:=H_{\theta_1}$ be the inertia subgroup for
$\theta_1$ in $H$. Since $H/H_0$ is abelian and restrictions from $H$ to $H_0$
are multiplicity-free (a result of Lusztig, see \cite[Thm~15.11]{CE04}),
Gallagher's theorem and Clifford correspondence imply that every element of
$\irr{H\mid\theta_1}$ is of the form $(\beta\hat\theta_1)^H$ where
$\hat\theta_1$ is an extension of $\theta_1$ to~$T$ and $\beta$ is a (linear)
character of $T/H_0$.  In particular, every member of $\Irr(B)$ is of this
form, and hence $|\cd(B)|=1$, a contradiction. This shows (a).

We now assume that $B$ satisfies Hypothesis \ref{hyp:orbisol} for $c$ and aim
to show 
$$k_{\Aut(H)}(B)\geq c.$$ 
Note that this is trivial when $c=1$, so we assume throughout that $c\geq 2$.
We claim that it suffices to show that 
\begin{equation}\label{eq:blockB}
  k_{\Aut(H_0)}(B')\geq c.
\end{equation}
Indeed, if this is the case, write $\theta_1,\ldots,\theta_c$ for
representatives in $\irr{B'}$ of $c$ distinct orbits. Then by
\cite[Thm~9.4]{N}, there must be at least $c$ characters $\chi_1,\ldots,\chi_c$
in $B$, lying above $\theta_1,\ldots,\theta_c$, respectively. Now $H_0$ is
characteristic in $H$ (indeed, it is generated by all unipotent elements of $H$
by \cite[Rem.~1.5.13]{GM20}, as $\bH_0$ is simply connected since $\bH$ is a
Levi subgroup of the simply connected group $\bG$), so is stabilised by any
automorphism of $H$.
Then, if $\chi_i$ and $\chi_j$ are $\Aut(H)$-conjugate for $i\neq j$, then
$\theta_i$ and $\theta_j$ are $\Aut(H)$-conjugate and hence
$\Aut(H_0)$-conjugate, a contradiction. Hence, we will prove show, at least in
the cases~\ref{hyp1} and~\ref{hyp2}, that \eqref{eq:blockB} holds.

Now, in the case of \ref{hyp1}, we may without loss assume
$\bar k_{\Aut(H_1)}(B_1)\geq c$. Let $\chi_1,\chi_2\in\Irr(B_1)$ lie in
distinct $\Aut(H_1)$-orbits on $\Irr(B_1)$ if $H_i\not\in\fE$ or have
distinct degrees if $H_i\in\fE$. First assume that $H_1$ is not isomorphic to
any $H_i$ for $2\leq i\leq k$. Let $X:=H_2\times\cdots\times H_k$ and let
$\alpha\in\Aut(H_0)$.
By \cite{BCM06}, for $(h, 1_X)\in H_1\times X$ with $h\in H_1$ we have
$\alpha(h, 1_X)=(\alpha_1(h), \alpha_1'(h))$ where $\alpha_1\in\Aut(H_1)$ and
$\alpha_1'\in\Hom(H_1,\zent{X})$ are defined by~$\alpha$.
Now, assume $(\chi_1\otimes \vhi)^\alpha =\chi_2\otimes\vhi$, where
$\vhi\in\Irr(B_2\otimes\cdots\otimes B_k)$. Then considering elements of the
form $(h, 1_X)$ with $h\in H_1$, we see that
$\chi_1^{\alpha_1}(h)\theta(h)=\chi_2(h)$ for some linear character
$\theta\in\Irr(H_1)$.  (Namely, $\theta$ is the composition of $\alpha_1'$ with
the unique irreducible constituent of $\vhi|_{\zent{X}}$.)  If $H_1\not\in\fE$,
this contradicts that $\chi_1$ and $\chi_2$ are not conjugate under
$\Aut(H_1)$, since $\theta$ must be trivial.  If $H_1\in\fE$, it contradicts
that $\chi_1(1)\neq \chi_2(1)$.  Hence in either case $\chi_1\otimes \vhi$
cannot be $\Aut(H_0)$-conjugate to $\chi_2\otimes\vhi$.  This shows
$k_{\Aut(H_0)}(B')\geq c$. In the case that $H_0$ contains multiple isomorphic
copies of $H_1$, a similar argument holds, taking instead $X$ to be the
(possibly trivial) product of those $H_i$ such that $B_i\not\cong B_1$ under
this isomorphism and $\chi_1\otimes\cdots\otimes\chi_1$ (one for each copy of
$B_1$) in place of $\chi_1$. 

Arguing similarly, in case \ref{hyp2}, we obtain $k_{\Aut(H_0)}(B')\geq c$, and
in case \ref{hyp3} we obtain $k_{\Aut(H_0)}(B')\geq c-1$.

Now, assume we are in the situation of \ref{hyp3}, so $B'$ is cyclic but $B$ is
not, and $B'$ has positive defect.  Then $p$ divides $|H/H_0|$, and we let
$H_0\leq H_p\leq H$ be such that $H_p/H_0\in\Syl_p(H/H_0)$. 
Let $B_p$ be the (unique) block of $H_p$ above $B'$.  Then we have $c-1$
non-$\Aut(H)$-conjugate characters in $B_p$ lying above the
non-$\Aut(H_0)$-conjugate characters $\theta_1, \ldots, \theta_{c-1}$ of
$\irr{B'}$. We claim that there is at least one more character in $\irr{B_p}$
that is not $\Aut(H)$-conjugate to these.

Let $(\bH^\ast, F)$ be dual to $(\bH, F)$ and write $H^\ast:=\bH^{\ast F}$.
Let $s\in H^\ast$ be a semisimple $p'$-element such that
$\Irr(B)\subseteq\cE_p(H, s)$. Let $\bH_0\hookrightarrow \bH$ be the inclusion
map and let $s_0$ be the image of~$s$ under the induced dual epimorphism
$\bH^\ast\twoheadrightarrow\bH_0^\ast$.  Then
$\Irr(B')\subseteq \cE_p(H_0,s_0)$.  Then at least one of these $c-1$
characters, say $\theta_1$, can be assumed to lie in $\cE(H_0, s_0)$ using
\cite[Thm~9.12]{CE04}.

Let $\iota\colon\bH\hookrightarrow\wt{\bH}$ be a regular embedding, as in
\cite[(15.1)]{CE04} and write $\wt{H}:=\wt{\bH}^F$. Note that $\iota|_{\bH_0}$
is also a regular embedding of $\bH_0$ into $\wt{\bH}$ and we have
$\bH_0=[\wt{\bH}, \wt{\bH}]$ and $H_0\lhd H\lhd \wt{H}$.
Let $\wt{s}\in\wt{H}^\ast$ be a semisimple $p'$-element such that
$\iota^\ast(\wt{s})=s$. Then by \cite[Prop.~15.6]{CE04}, $\cE(H, s)$ is the set
of constituents of the restrictions to~$H$ of characters in $\cE(\wt{H},
\wt{s})$ and we may further define $\cE(H_p, s)$ to be the set of constituents
of restrictions of characters from $\cE(H, s)$ to $H_p$. Similarly, for $\wt x$
any semisimple element of $\wt H^\ast$ and $x=\iota^\ast(\wt x)$, we define
$\cE(H_p, x)$ to be the set of constituents of restrictions of characters from
$\cE(\wt H, \wt x)$ to $H_p$.

Then by \cite[Prop.~1.3]{CE99}, $\theta_1$ extends to a character $\chi$ in
$\cE(H_p, s)$, which is hence a member of $\irr{B_p}\cap \cE(H_p, s)$.  Then
$\chi \beta\in\irr{B_p}$ for every $\beta\in\irr{H_p/H_0}$. Now, recall that
characters of $\wt{H}/H_0$ are in bijection with elements of
$\zent{\wt{H}^\ast}$ (see \cite[(15.2)]{CE04}). We write $\hat{\wt z}$ for the
character of $\wt{H}/H_0$ corresponding to $\wt z\in\zent{\wt{H}^\ast}$.
Choose $\beta:=\hat{\wt z}$ for $1\neq \wt z\in\zent{\wt{H}^\ast}$ of $p$-power
order. Then $\chi\beta\in\cE(H_p, sz)$ where $z:=\iota^\ast(\wt z)$ and is not
$\wt{H}$-conjugate to $\chi$ by definition of $\cE(H_p,sz)$ and $\cE(H_p,s)$.
Together with the fact that $sz$ cannot be $H^\ast$-conjugate to $\vhi^\ast(s)$
for any automorphism $\vhi^\ast$ since $s$ is $p'$ and $z$ is a $p$-element,
this tells us that $\chi$ cannot be $\Aut(H)$-conjugate to $\chi \beta$ (see
\cite[Cor.~2.4]{NTT08}). 

Further, since $\chi \beta$ lies above $\theta_1$, it is not $\Aut(H)$-conjugate
to a character above $\theta_i$ for $i\neq 1$. Then letting $\chi_c\in \Irr(B)$
above $\chi \beta$ and $\chi_1,\ldots,\chi_{c-1}\in\Irr(B)$ above
$\chi, \theta_2,\ldots,\theta_{c-1}$, respectively, $\chi_1,\ldots,\chi_c$ are
non-$\Aut(H)$-conjugate members of $\Irr(B)$, as desired. 
\end{proof}

\begin{rem}
In our application of Proposition \ref{prop:hyponBi} and Lemma \ref{lem:redtoisol} below, we really only require $k_{\Aut(G)_H}(B)\geq c$, rather than $k_{\Aut(H)}(B)\geq c$.  Then we remark that since the automorphisms of $G$ respect the product structure of $H_0$, we could replace each $\bar{k}_{\Aut(H_i)}(B_i)$ with $k_{\Aut(H_i)}(B_i)$ in Hypothesis \ref{hyp:orbisol}, and then the statements of Proposition \ref{prop:hyponBi} and Lemma \ref{lem:redtoisol} hold with the condition $k_{\Aut(H)}(B)\geq c$ replaced with $k_{\Aut(G)_H}(B)\geq c$.
\end{rem}

In the notation above, note that if $B$ is quasi-isolated, then so is each
$B_i$. Indeed, let $(\bH^\ast, F)$ be dual to $(\bH, F)$ and write
$H^\ast:=\bH^{\ast F}$.  Let $\Irr(B)\subseteq\cE_p(H, s)$ for a quasi-isolated
semisimple $p'$-element $s\in H^\ast$.  Let $\bH_0=[\bH, \bH]$, and keep the
rest of the notation from the paragraph before Hypothesis \ref{hyp:orbisol}.
Note that the inclusion map $\bH_0\hookrightarrow \bH$ is a central isotypy in
the sense of \cite[Def.~2.A]{Bo05} and \cite[1.3.21]{GM20}, and so is the
induced dual epimorphism $\bH^\ast\twoheadrightarrow \bH_0^\ast$, by
\cite[1.7.11]{GM20}. Then if $s_0\in H_0^\ast$ is the image of $s$ under the
latter map, \cite[Prop.~2.3]{Bo05} yields that $s_0$ is also quasi-isolated. In
particular, any block $B'$ of $H_0$ covered by $B$ is quasi-isolated, and hence
so are the blocks $B_1,\ldots, B_k$. (Indeed, if $s_0$ is quasi-isolated, let
$s_0$ correspond to $\prod s_i$ under the isomorphism with $\prod \bH_i^{F_i}$.
If $s_i$ is not quasi-isolated in $\bH_i$ then neither is its preimage in the
corresponding $F$-simple factor of~$\bH_0$, in the notation of
\cite[1.5.14]{GM20}. But this would contradict that $s_0$ is quasi-isolated.)

\begin{rem}   \label{rem:maxquasiisol}
With this and Proposition \ref{prop:hyponBi}, note that if each
(quasi-isolated) $B_i$ satisfies 
\begin{equation}   \label{eq:condorbs}
\bar{k}_{\Aut(H_i)}(B_i)\geq \begin{cases} c& \hbox{ if $B_i$ is non-cyclic,}\\
  c-1 & \hbox{ if $B_i$ is cyclic,}\end{cases}
  \end{equation}
then our (quasi-isolated) block $B$ will satisfy $k_{\Aut(H)}(B)\geq c$. 
\end{rem}

Throughout, when $\bL$ is an $F$-stable Levi subgroup of $\bH$, we use $\RLH$
to denote Lusztig's twisted induction with respect to any parabolic subgroup
$\bP$ of $\bH$ containing $\bL$ as a Levi complement. In our situation, since
we may exclude groups considered in Proposition \ref{prop:sporadic} and
$\type{E}_7(2)$ and $\type{E}_8(2)$ since they have trivial outer automorphism
group, we have twisted induction is independent of the choice of $\bP$ by
\cite[Thm~3.3.8]{GM20}. Hence, as is customary, we
will suppress the parabolic subgroup from the notation.

\begin{lem}   \label{lem:redtoisol}
 As above, let $G:=\bG^F$ and let $p\nmid q$ be an odd prime. Assume that
 $k_{\Aut(H)}(B)\geq c$ for each $F$-stable Levi subgroup $\bH$ of $\bG$ and
 each non-cyclic quasi-isolated $p$-block $B$ of $\bH^F$ with $|\cd(B)|>1$. 
 Then $k_{\Aut(G)}(b)\geq c$ for each non-cyclic $p$-block $b$ of~$G$ such
 that $|\cd(b)|>1$.
\end{lem}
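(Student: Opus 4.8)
The plan is to use the Bonnaf\'e--Rouquier Morita equivalence to transport a non-cyclic $p$-block $b$ of $G$ with $|\cd(b)|>1$ to a non-cyclic quasi-isolated block of a Levi subgroup, apply the hypothesis there, and then check that the number of automorphism orbits is not decreased under this correspondence. So, given such a block $b$, fix a semisimple $p'$-element $s\in G^\ast$ with $\Irr(b)\subseteq\cE_p(G,s)$. If $\cent{\bG^\ast}{s}$ is contained in a proper $F$-stable Levi subgroup $\bL^\ast$ of $\bG^\ast$, let $\bL$ be an $F$-stable Levi of $\bG$ in duality with $\bL^\ast$; then by \cite{BR03} there is a block $B$ of $L:=\bL^F$ in $\cE_p(L,s)$ and a Morita equivalence (in fact a Jordan decomposition type equivalence, induced by a bimodule with endopermutation source of rank prime to $p$, cf.\ \cite[Thm~9.12]{CE04}) between $b$ and $B$. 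This Morita equivalence preserves defect groups up to isomorphism, so $B$ is non-cyclic; and it matches up character degrees (up to the common scalar $|G:L|_{p'}$-type factor that does not affect the \emph{number} of distinct degrees), so $|\cd(B)|>1$. The key point to verify is that it also preserves the number of $\Aut$-orbits, i.e.\ $k_{\Aut(G)}(b)\geq k_{\Aut(L)}(B)$ — then the hypothesis gives $k_{\Aut(L)}(B)\geq c$, and since $B$ is quasi-isolated we are done. (Note the Levi $\bL$ can be taken proper in $\bG$; if $s$ itself is quasi-isolated in $\bG$ then $b$ is already a quasi-isolated block of $G=\bG^F$ and we apply the hypothesis directly with $\bH=\bG$, so that case needs the $\bH=\bG$ instance of the assumption.)

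**The main obstacle** is the $\Aut$-equivariance of the Bonnaf\'e--Rouquier correspondence. The clean statement one wants is: the bijection $\Irr(b)\to\Irr(B)$ induced by the Morita equivalence is equivariant for $\Aut(G)_L$, or at least for the relevant piece $N_{\wt G\rtimes\cD}(\bL)$ of it, so that distinct $\Aut(G)$-orbits in $\Irr(b)$ pull back from distinct orbits in $\Irr(B)$ under the relevant automorphism group. This is available in the literature: the Bonnaf\'e--Dat--Rouquier refinement \cite{BDR} of \cite{BR03} gives exactly such an equivariance for the Jordan decomposition of blocks, and it is standard that an automorphism of $G$ stabilising $b$ stabilises $\cE_p(G,s)$ hence (after conjugating) normalises $\bL$ and sends $B$ to an isomorphic block via the induced automorphism of $L$. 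One then argues: if $\chi_1,\dots,\chi_c\in\Irr(b)$ were in fewer than $c$ orbits, the equivariant bijection would put their images in $\Irr(B)$ in fewer than $c$ orbits under $\Aut(L)$ (or at least under the subgroup of $\Aut(L)$ realised inside $N_{\Aut(G)}(\bL)$), contradicting the hypothesis. The slightly delicate bookkeeping is that the hypothesis is phrased with the \emph{full} $\Aut(L)$ while the correspondence only transports the subgroup of automorphisms coming from $N_{\Aut(G)}(\bL)$; this is why the preceding remark deliberately records that one only needs $k_{\Aut(G)_H}(B)\geq c$, and one should invoke the hypothesis in that slightly weakened but sufficient form (as the remark after Proposition~\ref{prop:hyponBi} already anticipates).

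**Putting it together:** (i) reduce to $b\subseteq\cE_p(G,s)$ with $s$ semisimple $p'$; (ii) if $s$ is quasi-isolated, apply the hypothesis with $\bH=\bG$; (iii) otherwise choose a proper $F$-stable Levi $\bL$ with $\cent{\bG^\ast}{s}\leq\bL^\ast$ and the corresponding quasi-isolated block $B$ of $L$; (iv) cite \cite{BR03}/\cite{BDR} for an $N_{\Aut(G)}(\bL)$-equivariant Morita equivalence $b\sim B$ preserving defect groups (so $B$ non-cyclic) and character-degree multiset up to a global scalar (so $|\cd(B)|>1$); (v) deduce $k_{\Aut(G)}(b)\geq k_{\Aut(G)_L}(B)\geq c$ from the hypothesis applied to $B$. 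I expect the only real work to be making step (iv) precise — tracking which automorphisms survive the correspondence and checking the defect-group and character-degree compatibility — while the rest is assembly of cited facts.
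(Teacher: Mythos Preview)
Your approach is essentially the same as the paper's: reduce via Bonnaf\'e--Rouquier to a quasi-isolated block of a Levi, check that non-cyclicity and $|\cd|>1$ are preserved, and use equivariance to transport the orbit bound. The paper cites \cite[Thm~1.3]{KM13} for defect groups, the character formula \cite[Thm~8.16]{CE04} for $|\cd(B)|>1\Leftrightarrow|\cd(b)|>1$, and crucially the proof of \cite[Lemma~3.5]{MNS18} for the decomposition $\Aut(G)_b=\Inn(G)\,\Aut(G)_{H,B}$ together with $\Aut(G)_{H,B}$-equivariance of $\RHG$; this last fact is what you are reaching for via \cite{BDR}, and it is the clean tool that makes step~(iv) precise.

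One point in your write-up is muddled. You worry that the hypothesis is stated for the full $\Aut(H)$ while the correspondence only transports $\Aut(G)_{H,B}$, and you suggest one must ``invoke the hypothesis in that slightly weakened form''. This has the inequality backwards: since $\Aut(G)_{H,B}$ acts on $\Irr(B)$ through its image in $\Aut(H)$, orbits under $\Aut(G)_{H,B}$ refine those under $\Aut(H)$, so $k_{\Aut(G)_{H,B}}(B)\ge k_{\Aut(H)}(B)\ge c$ directly from the stated hypothesis. The remark after Proposition~\ref{prop:hyponBi} is merely observing that the hypothesis could have been \emph{weakened}, not that one needs to weaken it to make the proof go through. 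With that clarified, your outline is correct.
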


\begin{proof}
Let $b$ be a non-cyclic block of $G$ such that $|\cd(b)|>1$. If $b$ is
quasi-isolated, our assumption yields that the statement holds for $b$, with
$H=G$. Otherwise, by \cite{BR03}, $b$ is Morita equivalent to a quasi-isolated
block of a proper Levi subgroup $H:=\bH^F$ of $G$. In particular, this
Bonnaf\'e--Rouquier Morita equivalence is induced by the map $\RHG$.
Let $B$ be the Bonnaf\'e--Rouquier
correspondent for $b$ in~$H$. Note that by \cite[Thm~1.3]{KM13}, $B$ is
non-cyclic if and only if $b$ is. Further, by the character formula
\cite[Thm~8.16]{CE04} for $\RHG$, we have $|\cd(B)|>1$ if and only if
$|\cd(b)|>1$. 
 
Now, by the proof of \cite[Lemma 3.5]{MNS18}, we have
$\Aut(G)_b=\Inn(G)\Aut(G)_{H,B}$, and $\RHG$ is $\Aut(G)_{H,B}$-equivariant.
By assumption, $k_{\Aut(H)}(B)$, and hence $k_{\Aut(G)_{H,B}}(B)$, is at
least~$c$.  Then we have $k_{\Aut(G)_{H,B}}(b)\geq c$, proving that also
$k_{\Aut(G)}(b)\geq c$.
\end{proof}

\subsection{$e$-Harish-Chandra theory and blocks}
In this subsection, we allow $\bG$ to be any  Levi subgroup of a simple
algebraic group of simply connected type and $F:\bG\rightarrow\bG$ a Frobenius
endomorphism with respect to an $\FF_q$-rational structure. Thanks to the work
of Brou{\'e}--Malle--Michel \cite{BMM}, Cabanes--Enguehard \cite{CE99}, and
Kessar--Malle \cite{KM15}, we have a parametrisation of blocks of $G:=\bG^F$
in terms of $e$-Harish-Chandra theory. (See, e.g., \cite[\S3.5]{GM20} for the
notions of $e$-torus, $e$-split Levi subgroups and $e$-cuspidal characters
of $(\bG,F)$.)

Namely, by \cite[Thm~A]{KM15}, if $p\geq 3$ is a prime not dividing $q$ and $e$
is the order of $q$ modulo $p$, then 
there is a bijection from the set of $G$-conjugacy classes of $e$-Jordan
quasi-central cuspidal pairs $(\bL,\la)$ of $\bG$ with $\la\in\cE(\bL^F,p')$ to
the set of $p$-blocks of $G$. (See \cite[Def.~2.1, 2.12]{KM15} for the
definitions.)  We will write $b_G(\bL,\la)$ for the block corresponding to
$(\bL,\la)$. Then all irreducible constituents of $\RLG(\la)$ lie in
$b_G(\bL,\la)$ by \cite[Thm~A]{KM15}. The next lemma allows us to say more
about characters lying in $b_G(\bL,\la)$ and could be useful for other
applications. Here we write $d^1$ for the map on class functions given by
composition with the characteristic function on $p'$-elements of a group. 

\begin{lem}   \label{lem:p-elt}
 Let $b=b_{G}(\bL,\la)$ be a $p$-block of $G=\bG^F$ in $\cE_p(G,s)$, for
 $s\in\bL^{*F}$ a semisimple $p'$-element. Let $t\in \bZ(\bL^*)^F$ be a
 $p$-element. Then $\Irr(b)\cap\cE(G,st)\ne\emptyset$.
\end{lem}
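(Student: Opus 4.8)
The plan is to reduce the statement to a question about $e$-cuspidal pairs in the group $\bL$ and then transfer it back to $\bG$ via Lusztig induction, using that $\RLG(\la)$ already lies in $b$ by \cite[Thm~A]{KM15}. First I would recall that $\la\in\cE(\bL^F,s)$ and that $t\in\bZ(\bL^*)^F$ is a $p$-element commuting with $s$, so that $st$ is a genuine semisimple element of $\bL^*$ and $\cE(\bL^F,st)$ is defined; the point of having $t$ central in $\bL^*$ is that tensoring with the linear character $\hat t$ of $\bL^F$ attached to $t$ (when $\bL$ has connected centre, or after a regular embedding $\bL\hookrightarrow\wt\bL$) gives a bijection $\cE(\bL^F,s)\to\cE(\bL^F,st)$, and in particular produces a character $\la t:=\la\otimes\hat t\in\cE(\bL^F,st)$. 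The key structural input is that $(\bL,\la t)$ is again an $e$-cuspidal (indeed $e$-Jordan quasi-central cuspidal) pair: multiplication by a central-type linear character of $p$-power order does not change the $e$-cuspidality properties because $\bZ(\bL^*)^F$-twists commute with $\RLL[\bM]$-restriction and with the relevant $d$-Harish--Chandra series, so $(\bL,\la t)$ still defines a block $b':=b_G(\bL,\la t)$ via the Kessar--Malle bijection.

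Next I would show $b'=b$. Both blocks lie in $\cE_p(G,s)$: indeed $\la t\in\cE(\bL^F,st)$ and $st$ has $p'$-part $s$ (here I use crucially that $t$ is a $p$-element and $s$ is a $p'$-element, so $(st)_{p'}=s$), so by \cite[Thm~9.12]{CE04} the block $b'$ lies in $\cE_p(G,s)$. To pin down that it is the \emph{same} block as $b$, I would compare the Lusztig-induced characters: by \cite[Thm~A]{KM15} all constituents of $\RLG(\la t)$ lie in $b'$, while all constituents of $\RLG(\la)$ lie in $b$. One then argues that $\RLG(\la)$ and $\RLG(\la t)$ have a common "$d^1$-part", i.e. that $d^1\RLG(\la)$ and $d^1\RLG(\la t)$ are supported on the same block — this is where the operator $d^1$ (projection onto $p'$-classes) enters. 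Concretely, since $t$ is a $p$-element, $\hat t$ restricts trivially to $p'$-elements of $\bL^F$, hence $d^1(\la t)=d^1(\la)$ as class functions on $\bL^F$; applying $\RLG$ and using that $\RLG$ commutes with $d^1$ (the twisted-induction analogue of the fact that Deligne--Lusztig induction commutes with truncation by $p'$-sections, cf.\ the Jordan-decomposition-of-blocks machinery in \cite{BMM,CE99}), we get $d^1\RLG(\la t)=d^1\RLG(\la)$. Since the latter is a nonzero virtual character all of whose constituents lie in $b$, and the former's constituents lie in $b'$, we conclude $b=b'$.

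Finally, having $b=b_G(\bL,\la t)$, I invoke \cite[Thm~A]{KM15} once more: the constituents of $\RLG(\la t)$ lie in $b$, and since $\la t\in\cE(\bL^F,st)$, transitivity of Lusztig series (every constituent of $\RLG$ of a character in $\cE(\bL^F,st)$ lies in $\cE(G,st)$, by \cite[Prop.~15.6]{CE04} after a regular embedding, or directly from the compatibility of $\RLG$ with rational series) shows that $\RLG(\la t)$ has a constituent in $\cE(G,st)$. Any such constituent then lies in $\Irr(b)\cap\cE(G,st)$, which is therefore nonempty. The main obstacle I anticipate is the verification that $(\bL,\la t)$ is still an $e$-Jordan quasi-central cuspidal pair and that multiplication by $\hat t$ is compatible with the Kessar--Malle parametrisation — i.e.\ making rigorous the claim that the central $p$-element twist preserves $e$-cuspidality and $e$-Jordan-quasi-centrality; this will likely require unwinding the definitions in \cite[Def.~2.1, 2.12]{KM15} and using that $\hat t$, being attached to a central element, is fixed by the relevant relative Weyl groups and is trivial on $[\bL,\bL]^F$. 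A secondary technical point is the compatibility $d^1\circ\RLG=\RLG\circ d^1$ for twisted induction, which I would cite from the block-theoretic literature (e.g.\ \cite{BMM} or \cite{CE04}) rather than reprove.
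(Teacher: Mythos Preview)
Your proposal is correct and uses the same key computation as the paper: $d^1(\hat t\la)=d^1(\la)$ because $\hat t$ has $p$-power order, combined with $d^1\circ\RLG=\RLG\circ d^1$, to obtain $d^1\RLG(\hat t\la)=d^1\RLG(\la)\ne0$.

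However, you take an unnecessary detour. You propose to first verify that $(\bL,\hat t\la)$ is again an $e$-Jordan quasi-central cuspidal pair, so that it labels a block $b'=b_G(\bL,\hat t\la)$ via \cite[Thm~A]{KM15}, and then to argue $b'=b$ from the equality of $d^1$-projections. The paper's proof bypasses this completely: once $d^1\RLG(\hat t\la)=d^1\RLG(\la)$ is established and known to be a nonzero class function with all constituents in $b$, it follows immediately that $\RLG(\hat t\la)$ itself has a constituent in $\Irr(b)$, since $d^1$ respects the block decomposition. There is no need to know that $(\bL,\hat t\la)$ is $e$-cuspidal, nor to invoke the Kessar--Malle parametrisation a second time. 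The ``main obstacle'' you anticipate---checking that the central $p$-twist preserves $e$-Jordan quasi-central cuspidality---simply does not arise. Since all constituents of $\RLG(\hat t\la)$ lie in $\cE(G,st)$ by the compatibility of Lusztig induction with rational series, any constituent in $b$ witnesses $\Irr(b)\cap\cE(G,st)\ne\emptyset$.
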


\begin{proof}
By what we recalled above, all constituents of $\RLG(\la)$ lie in $\Irr(b)$, so
$0\ne d^1(\RLG(\la))$ has some non-zero constituent in~$b$. Let $\hat t$ denote
the linear character of $\bL^F$ corresponding to $t\in \bZ(\bL^*)^F$
(see \cite[Prop.~2.5.20]{GM20}). Since $d^1$ commutes with Lusztig induction
\cite[Prop.~3.3.17]{GM20}
and $|\hat t|=|t|$ is a $p$-power,
$$d^1(\RLG(\hat t\la))=\RLG(d^1(\hat t\la))=\RLG(d^1(\la))=d^1(\RLG(\la))\ne0,$$
so $d^1(\RLG(\hat t\la))$ has a component in $b$ as well, which means that
$\RLG(\hat t\la)$ has a constituent lying in $\Irr(b)$. But all constituents
of $\RLG(\hat t\la)$ are contained in $\cE(\bG^F,st)$ (see
\cite[Prop.~3.3.20]{GM20}).
\end{proof}

For $\bG$ a connected reductive group such that $[\bG,\bG]$ is simply
connected, we say that a block of $G=\bG^F$ is of \emph{quasi-central defect}
if it covers a block of $[\bG,\bG]^F$ that has a defect group contained in
$\zent{[\bG,\bG]^F}$. (In particular, when $\bG$ is simple, being of
quasi-central defect is equivalent to having central defect.)

\begin{cor}   \label{cor:ge2}
 Let $G=\bG^F$ be as above and let $b$ be a block in $\cE_p(G,s)$ with
 non-trivial defect. Then $\Irr(b)$ contains at least two characters not
 conjugate under $\Aut(G)$, where one of the characters lies in
 $\cE(G,s)$ and the other one outside of $\cE(G, p')$.
\end{cor}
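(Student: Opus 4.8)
The idea is to combine Lemma~\ref{lem:p-elt} with the parametrisation of blocks via $e$-Jordan cuspidal pairs from \cite{KM15}. Write $b=b_G(\bL,\la)$ with $\la\in\cE(\bL^F,p')$, and let $s\in\bL^{*F}$ be the semisimple $p'$-element with $\Irr(b)\subseteq\cE_p(G,s)$. Since all irreducible constituents of $\RLG(\la)$ lie in $b$ and at least one of them, say $\chi_1$, lies in $\cE(G,s)$, we get the first of the two desired characters directly. For the second one, I would use that $b$ has non-trivial defect: this should force the existence of a non-trivial $p$-element $t\in\zent{\bL^*}^F$ (if $\zent{\bL^*}^F$ had trivial $p$-part, one should be able to argue that $b$ has defect controlled inside $\zent{[\bG,\bG]^F}$, i.e.\ quasi-central defect, hence trivial defect once we account for the constraints in the statement). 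Granting such a $t\neq1$, Lemma~\ref{lem:p-elt} produces a character $\chi_2\in\Irr(b)\cap\cE(G,st)$, and since $st$ has the non-trivial $p$-part $t$ while all elements labelling $\cE(G,p')$ are $p'$-elements, $\chi_2$ lies outside $\cE(G,p')$; in particular $\chi_2\neq\chi_1$.

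\textbf{Non-conjugacy.} It remains to see $\chi_1$ and $\chi_2$ are not $\Aut(G)$-conjugate. Here I would invoke \cite[Cor.~2.4]{NTT08}: an automorphism $\vhi$ of $G$ sends $\cE(G,s)$ to $\cE(G,\vhi^*(s))$ for the dual automorphism $\vhi^*$, so if $\chi_1^\vhi=\chi_2$ then $\cE(G,\vhi^*(s))=\cE(G,st)$, forcing $\vhi^*(s)$ to be $G^*$-conjugate to $st$. But $\vhi^*(s)$ is a $p'$-element (being the image of the $p'$-element $s$ under an automorphism), whereas $st$ has non-trivial $p$-part, a contradiction. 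The same dual-pairs argument shows $\chi_2$ cannot be conjugate to any character of $\cE(G,p')$, which re-confirms $\chi_2$ lies genuinely outside that union of series.

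\textbf{Main obstacle.} The delicate point is the implication ``non-trivial defect $\Rightarrow$ there is $1\neq t\in\zent{\bL^*}^F$ a $p$-element''. One needs to relate the defect of $b_G(\bL,\la)$ to the $p$-part of $|\zent{\bL^*}^F|$ (equivalently, to the order of the relevant $\Phi_e$-torus in the centre of $\bL$), using the description of defect groups of $e$-Harish-Chandra blocks in \cite{CE99,KM15}. I expect one wants: if every $p$-element of $\zent{\bL^*}^F$ is trivial, then $\bL=C_{\bG}(\text{a central }\Phi_e\text{-torus})$ has the property that $b$ is of quasi-central defect, hence (given $[\bG,\bG]$ simply connected and the block covering a central-defect block of $[\bG,\bG]^F$) has the defect group forced into the centre, contradicting non-triviality. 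Once this structural input is in place, the rest of the argument is the short chain above. I would also double-check the edge cases where $\bL=\bG$ or where $\zent{\bL^*}^F$ is a $p'$-group for arithmetic reasons, to make sure the non-triviality of the defect is genuinely being used.
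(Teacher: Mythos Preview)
Your overall strategy matches the paper's, and the non-conjugacy argument via \cite[Cor.~2.4]{NTT08} is exactly what the paper uses. The gap is precisely where you flagged it, but your proposed resolution is wrong.

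You want to show that if $\bZ(\bL^*)^F$ has trivial $p$-part then $b$ has trivial defect, by arguing that $b$ would then have quasi-central defect and hence defect group contained in the centre. But quasi-central defect does \emph{not} force trivial defect: $\bZ([\bG,\bG]^F)$, and certainly $\bZ(G)$ when $\bG$ is a Levi subgroup, can have non-trivial $p$-part. So the implication ``quasi-central defect $\Rightarrow$ trivial defect'' fails, and your contradiction does not materialise.

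The paper instead splits into two cases. If $\bL$ is proper in $\bG$, then $\bL$ centralises a non-trivial $e$-torus of $\bG$, so $\bL^*$ centralises a non-trivial $e$-torus of $\bG^*$; since $p\mid\Phi_e(q)$, this $e$-torus has non-trivial $F$-fixed $p$-part, giving $1\neq t\in\bZ(\bL^*)^F$ directly---no defect-group analysis is needed here. If $\bL=\bG$, then the (quasi-central) defect group $D$ of $b$ is normal in $G$, so by \cite[Thm~9.4]{N} some character in $b$ is non-trivial on $D$; by \cite[Prop.~1.2(v)]{CE99} such a character cannot lie in $\cE(G,p')$. This second case is exactly the edge case you said you would ``double-check'', and it requires a genuinely different argument, not a refinement of the first one.
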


\begin{proof}
First, by the result of Hiss \cite[Thm~9.12]{CE04} we have
$\Irr(b)\cap\cE(G,s)\neq\emptyset$. Now write $b=b_G(\bL,\la)$ with $\bL\le\bG$
an $e$-split Levi subgroup whose dual $\bL^*\le\bG^*$ contains $s$. If $\bL$ is
proper in $\bG$, it centralises a non-trivial $e$-torus of $\bG$ and thus its
dual centralises a non-trivial $e$-torus of $\bG^*$. Thus there is a
$p$-element $1\ne t\in \bZ(\bL^*)^F\le \bC_{G^*}(s)$, and
$\Irr(b)\cap\cE(G,st)\neq\emptyset$ by Lemma~\ref{lem:p-elt}. If $\bL=\bG$, the
quasi-central defect group $D$ of $b$ is normal in $G$, hence there exist
characters in $b$ non-trivial on $D$ (see e.g. \cite[Thm~9.4]{N}) which again
cannot lie in $\cE(G,p')$ (see e.g.~\cite[Prop.~1.2(v)]{CE99}). Arguing as
before, characters in $\cE(G,p')$ and $\cE(G,st)$ cannot be $\Aut(G)$-conjugate.
\end{proof}

Note that in the context of Theorem \ref{newmandi}, we are interested in the
case $c=3$ of Hypothesis \ref{hyp:orbisol}, and hence Corollary \ref{cor:ge2}
implies that we now only need to deal with non-cyclic blocks.

The next observation deals with unipotent blocks, and may be of interest for
other applications. For this statement, we relax the assumption that $p$ is
odd, and if $p=2$ we define $e$ to be the order of $q$ modulo $4$.
We remark that for $\eps\in\{\pm1\}$, we write $\PSL_n(\eps q)$ for the group
$\PSL_n(q)$ when $\eps=+1$ and $\PSU_n(q)$ when $\eps=-1$, with analogous
conventions for related groups $\SL_n(\eps q)$, $\PGL_n(\eps q)$, and
$\GL_n(\eps q)$.

\begin{lem}   \label{lem:unipsdistinctrpart}
 Let $\bG$ be a connected reductive group in characteristic $r$ such that
 $[\bG,\bG]$ is simply connected, and let $F\colon \bG\rightarrow\bG$ be a
 Frobenius endomorphism. Let $p$ be a prime and let $b$ be a unipotent $p$-block
 of $G:=\bG^F$ that is not of quasi-central defect. Then $\Irr(b)$ contains two
 unipotent characters whose degrees have different $r$-parts.
 In particular, these cannot be $\Aut(G)$-conjugate.
\end{lem}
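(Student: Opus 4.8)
The plan is to reduce to the case where $\bG$ is semisimple of simply connected type, where the unipotent blocks and their constituents are well understood, and then exhibit two unipotent characters in $b$ with different $r$-parts. First I would pass from $\bG$ to $\bH_0:=[\bG,\bG]$ (simply connected since $[\bG,\bG]$ is simply connected): a unipotent block $b$ of $G$ covers a unipotent block $b'$ of $H_0=\bH_0^F$, and since $b$ is not of quasi-central defect, $b'$ is not of central defect, i.e. $b'$ has positive non-central defect. Unipotent characters of $G$ restrict to $H_0$ compatibly (Lusztig's multiplicity-one result, \cite[Thm~15.11]{CE04}), and their degrees differ from those of the corresponding constituents in $H_0$ only by the index $|G:H_0 \bZ(G)|$, which is prime to $r$; so it suffices to find two unipotent characters in $b'$ with distinct $r$-parts, and then further, using $H_0\cong\prod_i \bH_i^{F_i}$ with each $\bH_i$ simple simply connected, to reduce to a single simple factor $\bH_i^{F_i}$ carrying a non-central-defect unipotent block. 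Thus we are reduced to: $\bG$ simple of simply connected type, $b$ a unipotent $p$-block with non-central defect, find two unipotent characters in $\Irr(b)$ with different $r$-parts.

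Next I would invoke the $e$-Harish-Chandra parametrisation of unipotent blocks: by \cite{BMM,CE99,KM15}, $b=b_G(\bL,\lambda)$ for an $e$-split Levi $\bL$ and a unipotent $e$-cuspidal character $\lambda\in\cE(\bL^F,1)$, and $\Irr(b)\cap\cE(G,1)$ consists exactly of the unipotent constituents of $\RLG(\lambda)$ — more precisely, it is the set of unipotent characters lying in the $e$-Harish-Chandra series above $(\bL,\lambda)$, parametrised by characters of the relative Weyl group $W_G(\bL,\lambda)$. Since $b$ has non-central defect, $(\bL,\lambda)$ cannot be $(\bG, \mathrm{St})$ or a "defect-zero-like" degenerate situation; one checks that $W_G(\bL,\lambda)$ is then a non-trivial complex reflection group (or at worst $\bL<\bG$ is a proper $e$-split Levi), and in particular $\RLG(\lambda)$ has at least two distinct unipotent constituents. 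The degrees of the unipotent characters in this series are $\lambda(1)\cdot|G:\bL^F|_{r'}\cdot(\text{generic degrees of }W_G(\bL,\lambda))$ up to sign, so the $r$-parts of these degrees all coincide within a series — which is exactly the obstacle. To get genuinely different $r$-parts I would instead compare the character in $\cE(G,1)$ given by twisted induction of $\lambda$ with a character higher up: among the unipotent characters of $G$ lying in $b$, one should use the known $r$-parts of unipotent degrees (these are explicit "$a$-invariants": $\psi(1)_r = r^{a_\psi}$ for a unipotent character $\psi$, where $a_\psi$ runs over the fake-degree/generic-degree $a$-values). Concretely, within a fixed $e$-Harish-Chandra series the $a$-invariants of the constituents are $a_{(\bL,\lambda)} + a_\phi$ as $\phi$ runs over $\Irr(W_G(\bL,\lambda))$, and these $a_\phi$ are not all equal once $W_G(\bL,\lambda)\ne1$; the $r$-part of the unipotent degree is governed by this $a$-invariant (since the $\Phi_d$-parts contributing to a unipotent degree are units at $r$, the only power of $r$ is $r^{a}$). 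This gives two constituents of $\RLG(\lambda)$ whose degrees have different $r$-parts.

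So the key steps, in order, are: (1) reduce to $\bG$ simple simply connected via restriction to $[\bG,\bG]^F$ and the product decomposition, tracking that the index lost is prime to $r$; (2) write $b=b_G(\bL,\lambda)$ and observe that non-central defect forces $W_G(\bL,\lambda)\ne 1$ (equivalently, $\RLG(\lambda)$ has $\ge 2$ unipotent constituents); (3) recall that the $r$-part of a unipotent character degree $\psi(1)$ equals $r^{a_\psi}$ where $a_\psi$ is the $a$-invariant, and that within the $e$-Harish-Chandra series above $(\bL,\lambda)$ the values $a_\psi$ are $a_{(\bL,\lambda)}+a_\phi$, $\phi\in\Irr(W_G(\bL,\lambda))$; (4) conclude that two constituents with $a_\phi\ne a_{\phi'}$ have unipotent degrees of different $r$-parts, and that characters of different degrees cannot be $\Aut(G)$-conjugate. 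I expect step (2) — pinning down precisely that a non-quasi-central-defect unipotent block always has a non-trivial relative Weyl group, and excluding the degenerate small-rank or torus cases by hand — together with the uniform statement that $\psi(1)_r = r^{a_\psi}$ for unipotent $\psi$ (Carter's formulae, \cite[Sec.~13.8--13.9]{Carter}), to be the main technical points; the degree/$a$-invariant bookkeeping and the reduction in step (1) are routine.
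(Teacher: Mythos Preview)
Your reduction to the case $\bG$ simple of simply connected type is essentially the paper's, and your observation that $\psi(1)_r=q^{a_\psi}$ for a unipotent character $\psi$ is correct and useful. But the core of your argument diverges from the paper, and the step you yourself flag as ``the main technical point'' is not discharged.

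The paper does \emph{not} invoke a uniform additivity formula $a_\psi=a_{(\bL,\lambda)}+a_\phi$ for $e$-Harish-Chandra series. Instead it works type by type: for exceptional $\bG$ it reads off the constituents of $\RLG(\lambda)$ from \cite[\S13.9]{Carter} and the tables in \cite{BMM} (with $1_G$ and $\mathrm{St}_G$ handling the case where $\bL$ is a torus); for type~$\tA$ with $e'$-core $\lambda$ and weight $w\ge1$ it exhibits the two partitions $(\lambda_1+e'w,\lambda_2,\ldots,\lambda_k)$ and $(\lambda_1,\ldots,\lambda_k,1^{e'w})$ and checks via the hook formula that the associated unipotent degrees have different $r$-parts; for types $\tB,\tC,\tD$ it writes down two explicit symbols with the given $e'$-core and applies the symbol degree formula \cite[Prop.~4.4.7]{GM20}, using Olsson's $e'$-twisting to transfer the core case to the cocore case.

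Your proposed uniform route needs a precise reference for the additivity of $a$-invariants in $e$-Harish-Chandra series with $e>1$. For $e=1$ this is Lusztig's theory; for $e>1$ the relevant degree formulae in the BMM/Spetses framework involve Schur elements of cyclotomic Hecke algebras whose parameters depend on the embedding $(\bL,\lambda)\hookrightarrow\bG$, and turning this into the bare statement ``the $a_\phi$ are not all equal'' across all types is exactly the case analysis the paper carries out. You also need, and do not prove, that $\bL\ne\bG$ forces $W_G(\bL,\lambda)\ne1$ for unipotent $e$-cuspidal pairs; this is true but again comes from the BMM classification rather than a soft argument. Finally, your intermediate claim that ``the $r$-parts of these degrees all coincide within a series'' is simply false and contradicts what you then use: the generic degrees $D_\phi$ are polynomials in $q$ with varying lowest powers, and that variation is the whole point.

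In short, your outline is a coherent conceptual reformulation, but it relocates rather than removes the work; the paper's explicit constructions are what actually establish the lemma.
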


\begin{proof}
We first assume that $\bG=[\bG,\bG]$ is simple of simply connected type.
Suppose $G$ is of exceptional type.
Let $e$ be as defined above and let $b=b_G(\bL,\la)$ for some unipotent
quasi-central $e$-cuspidal pair $(\bL,\la)$ (see
\cite[Thm~A]{KM15}). Note that our assumption $b$ has non-central defect
groups and $\bG$ is simple means we have $G\neq L:=\bL^F$. 
Then \cite[Sec.~13.9]{Carter} and
\cite[Tab.~1 and~2]{BMM} include the decomposition of $\RLG(\la)$ and the
relative Weyl group $W_G(\bL, \la)$ for many of the relevant blocks and
\cite[Tab.~3]{BMM} includes information about remaining cases where $\bL$ is a
torus and $W_G(\bL, \la)$ is non-cyclic.  By studying the characters in these
decompositions, whose degrees are available in \cite[Sec.~13.9]{Carter}, and at
times applying Ennola duality (see \cite[Thm~3.3]{BMM}), we see there are at
least two characters in
$\RLG(\la)$ whose degrees have distinct $r$-parts, except possibly
when $\bL$ is a torus.  In the latter situation, we have $\bL$ is the
centraliser of a Sylow $e$-torus, $e$ is regular for $\bG$, and $\la=1$. Then
$b$ is the principal block of $G$ and is the unique block containing
$p'$-degree unipotent characters.  Hence $b$ contains at least $1_G$ and
$\mathrm{St}_G$, and the claim holds.

Now suppose $G$ is of classical type. Note that by \cite[Thm~(i)]{CE94}, we
may replace $G$ with a group of the same rational type. If $p=2$, we are done
taking $1_G$ and St$_G$, since there is a unique unipotent block in this case
(see \cite[Thm~21.14]{CE04}). So, we also assume $p\geq 3$.

First, let $G=\SL_n(\eps q)$.  Let $e'$ be the order of $\eps q$ modulo~$p$.
The unipotent blocks of $G$ are parametrised by $e'$-cores of partitions
of~$n$. Assume $b$ is labelled by the $e'$-core $\la$, with $n=e'w+|\la|$.
As $b$ has non-central defect, $w\ge1$.
Let $\mu_1=(\la_1+e'w,\la_2,\ldots, \la_k)$ and
$\mu_2=(\la_1,\ldots,\la_k,1^{e'w})$, two partitions of $n$ labelling distinct
unipotent characters $\chi_1,\chi_2$ in~$b$. Then the degree formulas
\cite[Props.~4.3.1 and~4.3.5]{GM20} show that $\chi_1(1)$ and $\chi_2(1)$ have
distinct $r$-part.

Now let $G$ be one of $\Sp_{2n}(q)$, $\SO_{2n+1}(q),$ or $\SO_{2n}^\eps(q)$ and
let $e'$ be the order of $q^2$ modulo~$p$. Now the unipotent blocks of $G$ (or
in the case $G=\SO_{2n}^\eps(q)$, the blocks of $\GO_{2n}^\eps(q)$ lying above
unipotent blocks of $G$) are parametrised by $e'$-cores (if $p\mid (q^{e'}-1)$)
or cocores (if $p\mid (q^{e'}+1)$) of appropriate symbols
(see \cite[pp.~48--52]{BMM} and \cite[Thm]{CE94}).
Suppose $b$ (or a block above in $\GO_{2n}^\eps(q)$) is labelled by the
$e'$-core $(X;Y)$, with $n=e'w+\rnk(X; Y)$. (Here $\rnk(X; Y)$ is the rank of
the symbol, defined in \cite[p.~301]{GM20}.) Let $X=(x_1,\ldots, x_a)$ and
$Y=(y_1,\ldots,y_c)$ with $x_i<x_{i+1}$ and $y_i<y_{i+1}$ for each $i$.  
Again we have $w\geq 1$, and we may consider the two unipotent characters in
$\Irr(b)$ labelled by 
$$\begin{array}{r}(0,1,2,\ldots,e'w-1, x_1+e'w,x_2+e'w,\ldots,x_a+e'w;\\
    1, 2, \ldots, e'w, y_1+e'w,y_2+e'w,\ldots,y_c+e'w)\end{array}$$ and 
$$(x_1,\ldots,x_{a-1},x_a+e'w; Y).$$ 
Here the degree formula \cite[Prop.~4.4.7]{GM20} shows that these two
characters will have distinct $r$-part.

If $b$ is instead labelled by an $e'$-cocore, Olsson's process of $e'$-twisting
of symbols (\cite[p.~235]{ols86}) shows there is a bijection between
symbols $(X';Y')$ with $e'$-core $(X;Y)$ and symbols $(\wt{X}';\wt{Y}')$ with
$e'$-cocore $(\wt{X};\wt{Y})$, where $(\wt{X};\wt{Y})$ is the $e'$-twist of
$(X;Y)$. Further, $e'$-twisting does not change the entries of the symbol,
i.e., $(X';Y')$ and its $e'$-twist $(\wt{X}';\wt{Y}')$ satisfy
$X'\cup Y'=\wt{X}'\cup\wt{Y}'$. Hence from the formula
\cite[Prop.~4.4.7]{GM20}, we see that the $r$-part of the characters
corresponding to the symbols $(X';Y')$ and  $(\wt{X}';\wt{Y}')$ are the same,
and we are done with the case $\bG=[\bG,\bG]$ is simple.

In the general case, we have $b$ lies above a block $B$ of $[\bG,\bG]^F$ whose
defect groups are non-central. We may write $[\bG, \bG]^F$ as a direct product
of groups of the form $G_i=\bG_i^{F_i}$, where $\bG_i$ is simple of
simply connected type and $F_i$ is some power of $F$.  Then $B$ is a tensor
product of blocks $B_i$ of $G_i$, at least one of which, say $B_j$,
must have non-central defect groups.   Then from above, $\Irr(B_j)$ contains at
least two unipotent characters with different $r$-parts, and therefore the
statement also holds for $B$. Since the unipotent characters in~$b$ are
extensions of those in $B$, this completes the proof.
\end{proof}

\subsection{Type $\tA$}

We begin by considering the case of finite linear and unitary groups.  In this
subsection, we fix $\bar{S}=\PSL_n(\eps q)$, $G=\SL_n(\eps q)$,
$G^\ast=\PGL_n(\eps q)$, and $\wt{G}=\GL_n(\eps q)\cong\wt{G}^\ast$.

The blocks of $\wt{G}$ have been well-studied, with the parametrisation of the
blocks of $\wt{G}$ given in \cite{FS82} and a reduction to smaller-rank linear
and unitary groups given in \cite{MO83}.  We use these to prove that Theorem
\ref{newmandi} holds in the case that $\bar S=S/\zent{S}=\PSL_n(\eps q)$ and
that Hypothesis~\ref{hyp:orbisol} holds for $c=3$ and blocks of $G$.  Again in
this case, we will provide a slightly more general result.

\begin{prop}   \label{prop:TypeA}
 Let $G=\SL_n(\eps q)$ such that $\PSL_n(\eps q)$ is simple, and let $p$ be an
 odd prime with $p\nmid q$. 
 \begin{enumerate}[\rm(a)]
  \item Let $B$ be a $p$-block of $G$ with non-cyclic defect. Then
   $k_{\Aut(G)}(B)\geq 3$. 
  \item  Theorem~{\rm\ref{newmandi}} holds when $S/\zent{S}=\PSL_n(\eps q)$.
 \end{enumerate}
\end{prop}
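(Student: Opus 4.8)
The plan is to reduce to quasi-isolated blocks via Lemma~\ref{lem:redtoisol} and then verify Hypothesis~\ref{hyp:orbisol} for $c=3$ for the quasi-isolated blocks appearing in an $F$-stable Levi subgroup $\bH$ of $\bG$, which by Proposition~\ref{prop:hyponBi}(b) and Remark~\ref{rem:maxquasiisol} will give $k_{\Aut(H)}(B)\geq 3$ and hence, passing back through $\RHG$, yield part~(a) for $G$ itself. Part~(b) then follows from~(a) together with Theorem~\ref{thm}\ref{thm}~9.9 of \cite{N}: since $p\nmid|\zent{\wt G}|$ and more to the point $p$ is odd, any $p$-block $b$ of $S=\SL_n(\eps q)/Z$ (for $Z\le\zent G$ a $p'$-group, which is automatic as $\gcd(n,q-\eps)$ divides $|\zent G|$ but we only need the $p$-part) lifts to a block $B$ of $G$ with $k_{\Aut(S)}(b)\ge k_{\Aut(G)}(B)$ after accounting for the automorphisms induced; since $\Aut(S)$ is induced by $\Aut(G)$ in type $\tA$, the orbit count is preserved and~(a) transfers to~(b). (The exceptional covers $\SL_2(4)\cong\SL_2(5)$, $\SU_4(2)$, etc., and the small cases $\SL_2(q)$, $\SL_3(\eps q)$ with non-generic behaviour are already handled in Propositions~\ref{prop:sporadic} and~\ref{prop:definingchar}, so I may assume $\PSL_n(\eps q)$ is simple and not one of these.)

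The heart of the argument is therefore the following: for each $F$-stable Levi $\bH\le\bG$ and each non-cyclic quasi-isolated $p$-block $B$ of $H=\bH^F$ with $|\cd(B)|>1$, produce the data required by Hypothesis~\ref{hyp:orbisol}. First I would use that in type $\tA$ the quasi-isolated semisimple $p'$-elements $s\in H^*$ are extremely restricted — indeed in $\PGL_n(\eps q)$ the only isolated classes are those of elements $s$ with $C_{\bH^*}(s)$ a product of $\GL$'s of equal rank permuted cyclically, so up to the Bonnaf\'e--Rouquier reduction one really lands on \emph{unipotent} blocks of (products of) smaller $\SL_{m}(\eps q')$. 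That is the key simplification: after the reduction we may assume $s=1$, i.e.\ $B$ is a unipotent block. Now decompose $H_0=[\bH,\bH]^F=\prod_i H_i$ with $H_i=\bH_i^{F_i}$ simple simply connected of type $\tA$, and let $B'=\otimes_i B_i$ be the covered block of $H_0$, each $B_i$ unipotent. Since $B$ is non-cyclic and has $|\cd(B)|>1$, by Proposition~\ref{prop:hyponBi}(a) $B'$ has positive defect, so at least one $B_i$, say $B_1$, is non-cyclic (if all $B_i$ are cyclic then $B'$ is cyclic and I invoke case~\ref{hyp3}). For that non-cyclic unipotent block $B_1$ of $\SL_m(\eps q')$, Lemma~\ref{lem:unipsdistinctrpart} already gives two unipotent characters of distinct $r$-part, hence in distinct $\Aut(H_1)$-orbits; I need a \emph{third} orbit. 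This is where the explicit combinatorics of Proposition~\ref{prop:TypeA} must go in: using the $e'$-core/weight parametrisation of unipotent blocks of $\GL_n(\eps q)$ from \cite{FS82} and the branching to $\SL_n$, I would exhibit three partitions $\mu_1,\mu_2,\mu_3$ of $n$ with the same $e'$-core (the core labelling $B_1$) and pairwise distinct character degrees — e.g.\ modifying the three partitions $(\la_1+e'w,\la_2,\dots)$, $(\la_1,\dots,\la_k,1^{e'w})$ used in Lemma~\ref{lem:unipsdistinctrpart} by a further distinct hook-addition when $w\ge2$, and treating $w=1$ (where the block is Brauer-tree-like but still non-cyclic only if the tree has $\ge2$ edges, forcing $\ge3$ unipotent characters in $\GL$) separately. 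One then checks these three remain in distinct $\Aut(\SL_m(\eps q'))$-orbits: field and graph automorphisms permute unipotent characters only via their effect on the labels (field autos fix unipotent character degrees; the graph auto in unitary type is already built into $\eps$), and diagonal automorphisms fix unipotent characters, so distinct degrees $\Rightarrow$ distinct orbits. This gives $\bar k_{\Aut(H_1)}(B_1)\ge3$, i.e.\ condition~\ref{hyp1} with $c=3$, and we are done via Proposition~\ref{prop:hyponBi}(b).

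The main obstacle I anticipate is precisely the last step: producing three unipotent characters of a non-cyclic unipotent block of $\SL_n(\eps q)$ that are pairwise non-$\Aut$-conjugate, uniformly in $n$, $q$, $\eps$ and in the block. The degree formulas \cite[Props.~4.3.1, 4.3.5]{GM20} make the $r$-parts of any two explicit partitions computable, but guaranteeing three \emph{pairwise} distinct degrees (not just distinct $r$-parts) for all block cores and all weights $w\ge1$ — especially small weight $w=1$, where the block is a cyclic-defect block of $\GL$ but can still be non-cyclic in $\SL$ due to the $\gcd(n,q-\eps)$-fold covering, or where the $e'$-core is trivial and $B_1$ is the principal block — will require a careful case split, and the non-conjugacy under the (abelian, hence harmless on unipotents) diagonal automorphisms together with field automorphisms must be argued using \cite[Cor.~2.4]{NTT08} plus the fact that field automorphisms act on unipotent characters through a permutation of the Weyl-group data that fixes degrees. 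I would isolate this as the technical core of the subsection and handle the handful of small-rank exceptions (where three distinct unipotent degrees genuinely fail and one must instead use a non-unipotent third character produced by Corollary~\ref{cor:ge2} or Lemma~\ref{lem:p-elt}) by direct inspection.
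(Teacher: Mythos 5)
Your proposal diverges from the paper's proof in architecture and, more importantly, has gaps at exactly the points where the real work lies. The paper does \emph{not} prove this proposition by reducing to quasi-isolated blocks of Levi subgroups; it works directly with an arbitrary block $B$ of $G=\SL_n(\eps q)$, covers it by a block $\wt B$ of $\wt G=\GL_n(\eps q)$, and uses the Fong--Srinivasan/Michler--Olsson parametrisation: $\cent{\wt G^*}{\wt s}\cong\prod_i\GL_{m_i}((\eps q)^{\delta_i})$ and Jordan decomposition send $\Irr(\wt B)\cap\cE(\wt G,\wt s)$ to a unipotent block $B'=\prod B_i$ of the centraliser. (Indeed, Proposition~\ref{prop:TypeA} is an \emph{input} to Hypothesis~\ref{hyp:orbisol} via Corollary~\ref{cor:isolsc}, not a consequence of it.) Your claim that ``after the reduction we may assume $s=1$, i.e.\ $B$ is a unipotent block'' is false for $\SL_n$: quasi-isolated $p'$-elements of $\PGL_n(\eps q)$ need not be central, their centralisers are disconnected, and the resulting blocks are not unipotent. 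That identification only holds in $\GL_n$, where every semisimple centraliser is a Levi.

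The more serious gap is your treatment of the case you dismiss as ``a handful of small-rank exceptions.'' When every $B_i$ in the Jordan correspondent has central defect (which forces $m_i=e_iw_i=1$, so the defect group of $\wt B$ is a Sylow $p$-subgroup of a product of cyclic tori $C_{(\eps q)^{\delta_i}-1}$), the correspondent block contains a \emph{single} unipotent character, so no argument about three distinct unipotent degrees can possibly apply. This is not a small-rank degeneration but a generic family, and it is where the bulk of the paper's proof lives: one must exhibit two $p$-elements $\wt t_1,\wt t_2\in\cent{\wt G^*}{\wt s}$ whose eigenvalue multiplicities prevent $\wt t_1^\al$ from being conjugate to $\wt t_2 z$ for any $z\in\zent{\wt G^*}_p$ and any $\al\in\cD$, with separate analyses for $l\ge3$, $l=2$ with $p\nmid(q-\eps)$, $p^2\mid((\eps q)^{\delta_1}-1)$, etc. Finally, part (b) cannot be waved through as you do: when $p\mid\gcd(n,q-\eps)$ the centre of $G$ has order divisible by $p$ (your parenthetical claim that $Z$ is automatically a $p'$-group is wrong), the defect group drops to $D/Z_p$, and one must verify that the witnesses constructed are trivial on $\zent{G}_p$ --- in the paper this requires arranging $\wt t_1,\wt t_2\in[\wt G^*,\wt G^*]$ and a determinant/eigenvalue case split on $k=2,3,\ge4$ that your proposal does not anticipate.
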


\begin{proof}
Keep the notation from above.  Note that in this case,
$\wt{G}^\ast\cong\wt{G}=\GL_n(\eps q)$ and
$G^\ast=\PGL_n(\eps q)\cong \wt{G}/\zent{\wt{G}}$ and recall from the
discussion in Section \ref{sec:redisol} that
$\wt{G}\rtimes \cD$ induces all automorphisms on $\PSL_n(\eps q)$. From
Section~\ref{sec:othergroups}, we may assume that $G$ is the (non-exceptional)
Schur covering group for $\PSL_n(\eps q)$. Let $e'$ be the order of
$\eps q$ modulo $p$.

Note that in the situation of (b), we have $S\cong G/Z$ where $Z\leq \zent{G}$
is some central subgroup.  Further, writing $Z_p$ for the Sylow $p$-subgroup
of~$Z$, we may identify the blocks of $G/Z$ with blocks of $G/Z_p$, using
\cite[Thm~9.9]{N}.  Hence we may assume $Z=Z_p$ is a $p$-subgroup
in~ $\zent{G}$. Then it suffices to show (b) in the case that $e'=1$ with
$S=G/Z_p$ a quotient by some non-trivial central $p$-group and to show (a). 

Let $B$ be a non-cyclic block of $G$ and suppose $s$ is a semisimple
$p'$-element of $G^\ast$ such that $B$ lies in $\cE_p(G, s)$. In the context
of~(b), we further assume here that $\bar{B}$ is a block of $G/Z_p$ with
non-cyclic defect groups dominated by $B$, so that $\irr{\bar B}$ is comprised
of those members of $\irr{B}$ that are trivial on~$Z_p$. 

Applying Corollary \ref{cor:ge2}, it suffices for (a) to show that there are at
least two non-$\Aut(G)$-conjugate members of $\Irr(B)\cap\cE(G,s)$ or two
non-$\Aut(G)$-conjugate members of $\Irr(B)$ outside of $\cE(G,p')$.  For (b),
note that every member of $\cE(G,s)$ lies above the same character $\omega_s$
of $\zent{G}$, by \cite[11.1(d)]{bon06}, and that $\omega_s$ must be trivial on
$Z_p$ (and in fact on $\zent{G}_p\in\mathrm{Syl}_p(\zent{G})$) since $s$ is a
$p'$-element. In particular, every member of $\Irr(B)\cap\cE(G,s)$ may be
viewed as a character of $\bar{B}$.  Hence for~(b), it suffices to show that
there are at least two non-$\Aut(G)$-conjugate members of
$\Irr(B)\cap\cE(G,s)$, or that there are at least two non-$\Aut(G)$-conjugate
members of $\Irr(B)$ lying outside of $\cE(G,p')$ and trivial on $\zent{G}_p$. 

Let $\wt{s}\in\wt{G}^\ast\cong \wt{G}$ be a semisimple $p'$-element mapping to
$s$ under the natural epimorphism $\wt{G}^\ast\twoheadrightarrow G^\ast$, and
let $\wt{B}$ be a block of $\wt{G}$ in $\cE_p(\wt{G},\wt{s})$ covering $B$.
(Indeed, such a setup exists using \cite[Thm~9.12 and Prop.~15.6]{CE04}.) Let
$\wt{D}$ and $D$ be defect groups of $\wt{B}$ and $B$, respectively,
satisfying $D=\wt{D}\cap G$. Now,
\[\cent{\wt{G}^\ast}{\wt{s}}
  \cong \prod_{i=1}^k \GL_{m_i}((\eps q)^{\delta_i}),\]
where $\delta_i$ and $m_i$ are positive integers. (See \cite[Sec.~1]{FS82} for
details.) Note that 
$q-\eps$ divides $(\eps q)^{\delta_i}-1$ for each~$i$.  Write
$G_i:=\GL_{m_i}((\eps q)^{\delta_i})$ for $1\leq i\leq k$.

According to \cite[Thm~(7A)]{FS82}, Jordan decomposition maps
$$\Irr(\wt{B})\cap\cE(\wt{G},\wt{s})$$ 
to 
$$\Irr(B')\cap \cE(\cent{\wt{G}^\ast}{\wt{s}}, 1),$$ 
where
$B'=\prod_{i=1}^k B_i$ is some unipotent block of $\cent{\wt{G}^\ast}{\wt{s}}$
with $B_i$ a unipotent block of $G_i$. 
Let $e_i$ be the order of $(\eps q)^{\delta_i}$ modulo $p$.  Then $\wt{B}$
(and $B'$) is determined by certain $e_i$-core partitions $\la_i$.  Namely,
writing $m_i:=e_iw_i+|\la_i|$ for each $1\leq i\leq k$, \cite[Thm~(1.9)]{MO83}
yields that the number of characters in $\irr{\wt{B}}$ and the defect group of
$\wt{B}$ are the same as for the principal block of
$\prod_{i=1}^k \GL_{e_iw_i}((\eps q)^{\delta_i})$. 

In particular, note that if some $B_i$ has non-central defect, then $B_i$, and
hence $B'$, contains at least two unipotent characters of distinct degree, by
Lemma~\ref{lem:unipsdistinctrpart}.  In this case, $\Irr(\wt{B})\cap
\cE(\wt{G}, \wt{s})$ contains at least two characters of distinct degree, which
must lie above two characters in $\Irr(B)\cap \cE(G,s)$ lying in distinct
$\Aut(G)$-orbits.

To complete the proof of~(a) and~(b) it therefore suffices to assume that each
$B_i$ has central defect, which forces $m_i=e_iw_i=1$ for each $1\leq i\leq l$,
where we without loss assume that $B_i$ has trivial defect groups for
$l<i\leq k$.  Here note that $G_i\cong C_{(\eps q)^{\delta_i}-1}$ is cyclic for
$1\leq i\leq l$ and a defect group for $\wt{B}$ is isomorphic to a Sylow
$p$-subgroup of $\prod_{i=1}^l C_{(\eps q)^{\delta_i}-1}$. Since $\wt{B}$ is
non-cyclic, note that $l\geq 2$.

Suppose $\chi_i\in\Irr(B)$ lies under members of $\cE(\wt{G},\wt{s}\wt{t}_i)$,
$i=1,2$, for $p$-elements $\wt{t}_i\in\cent{\wt{G}^\ast}{\wt{s}}$, and assume
$\chi_1$ is $\Aut(G)$-conjugate to $\chi_2$.  Then $(\wt{s}\wt{t}_1)^\alpha$
is $\wt{G}^\ast$-conjugate to $\wt{s}\wt{t}_2z$ for some
$z\in\zent{\wt{G}^\ast}$ and $\alpha\in\cD$.  It follows that $\wt{t}_1^\alpha$
is conjugate to $\wt{t}_2z_p$, and hence they share the same set of
eigenvalues, where $z_p$ is the $p$-part of $z$. To complete the proof of~(a),
we therefore aim to exhibit $\wt{t}_1$ and $\wt{t}_2$ such that this cannot be
the case.

When $l\geq 3$, elements of $\wt{G}^\ast$ corresponding to $(x,1,\ldots,1)$ and
$(x,y, 1,\ldots,1)$ for $p$-elements $1\neq x\in G_1$, $1\neq y\in G_2$ have
different multiplicities for the eigenvalue 1, and hence we obtain
$\wt t_1, \wt t_2$ satisfying $\wt t_1^\alpha$ is not $\wt{G}^\ast$-conjugate
to $\wt t_2 z$ for any $z\in\zent{\wt{G}^\ast}$ and $\alpha\in \cD$.

Now suppose $l=2$. If $p\nmid(q-\eps)$, then $p$-elements corresponding to
$(x,1)$, $(x,y)$ with $1\neq x\in G_1$, $1\neq y\in G_2$ again satisfy the
claim, since $p\nmid |\zent{\wt{G}^\ast}|$.  Hence we assume that
$p\mid (q-\eps)$.  Note that this forces $p\mid ((\eps q)^{\delta_i}-1)$ for
each $i=1,2$.

If $p^2\mid((\eps q)^{\delta_1}-1)$, say, then elements $\wt t_1, \wt t_2$
corresponding to $(x,1)$ and $(x^p,1)$ with $|x|=p^2$ have the desired
property. We therefore can assume that $p\mid\mid((\eps q)^{\delta_1}-1)$, so
that also $p\mid\mid(q-\eps)$, in which case $D=\wt{D}\cap G$ is cyclic,
and we are done with~(a).

To complete the proof of (b), we wish to show that such $\wt{t}_1$ and
$\wt{t}_2$ exist such that $\omega_{\wt{t}_i}$ are further trivial on
$\zent{\wt{G}}_p$. For this, it suffices to find $\wt{t}_1$ and $\wt{t}_2$
lying in $[\wt{G}^\ast,\wt{G}^\ast]\cong G$.  Recall that here
$p\mid \gcd(n,q-\eps)$. In this case, note that $e_i=1$ for $1\leq i\leq k$
since $(q-\eps)\mid ((\eps q)^{\delta_i}-1)$, so that $B_i$ is the unique
block in $\cE(G_i, 1)$ for each $1\leq i\leq k$, and hence $\wt{B}$ is the
unique block in $\cE_p(\wt{G}, \wt{s})$.  This also forces $l=k$, and
$\cE(\wt{G}, \wt{s})$ contains only one character.

Recall that $\wt{D}$ is isomorphic to a Sylow $p$-subgroup of the (in this case
abelian) group $\cent{\wt{G}^\ast}{\wt{s}}$ and in fact by
\cite[Thm~(3C)]{FS82}, if we identify $\wt{G}$ with $\wt{G}^\ast$, we may take
$\wt{D}$ to be a Sylow $p$-subgroup of $\cent{\wt{G}^\ast}{\wt{s}}$. This way,
we also identify $G$ with
$[\wt{G}^\ast, \wt{G}^\ast]$ and have $D=\wt{D}\cap G$ is viewed as a subgroup
of $\cent{\wt{G}^\ast}{\wt{s}}\cap[\wt{G}^\ast,\wt{G}^\ast]$.  Let
$\bar{D}=D/Z_p$ be a defect group for $\bar B$.  It suffices to argue that
$D/\zent{G}_p$ contains at least two non-trivial non-$\Aut(G)$-conjugate
elements when $|\cd(B)|>1$. (Indeed, this would yield $p$-elements of
$\cent{\wt{G}^\ast}{\wt{s}}$ that lie in $G\cong[\wt{G}^\ast,\wt{G}^\ast]$ and
are not $\Aut(G)$-conjugate to $\zent{\wt{G}^\ast}_p$-multiples of one
another.) Hence we may assume $D/\zent{G}_p$ is elementary abelian, as
otherwise two non-identity elements of $D/\zent{G}_p$ with distinct orders
satisfy the claim.

If $k\geq 4$, then $D/\zent{G}_p$ is generated by at least two elements, and as
before we may find such elements whose eigenvalue structures do not allow them
to be $\Aut(G)$-conjugate.
 
If $k=2$ and $\delta_1=\delta_2$, then this forces $p\mid \delta_1$, since
$p\mid n=2^c\delta_1$ for some positive integer~$c$. Further, note that since
$D$ is not cyclic, we know $((\eps q)^{\delta_1}-1)_p>(q-\eps)_p$.  Then
choosing the embeddings into $\wt{G}$ of elements of the form $(y, y^{-1})$
with $|y|=((\eps q)^{\delta_1}-1)_p$ and $(x,1)$ with $|x|=(q-\eps)_p$, we see
these elements lie in $G$ (since $(x,1)$ is embedded into $\wt{G}$ with
eigenvalues
$$(x,x^{\eps q},\ldots,x^{(\eps q)^{\delta_1-1}},1,\ldots,1)
  =(x,\ldots,x,1,\ldots,1),$$ 
and therefore has determinant 1) and cannot be $\Aut(G)$-conjugate to
$\zent{G}_p$-multiples of one another.  Hence we may assume that
$\delta_1>\delta_2$.  Then note that, again studying the embedding of elements
$(s_1, s_2)\in C_{(\eps q)^{\delta_1}-1}\times C_{(\eps q)^{\delta_2}-1}$ into
$\wt{G}^\ast$, we see that $\wt{s}$ cannot be $\wt{G}^\ast$-conjugate to
$\wt{s}z$ for any $1\neq z\in\zent{\wt{G}^\ast}$. Hence the unique character of
$\cE(\wt{G}, \wt{s})$ restricts irreducibly to $G$, and similarly every member
of $\cE(\wt{G}, \wt{s}\wt{t})$ for $\wt{t}\in\cent{\wt{G}^\ast}{\wt{s}}$ a
$p$-element also restricts irreducibly.  Since
$\cent{\wt{G}^\ast}{\wt{s}\wt{t}}=\cent{\wt{G}^\ast}{\wt{s}}$ for each such
$\wt{t}$, we see every element of $\Irr(\wt{B})$, and hence of $\Irr(B)$, will
have the same degree, a contradiction.
 
So, we finally assume $k=3$.  Arguing like above, we may assume
$\delta_1=\delta_2=\delta_3$, so $\cent{\wt{G}^\ast}{\wt{s}}\cong G_1^3$. If
$p\mid\mid ((\eps q)^{\delta_1}-1)$, then $p\mid\mid(q-\eps)$, so we have
$Z_p=\zent{G}_p$ and $\bar{D}$ is cyclic, a contradiction. So we see
$p^2\mid ((\eps q)^{\delta_1}-1)$.  But then we also have $p^2\mid |\zent{G}_p|$, since otherwise $D/\zent{G}_p$ contains elements of order $p^2$,
contradicting our assumption that $D/\zent{G}_p$ is elementary abelian. In
particular, $p^2\mid n$.  Then here since $n=2^c\cdot 3\delta_1$, we have
$p\mid\delta_1$.  Hence we are done by considering elements $(y,y^{-1},1)$ and
$(x,1,1)$ with $x,y$ analogous to the case $k=2$ above.
\end{proof}

\subsection{Other classical groups}

For $(\bG, F)$ a connected reductive group and Frobenius morphism $F$ emitting
an $\FF_q$-rational structure and $s\in \bG^\ast$ semisimple, we write
\[J_s^\bG\colon \ZZ\cE(\bG^F, s)\rightarrow \ZZ\cE(\cent{\bG^\ast}{s}^F, 1)\]
for a Jordan decomposition as in \cite[Thm~2.6.22]{GM20}.
Recall from \cite[Prop.~3.3.20]{GM20} that for $\bK\leq \bG$ an $F$-stable Levi
subgroup with dual $\bK^\ast\le\bG^*$ and $s\in \bK^\ast$, Lusztig induction
induces a map $\hc_\bK^\bG\colon\ZZ\cE(\bK^F, s)\rightarrow\ZZ\cE(\bG^F, s)$.

We next consider the case of other classical groups, specifically for
quasi-isolated blocks.

\begin{prop}   \label{prop:typesBCD}
 Let $\bG$ be simple of simply connected type such that $G=\bG^F$ is
 quasi-simple of type $\tB_n(q)$ with $n\geq 2$, $\type{C}_n(q)$ with
 $n\geq 3$, $\tD_n(q)$ with $n\geq 4$ or $\tw{2}\tD_n(q)$ with
 $n\geq 4$. Let $p$ be an odd prime not dividing $q$. Then if $B$ is a
non-cyclic quasi-isolated $p$-block of $G$, we have $k_{\Aut(G)}(B)\geq 3$. 
\end{prop}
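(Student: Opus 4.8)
The goal is to produce, in every such $B$, three irreducible characters lying in three different $\Aut(G)$-orbits; in practice it suffices to exhibit three characters of pairwise distinct degrees, or two unipotent characters of distinct degree together with one non-unipotent character. By Corollary~\ref{cor:ge2}, $\Irr(B)$ already contains two characters in distinct $\Aut(G)$-orbits, one in $\cE(G,s)$ and one outside $\cE(G,p')$, so it only remains to find a third. Since $p$ is odd and $s\in\bG^{*F}$ is a semisimple $p'$-element, Bonnaf\'e's classification of quasi-isolated classes in classical groups \cite{Bo05} leaves only two possibilities: either $s=1$, so that $B$ is a unipotent block, or $s$ is an involution (in particular $r$ is then odd, as involutions are not semisimple in characteristic $2$) whose centraliser $\cent{\bG^*}{s}$ has identity component a direct product of classical groups.

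Assume first $s=1$, so $B$ is unipotent. As $\bG$ is simple, $\zent G$ is a $2$-group, so a $p$-block of central---equivalently, of quasi-central---defect has trivial defect; since $B$ is non-cyclic it is therefore not of quasi-central defect, and Lemma~\ref{lem:unipsdistinctrpart} supplies two unipotent characters in $\Irr(B)$ whose degrees have distinct $r$-parts, hence distinct degrees. Being unipotent, these two characters are not $\Aut(G)$-conjugate to one another, nor to the non-unipotent character provided by Corollary~\ref{cor:ge2}; thus $k_{\Aut(G)}(B)\geq 3$.

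Now let $s$ be an involution. Choose a regular embedding $\bG\hookrightarrow\tbG$, set $\wt G:=\tbG^F$, pick $\wt s\in\tbG^{*F}$ mapping to $s$, and let $\wt B$ be a block of $\wt G$ in $\cE_p(\wt G,\wt s)$ covering $B$. Since $\tbG$ has connected centre, $\tbG^*$ has simply connected derived subgroup, so by Steinberg's connectedness theorem $\bC:=\cent{\tbG^*}{\wt s}$ is connected with $[\bC,\bC]$ again simply connected, and $\bC$ is a direct product of classical groups. A Jordan decomposition compatible with the block partition (see \cite{KM15,CE04}) identifies $\Irr(\wt B)\cap\cE(\wt G,\wt s)$ with $\Irr(\beta)\cap\cE(\bC^F,1)$ for a unipotent block $\beta$ of $\bC^F$, multiplying degrees by a fixed factor prime to $r$; as $\wt B$ is non-cyclic and the $p$-part of $\zent{\bC^F}$ is trivial, $\beta$ is not of quasi-central defect. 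Lemma~\ref{lem:unipsdistinctrpart}, applied to $\bC^F$ and $\beta$, then yields two unipotent characters in $\Irr(\beta)$ with distinct $r$-parts of degree, and transporting them back through the Jordan decomposition gives two characters in $\Irr(\wt B)\cap\cE(\wt G,\wt s)$ of distinct degree. Finally one passes from $\wt G$ down to $G$ exactly as in the proof of Proposition~\ref{prop:TypeA}: the index $|\wt G:G|$ is prime to $r$, so the constituents of the restrictions to $G$ of these two characters still have distinct $r$-parts of degree, and they lie in $\Irr(B)$; since $\Aut(G)$ preserves character degrees, these two characters of $G$ are not $\Aut(G)$-conjugate, and together with the character outside $\cE(G,p')$ from Corollary~\ref{cor:ge2} we conclude $k_{\Aut(G)}(B)\geq 3$.

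The principal difficulty lies in the bookkeeping of the involution case: one must track the Jordan decomposition simultaneously through the passage between $G$ and $\wt G$, through the (order at most $2$) component group of $\cent{\bG^*}{s}$ on the adjoint side, and through the graph automorphism present in types $\tD_n$ and $\tw2\tD_n$, so as to be certain that two characters of distinct degree ``upstairs'' really do land in two distinct $\Aut(G)$-orbits of $\Irr(B)$; here one relies on the equivariance properties of Jordan decomposition for classical groups. A secondary issue is a short list of degenerate low-rank configurations---where a classical factor of $\bC^F$ is too small to carry a non-central unipotent block (for instance configurations involving $\Sp_2(q)$, $\SO_3(q)$ or $\SO_4^{\pm}(q)$), or where $n$ is small---in which $B$ turns out to be cyclic, contrary to hypothesis, or else must be handled directly using the explicit unipotent degrees and the combinatorics of symbols already employed in the proof of Lemma~\ref{lem:unipsdistinctrpart}.
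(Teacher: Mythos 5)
Your overall strategy is the same as the paper's: pass to a regular embedding, use Jordan decomposition to transport the problem to a unipotent block of $\cent{\tbG^*}{\wt s}^F$, invoke Lemma~\ref{lem:unipsdistinctrpart} to get two unipotent characters of distinct degree, and add the character outside $\cE(G,p')$ from Corollary~\ref{cor:ge2}. The unipotent case ($s=1$) is handled correctly. But there is a genuine gap in the involution case, at exactly the point where the paper has to work hardest.

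The gap is your claim that ``the $p$-part of $\zent{\bC^F}$ is trivial,'' hence that $\beta$ is not of quasi-central defect. This is false: $\bC=\cent{\tbG^*}{\wt s}$ contains the positive-dimensional central torus $\zent{\tbG^*}$, and more to the point the centre of $\bC$ itself can have non-cyclic $p$-part. Consequently $\wt B$ (and $B$) can be non-cyclic while the Jordan correspondent $\beta$ has \emph{central} defect and contains a \emph{unique} unipotent character, so Lemma~\ref{lem:unipsdistinctrpart} yields nothing. The paper shows that in this residual situation the defect group of $\beta$ is $\zent{H}_p$ (after ruling out components of type $\tA_m(\eps q^\delta)$ with $d_p(\eps q^\delta)=1$), and that $D_B\cong\zent{H}_p/\zent{\wt G}_p$ is then cyclic — contradicting the hypothesis — \emph{except} when $\bG$ is of type $\tD_n$ and $\cent{\bG^*}{s}^\circ$ has derived subgroup of type $\tA_{n-3}$ with $d_p(\eps q)\neq1$; there $D_B\cong C_{(q+\eps)_p}^2$ is genuinely non-cyclic, the block exists for all $n\ge4$, and it is not a ``degenerate low-rank configuration'' in which $B$ turns out to be cyclic. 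For that case one cannot find a second character inside $\cE(G,s)$ at all: the paper instead produces two order-$p$ elements $t_1,t_2\in\zent{\bH^*}^F\smallsetminus\zent{\wt G^*}$ such that $t_1$ is not $\wt G^*\cD$-conjugate to $t_2z$ for any $z\in\zent{\wt G^*}_p$, and uses Lemma~\ref{lem:p-elt} plus the $\cD$-equivariance of Jordan decomposition to obtain two non-$\Aut(G)$-conjugate characters of $B$ lying \emph{outside} $\cE(G,p')$. Your closing paragraph does not anticipate this: you would need either this explicit construction or an equivalent argument to close the proof.
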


\begin{proof} 
As before, let $\iota\colon \bG\hookrightarrow\tbG$ be a regular embedding,
even taking the embedding as in \cite[Ex.~1.7.4]{GM20}, and
write $\wt{G}=\tbG^F$. Let $B$ be a non-cyclic quasi-isolated block in
$\cE_p(G, s)$ and let $\wt{B}\subseteq \cE_p(\wt{G}, \wt{s})$ be a block of
$\wt{G}$ lying above $B$, where
$\iota^\ast(\wt{s})=s$ is a quasi-isolated $p'$-element of $G^\ast$. 
Let $e$ be the order of $q$ modulo $p$. By what we recalled from \cite{KM15},
let $(\bL, \la)$ be the $e$-Jordan cuspidal pair of $\tbG$ such that $\wt{B}$
is the unique block of $\wt{G}$ containing all constituents of
$\hc_\bL^{\tbG}(\la)$.

Note that we may assume $\wt{s}\in\bL^\ast$ for a dual Levi subgroup
$\bL^\ast\le\tbG^\ast$ of $\bL$. Then since $F$ is a Frobenius endomorphism
and $\cent{\tbG^\ast}{\wt{s}}$ is connected and has only components of
classical type, by \cite[Thm~3.3.7]{GM20} the Mackey formula holds, and by
\cite[Thm~4.7.2]{GM20},
\begin{equation}\label{eqn:jordlus}
  J_{\wt{s}}^{\bK}\circ\hc_{\bK_1}^{\bK}=
  \hc_{\cent{\bK_1^*}{\wt{s}}}^{\cent{{\bK}^*}{\wt{s}}}\circ J_{\wt{s}}^{\bK_1},
\end{equation}
for any $F$-stable Levi subgroups $\bK_1^\ast\leq \bK^\ast\leq \tbG^\ast$
containing $\wt{s}$, so in particular for $\bK_1^*=\bL^*$. 

Also note that $\cent{\bL^\ast}{\wt{s}}$ is an $e$-split Levi subgroup of
$\cent{\tbG^\ast}{\wt{s}}$ (by \cite[Prop.~2.12]{Ho22}) and (by definition
of $e$-Jordan cuspidality), $(\cent{\bL^\ast}{\wt{s}}, J_{\wt{s}}^\bL(\la))$ is
an $e$-cuspidal pair.
Write $\bH^\ast:=\cent{\tbG^\ast}{\wt{s}}$ and
$\bM^\ast:=\cent{\bL^\ast}{\wt{s}}$, a Levi subgroup of $\bH^\ast$, and let
$\bH,\bM\le\tbG$ be dual to
$\bH^\ast$ and $\bM^\ast$, respectively. Then $(\bM, \psi)$, with
$\psi:= J_1^{\bM^\ast}\circ J_{\wt{s}}^\bL(\la)$, is an $e$-Jordan cuspidal
pair for $\bH$ defining a unipotent block $b$ of $H:=\bH^F$. 
Then by \cite[Thm~3.11]{BMM} and \cite[Thm~A]{KM15}, we have
$\irr{b}\cap \cE(H, 1)$ is the set of constituents of $\hc_\bM^\bH(\psi)$.
From this, we see that Jordan decomposition induces an injection
$$\irr{b}\cap \cE(H, 1)\hookrightarrow \irr{\wt{B}}\cap \cE(\wt{G}, \wt{s}).$$

Let $t\in\cent{\wt{G}^\ast}{\wt{s}}=(\bH^\ast)^F$ be a $p$-element and let
$\wt{\bG}(t)\leq\wt{\bG}$ and $\bH(t)\leq\bH$ be $F$-stable Levi subgroups in
duality with $\cent{\wt{\bG}^\ast}{t}$ and $\cent{\bH^\ast}{t}$, respectively.
Applying \cite[Thm (iii)]{CE94} to both $\wt{B}$ and $b$ and applying
\eqref{eqn:jordlus} to $(\wt{\bG},\bG(t),\wt{s})$ as well as to its analogue
for $(\bH^\ast,\bH(t)^\ast,1)$, we obtain further that
$\Irr(\wt{B})\cap\cE(\wt{G},\wt{s}t)$ is non-empty if and only if
$\Irr(b)\cap \cE(H,t)$ is non-empty.  More specifically,
$\chi\in\cE(\wt{G},\wt{s}t)$ is in $\irr{\wt{B}}$ if and only if
$\psi\in\cE(H, t)$ lies in $\irr{b}$, where $\psi$ is the character such that
$J_{\wt{s}t}^{\wt{\bG}}(\chi)=J_{t}^{\bH}(\psi)$.
Further, by \cite[Prop.~5.1]{CE99}, $b$ and $\wt{B}$ have isomorphic defect
groups. Let $D_b\cong D_{\wt{B}}$ and $D_B$ be defect groups of $b, \wt{B}$
and~$B$, respectively, chosen so that $D_B=D_{\wt{B}}\cap G$.

Now, by \cite[(2.2)]{Bo05}, we have
$\iota^\ast\left(\bH^\ast\right)=\cent{\bG^\ast}{s}^\circ$. Since $B$ is
quasi-isolated, the latter has only classical components, of the forms listed
in \cite[Tab.~II]{Bo05}. Hence the components of $\bH$ are of the forms dual
to these.

By \cite[Thm~(i)]{CE94}, the unipotent characters in $b$ are independent of
isogeny type.  Let $H_0:=[\bH,\bH]^F$ and let $b'$ be the unipotent block of
$H_0$ below $b$ and $b''$ the corresponding unipotent block of
$[\bH, \bH]_\SC^F$, where $[\bH, \bH]_\SC$ denotes the simply
connected covering of the semisimple group $[\bH, \bH]$. Applying
Corollary~\ref{cor:ge2} and Lemma~\ref{lem:unipsdistinctrpart}, we may assume
$\zent{[\bH,\bH]_\SC^F}_p$
is a defect group of $b''$, and each of $b,b'$ and $b''$ contain exactly one
unipotent character. Assume $[\bH, \bH]_\SC^F$ has a factor of type
$\type{A}_m(\eps q^\delta)$ for some $m,\delta\geq 1$ and $\eps\in\{\pm1\}$
with $d_p(\eps q^\delta)=1$, where $d_p(\eps q^\delta)$ denotes the order of
$\eps q^\delta$ modulo $p$. Then, $\type{A}_m(\eps q^\delta)$ has
a unique unipotent $p$-block, which must contain the trivial and Steinberg
characters, contradicting that $b$ contains a unique unipotent character.
Thus, since all components of $\bH$ are classical, $\zent{[\bH, \bH]_\SC^F}_p$
is trivial, so $p\nmid |\zent{H_0}|$, and we obtain that $D_b=\zent{H}_p$ from
\cite[Thm~(ii)]{CE94}. Then we see using \cite[Lemma~4.16, Def.~4.3]{CE99} that
$D_{\wt{B}}$ can be taken to be equal to $D_b=\zent{H}_p$.

Since $\wt{G}/G\zent{\wt{G}}$ is a $2$-group, we know $\zent{\wt{G}}_p\cap G=1$, so $D_{\wt{B}}$ is a direct product of $D_B$ and $\zent{\wt{G}}_p$, so $D_B\cong D_{\wt{B}}/\zent{\wt{G}}_p$ and $\zent{H}_p/\zent{\wt{G}}_p$ must be non-cyclic.
However, from our list of possible structures in \cite[Tab.~II]{Bo05}, we see
that $\zent{H}_p/\zent{\wt{G}}_p$ is trivial or cyclic, unless $\bG$ is of type
$\type{D}_n$ and $\bH$ is of type $\type{A}_{n-3}$. In this case, although $s$
is quasi-isolated, $\wt{s}$ is not, and $\bH$ is a Levi subgroup of
$\wt{\bG}$.  From our discussions above and using, for example, the
descriptions of possible $(\cent{\bG^\ast}{s}^\circ)^F$ in
\cite[Table~2]{mallecusp}, we have $H_0=\type{A}_{n-3}(\eps q)$ with
$d_{p}(\eps q)\neq 1$, and $\zent{H}_p/\zent{\wt{G}}_p\cong C_{(q+\eps)_p}^2$
coming from a torus $C_{q+\eps}^2\leq H$.

Let $\theta$ be the unique unipotent character in $b$.
We have $\Irr(b)=\{\theta\hat{t}\mid t\in \zent{\bH^\ast}^F_p\}$, where
$t\mapsto \hat t$ is the isomorphism $\zent{\bH^\ast}^F\rightarrow\Irr(H/H_0)$
guaranteed by \cite[Prop.~2.5.20]{GM20}. Now, a construction for
$\wt{\bG}\cong\wt{\bG}^\ast$ is presented in \cite[Ex.~1.7.4]{GM20} using tori
constructed in \cite[Ex.~1.5.6]{GM20}. The element $s$ and its centralizer in
$G^\ast$ are described in terms of root systems in \cite{Bo05}. With this, we
can see $\zent{\bH^\ast}$ through this construction, and we calculate that
there are elements $t_1,t_2$ of order $p$ in
$\zent{\bH^\ast}^F\setminus\zent{\wt{G}^\ast}$ such that $t_1$ is not
$\wt{G}^\ast\cD$-conjugate
to $t_2z$ for any $z\in \zent{\wt{G}^\ast}_p$. Since $\wt{G}\rtimes \cD$
induces $\Aut(G)$ and \cite[Thm~3.1]{CS13} tells us Jordan decomposition
for $\wt{G}$ can be chosen to be $\cD$-equivariant, we then obtain as in the
proof of Proposition \ref{prop:TypeA} two non-$\Aut(G)$-conjugate characters in
$\wt{B}\setminus\cE(\wt{G}, p')$ lying above non-$\Aut(G)$-conjugate members of
$\Irr(B)$.
\end{proof}

\subsection{Remaining quasi-isolated blocks}

\begin{prop}   \label{prop:ExceptTypes}
 Let $\bG$ be such that $G:=\bG^F$ is quasi-simple of type $\tE_n(q)$ with
 $6\leq n\leq 8$, $\tw{2}\tE_6(q)$, $\tF_4(q)$ or $\type{G}_2(q)$. Let $p$
 be an odd prime not dividing $q$. Then if $B$ is a non-cyclic quasi-isolated
 $p$-block of $G$, we have $k_{\Aut(G)}(B)\geq 3$.
\end{prop}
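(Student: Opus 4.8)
The plan is to work through the classification of quasi-isolated semisimple classes of the adjoint exceptional groups (Bonnaf\'e \cite{Bo05}) together with the determination of the corresponding $p$-blocks of $G$ and their defect groups carried out in \cite{KM13}, treating each non-cyclic quasi-isolated block in turn. By Corollary~\ref{cor:ge2}, any block $b=b_G(\bL,\la)\subseteq\cE_p(G,s)$ of non-trivial defect already contains two characters lying in distinct $\Aut(G)$-orbits, one inside $\cE(G,s)$ and one outside $\cE(G,p')$; hence for each non-cyclic quasi-isolated $b$ it suffices to produce a \emph{third} character not $\Aut(G)$-conjugate to these. As in Proposition~\ref{prop:TypeA} I would fix a regular embedding $\iota\colon\bG\hookrightarrow\tbG$, pass to $\wt G=\tbG^F$ and to a block $\wt B\subseteq\cE_p(\wt G,\wt s)$ lying above $B$ with $\iota^\ast(\wt s)=s$, using the $\cD$-equivariant Jordan decomposition \cite[Thm~3.1]{CS13} and the multiplicity-freeness of restriction $\wt G\to G$ \cite[Thm~15.11]{CE04} to pass between $\Irr(\wt B)$ and $\Irr(B)$. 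One must note that for $\bG$ of type $\tE_6$ with $p=3\mid(q-1)$ (and dually $\tw2\tE_6$ with $3\mid(q+1)$) the quotient $\wt G/G\zent{\wt G}$ is itself a $3$-group, so that there the reduction from $\wt B$ to $B$ requires the full Clifford theory above $G$ and not merely $p'$-restriction.

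Two complementary mechanisms supply the third character. If the $e$-split Levi $\bL$ with $\bL^\ast\ni s$ has $\zent{\bL^\ast}^F_p$ non-cyclic, or of order at least $p^2$, then one may choose $p$-elements $1\ne t_1,t_2\in\zent{\bL^\ast}^F\le\cent{G^\ast}{s}$ with $s,st_1,st_2$ pairwise non-conjugate in $G^\ast$, in fact pairwise not conjugate even after applying automorphisms dual to elements of $\cD$ and multiplying by elements of $\zent{\wt G^\ast}_p$; by Lemma~\ref{lem:p-elt} there is a character of $\Irr(b)$ in each of $\cE(G,st_1)$ and $\cE(G,st_2)$, and \cite[Cor.~2.4]{NTT08} shows that these, together with a character of $\Irr(b)\cap\cE(G,s)$, lie in three distinct $\Aut(G)$-orbits. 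When instead $\zent{\bL^\ast}^F_p$ is cyclic, so the defect is concentrated in $\cent{\bG^\ast}{s}$, I would use the $e$-Harish--Chandra description: the constituents of $\RLG(\la)$ exhaust $\Irr(b)\cap\cE(G,s)$, and a Jordan decomposition $J_s^{\bG}$ identifies them with the constituents of $\hc_{\bM}^{\bH}(\psi)$ inside the centraliser $\bH^\ast=\cent{\bG^\ast}{s}$, whose components are classical or small exceptional. Reading off the relative Weyl group $W_G(\bL,\la)$ and the relevant unipotent degrees from \cite[Tab.~1--3]{BMM}, \cite[Sec.~13.9]{Carter} and \Chevie{} then exhibits two characters of $\Irr(b)\cap\cE(G,s)$ of distinct degree --- for classical components this is essentially Lemma~\ref{lem:unipsdistinctrpart}, and combined with the character outside $\cE(G,p')$ from Corollary~\ref{cor:ge2} this again gives three distinct $\Aut(G)$-orbits.

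Between them these two mechanisms dispose of all but a short residual list: the quasi-isolated blocks with defect group of shape $C_p\times C_p$ for which $\zent{\bL^\ast}^F_p$ has order exactly $p$ \emph{and} the matching unipotent block of $\cent{\bG^\ast}{s}^F$ carries only one unipotent character, together with all quasi-isolated blocks of $\type{G}_2(q)$ and a handful for $\tF_4(q)$, $\tE_6(q)$ with $3\mid(q-1)$ and $\tw2\tE_6(q)$ with $3\mid(q+1)$. For this finite list I would argue directly, counting $\Aut(G)$-orbits in $\Irr(B)$ from the generic character tables and \Chevie{}, bearing in mind that outer diagonal automorphisms are relevant only for $\tE_6$/$\tw2\tE_6$ (when $3$ divides the appropriate $q\mp1$) and for $\tE_7$ when $2\mid(q-1)$, the latter a $p'$-phenomenon. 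The main obstacle will be exactly this case analysis when $p=3$: several of $q-1$, $q+1$, $q^2+q+1$, $q^2-q+1$ may be divisible by $3$ at once so that many quasi-isolated classes are simultaneously in play; the centralisers $\cent{\bG^\ast}{s}$ are frequently disconnected with component group a $2$-group or $\fS_3$, which controls how unipotent characters of $\cent{\bG^\ast}{s}^\circ$ extend and hence the precise character count; and in types $\tE_6$, $\tw2\tE_6$ the outer-diagonal action on $\Irr(b)$ must be tracked closely to be sure the three characters produced genuinely fall into three orbits. This bookkeeping --- where one leans heavily on the tables of \cite{KM13} and on \Chevie{} --- is where essentially all of the work lies.
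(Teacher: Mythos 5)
Your proposal is essentially the paper's argument: both start from Corollary~\ref{cor:ge2}, which already yields one orbit inside $\cE(G,s)$ and one outside $\cE(G,p')$, and then hunt for a third orbit by running through the classification of quasi-isolated blocks. Your ``Mechanism 2'' --- exhibiting two characters of distinct degree in $\Irr(B)\cap\cE(G,s)$ by reading off the $e$-Harish-Chandra data and the unipotent degrees of $\cent{\bG^\ast}{s}$ --- is exactly what the paper does, organised there by whether $p$ is bad (tables of \cite{KM13}, plus Ennola duality), good with $e\ge3$ (tables of \cite{Ho22} --- a reference you will need, since for good primes and $e\ge3$ the decompositions of $\RLG(\la)$ for non-unipotent quasi-isolated pairs are in neither \cite{KM13} nor \cite{BMM}), or good with $e\in\{1,2\}$ (same decompositions as in the bad-prime tables, but now each pair $(\bL,\la)$ labels its own block by \cite[Thm~A]{KM15}). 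Your ``Mechanism 1'' --- producing characters in two distinct non-$p'$ series $\cE(G,st_1)$, $\cE(G,st_2)$ via Lemma~\ref{lem:p-elt} --- is not used by the paper in this proposition (it is the engine of Propositions~\ref{prop:TypeA} and~\ref{prop:typesBCD}), but it is sound here since the centres of $\wt G^\ast$ in play are small, and it would shorten some of the table inspection. The paper also records two reductions you leave implicit: a quasi-central $e$-cuspidal pair $(\bG,\chi)$ gives a block of central, hence cyclic, defect, so one may always take $\bL$ proper; and unipotent blocks are dispatched beforehand by Lemma~\ref{lem:unipsdistinctrpart} together with Corollary~\ref{cor:ge2}.
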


\begin{proof}
Let $B$ lie in $\cE_p(G, s)$, where $s$ is a quasi-isolated semisimple
$p'$-element of $G^\ast$. Let $e$ be the order of $q$ modulo $p$. In our
situation, thanks to \cite[Rem.~2.2]{KM15}, the notions of $e$-Jordan
cuspidality and $e$-cuspidality coincide. When $p$ is good for $G$, the
$e$-Jordan quasi-central cuspidal pairs are the same as the
$e$-Jordan cuspidal pairs by \cite[Thm~A(d,e)]{KM15}.

Assume $(\bG,\chi)$ is quasi-central $e$-cuspidal. Then since
$\bG=[\bG, \bG]$, \cite[Prop.~2.5]{KM13} implies that $b_G(\bG,\chi)$ is of
central defect. However, then $b_G(\bG,\chi)$ is cyclic.

Hence from now on, we may assume $B=b_G(\bL,\la)$ with $(\bL,\la)$ an
$e$-Jordan quasi-central cuspidal pair of $G$ with $G\neq\bL^F$ and
$\la\in\cE(\bL^F,s)$.  Further, from Lemma~\ref{lem:unipsdistinctrpart} and
Corollary~\ref{cor:ge2}, we may assume $B$ is not unipotent. From here, we
consider separately the cases that $p$ is bad and that $p$ is good.

First let $p\geq 3$ be bad for $\bG$.
Here the block distributions for $\cE(G,s)$ are given in
\cite[Tab.~2--4,6--9]{KM13}, or by Ennola duality with those results.
(Namely, when $e=2$, one formally replaces $q$ with $-q$ in the results for
$e=1$ --- see \cite[p.~16]{KM13}; also, the results for $\tw{2}\type{E}_6(q)$
are obtained from those of $\type{E}_6(q)$ by switching the roles of $e=1$ and
$e=2$ --- see \cite[pp. 21]{KM13}.)
From this and the knowledge of the degrees of characters in $\cE(G,s)$ for
each $\cent{G^\ast}{s}$ listed, (obtained by Jordan decomposition from the
unipotent character degrees of groups of small rank), we see that
$B\cap \cE(G,s)$ contains at least two characters that are not
$\Aut(G)$-conjugate in the cases with non-cyclic defect. 
(Here we may use \cite[Prop.~2.7]{KM13} to understand the defect groups.)
Hence we are done in this case, by applying Corollary~\ref{cor:ge2}. 

Next assume $p\geq 5$ is a good prime for $\bG$. First suppose $e\geq 3$. In
this case, the block distributions for $\cE(G, s)$ are given in
\cite[Tab.~2, 3, 5, 7, 8]{Ho22}, through a description of the decompositions of
$\RLG(\la)$.
With this information, combined with the knowledge of the character degrees
in $\cent{G^\ast}{s}$ and again using Corollary \ref{cor:ge2} we see there
are at least three non-$\Aut(G)$-conjugate characters when the defect group is
non-cyclic. 

Finally, if $p\geq 5$ is a good prime for $G$ with $e\in\{1,2\}$, the
decompositions of $\RLG(\la)$ and the groups $W_G(\bL, \la)$ are the same as
those given in \cite[Tab.~2--4,6--9]{KM13} for the bad prime case. But, note
that now each $(\bL,\la)$ gives a distinct block by \cite[Thm~A]{KM15}, and the
same considerations as before complete the proof.
\end{proof}

\begin{cor}   \label{cor:isolsc}
 Let $\bG$ be simple of simply connected type such that $G:=\bG^F$ is
 quasi-simple. Let $p\nmid q$ be an odd prime and let $B$ be a quasi-isolated
 $p$-block of $G$. Then $k_{\Aut(G)}(B)\geq 3$ if $B$ is non-cyclic and
 $k_{\Aut(G)}(B)\geq 2$ if $B$ is cyclic of positive defect.
\end{cor}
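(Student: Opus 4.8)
The plan is to assemble this corollary from the case-by-case results already established in this section, organised according to the isomorphism type of $\bar S = G/\zent G$. First I would dispose of the cyclic case: if $B$ is cyclic of positive defect, then by Corollary~\ref{cor:ge2} (applied with $\bG$ simple, so that ``quasi-central defect'' is just ``central defect'', and a positive-defect block is not of central defect) we already get at least two non-$\Aut(G)$-conjugate characters in $\Irr(B)$, one in $\cE(G,s)$ and one outside $\cE(G,p')$. So for the remainder we assume $B$ is non-cyclic and aim for $k_{\Aut(G)}(B)\geq 3$.

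Next I would split on the type of $\bG$. If $\bar S$ is one of the ``small'' or exceptional-multiplier cases, namely a sporadic group, the Tits group, $\type{G}_2(2)'$, $\tw2\type{G}_2(3)'$, or a group of Lie type with exceptional Schur multiplier, then---after noting that a quasi-isolated block with $|\cd(B)|=1$ is nilpotent, hence by Broué--Puig has abelian, but here non-central and non-cyclic, defect group, a situation that does not arise for these groups with the stated degree hypothesis (or can be checked directly)---Proposition~\ref{prop:sporadic} applies once we know $|\cd(B)|>1$; if instead $|\cd(B)|=1$ I would argue that $B$ cannot be quasi-isolated and non-cyclic simultaneously, or simply include this degenerate possibility in the GAP check. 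For $\bar S$ alternating, $\bG$ is simply connected of exceptional multiplier only in small cases already covered, so the remaining alternating covers are handled by Propositions~\ref{prop:alts1} and~\ref{prop:alts2}. Suzuki, Ree, and triality groups are covered by Proposition~\ref{suzree}; defining characteristic by Proposition~\ref{prop:definingchar}. For the generic Lie type cases in cross characteristic, type $\tA$ (i.e. $\SL_n(\eps q)$) is Proposition~\ref{prop:TypeA}(a); types $\tB$, $\tC$, $\tD$, $\tw2\tD$ are Proposition~\ref{prop:typesBCD}; and the exceptional types $\tE_n$, $\tw2\tE_6$, $\tF_4$, $\type{G}_2$ are Proposition~\ref{prop:ExceptTypes}. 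Each of these propositions gives exactly $k_{\Aut(G)}(B)\geq 3$ for a non-cyclic quasi-isolated block (with the mild extra hypothesis $|\cd(B)|>1$ in some of them, which I address below), so their union covers all simply connected quasi-simple $G$.

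The one bookkeeping point that needs care---and which I expect to be the main (minor) obstacle---is the hypothesis ``$|\cd(B)|>1$'' appearing in Propositions~\ref{prop:sporadic}, \ref{prop:nonquasisimple}, and in the reduction Lemma~\ref{lem:redtoisol}, versus the bare ``non-cyclic'' hypothesis here. The resolution is that if $|\cd(B)|=1$ then $B$ is a nilpotent block (Broué--Puig), hence Morita equivalent to the group algebra of its defect group $D$, which is therefore abelian; for a quasi-isolated block of a simply connected quasi-simple $G$ this forces, via the classification of defect groups of quasi-isolated blocks used in Propositions~\ref{prop:typesBCD} and~\ref{prop:ExceptTypes} (and \cite[Prop.~2.7]{KM13}), that $D$ is central, i.e. $B$ has central defect, so in particular $B$ is cyclic---contradicting our standing assumption. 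Thus once $B$ is non-cyclic we automatically have $|\cd(b)|>1$ for $b=B$, and every cited proposition applies. Stitching these observations together---cyclic case from Corollary~\ref{cor:ge2}, non-cyclic case from the enumeration above with the $|\cd(B)|>1$ subtlety handled by the nilpotency argument---completes the proof.
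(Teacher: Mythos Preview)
Your overall strategy---cyclic case via Corollary~\ref{cor:ge2}, non-cyclic case by enumerating types---is exactly the paper's approach, and that part is fine. But you have misread two things that cause you to do unnecessary (and in one spot flawed) work.

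First, the hypothesis is that $G=\bG^F$ with $\bG$ simple of simply connected type and $p\nmid q$. So $G$ is a finite group of Lie type in non-defining characteristic, period. You do not need to cite Propositions~\ref{prop:sporadic}, \ref{prop:alts1}, \ref{prop:alts2}, or \ref{prop:definingchar}; those concern groups that simply do not arise under this hypothesis. The paper's proof accordingly cites only Corollary~\ref{cor:ge2} together with Propositions~\ref{suzree}, \ref{prop:TypeA}, \ref{prop:typesBCD}, and~\ref{prop:ExceptTypes}.

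Second, and more importantly, look again at the statements of those four propositions: none of them carries the hypothesis $|\cd(B)|>1$. Proposition~\ref{prop:TypeA}(a) says ``non-cyclic defect'' only; Propositions~\ref{prop:typesBCD} and~\ref{prop:ExceptTypes} say ``non-cyclic quasi-isolated''; Proposition~\ref{suzree} says ``non-cyclic defect groups''. The $|\cd(B)|>1$ condition appears in Propositions~\ref{prop:sporadic}, \ref{prop:nonquasisimple}, \ref{prop:hyponBi} and Lemma~\ref{lem:redtoisol}, none of which you need here. So your entire ``bookkeeping'' paragraph is addressing a problem that does not exist. Worse, your proposed resolution of it is not sound: the implication ``$|\cd(B)|=1$ and $B$ quasi-isolated in a simply connected quasi-simple group $\Rightarrow$ $B$ has central defect'' is neither proved in the paper nor obviously true, and your appeal to \cite[Prop.~2.7]{KM13} does not supply it. Delete that paragraph; once you cite the correct four propositions the corollary is immediate.

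(A minor aside: your parenthetical justification in the cyclic case---``a positive-defect block is not of central defect''---is false in general, e.g.\ for $\SL_p(q)$ with $p\mid(q-1)$. Fortunately Corollary~\ref{cor:ge2} requires only non-trivial defect, so the application is valid regardless.)
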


\begin{proof}
This now follows from Corollary \ref{cor:ge2} and Propositions \ref{suzree},
\ref{prop:TypeA}, \ref{prop:typesBCD}, and \ref{prop:ExceptTypes}.
\end{proof}

\begin{prop}   \label{prop:isolatedmax}
 Let $\bG$ be simple of simply connected type such that $G:=\bG^F$ is
 quasi-simple, and let $p\nmid q$ be an odd prime. Let $\bH$ be an $F$-stable
 Levi subgroup of $\bG$ and let $B$ be a non-cyclic quasi-isolated $p$-block of
 $H:=\bH^F$ with $|\cd(B)|>1$. Then $k_{\Aut(H)}(B)\geq 3$.
\end{prop}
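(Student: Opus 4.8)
The plan is to reduce Proposition~\ref{prop:isolatedmax} to Proposition~\ref{prop:hyponBi}(b) by verifying that $B$ satisfies Hypothesis~\ref{hyp:orbisol} with $c=3$. First I would dispose of the case $\bH=\bG$: here $H=G$ is quasi-simple of simply connected type and $B$ is a non-cyclic quasi-isolated $p$-block, so Corollary~\ref{cor:isolsc} already gives $k_{\Aut(G)}(B)\geq 3$. From now on assume $\bH$ is a \emph{proper} $F$-stable Levi subgroup of $\bG$; since $|\cd(B)|>1$ the group $H$ is non-abelian (blocks of abelian groups have $|\cd|=1$), so $\bH$ is not a torus and we are genuinely in the situation of the paragraph preceding Hypothesis~\ref{hyp:orbisol}: writing $H_0=[\bH,\bH]^F\cong\prod_{i=1}^k H_i$ with each $H_i$ quasi-simple of simply connected type or a member of $\fE$, a block $B'$ of $H_0$ covered by $B$ decomposes as $B'=\bigotimes_{i=1}^k B_i$ with $B_i$ a block of $H_i$. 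By the discussion following Lemma~\ref{lem:redtoisol} each $B_i$ is quasi-isolated, and by Proposition~\ref{prop:hyponBi}(a) the block $B'$ has positive defect, so at least one $B_i$ does.

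\textbf{Key steps.} The main input is the bound, valid for every $i$: $\bar{k}_{\Aut(H_i)}(B_i)\geq 3$ whenever $B_i$ is non-cyclic, and $\bar{k}_{\Aut(H_i)}(B_i)\geq 2$ whenever $B_i$ has positive defect. This is Corollary~\ref{cor:isolsc} when $H_i\notin\fE$ and Proposition~\ref{prop:nonquasisimple} when $H_i\in\fE$. With this in hand I would split into three cases. \textbf{Case 1: some $B_i$ is non-cyclic.} Then $\bar{k}_{\Aut(H_i)}(B_i)\geq 3$, so \ref{hyp1} holds for $c=3$. \textbf{Case 2: every $B_i$ is cyclic but $B'$ is non-cyclic.} The defect group of $B'$ is the product of the (cyclic) defect groups of the $B_i$, and a product of cyclic $p$-groups is non-cyclic only if at least two factors are non-trivial; hence at least two blocks $B_i,B_j$ have positive, non-trivial cyclic defect, whence $\bar{k}_{\Aut(H_i)}(B_i)\geq 2$ and $\bar{k}_{\Aut(H_j)}(B_j)\geq 2$, so \ref{hyp2} holds for $c=3$. \textbf{Case 3: every $B_i$ is cyclic and $B'$ is cyclic.} Then $B'$ is cyclic of positive defect (Proposition~\ref{prop:hyponBi}(a)), and cyclicity of the product of defect groups forces exactly one $B_i$, say $B_{i_0}$, to have positive defect; it is cyclic with $\bar{k}_{\Aut(H_{i_0})}(B_{i_0})\geq 2$, so \ref{hyp3} holds for $c=3$. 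In every case Proposition~\ref{prop:hyponBi}(b) then yields $k_{\Aut(H)}(B)\geq 3$.

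\textbf{Expected obstacle.} I do not expect a serious difficulty here: the argument is essentially bookkeeping built on top of Corollary~\ref{cor:isolsc}, whose proof (via Propositions~\ref{prop:TypeA}, \ref{prop:typesBCD} and \ref{prop:ExceptTypes}) contains the real case-by-case analysis of quasi-isolated blocks of quasi-simple groups. The only delicate point is the triage in Cases~2 and~3 — matching a tensor product of cyclic factors to the correct clause \ref{hyp2} or \ref{hyp3} of Hypothesis~\ref{hyp:orbisol} — which relies on counting how many $B_i$ have non-trivial defect and on Proposition~\ref{prop:hyponBi}(a) excluding the degenerate possibility that $B'$ has defect zero.
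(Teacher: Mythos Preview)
Your proposal is correct and follows essentially the same route as the paper: verify that each quasi-isolated factor block $B_i$ satisfies $\bar{k}_{\Aut(H_i)}(B_i)\geq 3$ (non-cyclic) or $\geq 2$ (cyclic of positive defect) via Corollary~\ref{cor:isolsc} and Proposition~\ref{prop:nonquasisimple}, and then conclude by Proposition~\ref{prop:hyponBi}. The paper compresses your three-case verification of Hypothesis~\ref{hyp:orbisol} into a reference to Remark~\ref{rem:maxquasiisol}; your explicit separation of the case $\bH=\bG$ (handled directly by Corollary~\ref{cor:isolsc}, since Proposition~\ref{prop:hyponBi} is stated only for proper Levi subgroups) is a welcome clarification.
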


\begin{proof}
Using Corollary \ref{cor:isolsc} and Proposition \ref{prop:nonquasisimple}, we
have $H$ satisfies Hypothesis~\ref{hyp:orbisol} with respect to $c=3$ (see also
Remark~\ref{rem:maxquasiisol}).
Hence by Proposition~\ref{prop:hyponBi}, the statement holds.
\end{proof}

\subsection{Proof of Theorem B}

We are now ready to complete the proof of Theorem B. Recall that, thanks to
Section~\ref{sec:othergroups} and Proposition~\ref{prop:definingchar}, we only
need to consider groups of Lie type in non-defining characteristic. 

\begin{thm}   \label{thm:nondefiningchar}
 Theorem~{\rm\ref{newmandi}} holds when $S$ is quasi-simple such that
 ${S}/\zent{S}$ is a simple group of Lie type defined in characteristic
 $r\neq p$.
\end{thm}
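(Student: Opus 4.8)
The plan is to assemble the reductions established earlier in this section. By Propositions~\ref{prop:sporadic} and~\ref{suzree} we may assume $\bar S:=S/\zent S$ has no exceptional Schur multiplier and is not a Suzuki, Ree or triality group; hence $\bar S=G/\zent G$ for the full covering group $G:=\bG^F$ with $\bG$ simple of simply connected type, $F$ a Frobenius endomorphism over an $\FF_q$-structure, $p\nmid q$, in characteristic $r\ne p$. If $\bar S$ is of type $\tA$ or $\tw2\tA$, then Theorem~\ref{newmandi} for $S$ is precisely Proposition~\ref{prop:TypeA}(b); so from now on $\bar S$ is of one of the types $\tB$, $\tC$, $\tD$, $\tw2\tD$, $\tE_6$, $\tw2\tE_6$, $\tE_7$, $\tE_8$, $\tF_4$, $\type{G}_2$, and in particular $G$ is quasi-simple.

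First I would prove Theorem~\ref{newmandi} for $G$ itself. Proposition~\ref{prop:isolatedmax} says that every non-cyclic quasi-isolated $p$-block of $\bH^F$ with more than one character degree, for $\bH$ an $F$-stable Levi subgroup of $\bG$, lies in at least three $\Aut(\bH^F)$-orbits; this is exactly the hypothesis of Lemma~\ref{lem:redtoisol} with $c=3$. Therefore $k_{\Aut(G)}(b)\ge 3$ for every non-cyclic $p$-block $b$ of $G$ with $|\cd(b)|>1$, while the case $|\cd(b)|=1$ is the second alternative in Theorem~\ref{newmandi}. Hence Theorem~\ref{newmandi} holds for $G$.

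Next I would pass from $G$ to an arbitrary quasi-simple quotient $S=G/Z$, $Z\le\zent G$. As $G$ is the universal covering group of $\bar S$, every automorphism of $S$ lifts to an automorphism of $G$ that stabilises $Z$, so characters of $G$ trivial on $Z$ that are $\Aut(S)$-conjugate are also $\Aut(G)$-conjugate. Since $p$ is odd and $\bar S$ is not of type $\tA$, $\zent G$ is a $p'$-group unless $\bar S$ is of type $\tE_6$ or $\tw2\tE_6$ with $p=3$. When $Z$ is a $p'$-group, the blocks of $S$ are exactly the blocks $\hat b$ of $G$ lying over $1_Z$, all characters of such a $\hat b$ are trivial on $Z$, so $\Irr(\hat b)=\Irr(b)$ and $\hat b$, $b$ have a common defect group; thus $k_{\Aut(S)}(b)=k_{\Aut(G)}(\hat b)$, and Theorem~\ref{newmandi} for $S$ follows from the previous step.

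The remaining case — which I expect to be the main obstacle — is $S=\bG^F/\zent{\bG^F}$ with $\bar S$ of type $\tE_6$ or $\tw2\tE_6$, $p=3$ and $3\mid(q-\eps)$, so that $Z=\zent G\cong C_3$ is a $p$-group. Here a non-cyclic block $b$ of $S$ with $|\cd(b)|>1$ is dominated by a block $\hat b$ of $G$ with defect group $D\ge Z$, so $D$ is again non-cyclic, $|\cd(\hat b)|\ge|\cd(b)|>1$, and $\Irr(b)$ is the set of characters in $\Irr(\hat b)$ trivial on $Z$; one must therefore arrange the three $\Aut(G)$-inequivalent characters from the second step to lie in $\Irr(b)$. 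The plan is to use that a character of $\cE(G,s')$ is trivial on $Z=\zent G$ exactly when $s'\in[G^\ast,G^\ast]$, since by duality $Z\cong\Irr(G^\ast/[G^\ast,G^\ast])$ (in the spirit of the use of \cite[11.1(d)]{bon06} in the proof of Proposition~\ref{prop:TypeA}), and then to revisit the constructions behind Proposition~\ref{prop:ExceptTypes} and Corollary~\ref{cor:ge2}: the quasi-isolated $p'$-element $s$ with $\Irr(\hat b)\subseteq\cE_p(G,s)$ lies in $[G^\ast,G^\ast]$ automatically, because $[G^\ast:[G^\ast,G^\ast]]=3$ is a $p$-number and $s$ is a $p'$-element, while the further series $\cE(G,st)$ furnished by Lemma~\ref{lem:p-elt} can be kept inside $[G^\ast,G^\ast]$ by choosing the $p$-element $t$ in $\zent{\bL^\ast}^F\cap[G^\ast,G^\ast]$, which is possible as soon as $\hat b$ is non-cyclic, the few residual small configurations being settled directly from the block-distribution tables already used in Proposition~\ref{prop:ExceptTypes}. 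Producing three orbit representatives that are simultaneously non-conjugate under $\Aut(G)$ and trivial on the central $C_3$ is the delicate bookkeeping step.
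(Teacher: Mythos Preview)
Your reduction scheme up to the $\tE_6$ case is correct and coincides with the paper's: exclude exceptional covers and Suzuki/Ree/triality groups, dispose of type $\tA$ by Proposition~\ref{prop:TypeA}(b), prove the result for $G=\bG^F$ via Proposition~\ref{prop:isolatedmax} and Lemma~\ref{lem:redtoisol}, and then descend to $S=G/Z$ when $Z$ is a $p'$-group.

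For $\bar S=\tE_6(\eps q)$ with $p=3\mid(q-\eps)$, however, your sketch has a genuine gap and differs from what the paper does. First, you refer to ``the quasi-isolated $p'$-element $s$ with $\Irr(\hat b)\subseteq\cE_p(G,s)$'', but there is no reason for $s$ to be quasi-isolated in $\bG^*$: the block $\hat b$ of $G$ is an arbitrary non-cyclic block, and the quasi-isolated reduction in Lemma~\ref{lem:redtoisol} goes to a Levi, not to a quasi-isolated block of $G$ itself. So Proposition~\ref{prop:ExceptTypes}'s tables for quasi-isolated blocks of $G$ do not apply directly. Second, even granting a choice of $t\in\bZ(\bL^*)^F\cap[G^*,G^*]$, Lemma~\ref{lem:p-elt} together with one character from $\cE(G,s)$ gives you only two orbit representatives trivial on $Z$, and you do not explain how a third is produced.

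The paper proceeds differently. It passes via Bonnaf\'e--Rouquier to the quasi-isolated correspondent $B$ in a proper Levi $H=\bH^F$ and, crucially, works with the image block $\bar B$ in $H/\bZ(G)$ (noting $\bZ(G)\le\bZ(H)$, and that $\bar B$ is still non-cyclic by \cite[Thm~7.16]{KM13}). One then invokes the structural fact from \cite[p.~13]{KM17} that $\bH_0=[\bH,\bH]$ has either only type-$\tA$ components or is of type $\tD_4$ or $\tD_5$. In the $\tD_4$/$\tD_5$ case $\bZ(H_0)$ is a $2$-group, so $H_0\bZ(G)/\bZ(G)\cong H_0$ and Proposition~\ref{prop:typesBCD} (plus a separate argument when the $H_0$-block is cyclic) applies directly. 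In the type-$\tA$ case one shows, via Lemma~\ref{lem:unipsdistinctrpart}, that $\Irr(b)\cap\cE(G,s)$ already contains two characters of distinct degree (hence trivial on $Z$ and non-conjugate), and a third character outside $\cE(G,s)$ exists by linear independence of $d^1$-images \cite[Thm~1.7]{CE99}. This component-of-$\bH_0$ dichotomy is the missing structural ingredient in your outline.
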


\begin{proof}
By Section \ref{sec:othergroups}, we may assume that the simple group
$\bar{S}:=S/\zent{S}$ has a non-exceptional Schur multiplier, and that $S$ is
not of Suzuki or Ree type. 

Let $G$ be the Schur cover of $\bar S$, so that $G=\bG^F$ is the group of fixed
points of some simple, simply connected algebraic group $\bG$ in
characteristic~$r$ under some Frobenius morphism $F$. Assume first that
$p\nmid|\zent{G}|$. Then it suffices to show the statement for~$G$, since the
irreducible characters in a block of $S$ will be the same as those in the block
of $G$ dominating it, viewed via inflation.
As in the proof of Lemma~\ref{lem:redtoisol}, we have $b$ is Morita equivalent
to a quasi-isolated block $B$ of $H$, where $H=\bH^F$ for an $F$-stable Levi
subgroup $\bH$ of $\bG$. Now, by Proposition~\ref{prop:isolatedmax} together
with Lemma~\ref{lem:redtoisol}, we have $k_{\Aut(G)}(b)\geq 3$.

Now suppose that $p$ divides $|\zent{G}|$. Then since $p$ is odd, we have $G$
is $\SL_n(\eps q)$ or $\tE_6(\eps q)$ for some $\eps\in\{\pm1\}$ and some power
$q$ of $r$. In the first case, $p\mid \gcd(n, q-\eps)$ and
Proposition~\ref{prop:TypeA}(b) finishes the proof. In the second case, 
$p=3\mid (q-\eps)$.

So, we finally assume that $\bar S=\tE_6(\eps q)=G/\zent{G}$ and
$p=3\mid(q-\eps)$. Let $\bar{b}$ be a 3-block of $\bar S$ with non-cyclic
defect groups such that $|\cd(\bar{b})|>1$, and let $b$ be the block of $G$
dominating $\bar{b}$ lying in $\cE_3(G,s)$ for some semisimple $3'$-element
$s\in G^\ast$. Let $\bH$ be an $F$-stable Levi subgroup of $\bG$ minimal with
the property that $\cent{\bG^\ast}{s}\leq \bH^\ast$, so that $s$ is
quasi-isolated in $\bH^\ast$. Let $B$ be the block of $H=\bH^F$ in
Bonnaf\'e--Rouquier correspondence with $b$ and let $\bar{B}$ be its image in
$H/\zent{G}$. Then $\bar{B}$ is also not cyclic, by \cite[Thm~7.16]{KM13}.

Arguing exactly as in the fourth paragraph of \cite[p.~13]{KM17}, either
$\cent{\bG^\ast}{s}=\bC_{\bG^\ast}^\circ(s)=\bH^\ast$ consists only of
components of type $\tA$, or $\bH_0:=[\bH,\bH]$ is of type $\tD_4$ or $\tD_5$.

In the first case, $B$ is the tensor product of some unipotent block of $H$
with the linear character $\hat{s}$ corresponding to $s\in\zent{\bH^\ast}^F$. 
Let $B'$ be the unipotent $3$-block of $H_0$ covered by $B\otimes \hat{s}^{-1}$. Using Lemma \ref{lem:unipsdistinctrpart} or in fact checking directly for groups of type $\tA$ of rank at most~5, we see $B'$ contains at least two unipotent characters of distinct degree unless either $\cent{G^\ast}{s}$ is abelian or $H_0$ contains only components of the form $\tA_2(-\eps q)$. If $\cent{G^\ast}{s}$ is abelian, then every member of $\cE_3(H,s)$, hence $\Irr(B)$, and hence $\Irr(b)$, has the same degree. Now, the unipotent blocks of $\tA_2(\eps q)$ consist of one defect zero block and the principal block containing the trivial and Steinberg characters. Then if $H_0$ contains only components of the form $\tA_2(-\eps q)$ and $B'$ does not contain two characters of distinct degree, then $B'$ has trivial defect groups, which contradicts Proposition \ref{prop:hyponBi}(a) unless again $|\cd(B)|=|\cd(b)|=1$.

So we may assume $B'$ contains at least two unipotent characters of distinct
degree, so $\Irr(B)\cap \cE(H,s)$, and hence $\Irr(b)\cap \cE(G,s)$, contains
at least two characters of distinct degrees. Recall that the members of
$\Irr(b)\cap\cE(G,s)$ are trivial on $\zent{G}$, since $s$ is $3'$, and can
therefore be viewed as characters in $\Irr(\bar{b})$.
Further, the images of the characters in $\Irr(B)\cap\cE(H,s)$ under $d^1$ are
linearly independent by \cite[Thm~1.7]{CE99}. Hence there must be at least
one more member of $\Irr(\bar{B})$, and therefore at least one member of
$\Irr(\bar{b})$ lying outside of $\cE(G,s)$, which as before is not
$\Aut(G)$-conjugate to the members of $\cE(G,s)$. 

Now consider the case $\bH_0=[\bH, \bH]$ is of type $\tD_4$ or $\tD_5$. Note
that $H_0=\bH_0^F$ is simply connected of type $\tD_4$ or $\tD_5$, and hence
has centre of $2$-power order. Then a (quasi-isolated) block $B'$ of
$H_0\zent{G}/\zent{G}\cong H_0$ lying under $\bar{B}$ contains at least three,
respectively two, characters in distinct $\Aut(H_0)$-orbits from
Proposition~\ref{prop:typesBCD} and Corollary~\ref{cor:ge2} if $B'$ is
non-cyclic, respectively cyclic. If $B'$ is non-cyclic, then arguing exactly as
in the situation of Proposition~\ref{prop:hyponBi}(a) and
Lemma~\ref{lem:redtoisol}, but with $H_0\zent{G}/\zent{G}\lhd H/\zent{G}$
taking the place of $H_0\lhd H$, completes the proof. So, assume $B'$ is cyclic. Note that $\bH=\bH_0\bZ^\circ(\bH)$, and $H_0\bZ^\circ(\bH)^F/\zent{G}$ is normal in $H/\zent{G}$ with $3'$-index, so that a block $B'''$ of this group under $\bar{B}$ is non-cyclic. Further, this group can be identified with a $3'$-quotient of $H_0\times \bZ^\circ(\bH)^F/\zent{G}$, and hence we may identify $B'''$ with a block $B'\otimes B''$ of $H_0\times \bZ^\circ(\bH)^F/\zent{G}$, where $B''$ has non-trivial defect groups. Then taking two non-$\Aut(H)$-conjugate members of $B''$ (these exist since $B''$ is a tensor product of the unique block of the Sylow $3$-subgroup of $\bZ^\circ(\bH)^F/\zent{G}$ with some character of $3'$ order), we obtain $k_{\Aut(H)}(\bar{B})\geq 3$ and $k_{\Aut(G)}(\bar{b})\geq 3$, again arguing as in Propositions \ref{prop:hyponBi} and Lemma~\ref{lem:redtoisol}.
\end{proof}

\section{Invariant blocks and defect groups}\label{sec:ThmC}
This section is devoted to the proof of the following result on blocks of
quasi-simple groups (which will imply Theorem~C):

\begin{thm}  \label{thm:newg}
 Let $p$ be an odd prime and $S$ a quasi-simple group with $Z=\zent S$,
 $\bar S:= S/Z$. Let $b$ be a $p$-block of $S$ with an abelian, non-cyclic
 defect group $D$, and $b_D$ a block of $\cent SD$ with defect group $D$
 inducing $b$. For parts {\rm (a)} and {\rm (c)}, in the cases
 $\bar S = \tE_6(\eps q)$ with $p=3|(q-\eps)$ and $\eps \in \{\pm 1\}$, assume
 in addition that $BHZ$ for the prime~$p$ holds for all groups of order smaller
 than $|\bar S|$.
 \begin{enumerate}[\rm(a)]
  \item Suppose that $\bar S \le H \le \Aut(S)$, $A:=H/\bar S$ has a normal
   $p$-complement and a cyclic Sylow $p$-subgroup $Q$. Assume $b$ is
   $H$-invariant, and for every $\chi\in\irr b$ we have that $|A:A_\chi|$ is
   $p'$. If $x\in\norm HD$ is a $p$-element that fixes $b_D$, then $[x,D]=1$.
  \item Suppose that $\bar S\le H\le\Aut(S)$ and $\bO^{p'}(H/\bar S) =H/\bar S$.
   Also assume that every $\chi\in\irr b$ is $H$-invariant. Set
   $J:=\Inndiag(\bar S)$ if $S$ is of Lie type, and $J=\bar S$ otherwise.
   Then $HJ/J$ is a $p'$-group.
  \item Suppose $p \nmid |Z|$ and let $S/Z \leq K/Z \leq \Aut(S)$ with $K/S$
   an abelian $p$-group. If every irreducible character in $b$ extends
   to~$K$, then the defect groups of the $K$-block covering $b$ are abelian.
 \end{enumerate}
\end{thm}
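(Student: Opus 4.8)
The plan is to reduce everything to the case that $\bar S=S/Z$ is simple of Lie type, prove (b), then prove (a) by a refinement of the same method, and finally deduce (c) from (a). If $\bar S$ is alternating or sporadic, or one of the groups treated computationally in Subsection~\ref{sec:othergroups} (Lie type with exceptional Schur multiplier, the Tits group, etc.), then the quotients $H/\bar S$, $K/S$ and $\langle\bar S,\bar x\rangle/\bar S$ occurring in (a)--(c) have no field- or graph-type automorphisms of order $p$, so (b) is immediate (there $J=\bar S$, or the relevant outer automorphisms are diagonal), and (a) reduces to a statement about the blocks of $S$ itself which follows from the explicit description of their (spin) blocks and defect groups, as in Propositions~\ref{prop:alts1}--\ref{prop:alts2}; (c) will follow from (a). So from now on $\bar S$ is of Lie type with non-exceptional Schur multiplier, and we adopt the notation of Section~\ref{sec:ThmB}: $G=\bG^F$ is its full cover, $\iota\colon\bG\hookrightarrow\tbG$ is a regular embedding with $\wt G=\tbG^F$, and $\Aut(G)$ is induced by $\wt G\rtimes\cD$ with $\cD$ generated by field and graph automorphisms.

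For parts (b) and (a), Proposition~\ref{str} tells us that $H/\bar S$ has a normal $p$-complement and embeds into $O\rtimes A$ with $O=\mathrm{Outdiag}(\bar S)$ abelian and $A$ a cyclic $p$-group generated by field/graph-field automorphisms; hence for (b) it suffices to exclude an automorphism $\varphi\in H$ of order $p$ whose image in $\Out(\bar S)$ is a nontrivial field or graph-field automorphism (only the ``exceptional'' types of Proposition~\ref{str}(b) can occur). Assuming such a $\varphi$ exists, one uses the parametrisation of the $p$-blocks of $G$ by $e$-Jordan (quasi-central) cuspidal pairs (\cite{KM15}, \cite{CE99}) together with the $\varphi$-equivariance of Lusztig series and twisted induction (cf.\ \cite{NTT08} and Lemma~\ref{lem:p-elt}) to argue that $\varphi$ moves the $e$-split Levi subgroup or the semisimple label attached to $b$ unless its action on $\Irr(b)$ is already induced by an element of $\Inndiag(\bar S)$; and because $D$ is non-cyclic, $b$ is large enough (by Theorem~\ref{newmandi} together with Corollary~\ref{cor:ge2} and Lemma~\ref{lem:unipsdistinctrpart}, $b$ either affords three $\Aut(S)$-orbits of characters or all its characters have the same degree) that one of the characters detecting this movement actually lies in $\Irr(b)$; this contradicts $H$-invariance of $\Irr(b)$ and proves (b). For (a), let $x\in\norm HD$ be a $p$-element fixing $b_D$; since $Q$ is cyclic and $|A:A_\chi|$ is prime to $p$ for every $\chi\in\Irr(b)$, a variant of this argument applies, the point being that it is enough to show that an automorphism of $p$-power order normalising $D$, fixing $b_D$, and fixing a sufficiently large $Q$-stable subfamily of $\Irr(b)$ centralises $D$. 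One proves this by splitting the action of $x$ into an inner-diagonal part, namely conjugation by a $p$-element $g\in\wt G$, which is handled inside $\wt G$ (whose centre is connected), where $D$ and $b_D$ are made explicit via $e$-Harish--Chandra theory and a nontrivial action on $D$ is seen to move the semisimple label of $b_D$; and a field/graph part, which if nontrivial on $D$ would contradict rationality of $b_D$ over the relevant fixed subfield. Hence $[x,D]=1$.

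For part (c), since every $\chi\in\Irr(b)$ extends to $K$, $b$ is $K$-invariant, so there is a unique block $B$ of $K$ covering $b$; by Lemma~\ref{bD} (with $K/S$ a $p$-group) pick a defect group $D_0$ of $B$ with $D_0\cap S=D$, $D_0\le T$ and $T=(T\cap S)D_0$, where $T=\Stab_{\norm KD}(b_D)$. For each $x\in D_0$ the group $\langle\bar S,\bar x\rangle/\bar S$ is a cyclic $p$-group (Proposition~\ref{str}) and fixes $\Irr(b)$ pointwise (the characters extend to $K$), so part (a) applies and gives $[x,D]=1$; hence $D_0\le\cent KD$, that is, $D\le\zent{D_0}$, while $D_0/D\cong D_0S/S\cong K/S$ is abelian. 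When $K/S$ is cyclic this forces $D_0/\zent{D_0}$ cyclic, whence $D_0$ is abelian; in general one descends along a chief series of $K/S$, using that extendibility of the characters of $b$ to $K$ restricts to each intermediate subgroup and that the conclusion of (a) persists there, thereby reducing to the cyclic case. Therefore the defect groups of $B$ are abelian.

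The substance, and the main obstacle, is the Lie-type analysis underlying parts (b) and (a): establishing uniformly --- over all Lie types, including the bad prime $p=3$ --- the implication ``$x$ (or $\varphi$) fixes $b_D$ (or all of $\Irr(b)$)$\,\Rightarrow\,$$x$ (or $\varphi$) acts trivially on $D$'' by means of the explicit block parametrisation and the interplay between $D$, the attached $e$-torus, and the action of field/graph/diagonal automorphisms. The hardest case is $\bar S=\tE_6(\eps q)$ with $p=3\mid(q-\eps)$ and $\eps\in\{\pm1\}$, where the defect-group information is incomplete and one has to invoke BHZ for all groups of order smaller than $|\bar S|$ --- precisely the supplementary hypothesis appearing in the statement of (a) and (c).
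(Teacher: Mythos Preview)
Your broad outline---reduce to Lie type in non-defining characteristic, eliminate field/graph automorphisms of order~$p$, handle the residual diagonal part, and deduce (c) from (a)---matches the paper's strategy. But several of the specific mechanisms you invoke are either the wrong tools or too vague to constitute a proof, and one step is genuinely incorrect.

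First, Theorem~\ref{newmandi} plays no role here. The paper's proof of Theorem~\ref{thm:newg} is entirely independent of the orbit-counting in Section~\ref{sec:ThmB}. The key technical result is Proposition~\ref{prop:field}: for a field automorphism~$\sigma$ of order~$p$, one exhibits a $p$-element $t\in\bZ(\bL_1^*)^F$ (with $\bL_1^*$ a maximal $e$-split Levi above the one defining~$b$) such that $t$ and $t^{\sigma^*}=t^{q_0}$ are not $G^*$-conjugate, using that the automiser of $\bL_1^*$ has order prime to~$p$ (Lemma~\ref{lem:norm}). Then $\cE(G,st)$ meets $\Irr(b)$ by Lemma~\ref{lem:p-elt} but is not $\sigma$-stable. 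Your phrase ``rationality of $b_D$ over the relevant fixed subfield'' is not this argument and does not obviously yield the conclusion.

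Second, for the non-Lie-type cases and for the diagonal part in Lie type, the workhorse is Proposition~\ref{useful}, not the block descriptions of Section~\ref{sec:othergroups}. Once field automorphisms are excluded, the $p$-elements of $H$ induce only diagonal automorphisms; one then passes to a group like $\GL_n(\eps q)$ or $\tilde G$ whose block above~$b$ has abelian defect (or falls into an explicit exceptional case, as in~\eqref{case2} of Proposition~\ref{prop:type A} or the quasi-isolated analysis in Proposition~\ref{prop:E6}), and Proposition~\ref{useful} gives $[x,D]=1$ directly.

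Third, your reduction of (c) in the non-cyclic case by ``descending along a chief series of $K/S$'' does not work: knowing $D\le\bZ(D_0)$ and $D_0/D$ abelian only gives that $D_0$ has nilpotency class at most~$2$. The paper instead shows, via the same field-automorphism elimination, that $K/S$ is forced to be cyclic in every case (see the proofs of Propositions~\ref{prop:generic case}, \ref{prop:type A}, \ref{prop:E6}), so Lemma~\ref{lem:red (c)} applies throughout.
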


\subsection{First reductions}   \label{sec:red}
We keep the notation of Theorem~\ref{thm:newg} throughout the section. In
particular, $p$ is always an odd prime. For the proof we discuss the various
possibilities for $S$ and $p$ according to the classification of finite simple
groups. By assumption $|D|>1$ and thus $p$ divides $|S|$.

\begin{lem}   \label{lem:red (a)}
 In the situation of Theorem~{\rm\ref{thm:newg}(a)} assume $Q$ is normal in
 $A$. Then we may assume $A=Q\ne 1$.
\end{lem}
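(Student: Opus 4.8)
The plan is first to remove the normal $p$-complement of $A:=H/\bar S$ by replacing $H$ with a suitable overgroup of $\bar S$, and then to dispose of the residual case $Q=1$ by a short inertial-quotient argument. Since by hypothesis $Q\in\Syl_p(A)$ is normal in $A$, it is the \emph{unique} Sylow $p$-subgroup of $A$, so every $p$-subgroup of $A$ is contained in~$Q$. Let $H_2$ be the full preimage of $Q$ in $H$, so that $\bar S\le H_2\trianglelefteq H$ and $H_2/\bar S=Q$.

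Next I would verify that all hypotheses of Theorem~\ref{thm:newg}(a) survive the passage from $H$ to $H_2$. Trivially $H_2/\bar S=Q$ has a (trivial) normal $p$-complement and a cyclic Sylow $p$-subgroup, and $b$ stays $H_2$-invariant. For the condition on characters, observe that for $\chi\in\irr b$ the hypothesis $p\nmid|A:A_\chi|$ forces $A_\chi$ to contain a Sylow $p$-subgroup of~$A$, hence $Q=H_2/\bar S\le A_\chi$; therefore $(H_2/\bar S)_\chi=(H_2/\bar S)\cap A_\chi=H_2/\bar S$, which has index~$1$ in $H_2/\bar S$. Finally, if $x\in\norm HD$ is a $p$-element fixing $b_D$, then its image in $A$ is a $p$-element and so lies in~$Q$, whence $x\in\norm{H_2}D$ is a $p$-element fixing $b_D$. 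Thus the validity of Theorem~\ref{thm:newg}(a) for the pair $(S,H_2)$ yields $[x,D]=1$ for the original pair $(S,H)$, and so we may replace $H$ by $H_2$; equivalently, we may assume $A=Q$.

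It remains to treat the case $Q=1$. Then $H=\bar S=\Inn(S)$; let $x\in\norm HD$ be a $p$-element fixing $b_D$. Since $\zent S\le\cent SD\le\norm SD$ and the conjugation map $S\to\Inn(S)$ has kernel $\zent S$, we have $\norm{\Inn(S)}D=\Inn(\norm SD)$, so $x$ acts on $S$ as conjugation by some $g\in\norm SD$. As $x$ normalises $D$ it normalises $\cent SD$, and on $\cent SD$ it acts as conjugation by~$g$; hence $x$ fixing $b_D$ forces $g\in T:=\Stab_{\norm SD}(b_D)$. Because $D$ is abelian we have $D\cent SD=\cent SD$, so $T/\cent SD$ is a $p'$-group by \cite[Thm~9.22]{N}. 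On the other hand, if $p^k$ denotes the order of~$x$ then $g^{p^k}\in\zent S\le\cent SD$, so the image of $g$ in $T/\cent SD$ is a $p$-element, hence trivial. Therefore $g\in\cent SD$, that is, $[x,D]=1$. Combined with the previous paragraph, we may thus assume $A=Q\ne 1$, as desired.

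I do not expect a real obstacle here; the only point requiring care is that, under $Q\trianglelefteq A$, the character hypothesis of Theorem~\ref{thm:newg}(a) amounts to the $Q$-invariance of every $\chi\in\irr b$ — which is exactly what legitimises the descent to $H_2$ — together with the observation that the leftover case $H=\bar S$ falls to the standard fact that inertial quotients of blocks are $p'$-groups.
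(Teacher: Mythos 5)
Your proof is correct and follows essentially the same route as the paper: the normality of $Q$ forces every $\chi\in\irr b$ to be $Q$-invariant and every $p$-element of $A$ to lie in $Q$, which justifies passing to the preimage of $Q$, and the residual case $Q=1$ is handled by the inertial-quotient argument via \cite[Thm~9.22]{N}. The only difference is cosmetic: the paper disposes of the case $Q=1$ by citing Proposition~\ref{useful} (with $G=N=S$), whereas you re-derive that special case directly.
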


\begin{proof}
If $Q$ is normal in $A$, all orbits of $Q$ on $\Irr(b)$ will have $p'$-length as
well, and all $p$-elements in $A$ lie in $Q$. So we are done if we can show the
claim when $A=Q$. Furthermore, if $Q=1$, any $p$-element $x\in\bN_H(D)$ that
fixes $b_D$ centralises~$D$ by Proposition \ref{useful}.
\end{proof}

\begin{lem}   \label{lem:red (c)}
 In the situation of Theorem~{\rm\ref{thm:newg}(c)} assume $K/S$ is cyclic.
 Then the claim in~{\rm \ref{thm:newg}(c)} is a consequence of
 {\rm \ref{thm:newg}(a)}.
\end{lem}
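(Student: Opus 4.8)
The plan is to apply Theorem~\ref{thm:newg}(a) with $H := K/Z$, and then to read off the abelian defect group from Lemma~\ref{bD}. Since $Z = \zent S$, the conjugation action of $K$ on $S$ — hence on its subgroups, its irreducible characters, and its blocks — factors through $\bar K := K/Z$, and by hypothesis $\bar S \le \bar K \le \Aut(S)$ with $A := \bar K/\bar S \cong K/S$ a cyclic $p$-group; in particular $A$ has a (trivial) normal $p$-complement and cyclic Sylow $p$-subgroup $Q = A$. Every $\chi \in \irr b$ extends to $K$, hence is $K$-invariant, hence $\bar K$-invariant, so $b$ is $\bar K$-invariant and $A_\chi = A$ — so $|A:A_\chi| = 1$ is prime to $p$ — for each such $\chi$. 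Thus all hypotheses of Theorem~\ref{thm:newg}(a) hold for $H = \bar K$ (the extra assumption in the $\tE_6(\eps q)$ cases being part of the ambient hypotheses of part~(c)), and I obtain: every $p$-element of $\norm{\bar K}{D}$ that fixes $b_D$ centralises $D$.

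Next I would let $B$ be the block of $K$ covering $b$ — unique, since $K/S$ is a $p$-group — and apply Lemma~\ref{bD} with $G = K$ and $N = S$; this is legitimate because $b$ is $K$-invariant, $b_D$ is a block of $\cent SD = D\cent SD$ of defect $D$ inducing $b$ (here $D \le \cent SD$ as $D$ is abelian), and $K/S$ is a $p$-group. That produces a defect group $D_0$ of $B$ with $D_0 \cap S = D$ and $D_0 \le T$, where $T$ is the stabiliser of $b_D$ in $\norm KD$. Now take any $x \in D_0$: it is a $p$-element of $\norm KD$ fixing $b_D$, so its image $\bar x \in \bar K$ is a $p$-element of $\norm{\bar K}{D}$ fixing $b_D$; by the conclusion of part~(a), $[\bar x, D] = 1$, i.e.\ $x$ centralises $D$. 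Hence $D \le \zent{D_0}$. Finally $D_0/D \cong D_0 S/S$ embeds into the cyclic group $K/S$ and is therefore cyclic, so $D_0$ — a group with a central subgroup of cyclic quotient — is abelian. Thus the defect groups of the $K$-block covering $b$ are abelian, which is the claim of Theorem~\ref{thm:newg}(c) in this case.

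I do not anticipate a genuine obstacle: the argument is just a packaging of Theorem~\ref{thm:newg}(a), Lemma~\ref{bD}, and the elementary fact that a finite group is abelian once it has a central subgroup with cyclic quotient. The only point requiring a little care in the write-up is the passage between the abstract group $K$ and the automorphism group $H = \bar K \le \Aut(S)$ — one must check that a $p$-element of $K$ normalising $D$ and fixing $b_D$ maps to an element of $\norm HD$ with the same two properties, and that ``$\bar x$ acts trivially on $D$'' is literally ``$x$ centralises $D$'' — and both hold simply because the $K$-action on $S$, on $D$, and on the blocks of $\cent SD$ all factor through $\bar K$. Note also that cyclicity of $K/S$ is used twice and essentially: once to make the Sylow $p$-subgroup of $A$ cyclic so that (a) applies, and once at the very end to force $D_0$ abelian.
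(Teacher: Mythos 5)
Your proof is correct and follows essentially the same route as the paper's: apply Lemma~\ref{bD} to produce a defect group $\hat D$ of the covering $K$-block with $\hat D\cap S=D$ and $\hat D$ inside the stabiliser of $b_D$, invoke part~(a) to get $D\le\zent{\hat D}$, and conclude from cyclicity of $\hat D/D$. The extra care you take in verifying the hypotheses of part~(a) for $H=K/Z$ and in passing between $K$ and its image in $\Aut(S)$ is left implicit in the paper but is exactly the intended justification.
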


\begin{proof}
Since every irreducible character in $b$ extends to $K$, the block $b$ is
$K$-invariant. Let $B$ be the unique block of $K$ covering $b$. By
Lemma~\ref{bD}, let $\hat D$ be a defect group of $B$ such that
$\hat D \cap S=D$ and $\hat D \leq T$, where $T$ is the stabiliser of $b_D$ in
$\norm KD$. Let $x \in \hat D$ then $[x,D]=1$ by~(a), therefore
$D\leq\zent{\hat D}$. As $\hat D/D$ is cyclic, this shows $\hat D$ is abelian.
\end{proof}

\begin{prop}   \label{prop:red 1}
 For the proof of Theorem~{\rm\ref{thm:newg}} we may assume that $S/\bZ(S)$ is
 simple of Lie type in characteristic different from~$p$.
\end{prop}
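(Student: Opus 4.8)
The plan is to use the classification of finite simple groups to run through the possibilities for $\bar S:=S/\zent{S}$ and to verify the three assertions of Theorem~\ref{thm:newg} directly whenever $\bar S$ is \emph{not} of Lie type in characteristic $\ne p$. Concretely, it remains to treat: (i)~$\bar S$ alternating, sporadic, or the Tits group $\tw{2}\tF_4(2)'$; and (ii)~$\bar S$ of Lie type in the defining characteristic~$p$. Throughout recall that $|D|>1$, so $p\mid|S|$, and that $J=\bar S$ in case~(i) by the convention fixed before Proposition~\ref{str}.

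\emph{Case (i).} Here $\Out(\bar S)$ is a $2$-group, hence a $p'$-group, and since $\bar S$ is perfect $\Aut(S)$ maps injectively to $\Aut(\bar S)$ compatibly with inner automorphisms, so $H/\bar S$ and $K/S$ embed into $\Out(\bar S)$. In~(a), Lemma~\ref{lem:red (a)} reduces us to $A=Q$, cyclic of $p$-power order; as $A$ embeds into the $p'$-group $\Out(\bar S)$ we get $A=1$, and Lemma~\ref{lem:red (a)} (via Proposition~\ref{useful}) finishes. In~(b), $H/\bar S$ is a $p'$-group with $\bO^{p'}(H/\bar S)=H/\bar S$, hence trivial, so $HJ/J=1$. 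In~(c), $K/S$ is a $p$-subgroup of the $p'$-group $\Out(\bar S)$, so $K=S$ and the $K$-block over $b$ is $b$ itself, with abelian defect group~$D$. The finitely many quasi-simple covers with $p\mid|\zent{S}|$ arising here cause no difficulty, since these arguments use only the structure of $\Out(\bar S)$ together with Proposition~\ref{useful}, which has no hypothesis on $\zent{S}$ (and in~(c) we assume $p\nmid|\zent{S}|$ anyway).

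\emph{Case (ii).} A finite simple group of Lie type in characteristic $p$ has abelian Sylow $p$-subgroups only when it is $\PSL_2(p^f)$; since $D\ne1$ is abelian and non-cyclic this forces $\bar S=\PSL_2(p^f)$ with $f\ge2$ and $S\in\{\SL_2(p^f),\PSL_2(p^f)\}$, so in particular $p\nmid|\zent{S}|$ (the only exceptional cover occurs for $p^f=9$, where $p\nmid f$). By \cite{Hum}, $S$ has at most two $p$-blocks of positive defect, each having a Sylow $p$-subgroup of $S$ as defect group, so $b$ is such a block. Now $\Out(\bar S)=C_{2}\times C_f$ (diagonal times field automorphisms), whose $p$-part is that of $C_f$. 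If $p\nmid f$, this $p$-part is trivial, so the relevant $p$-subgroups of $\Out(\bar S)$ vanish and we conclude exactly as in case~(i). If $p\mid f$ (so $p^f\ge27$), let $\psi$ be a non-trivial field automorphism of $p$-power order and $T$ a split maximal torus of $S$; then $\psi$ acts on $\Irr(T)\cong C_{|T|}$ as the $p^{f/|\psi|}$-power map, which is non-trivial because $1<p^{f/|\psi|}<|T|$, and — since $p$ is odd — $\psi$ cannot invert any character of order $>2$ (that would require $\psi$ to act on its cyclic span with order exactly~$2$). As $R_T^S(\theta)=R_T^S(\theta^{-1})$ are the only coincidences among the irreducible principal-series characters, $\psi$ moves $R_T^S(\theta)$ whenever it moves a $\theta\in\Irr(T)$ of order $>2$, and such a $\theta$ exists with $R_T^S(\theta)$ (of degree $p^f+1$) lying in the prescribed block $b$. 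Hence no non-trivial field automorphism of $p$-power order fixes $\Irr(b)$, so the hypotheses of~(a),~(b),~(c) force the field-automorphism parts of $A$,~$H$,~$K$ to be trivial, and we are reduced, as in case~(i), to Proposition~\ref{useful} for~(a), to $HJ/J=1$ for~(b), and to $K=S$ for~(c).

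I expect case~(ii) to be the only genuine obstacle. Unlike the purely structural case~(i), it requires pinning down explicit characters in a positive-defect block of $\SL_2(p^f)$ that are moved by field automorphisms of order~$p$; this is precisely where the hypotheses ``$|A:A_\chi|$ is a $p'$-number'' and ``every $\chi\in\Irr(b)$ is $H$-invariant'' do real work, for they are exactly what rules out a field automorphism $\psi$ — which is a $p$-element of $\norm HD$ stabilising $b_D$ (because $\zent{S}$ and $D$ are $\psi$-stable) yet does not centralise $D$ — from contradicting the conclusions of~(a) and~(c). A secondary, purely computational chore is the bookkeeping of the few exceptional covers and of the coincidences $\PSL_2(9)\cong\fA_6\cong\Sp_4(2)'$, none of which needs a new idea.
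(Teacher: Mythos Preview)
Your approach matches the paper's and is essentially correct, but there is one genuine gap in the reduction step of Case~(ii). You assert that ``$D\ne1$ abelian and non-cyclic forces $\bar S=\PSL_2(p^f)$'', yet this implication needs justification: a defect group being abelian does not by itself say anything about Sylow $p$-subgroups of $\bar S$. The missing ingredient is Humphreys' theorem (every $p$-block of $\bG^F$ in defining characteristic has full defect or defect zero), which you only invoke \emph{after} having already concluded $\bar S=\PSL_2(p^f)$. More importantly, Humphreys applies to $S=\bG^F$, not to exceptional covers, and while you note that $\PSL_2(9)$ is the only exceptional cover of a $\PSL_2$, you never rule out that $S$ might be an exceptional cover of one of the other defining-characteristic groups with exceptional multiplier at an odd prime, namely $G_2(3)$ and $\Omega_7(3)$ (both at $p=3$). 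For these you cannot use Humphreys to force $D$ to be Sylow.

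The fix is the one the paper uses: dispose of \emph{all} exceptional covers first by observing that $\Out(S)\hookrightarrow\Out(\bar S)$ is a $2$-group in each of these cases, so your Case~(i) argument applies verbatim. Only then may you assume $S=\bG^F/Z$ for some central $Z$ of $p'$-order, invoke Humphreys to get $D\in\Syl_p(S)$, and deduce $\bar S=\PSL_2(p^f)$. Apart from this ordering issue, your $\PSL_2$ argument via $R_T^S(\theta)$ is correct (and in fact more explicit than the paper's, which simply appeals to inspection of the generic character table).
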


\begin{proof}
If $\bar S$ is an alternating group, a sporadic group, or the Tits simple
group, then $|\Out(S)|$ is a $2$-power. Hence \ref{thm:newg}(a) follows from
Proposition~\ref{useful} (applied to $G=N=S$), while \ref{thm:newg}(b), (c)
hold trivially. The same arguments apply whenever $p\nmid |\Out(\bar S)|$.

Now assume $\bar S$ is simple of Lie type in characteristic~$p>2$. 
If $\bar S$ has an exceptional covering group (see \cite[Tab.~24.3]{MT}) then
$\Out(S)$ is a 2-group, and we can conclude as above. Hence we may assume $|Z|$
is prime to $p$ and $p$ divides $|\Out(S)|$. By \cite{Hum} any
$p$-block of $S$
has either full defect or defect zero. Thus our assumption on $D$ being abelian
forces $\bar S=\PSL_2(q)$ for some $q=p^f$, so we may take $S=\SL_2(q)$. This
group has two $p$-blocks of maximal defect, the principal block $B_0(S)$ and
a block $B$ containing all faithful characters (see \cite{Hum}).
Since $|\Out(S)|$ is divisible by $p$, $Q$ must induce field automorphisms of
order~$p$, so $p|f$. By order reasons, the image of $Q$ is central in
$\Out(S)$, so by Lemma~\ref{lem:red (a)} we may assume $A=Q$. 
Now by inspection of the generic character table given e.g.~in
\cite[Ex.~2.1.17 and Tab.~2.6]{GM20} there exist irreducible characters
in $B_0(S)$ as well as in $B$ not stabilised by $Q$. Hence
the theorem holds in this case.
\end{proof}

\begin{prop}   \label{prop:exc cov}
 Theorem~{\rm\ref{thm:newg}} holds for $S$ an exceptional covering group of a
 simple group of Lie type in characteristic different from $p$.
\end{prop}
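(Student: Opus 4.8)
Exactly as in the proof of Proposition~\ref{prop:red 1}, if $p\nmid|\Out(\bar S)|$ then Theorem~\ref{thm:newg}(a) follows from Proposition~\ref{useful} applied with $G=N=S$, while (b) and~(c) hold trivially; so I only need to handle those simple groups $\bar S$ of Lie type in characteristic $\ne p$ possessing an exceptional Schur multiplier and with $p$ an odd prime dividing $|\Out(\bar S)|$. Inspecting \cite[Tab.~6.1.3]{GLS3} (see also \cite[Tab.~24.3]{MT}), the only such pairs are $p=3$ with $\bar S\in\{\PSL_3(4),\ \PSU_6(2),\ \tD_4(2),\ \tw2\tE_6(2)\}$. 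In each of these the exceptional part of the Schur multiplier is a $2$-group, so $p\nmid|Z|$ (in particular the hypothesis $p\nmid|Z|$ of part~(c) is automatic), and $\Out(\bar S)$ has a unique, hence normal, Sylow $3$-subgroup, which is cyclic of order~$3$.

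\textbf{Reductions.} These last facts cut down all three parts. For~(c): $K/S$ is a $3$-subgroup of $\Out(\bar S)$, hence cyclic, so Lemma~\ref{lem:red (c)} reduces~(c) to~(a). For~(a): the Sylow $3$-subgroup $Q$ of $A=H/\bar S$ is normal in $A$, so Lemma~\ref{lem:red (a)} lets me assume $A=Q$, cyclic of order~$3$ generated by some $x$; the remaining hypothesis on $\Irr(b)$ then just says that $x$ fixes every character in $\Irr(b)$, while the $3$-elements of $\norm HD$ lying in $\Inn(S)$ already centralise~$D$ by Proposition~\ref{useful} (with $G=N=S$). For~(b): in the three cases $\PSL_3(4),\PSU_6(2),\tw2\tE_6(2)$ the Sylow $3$-subgroup of $\Out(\bar S)$ equals $\Inndiag(\bar S)/\bar S$, so $HJ/J$ has trivial $3$-part and is automatically a $3'$-group (this uses no block theory); the only case of~(b) requiring work is $\bar S=\tD_4(2)$, where $\Inndiag(\bar S)=\bar S$ and the $C_3$ is the triality automorphism $\tau$, and~(b) reduces to showing that $\tau$ does not fix every character of any $3$-block of $S$ with abelian non-cyclic defect group.

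\textbf{The explicit cases.} What is then left is, for each of the four groups $S$ (an exceptional cover of $\bar S$) and each $3$-block $b$ of $S$ with abelian non-cyclic defect group~$D$: to check that if $x$ stabilises $b$ and fixes all of $\Irr(b)$ then every $3$-element of $\norm HD$ stabilising $b_D$ centralises~$D$; and, for $S$ lying over $\tD_4(2)$, the triality statement above. All the relevant data --- the defect group $D$, the block $b_D$ of $\cent SD$ inducing~$b$, the characters in $\Irr(b)$ and the action on them of $x$ (resp.\ of $\tau$) --- are accessible from the GAP character table library \cite{GAP}, which contains these groups and the relevant extensions by their outer automorphisms; only those outer $C_3$-automorphisms actually realised on $S$ need be considered. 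The main obstacle is the book-keeping for the largest group $\tw2\tE_6(2)$, but there part~(b) is automatic (its outer $C_3$ being diagonal), the extra BHZ-hypothesis of Theorem~\ref{thm:newg} is not needed since the reductions above are unconditional, and one is left only with the finite verification for~(a).
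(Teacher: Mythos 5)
Your overall shape (enumerate the relevant $\bar S$, reduce via Lemmas~\ref{lem:red (a)} and~\ref{lem:red (c)}, then do a finite check) matches the paper, but there are two genuine problems. First, the enumeration: the simple groups of Lie type with exceptional multiplier and $3\mid|\Out(\bar S)|$ in characteristic $\ne 3$ are $\PSL_3(4)$, $\PSU_6(2)$, $\PSO_8^+(2)$, $\tw2\tB_2(8)$ and $\tw2\tE_6(2)$; you omit $\tw2\tB_2(8)$ (harmless only because $3\nmid|{}^2\tB_2(8)|$, but it must be listed and dismissed). Relatedly, your claim that $p\nmid|Z|$ for every exceptional cover is false: the exceptional \emph{part} of the multiplier is a $2$-group, but exceptional covers such as $12_1.\PSL_3(4)$ or $6.\PSU_6(2)$ have centre of order divisible by~$3$ (this only makes (c) vacuous there, but the assertion as stated is wrong).

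The more serious gap is your treatment of the exceptional covers of $\PSU_6(2)$ and $\tw2\tE_6(2)$, which you propose to settle by a ``finite verification for (a)'' in the GAP character table library. That verification requires, for each relevant block, the defect group $D$, the subgroup $\cent SD$, the block $b_D$ inducing $b$, and the action of the outer $3$-element on $D$ — none of which is data stored in the character table library, and for $2^2.\tw2\tE_6(2)$ (order roughly $3\cdot10^{14}$) an explicit computation of $\cent SD$ is not realistic. The paper avoids this entirely with an argument you are missing: any outer automorphism of order~$3$ of $\PSU_6(2)$ or $\tw2\tE_6(2)$ permutes the three involutions of the exceptional $2^2$ in the Schur multiplier cyclically, so if $b$ is invariant under an $H$ (resp.\ $K$) with $3\mid|H/\bar S|$, the central character of $b$ must be trivial on that $2^2$; hence the only relevant covering groups are $\bar S$ and $3.\bar S$, which are not exceptional and are handled elsewhere. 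Only the (small) covers of $\PSL_3(4)$, $\PSO_8^+(2)$ and $\tw2\tB_2(8)$ are then checked in GAP, and there one merely observes that all their $3$-blocks have cyclic or non-abelian defect, so the hypotheses of Theorem~\ref{thm:newg} are never met. Without the involution-permutation argument your proof does not close.
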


\begin{proof}
The simple groups with exceptional covering groups are listed in
\cite[Tab.~24.3]{MT}. Arguing as in the proof of Proposition \ref{prop:red 1},
we need only consider the ones with $\Out(S)$ not a $p'$-group, which are
$\PSL_3(4)$, $\PSU_6(2)$, $\PSO_8^+(2)$, $\tw2B_2(8)$ and $\tw2\tE_6(2)$, and
the only relevant prime is $p=3$. Assume $\bar S=\PSU_6(2)$ or $\tw2\tE_6(2)$
and that $3$ divides $|H/\bar S|$, respectively $|K/S|$. Since any outer
automorphism of order $3$ permutes the three involutions in the Schur multiplier
of $\bar S$ cyclically and the block $b$ is invariant under $H$,
respectively $K$, the relevant covering groups are $S=\bar S$ and $S=3.\bar S$
only, which are not exceptional coverings.
In all other cases, using \cite{GAP}, the $3$-blocks of an exceptional covering
group $S$ turn out to have either cyclic or non-abelian defect, 
whence the claim follows.
\end{proof}

\begin{prop}   \label{prop:suzree}
 Theorem~{\rm\ref{thm:newg}} holds for $S$ a Suzuki or Ree group.
\end{prop}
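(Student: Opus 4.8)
The strategy is to reduce everything to one statement about a field automorphism acting on the irreducible characters of a block, and then to handle the three families separately. For $S$ a Suzuki group $\tw{2}\tB_2(q^2)$, a small Ree group $\tw{2}\type{G}_2(q^2)$, or a large Ree group $\tw{2}\tF_4(q^2)$ (including the small cases $\tw{2}\type{G}_2(3)'\cong\PSL_2(8)$ and the Tits group $\tw{2}\tF_4(2)'$), the group $\Out(\bar S)$ is cyclic, generated by field automorphisms, and there are no diagonal automorphisms, so $\Inndiag(\bar S)=\bar S$. If $p\nmid|\Out(\bar S)|$ we are done exactly as in Proposition~\ref{prop:red 1}: part (a) follows from Proposition~\ref{useful} (applied with $G=N=S$), while (b) and (c) hold trivially. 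The only exceptional covering groups occurring here are those of $\tw{2}\tB_2(8)$, handled by Proposition~\ref{prop:exc cov} (and in any case $3\nmid|\tw{2}\tB_2(8)|$). So from now on $S=\bar S$, $p$ is an odd prime dividing $|\Out(S)|$, and, by Proposition~\ref{prop:red 1}, $p$ is not the defining characteristic. Since $\Out(S)$ is cyclic: in case (b) the group $H/\bar S=\langle\varphi\rangle$ is a nontrivial cyclic $p$-group with $\varphi$ a field automorphism; in case (a), Lemma~\ref{lem:red (a)} lets us assume $A=Q$, which we may likewise take to be $\langle\varphi\rangle\neq1$; and in case (c) the group $K/S$ is cyclic, so by Lemma~\ref{lem:red (c)} part (c) reduces to part (a).

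In both remaining cases every $\chi\in\irr b$ is fixed by $\varphi$: in (b) this is the hypothesis, while in (a) every $\chi\in\irr b$ has $Q$-orbit of $p'$-length, which for the $p$-group $Q$ forces $\varphi$-invariance. So it is enough to prove the following claim: \emph{if $b$ is a $p$-block of $S$ with non-cyclic abelian defect group, then $\irr b$ is not fixed pointwise by a field automorphism of order $p$.} Granting this, case (b) forces $H=\bar S$, so $HJ/J=H/\bar S=1$ is a $p'$-group; case (a) forces $Q=1$, and then $[x,D]=1$ by Proposition~\ref{useful}; and (c) follows via Lemma~\ref{lem:red (c)}. For the Suzuki and small Ree groups the claim is vacuous: all maximal tori of $\tw{2}\tB_2(q^2)$ and $\tw{2}\type{G}_2(q^2)$ are cyclic and are pairwise of coprime order up to a bounded power of $2$, so Sylow $p$-subgroups are cyclic for every odd prime $p$ other than the defining characteristic, and $S$ has no block with non-cyclic defect group.

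It remains to treat $S=\tw{2}\tF_4(q^2)$ with $q^2=2^{2f+1}\geq8$. Here I would use the description of the blocks of $S$ from \cite{Ma90} and \cite{Sh75} already exploited in the proof of Proposition~\ref{suzree}: a $p$-block of $S$ with non-cyclic defect can occur only when the order of $q^2$ modulo $p$ lies in $\{1,2,4\}$ (so that the associated Sylow $\Phi_e$-torus has rank two), in which case its defect group is essentially a rank-two $p$-subgroup inside a maximal torus of $S$, and the irreducible characters of $b$ are distributed among the rational Lusztig series $\cE(S,st)$, where $s$ is a fixed quasi-isolated $p'$-element of $S^\ast$ and $t$ runs over suitable $p$-elements of $\cent{S^\ast}{s}$. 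A field automorphism $\varphi$ of order $p$ permutes these series via $\cE(S,s)^{\varphi}=\cE(S,s^{\varphi^\ast})$ (see \cite[Cor.~2.4]{NTT08}); since the relevant Weyl-group action on the rank-two $p$-torus has scalars only $\pm\mathrm{id}$, whereas $\varphi$ induces a scalar of order $p$ on that torus, $\varphi$ cannot fix all the series $\cE(S,st)$ meeting $\irr b$, and hence moves some character of $b$. The main obstacle is making this last step precise: one must run through the explicit lists of non-cyclic-defect blocks of $\tw{2}\tF_4(q^2)$ and their Ennola duals in \cite{Ma90,Sh75}, check that the field automorphism of order $p$ acts on the pertinent rank-two torus as a non-central scalar, and verify that the two Lusztig series it interchanges lie in the \emph{same} block $b$ rather than being spread over the several blocks of $\cE_p(S,s)$.
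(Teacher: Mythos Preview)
Your reduction is correct and matches the paper's: $\Out(S)$ is cyclic with $J=\bar S$, so Lemma~\ref{lem:red (a)} and Lemma~\ref{lem:red (c)} reduce everything to showing that a field automorphism of order~$p$ cannot fix $\Irr(b)$ pointwise, and the Suzuki and small Ree cases are indeed vacuous since their odd Sylow subgroups (away from the defining prime) are cyclic.

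The gap is in your treatment of $\tw2\tF_4(q^2)$. You set out to classify all non-cyclic blocks via the order $e$ of $q^2$ modulo~$p$ and then argue with Lusztig series, but you yourself flag the obstacle: you have not verified that the field automorphism moves a series \emph{lying in the given block}, nor that the Weyl-group automiser on the relevant rank-two torus really contains only $\pm\mathrm{id}$. As written this is a sketch, not a proof.

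The paper bypasses all of this with one structural fact you missed: by \cite[Bem.~1]{Ma90}, the \emph{only} $p$-block of $\tw2\tF_4(q^2)$ with non-cyclic defect is the principal block $b=B_0(S)$, and its defect groups are abelian precisely when $p>3$. So there is a single block to handle, and it is principal. From here the paper gives two short finishes. First, since $b=B_0(S)$, the unique block of $H$ above it is $B_0(H)$; all characters in $b$ have height zero by \cite[Bem.~3]{Ma90}, and if $Q$ has only $p'$-orbits on $\Irr(b)$ then the same holds for $\Irr(B_0(H))$, so the main result of \cite{MN21} forces the Sylow $p$-subgroups of $H$ to be abelian, giving~(a) directly. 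Second, and more decisively for~(b), a direct look at the character table in \cite{Ma90} shows that a nontrivial field automorphism of $p$-power order never fixes every character of $B_0(S)$, so the hypotheses of~(a) and~(b) simply fail whenever $A\ne1$. Either route is far shorter than the Lusztig-series case analysis you propose.
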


\begin{proof}
By Proposition~\ref{prop:red 1}, we may assume $p$ is not the defining
characteristic of $S$ and $S\not\cong \tw2 F_4(2)'$.
Now the Sylow $p$-subgroups of the Suzuki and the small Ree groups are cyclic
for any such $p\ge3$. For $S=\tw2F_4(q^2)$ with $q^2>2$, the only $p$-block
with non-cyclic defect groups is the principal block $b=B_0(S)$, and its defect
groups are abelian when $p>3$ (see \cite{Ma90}). Now $\Out(S)$ is cyclic, so in
particular its Sylow $p$-subgroup is normal; also $J=\bar S$. By
Lemma~\ref{lem:red (a)} we may assume $A=Q$, and by Lemma~\ref{lem:red (c)} it
suffices to show~\ref{thm:newg}(a) and \ref{thm:newg}(b). But then the
only $p$-block of $H$ above $b$ is the principal block $B=B_0(H)$. By
\cite[Bem.~3]{Ma90} all characters in $b$ have height~0, and if all orbits of
$Q$ on $\Irr(b)$ have $p'$-size, the same is true for the characters in
$\Irr(B)$. But then by the main result of \cite{MN21}, the Sylow $p$-subgroups
of $H$ are abelian. Alternatively, by inspection of the character table
\cite{Ma90}, the assumption of \ref{thm:newg}(a) is in fact never satisfied for
$A \neq 1$; and this establishes~\ref{thm:newg}(a), as well
as~\ref{thm:newg}(b).
\end{proof}

\subsection{Some results on $p$-blocks} \label{sec:blocks}
To deal with groups of Lie type for non-defining primes we first observe some
general facts on blocks of finite reductive groups that may be of independent
interest.

Let $\bG$ be a Levi subgroup of a simple linear algebraic group of simply
connected type over an algebraically closed field of positive characteristic and
$F:\bG\to\bG$ a Frobenius endomorphism with respect to an $\FF_q$-rational
structure. Let $\bG^*$ be a group in duality with $\bG$ with corresponding
Frobenius endomorphism again denoted $F$. We let $p\ge3$ be a prime not dividing
$q$ and denote by $e$ the order of $q$ modulo $p$.

Recall that any $p$-block $b$ of $G:=\bG^F$ has the property that
$\Irr(b)\subseteq\cE_p(G,s)$ for some semisimple $p'$-element
$s\in G^*:=\bG^{*F}$ (see e.g. \cite[Thm~9.12]{CE04}). Furthermore, by
\cite[Thm~A]{KM15} there is a bijection between $p$-blocks $b$ of $\bG^F$ and
$\bG^F$-classes of $e$-Jordan-cuspidal pairs $(\bL,\la)$ of $\bG$ of
quasi-central $p$-defect, with $\la$ lying in a $p'$-Lusztig series of
$\Irr(\bL^F)$ such that all constituents of $\RLG(\la)$ are contained in
$\Irr(b)$. We write 
$$(\bL,\la)\mapsto b_G(\bL,\la)$$ 
for this map.

\begin{lem}   \label{lem:norm}
 Let $d\ge1$ and $\bL$ be a maximal proper $d$-split Levi subgroup of $\bG$.
 Then $|\bN_\bG(\bL)^F/\bL^F|$ is not divisible by a prime bigger than
 $\max\{2,d\}$.
\end{lem}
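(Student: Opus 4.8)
The plan is as follows. Write $\bS:=\bZ^\circ(\bL)_{\Phi_d}$ for the $\Phi_d$-part of $\bZ^\circ(\bL)$, so that (since $\bL$ is a $d$-split Levi subgroup) $\bL=\bC_\bG(\bS)$, see \cite[\S3.5]{GM20}. Then $\bN_\bG(\bL)=\bN_\bG(\bS)$, and the relative Weyl group $W:=\bN_\bG(\bL)/\bL=\bN_\bG(\bS)/\bC_\bG(\bS)$ acts faithfully on the cocharacter lattice $Y(\bS)$, that is, embeds into $\GL(Y(\bS))$. As $\bL$ is connected, the Lang--Steinberg theorem gives $\bN_\bG(\bL)^F/\bL^F\cong W^F$, so it suffices to bound the prime divisors of $|W^F|$.

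First I would reduce to the case that $\bG$ is simple of simply connected type. Indeed $[\bG,\bG]$ is semisimple of simply connected type, hence a direct product of simple simply connected groups which are permuted by $F$, and $\bZ^\circ(\bG)$ is centralised by $\bN_\bG(\bL)$, so plays no role in $W$. When $\bL$ is maximal proper, it meets all but one $F$-orbit of simple factors in the full factor, and on the remaining orbit---cyclically permuted by $F$ with, say, $k$ factors---its intersection $\bL_1$ with one factor $\bG_1$ is a maximal proper $d'$-split Levi subgroup of $(\bG_1,F^k)$, where $F^k$ is again a Frobenius endomorphism and $d'\mid d$. One checks $W^F$ is isomorphic to the group of $F^k$-fixed points of the relative Weyl group of $\bL_1$ in $\bG_1$; since $\max\{2,d'\}\le\max\{2,d\}$, the statement for $(\bG_1,F^k)$ implies the one for $(\bG,F)$. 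So we may assume $\bG$ is simple and $\bZ^\circ(\bG)=1$.

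The key structural input---which I would take from the theory of $d$-split Levi subgroups and generic Sylow theory (\cite{BMM}; see also \cite[\S3.5]{GM20})---is that a maximal proper $d$-split Levi subgroup of a simple group has $\bS$ of $\Phi_d$-rank one. Equivalently, writing $V:=Y(\bS)\otimes\mathbb{Q}$ and writing the action of $F$ on $V$ as a positive-integer scalar times a finite-order automorphism $\phi_0$, the characteristic polynomial of $\phi_0$ equals the cyclotomic polynomial $\Phi_d$; hence $V\cong\mathbb{Q}[\phi_0]=\mathbb{Q}(\zeta_d)$ is one-dimensional over the field $\mathbb{Q}(\zeta_d)$. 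Now if $n\in\bN_\bG(\bS)$ represents an element of $W^F$, then $F(n)\in n\bC_\bG(\bS)$, and since $\bC_\bG(\bS)$ acts trivially on $\bS$ the induced automorphism of $Y(\bS)$ commutes with the $F$-action; being $\mathbb{Q}(\zeta_d)$-linear on the one-dimensional space $V$, it is multiplication by a root of unity of $\mathbb{Q}(\zeta_d)$. As $W$ embeds into $\GL(Y(\bS))$, this shows $W^F$ embeds into the group $\mu(\mathbb{Q}(\zeta_d))$ of roots of unity of $\mathbb{Q}(\zeta_d)$, which is cyclic of order $\operatorname{lcm}(2,d)$. Therefore $|\bN_\bG(\bL)^F/\bL^F|=|W^F|$ divides $\operatorname{lcm}(2,d)$, so every prime dividing it is $2$ or a prime divisor of $d$, and in particular is at most $\max\{2,d\}$.

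The main obstacle is the structural assertion that for a simple group a maximal proper $d$-split Levi has central $\Phi_d$-torus of $\Phi_d$-rank one; note this is genuinely needed, as it fails once $\bG$ has a nontrivial central torus (which is why the reduction to the simple case comes first). The cleanest route is to invoke the classification of $d$-split Levi subgroups: their poset is graded by the dimension of the central $\Phi_d$-torus, with $\bG$ at the bottom, so a maximal proper one has central $\Phi_d$-torus of minimal positive $\Phi_d$-rank, and the possibility of $\Phi_d$-rank $\ge2$ is excluded by a short argument (in a simple group a nontrivial torus has proper centraliser, and shrinking a subtorus can only enlarge its centraliser, so a maximal proper member cannot contain a $\Phi_d$-torus of strictly smaller positive rank in its centre). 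Alternatively, one avoids the general theory and simply runs through the short list of maximal proper $d$-split Levi subgroups of simple groups and their relative Weyl groups; in each case $|\bN_\bG(\bL)^F/\bL^F|$ is seen to divide $\operatorname{lcm}(2,d)$, making the prime bound evident.
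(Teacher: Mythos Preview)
Your approach differs from the paper's. The paper identifies $\bN_\bG(\bL)^F/\bL^F$ with $\bN_W(W_1)/W_1$, where $W$ is the relative Weyl group of a Sylow $d$-torus and $W_1\le W$ the maximal parabolic corresponding to $\bL$ (via \cite[Prop.~3.5.12]{GM20} and \cite[Prop.~26.4]{MT}), and then verifies case by case in the relevant reflection groups that this quotient has no large prime divisors. Your route---embedding $W^F=\bN_\bG(\bL)^F/\bL^F$ into the group of roots of unity of $\mathbb{Q}(\zeta_d)$ via its faithful $\mathbb{Q}(\zeta_d)$-linear action on a one-dimensional space---is more uniform and in fact yields the sharper conclusion that $|\bN_\bG(\bL)^F/\bL^F|$ divides $\operatorname{lcm}(2,d)$.

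There is, however, a gap in your justification of the key structural claim that $\bS=\bZ^\circ(\bL)_{\Phi_d}$ has $\Phi_d$-rank one. You argue that if $\bS$ had rank at least two, a rank-one $\Phi_d$-subtorus $\bS'\subsetneq\bS$ would yield a proper $d$-split Levi $\bC_\bG(\bS')\supseteq\bL$, contradicting maximality. But monotonicity of centralisers only gives $\bC_\bG(\bS')\supseteq\bL$; equality $\bC_\bG(\bS')=\bL$ is not excluded by your reasoning, and then there is no contradiction. The fix is to choose $\bS'$ with more care: take a root $\alpha$ of $\bG$ not vanishing on $\bS$ (one exists since $\bL$ is proper), and observe that the $\mathbb{Q}[\phi_0]$-span of $\alpha|_V$ in $V^*$ has $\mathbb{Q}$-dimension exactly $\varphi(d)$ (as $\Phi_d$ is irreducible and is the minimal polynomial of $\phi_0$ on $V^*$), so its annihilator in $V$ is a $\phi_0$-stable subspace of $\mathbb{Q}(\zeta_d)$-dimension $r-1\ge1$. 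The corresponding $\Phi_d$-subtorus $\bS'$ then has $\alpha|_{\bS'}=0$, whence $\bC_\bG(\bS')\supsetneq\bL$ strictly, and the contradiction follows. With this correction your argument is complete. (Indeed, the reduction to simple $\bG$---which you leave somewhat sketchy, particularly the passage from $d$ to $d'$---can then be bypassed altogether: apply the same root-kernel argument to show $\dim_{\mathbb{Q}(\zeta_d)}V/V_0=1$ where $V_0=Y(\bZ^\circ(\bG)_{\Phi_d})\otimes\mathbb{Q}$; since $W^F$ is finite and acts trivially on $V_0$, it acts faithfully on the one-dimensional $\mathbb{Q}(\zeta_d)$-space $V/V_0$.)
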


\begin{proof}
The maximal proper $d$-split Levi subgroups $\bL$ of $\bG$ above the
centraliser of a fixed Sylow $d$-torus $\bS$ of $\bG$ are in one-to-one
correspondence with the maximal parabolic subgroups of the relative Weyl group
$W=\bN_\bG(\bS)^F/\bC_\bG(\bS)^F$ of
$\bS$ \cite[Prop.~3.5.12]{GM20}. Moreover, if $\bL$ corresponds to
$W_1\le W$ then $\bN_\bG(\bL)^F/\bL^F\cong \bN_W(W_1)/W_1$ (see
\cite[Prop.~26.4]{MT}). The claim is thus reduced to a question in reflection
groups. For these, there is an immediate reduction to the irreducible case.
The latter can be checked case by case using the explicit description of the
various relative Weyl groups given in \cite[3.5.11--3.5.15]{GM20}. For
example, when $\bG$ is of classical type, then $W=G(m,1,n)$ or $G(m,2,n)$ for
suitable $m\in\{d,2d\}$ and $n\ge1$, for which the assertion is easily verified.
\end{proof}

The following somewhat surprising result may be of independent interest.
Here, by a \emph{field automorphism of $G$} we mean any conjugate of an
automorphism induced by a Frobenius endomorphism $F_0$ of $\bG$ commuting
with~$F$.

\begin{prop}   \label{prop:field}
 Let $\sigma$ be a field automorphism of $G=\bG^F$ of order~$p$, and let
 $\gamma=\sigma\tau$, where $\tau$ is an inner-diagonal automorphism of $G$. 
 Then any $p$-block of $G$ of non-quasi-central defect contains characters in
 a Lusztig series that is not $\gamma$-stable.
\end{prop}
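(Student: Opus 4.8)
The plan is to exploit the fact that a field automorphism $\sigma$ of order $p$ acts on $G = \bG^F$ essentially as the Frobenius endomorphism $F_0$ on the larger group $\bG^{F_0^p}$, so that $G = (\bG^{F_0^p})^{\sigma}$, and then to count Lusztig series fixed by $\sigma$ (or by $\gamma = \sigma\tau$) against the block distribution. The key point is that $\sigma$ permutes the Lusztig series $\cE(G,s)$ of $G$ according to its action on semisimple conjugacy classes of $G^\ast$: we have $\cE(G,s)^\sigma = \cE(G, \sigma^\ast(s))$ for $\sigma^\ast$ the dual field automorphism (as recorded via \cite[Cor.~2.4]{NTT08} elsewhere in the paper), and likewise $\gamma$ acts as $\gamma^\ast = \sigma^\ast \tau^\ast$ where $\tau^\ast$ is an inner automorphism of $G^\ast$ (conjugation by an element of $\Inndiag(G^\ast)$). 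So a block $b = b_G(\bL,\la)$ lying in $\cE_p(G,s)$ has \emph{all} its Lusztig series $\gamma$-stable if and only if every $st$ (for $t$ a $p$-element of $\bC_{G^\ast}(s)$ with $\cE(G,st)$ meeting $\Irr(b)$) is $G^\ast$-conjugate to $\gamma^\ast(st) = \sigma^\ast(s)\sigma^\ast(t)^g$ for the relevant $g$.

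First I would use Lemma~\ref{lem:p-elt} and Hiss's theorem \cite[Thm~9.12]{CE04} to pin down which Lusztig series actually meet $\Irr(b)$: writing $b = b_G(\bL,\la)$ with $\Irr(b) \subseteq \cE_p(G,s)$ and $s \in \bL^{\ast F}$ semisimple $p'$, the series $\cE(G,st)$ meets $\Irr(b)$ for every $p$-element $t \in \bZ(\bL^\ast)^F$. Since $b$ has non-quasi-central defect, $\bL$ is a \emph{proper} $e$-split Levi subgroup of $\bG$ (if $\bL = \bG$ then by \cite[Prop.~2.5]{KM13}, as used in the proof of Proposition~\ref{prop:ExceptTypes}, the block has central, hence quasi-central, defect), so $\bZ(\bL^\ast)^\circ$ contains a nontrivial $e$-torus and therefore $\bZ(\bL^\ast)^F$ contains nontrivial $p$-elements; in fact it contains enough $p$-elements that the set $\{st : t \in \bZ(\bL^\ast)^F_p\}$ is reasonably large. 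The plan is then to argue that this set cannot be globally stable under $\gamma^\ast$: intuitively, $\sigma^\ast$ has order $p$ and acts on the $p$-group $\bZ(\bL^\ast)^F_p$ with a nontrivial fixed subgroup but also (when that $p$-group is noncyclic, or large enough) with nontrivial orbits, and twisting by the bounded-order inner part $\tau^\ast$ cannot repair this. More precisely I would fix a $p$-element $t \in \bZ(\bL^\ast)^F_p$ of maximal order and compare the $\gamma^\ast$-orbit of $st$ with its $G^\ast$-class; since $s$ is $p'$ and central-ish relative to $\bZ(\bL^\ast)$, $G^\ast$-conjugacy of $st$ and $\sigma^\ast(st)^g$ forces $\sigma^\ast(t)$ (up to the bounded twist) to be $\bC_{G^\ast}(s)$-conjugate to $t$, and by a counting/order argument on the finite abelian $p$-group $\bZ(\bL^\ast)^F_p$ one can choose $t$ so that this fails.

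The main obstacle I expect is making the last step uniform: the action of $\sigma^\ast$ on $\bZ(\bL^\ast)^F_p$ together with the inner twist $\tau^\ast$ has to be analysed carefully, because a priori $\tau^\ast$ could conjugate $\sigma^\ast(t)$ back onto $t$ for \emph{every} $t$ if $\bZ(\bL^\ast)^F_p$ happened to be small (e.g. cyclic of order $p$), in which case the statement would need the non-quasi-central defect hypothesis to guarantee more room. So the real work is: (i) showing that non-quasi-central defect forces $\bL$ proper and hence $\bZ(\bL^\ast)^F$ to carry nontrivial $p$-torsion of sufficiently rich structure — here I would likely invoke the classification of the relevant $e$-cuspidal pairs and defect groups (as in Lemma~\ref{lem:norm} and the block-theoretic input of \cite{KM15, CE94, CE99}), possibly reducing to the case where the defect group $D$ itself is related to a Sylow $e$-torus of $\bC_{\bG^\ast}(s)$; and (ii) controlling $\langle \sigma^\ast \rangle \ltimes (\text{inner part})$ acting on that $p$-torus. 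A clean way to organise (ii) is to note that a field automorphism of order $p$ has fixed points forming $\bG^{F_0}$, so $\bZ(\bL^\ast)^{F_0}_p \subsetneq \bZ(\bL^\ast)^F_p$ is a \emph{proper} subgroup (as $|(\bZ(\bL^\ast)^\circ)^{F_0}| < |(\bZ(\bL^\ast)^\circ)^F|$ by the usual $e$-torus order formula, since $\sigma$ has order $p > 1$), so there genuinely exists $t$ moved by $\sigma^\ast$; then one shows the inner twist, whose order is a divisor of $|\bZ(G^\ast)|$ (bounded, and coprime to much of the structure), cannot absorb the move. I would then conclude that some $\cE(G,st)$ is not $\gamma$-stable, giving the claim, and finish by recording the (few) small-rank or exceptional-Schur-multiplier cases separately via \cite{GAP} as is done elsewhere in the section.
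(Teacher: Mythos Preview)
Your overall framework is right: use Lemma~\ref{lem:p-elt} to populate $\Irr(b)$ with characters from series $\cE(G,st)$ for $p$-elements $t$ in the centre of the dual Levi, then find some $t$ whose series is moved by $\gamma$. But two points are off.

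First, the inner-diagonal part $\tau$ is much easier to dispose of than you suggest. An inner-diagonal automorphism of $G$ is conjugation by an element of $\tilde G = \tbG^F$ for a regular embedding $\bG\hookrightarrow\tbG$; this fixes every irreducible character of $\tilde G$ and hence stabilises each rational series $\cE(G,s)$ as a set (being the constituents of restrictions from a single $\cE(\tilde G,\tilde s)$). So $\tau$ fixes \emph{every} rational Lusztig series, and the problem reduces entirely to~$\sigma$. Your remark about the order of $\tau^*$ dividing $|\bZ(G^*)|$ is neither correct nor needed.

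Second, and more seriously, your argument for $\sigma$ only produces a $t$ with $\sigma^*(t) \ne t$; it does not produce one with $\sigma^*(t)$ lying in a different $G^*$-conjugacy class, which is what controls the rational series. Distinct elements of $\bZ(\bL^*)^F_p$ can be $G^*$-conjugate via the relative Weyl group, so a ``counting/order argument on the finite abelian $p$-group'' does not suffice. The paper resolves this by passing from $\bL^*$ to a \emph{maximal} proper $e$-split Levi subgroup $\bL_1^* \ge \bL^*$ of $(\bG^*,F_0)$, so that the $e$-torus $\bT_1 := \bZ(\bL_1^*)_{\Phi_e}$ has rank~$1$ and $(\bT_1^F)_p$ is cyclic. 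A generator $t$ is not fixed by $\sigma^*$ (since $|\bT_1^F|_p > |\bT_1^{F_0}|_p$), so $\sigma^*(t) = t^a \ne t$. If $t$ and $t^a$ were $G^*$-conjugate, the conjugating element would normalise $\langle t\rangle$ and hence its centraliser, which one identifies with $\bL_1^{*F}$ (via \cite[Prop.~2.1]{CE94} for good primes and a short check for bad ones); but Lemma~\ref{lem:norm} says $\bN_{\bG^*}(\bL_1^*)^F/\bL_1^{*F}$ has order prime to $p$, contradicting the existence of an orbit of length $p$ on $\langle t\rangle$. This is precisely the role Lemma~\ref{lem:norm} is meant to play, and with it the argument is uniform --- no GAP fallback is needed.
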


\begin{proof}
Let $b$ be a $p$-block of $G$ of non-quasi-central defect. As above,
$b=b_G(\bL,\la)$ for some $e$-split Levi subgroup $\bL$ of $\bG$, proper
since $b$ has non-quasi-central defect. By assumption, up to conjugation,
$\sigma$ is induced by a Frobenius endomorphism $F_0$ of $\bG$ commuting
with~$F$, with
respect to an $\FF_{q_0}$-rational structure on $\bG$ where $q=q_0^p$. Then
$q_0$ also has order~$e$ modulo $p$. In particular any $e$-torus of $(\bG,F_0)$
is also an $e$-torus of $(\bG,F)$, and conversely, any $F_0$-stable $e$-torus
of $(\bG,F)$ is also an $e$-torus of $(\bG,F_0)$. The same relations hold
for $e$-split Levi subgroups since these are the centralisers of $e$-tori.

By \cite[5.6]{Tay18}, $\sigma$ induces a field automorphism $\sigma^*$ of
$G^*=\bG^{*F}$ of the same order. Let $\bL^*\le\bG^*$ be in duality with $\bL$,
an $e$-split Levi subgroup of $(\bG^*,F)$ that is $F_0$-stable, hence $e$-split
in $(\bG^*,F_0)$. Let $\bL_1^*\ge\bL^*$ be a maximal (proper) $e$-split Levi
subgroup of $(\bG^*,F_0)$. Thus, $\bT_1:=\bZ(\bL_1^*)_{\Phi_e}$ is an $e$-torus
of $(\bG^*,F)$ and $(\bG^*,F_0)$ of
rank~1, and so $\bT_1^F$ and $\bT_1^{F_0}$ have cyclic Sylow $p$-subgroups
\cite[Prop.~25.7]{MT}, with $|\bT_1^F|_p>|\bT_1^{F_0}|_p$ since
$|\Phi_e(q)|_p>|\Phi_e(q_0)|_p$. Thus a generator $t$ of $(\bT_1^F)_p$ is not
fixed by $\sigma^*$, that is, $t^{\sigma^*}= t^a\ne t$ for some integer $a$.
\par
We claim that $\bC_{G^*}((\bT_1^F)_p) = \bC_{G^*}((\bT_1)_{\Phi_e})$. Indeed, if
$p$ is a good prime for $\bG$ then this is \cite[Prop.~2.1(ii)+(iii)]{CE94}. 
Otherwise, $\bG$ has a factor of exceptional type and $p=3$ (so $e=1,2$) or
$\bG=\tE_8$, $p=5$ and $e=1,2,4$. In the latter cases, the explicit description
of maximal $e$-split Levi subgroups in \cite[\S3.5]{GM20} together with the
fact \cite[Tab.~1]{KM13} that there are no quasi-isolated elements of
order~$p^k$, $k\ge2$, in $\bG^*$ shows the claim.
\par
Now assume there exists $g\in G^*$ with $t^a=t^g$, so conjugation by $g$ makes
an orbit of length $p$ on $\langle t\rangle=(\bT_1^F)_p$. Then $g$ normalises
$(\bT_1^F)_p$ and hence also
$$\bC_{\bG^*}((\bT_1^F)_p)^F = \bC_{\bG^*}((\bT_1)_{\Phi_e})^F=\bL_1^{*F}.$$
But by Lemma~\ref{lem:norm}, $\bN_{\bG^*}(\bL_1^*)^F/\bL_1^{*F}$ has order prime
to $p$ (since $p>e$), which contradicts the assumption that $g$ makes an orbit
of length $p$ on $\langle t\rangle$. Hence $t,t^{\sigma^*}$ are not
$G^*$-conjugate.

Now let $s\in G^*$ be a semisimple $p'$-element such that
$\Irr(b)\subseteq\cE_p(G,s)$. Since $\Irr(b)$ contains the constituents of
$\RLG(\la)$ we may assume $s\in \bL^{*F}$. So
$$t\in \bZ(\bL_1^{*F})\le\bZ(\bL^{*F})\le \bC_{G^*}(s).$$
Note that $st$ and $(st)^{\sigma^*}$ are not $G^*$-conjugate, since neither are
their $p$-parts. Thus, by \cite[Prop.~7.2]{Tay18} the Lusztig series
$\cE(G,st)\subset\Irr(G)$ is not $\sigma$-invariant. Since
$\cE(G,st)\cap\Irr(b)\ne\emptyset$ by
Lemma~\ref{lem:p-elt}, the statement follows in the case $\gamma=\sigma$.

It remains to consider the case $\gamma=\sigma\tau$ where $\tau$ is not an
inner automorphism of $G$. Note that every semisimple conjugacy class of~$G^*$
is stable under inner-diagonal automorphisms of $G^*$ (indeed, $\Inndiag(G^*)$
is induced by the action of $(\tilde\bG^*)^F$ on $G^*$ for some regular
embedding $\bG^*\hookrightarrow\tbG^* = \bZ(\tilde\bG^*)\bG^*$). It follows 
that $\tau$ fixes the rational series $\cE(G,st)$ in the preceding paragraph,
and hence $\gamma$ again does not stabilise $\cE(G,st)$
which contains characters from $b$.
\end{proof}

By Propositions~\ref{prop:red 1}, \ref{prop:exc cov} and~\ref{prop:suzree}, to
complete the proof of Theorem~\ref{thm:newg} we may assume $S$ is a central
quotient of a group $G=\bG^F$ as above, for $\bG$ simple. We now distinguish
two situations:
\begin{equation}\label{eq:caseA}
  p\mbox{ does not divide }|\bZ(G)|,
   \mbox{ and }G{\not\cong}\tD_4(q)\mbox{ with }p=3;
\end{equation}
and
\begin{equation}   \label{eq:caseB}
  p\mbox{ divides }|\bZ(G)|,\mbox{ or }G\cong \tD_4(q)\mbox{ with }p=3.
\end{equation}

\begin{prop}   \label{prop:generic case}
 Theorem~{\rm\ref{thm:newg}} holds in Case~\eqref{eq:caseA}.
\end{prop}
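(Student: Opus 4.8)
The plan is to build on the reductions already in place: by Propositions~\ref{prop:red 1}, \ref{prop:exc cov} and~\ref{prop:suzree} we may assume $S$ is a central quotient of $G=\bG^F$ with $\bG$ simple of simply connected type and $r\ne p$. In Case~\eqref{eq:caseA} we have $p\nmid|\bZ(G)|$, hence $p\nmid|Z|$; as $Z$ is then a $p'$-group it suffices to prove the three statements for $G$ in place of $S$. Since $D$ is a $p$-group and $\bZ(G)$ is a $p'$-group, $D$ is non-central, so $b$ has non-quasi-central defect. Moreover, by \cite[Thm~2.5.12]{GLS3} (used exactly as in the proof of Proposition~\ref{str}), in Case~\eqref{eq:caseA} we have $\Out(G)=O\rtimes(C_f\times\Gamma)$ with $O=\mathrm{Outdiag}(G)$ of $p'$-order (as $|O|=|\bZ(G)|$) and $\Gamma$, the group of graph automorphisms, of $p'$-order too (here we use $(\bG,p)\ne(\tD_4,3)$), while $C_f$ is cyclic; hence a Sylow $p$-subgroup of $\Out(G)$ is cyclic and is generated, modulo $\Inndiag(G)$, by a field automorphism. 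This already disposes of part~(c): the abelian $p$-group $K/S$ then lies in such a cyclic Sylow $p$-subgroup, so $K/S$ is cyclic, and Lemma~\ref{lem:red (c)} deduces~(c) from~(a).

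For part~(b), suppose for contradiction that $p\mid|HJ/J|$, with $J=\Inndiag(\bar S)$. The hypothesis $\bO^{p'}(H/\bar S)=H/\bar S$ passes to the quotient $HJ/J\le\Out(S)/\mathrm{Outdiag}(S)$, so $HJ/J$ is generated by its $p$-elements; as its Sylow $p$-subgroups are cyclic groups of field automorphisms, it contains an element of order~$p$ induced by an honest field automorphism $\sigma$ of $G$ of order~$p$. Pick $h\in H$ representing it; as an automorphism of $G$, $h=\sigma\tau$ with $\tau$ inner-diagonal, and $h$ fixes every $\chi\in\irr b$ by assumption. But $b$ has non-quasi-central defect, so Proposition~\ref{prop:field} yields a Lusztig series meeting $\irr b$ that is not $h$-stable; any character of $b$ in that series is moved by $h$, a contradiction. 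Hence $HJ/J$ is a $p'$-group.

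For part~(a), set $\mathcal G:=\bar S\langle x\rangle$, so $\mathcal G/\bar S=\langle\bar x\rangle$ is a cyclic $p$-group; let $B$ be the block of $\mathcal G$ covering $b$. By Lemma~\ref{bD} (with $N=\bar S\nor\mathcal G$) there is a defect group $D_0$ of $B$ with $D_0\cap\bar S=D$, $D_0\le T$ and $T=(T\cap\bar S)D_0$, where $T$ is the stabiliser of $b_D$ in $\norm{\mathcal G}D$. Since $(T\cap\bar S)/\cent{\bar S}D$ is a $p'$-group by \cite[Thm~9.22]{N} while $x\in T$ is a $p$-element, after replacing $D_0$ by a $(T\cap\bar S)$-conjugate I may assume $x\in D_0\,\cent{\bar S}D$; writing $x=d_0c$ with $d_0\in D_0$ and $c\in\cent{\bar S}D$, the facts that $D_0$ normalises $D$ and $c$ centralises $D$ show that $x$ and $d_0$ act identically on $D$, and since $D_0=D\langle d_0\rangle$ this means $[x,D]=1$ precisely when $D_0$ is abelian. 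If $\bar x=1$, i.e.\ $x\in\bar S$, then $[x,D]=1$ by Proposition~\ref{useful} (applied with $G=N=\bar S$), so I may assume $\bar x\ne1$, in which case $x$ induces, modulo $\Inndiag(G)$, a nontrivial field automorphism of $G$.

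The remaining point, and the technical heart of the matter, is to show that $D_0$ is abelian in this field-automorphism case. For $\chi\in\irr B$ lying over $\theta\in\irr b$ one computes, using $D_0/D\cong\mathcal G/\bar S$ and that $\theta$ has height zero in $b$ (Kessar--Malle \cite{KM13}, as $D$ is abelian), that $p^{h_\chi}=[\mathcal G:\mathcal G_\theta]$; hence $D_0$ non-abelian would force $B$ to contain a character of positive height, i.e.\ some $\theta\in\irr b$ not fixed by $\langle\bar x\rangle$. The plan is to rule this out using the hypothesis that every $A$-orbit on $\irr b$ has $p'$-length together with Proposition~\ref{prop:field}, which forces a nontrivial $p$-power field automorphism of $G$ to move an entire Lusztig series meeting $\irr b$; combined with the normal-$p$-complement/cyclic-Sylow structure of $A$ and the explicit description of the abelian defect groups of $\bG^F$ and of the action of field automorphisms on them (via Cabanes--Enguehard theory), this should constrain the permutation action of $\langle\bar x\rangle$ on $\irr b$ tightly enough to contradict the existence of such a moved $\theta$. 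I expect controlling this field-automorphism action on the abelian defect group to be the main obstacle.
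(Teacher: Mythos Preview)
Your treatment of parts~(b) and~(c) is correct and matches the paper's approach exactly: reduce (c) to (a) via Lemma~\ref{lem:red (c)} using that the Sylow $p$-subgroup of $\Out(\bar S)$ is cyclic in Case~\eqref{eq:caseA}, and for (b) produce an element $\gamma=\sigma\tau\in H$ with $\sigma$ a field automorphism of order~$p$ and invoke Proposition~\ref{prop:field} to contradict $H$-invariance of all of $\irr b$.

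The gap is in part~(a). The paper's argument here is far more direct than yours, and your detour through $\mathcal G=\bar S\langle x\rangle$, the defect group $D_0$, and heights in the covering block is unnecessary and, as you yourself acknowledge, incomplete. The paper simply shows that the hypothesis of~(a) is \emph{never} satisfied when $Q\ne1$: if $Q\ne1$ then $H$ contains a $p$-element $\gamma$ which, in Case~\eqref{eq:caseA}, is of the form $\sigma\tau$ with $\sigma$ a field automorphism of order~$p$ and $\tau$ inner-diagonal; Proposition~\ref{prop:field} then yields a Lusztig series meeting $\irr b$ that is moved by $\gamma$, contradicting the hypothesis. Hence $Q=1$, and Proposition~\ref{useful} finishes.

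Your confusion in the last paragraph stems from misreading what Proposition~\ref{prop:field} provides: it shows that $\gamma$ \emph{moves} a character, which is exactly what one needs to \emph{contradict} the hypothesis of~(a), not to verify that $\bar x$ fixes everything. The only subtlety---which the paper leaves implicit---is why ``$\gamma$ moves $\chi$'' forces $p\mid|A:A_\chi|$, given that $Q$ need not be normal in $A$. The point is that inner-diagonal automorphisms fix every rational Lusztig series (see the last paragraph of the proof of Proposition~\ref{prop:field}), so the action of $A$ on the set of series meeting $\irr b$ factors through $\bar A:=HJ/J\le\Out(\bar S)/\mathrm{Outdiag}\cong\Phi\times\Gamma$, which is \emph{abelian} in Case~\eqref{eq:caseA}. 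The $p'$-orbit hypothesis then forces the (unique) Sylow $p$-subgroup of $\bar A$ to fix every such series, contradicting Proposition~\ref{prop:field}. No analysis of the action of field automorphisms on defect groups via Cabanes--Enguehard theory is needed.
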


\begin{proof}
Let $(\bG,F)$ be such that $S$ is a central quotient of $G=\bG^F$. Let $b$ be a
$p$-block of $S$ of non-trivial defect. Since in Case~\eqref{eq:caseA} the
order of $\bZ(G)$ is prime to $p$, we may consider $b$ as a block of $G$. For
the groups in~\eqref{eq:caseA},
the only outer automorphisms of $p$-power order are field automorphisms 
modulo $J=\Inndiag(\bar S)$. So by Lemma~\ref{lem:red (c)}, it suffices
to prove parts~(a) and~(b) of Theorem ~\ref{thm:newg}, with $Q \neq 1$ in the
case of \ref{thm:newg}(a), or $p$ divides $|HJ/J|$ in the case of
\ref{thm:newg}(b). Then $H$ contains an element $\gamma$ as in
Proposition~\ref{prop:field}. But then Proposition~\ref{prop:field} shows the
assumptions of \ref{thm:newg}(a), (b) are not satisfied.
\end{proof}

\subsection{The groups in Case~\eqref{eq:caseB}} \label{sec:tricky}

\begin{prop}   \label{prop:D4}
 Theorem~{\rm\ref{thm:newg}} holds for $\bar S=\PSO^+_8(q)$ and $p=3$.
\end{prop}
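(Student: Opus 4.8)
The plan is to imitate the proof of Proposition~\ref{prop:generic case}, using a triality version of Proposition~\ref{prop:field} in place of the latter. By Propositions~\ref{prop:red 1}, \ref{prop:exc cov} and~\ref{prop:suzree} we may assume $S$ is a central quotient of $G=\bG^F$ with $\bG$ simple simply connected of type~$\tD_4$; since $p=3$ is good for $\tD_4$ and $3\nmid|\bZ(G)|$ (a $2$-group), we regard $b$ as a $3$-block of~$G$. First, part~(c) follows from part~(b): under the hypotheses of~(c) the group $H:=K/Z$ satisfies those of~(b) (as $H/\bar S\cong K/S$ is an abelian $3$-group and every character of~$b$ extends to~$K$, hence is $H$-invariant), so~(b) gives that $HJ/J$ is a $3'$-group; since $J/\bar S$ is a $2$-group whereas $H/\bar S$ is a $3$-group, this forces $K=S$, and the $K$-block covering $b$ is $b$ itself with abelian defect. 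Moreover, if $x\in\norm HD$ is a $p$-element fixing $b_D$ and inducing an inner automorphism of~$S$, then $[x,D]=1$ already by Proposition~\ref{useful} applied with $N=G=S$ (legitimate since $\bZ(S)$ is a $3'$-group). By the structure $\Out(\bar S)=\mathrm{Outdiag}(\bar S)\rtimes(C_f\times\fS_3)$, any automorphism of $3$-power order that is not inner-diagonal is, up to conjugacy, of the form $\gamma=\sigma\rho^{j}\tau$ with $\tau$ inner-diagonal, $\sigma$ a field automorphism of $3$-power order, $\rho$ a triality automorphism and $j\in\{0,1,2\}$, where $(\sigma,j)\ne(1,0)$; so it suffices to treat such~$\gamma$.

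The central step is the claim that, for $\bG$ of type~$\tD_4$, $p=3$ and $\gamma=\sigma\rho^{j}\tau$ as above, every $3$-block $b$ of $G$ of non-central defect contains characters lying in a Lusztig series that is not $\gamma$-stable. For $j=0$ this is Proposition~\ref{prop:field}. For $j\ne0$ I would argue as in the proof of that proposition: write $b=b_G(\bL,\la)$ with $\bL$ a proper $e$-split Levi subgroup of $\bG$ (here $e\in\{1,2\}$ is the order of $q$ modulo~$3$) and choose $s\in\bL^{*F}$ with $\Irr(b)\subseteq\cE_3(G,s)$, so that $\bZ(\bL^{*})_{\Phi_e}$ is a non-trivial $\Phi_e$-torus; we may assume $\bL$ is $\gamma$-stable up to $G$-conjugacy, else $b$ is already not $\gamma$-stable. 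Now use the explicit action of triality on the cocharacter lattice of~$\tD_4$ (which cyclically permutes the three outer fundamental coweights) and on the relative Weyl groups $W_G(\bL)$ occurring for abelian-defect blocks — as $3$ is good, abelianness of $D$ forces $W_G(\bL,\la)$ to be a $3'$-group (see \cite{KM15}, \cite[Prop.~2.7]{KM13}) — to produce a $p$-element $t\in\bZ(\bL^{*})_{\Phi_e}^F$ whose ``coordinate pattern'' is not fixed by the permutation induced by $\rho^{*j}$, even modulo the coordinate inversions realised by $W_G(\bL)$ and modulo the harmless twists by $\sigma^{*}$ and $\tau^{*}$; then $st$ is not $\langle G^{*},\gamma^{*}\rangle$-conjugate to itself. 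The impossibility of a compensating conjugation is ensured, exactly as in the field case, by Lemma~\ref{lem:norm}: for every maximal proper $e$-split Levi $\bL_1^{*}$ of $\bG^{*}$, $\bN_{\bG^{*}}(\bL_1^{*})^F/\bL_1^{*F}$ has order prime to~$3$ (since $3>\max\{2,e\}$), so cannot realise a $3$-cycle of coweights. Then $\cE(G,st)\cap\Irr(b)\ne\emptyset$ by Lemma~\ref{lem:p-elt}, and \cite[Cor.~2.4]{NTT08} gives a character of $b$ moved by~$\gamma$.

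Granting the claim, the proof finishes as for Proposition~\ref{prop:generic case}. For~(b): if $HJ/J$ were not a $3'$-group, $H$ would contain a $3$-element $\gamma$ as above, and the claim yields $\chi\in\Irr(b)$ with $\chi^{\gamma}\ne\chi$, contradicting that all characters of~$b$ are $H$-invariant. For~(a): a $p$-element $x\in\norm HD$ fixing $b_D$ either induces an inner automorphism of~$S$ — whence $[x,D]=1$ by Proposition~\ref{useful}, as noted — or induces, up to conjugacy, a non-trivial $\gamma$ as above; but then the claim gives $\chi\in\Irr(b)$ not fixed by~$x$, so the $\langle x\rangle$-orbit of~$\chi$ has length a non-trivial power of~$p$ and $p\mid|A:A_{\chi}|$, contrary to hypothesis. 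Hence only the first alternative occurs, proving~(a).

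The main obstacle I expect is the triality version of Proposition~\ref{prop:field}. A triality — even combined with a field and a diagonal automorphism — can stabilise blocks $b_G(\bL,\la)$ with non-cyclic abelian defect group (for instance with $\bL$ the central-node $\tA_1$-Levi and defect group of rank~$3$, or with $\bL$ a $\Phi_1$-split maximal torus and defect group of rank~$4$ when $e=1$), and in each such $\gamma$-stable case one must verify that the induced action of $\gamma^{*}$ on $\bZ(\bL^{*})_{\Phi_e}^F$ — essentially a coordinate permutation of a product of cyclic $3$-groups — genuinely moves some $3$-element to a non-$G^{*}$-conjugate one, i.e.\ that neither the $W_G(\bL)$-action nor the field and diagonal twists can undo the discrepancy created by the $3$-cycle of outer coweights. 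This requires careful bookkeeping of the triality action on the tori and relative Weyl groups of~$\tD_4$, together with, in part~(a), the standard dictionary relating the action of~$x$ on~$D$ to its action on~$\Irr(b)$ for $b=b_G(\bL,\la)$.
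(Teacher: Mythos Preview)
Your overall strategy matches the paper's: reduce all three parts to showing that no automorphism of $S$ involving a genuine triality (modulo $J=\Inndiag(\bar S)$) can fix every character of~$b$, and handle the inner-diagonal case via Proposition~\ref{useful}. Your reduction of~(c) to~(b) is correct (the paper compresses all three parts into the single claim that $3\nmid|H/\bar S|$, respectively $3\nmid|K/S|$, but this amounts to the same thing), and your deduction of~(a) from the ``moved character'' claim is fine.

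The gap is exactly where you flag it: the triality analogue of Proposition~\ref{prop:field}. Your sketch---produce a $t\in\bZ(\bL^{*})_{\Phi_e}^F$ whose coordinate pattern under the triality action cannot be undone by $W_G(\bL)$ and diagonal/field twists---is on the right track but not carried out, and your appeal to Lemma~\ref{lem:norm} alone does not settle it: that lemma only constrains $G^*$-conjugation, whereas here you must rule out that $G^*$-conjugation composed with $\rho^{*j}\sigma^*$ fixes the class of~$t$. The paper supplies the missing crystallisation in two short steps. First, it excludes the possibility that every maximal proper $e$-split Levi above~$\bL^*$ has type~$\tA_1^3$: if so then $\bL^*$ itself has type~$\tA_1^3$, the relative Weyl group has order~$2$, and by \cite[Lemma~4.16]{CE99} the defect group is $\bZ(\bL)_3^F$, which is cyclic (rank~$1$), contrary to hypothesis. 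Second---and this is the key insight replacing your vague coordinate bookkeeping---it chooses $\bL_1^*$ of type~$\tA_3$ and takes $t$ a generator of $\bZ(\bL_1^*)_3^F$. There are three $\bG^*$-classes of $\tA_3$-Levis in~$\tD_4$, each stable under inner-diagonal and field automorphisms but permuted transitively by triality; hence the $G^*$-class of~$t$ is not $\sigma^*$-stable, and one concludes as in Proposition~\ref{prop:field} via Lemma~\ref{lem:p-elt}. This is precisely the ``$3$-cycle of outer coweights'' you allude to, but packaged as a statement about Levi classes so that no explicit torus calculation or $W_G(\bL,\la)$ analysis is needed. Your claim that abelian defect forces $W_G(\bL,\la)$ to be a $3'$-group is not used (and not needed) in the paper's argument.
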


\begin{proof}
Let $b$ be a 3-block of $S$. Since $\bZ(\bG)$ has order prime to~3 we may
consider $b$ as a block of $G$ and thus assume $S=G$. To prove any part of
Theorem \ref{thm:newg}, it suffices to show that $|H/\bar S|$, respectively
$|K/S|$ is coprime to~$3$.

Assume the contrary that $H$, respectively $K$, contains an element $\sigma$
that induces an outer automorphism of order $3$ of $\bar S$.
By Proposition~\ref{prop:field} we may assume $\sigma$ induces a triality
graph automorphism modulo inner-diagonal and field automorphisms. 
We have $3|(q^2-1)$, so $e\in\{1,2\}$. Write
$b=b_G(\bL,\la)$. Let $\bL^*\le\bG^*$ be dual to $\bL$. Let $\bL_1^*\ge\bL^*$
be a maximal (proper) $e$-split Levi subgroup of $\bG^*$. Then $\bL_1^*$ is
either of type $\tA_3$ or $\tA_1^3$. If all maximal $e$-split Levi
subgroups above $\bL^*$ are of type $\tA_1^3$, then $\bL^*$ itself must be of
type~$\tA_1^3$. A computation inside the Weyl group shows that the Levi
subgroup of type $W(\tA_1^3)$ of $W(\tD_4)$ has index~2 in its normaliser. Thus,
by \cite[Lemma~4.16]{CE99} a defect group of $b$ is $\bZ(\bL)_3^F$, hence
cyclic, which was excluded.
\par
Thus we may choose $\bL_1^*$ of type $\tA_3$. Let $t$ be a generator of the
cyclic group $\bZ(\bL_1^*)_3^F$. Note that there are three $\bG^*$-classes of
Levi subgroups of type $\tA_3$ in $\bG^*$, each of which is fixed by
inner-diagonal and field automorphisms, and permuted transitively
by~$\sigma^*$. It follows
that the $G^*$-class of $t$ is not $\sigma^*$-invariant. We can now conclude as
in the proof of Proposition~\ref{prop:field} that $b$ contains characters that
are not $\sigma$-stable.
\end{proof}

We now show an extension of Proposition~\ref{prop:field} to groups of
type~$\tA$.

\begin{prop}   \label{prop:field type A}
 Let $\bG=\SL_n$ and assume that $p$ divides $|\bZ(\bG^F)|$.
 Let $b=b_G(\bL,\la)$ be a $p$-block of $G=\bG^F$ with abelian defect and assume
 $\bZ^\circ(\bL)_{\Phi_e}$ has rank at least~$2$. Suppose $F=F_0^p$ for a
 Frobenius endomorphism $F_0$ of $\bG$ and let $\sigma$ be the automorphism
 of~$G$ induced by $F_0$. Then for any inner-diagonal automorphism $\tau$
 of~$G$, $\Irr(b)$ contains a character that is not $\sigma\tau$-stable
 and is trivial on $\bZ(G)_p$.
\end{prop}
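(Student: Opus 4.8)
The plan is to mimic the structure of the proof of Proposition~\ref{prop:field}, producing a semisimple element $st$ whose $G^*$-class is not stabilised by $\sigma^*\tau^*$, and then to arrange that the resulting character in $\cE(G,st)\cap\irr b$ can moreover be chosen trivial on $\bZ(G)_p$. As before, write $b=b_G(\bL,\la)$ with $\bL$ a proper $e$-split Levi subgroup (proper since $b$ has non-central defect, which is forced by $\bZ^\circ(\bL)_{\Phi_e}$ having rank $\ge 2$ together with the block being non-principal or at least by the abelian defect hypothesis), and let $s\in\bL^{*F}$ be a semisimple $p'$-element with $\irr b\subseteq\cE_p(G,s)$. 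The field automorphism $\sigma$ is induced by $F_0$ with $F=F_0^p$, and $q=q_0^p$ where $q_0$ has the same order $e$ modulo $p$. First I would pass to $\bG^*=\PGL_n$ and the dual field automorphism $\sigma^*$ of order $p$ (by \cite[5.6]{Tay18}), and choose $\bL^*$ to be $F_0$-stable, hence $e$-split for both $(\bG^*,F)$ and $(\bG^*,F_0)$.

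The key new point, compared with Proposition~\ref{prop:field}, is that $\bZ^\circ(\bL^*)_{\Phi_e}$ has rank at least $2$, so I have more room: I want to find $t\in\bZ(\bL^{*F})_p$ of order $p$ that (i) lies in $[\bG^*,\bG^*]$'s relevant dual structure so that the corresponding character of $G$ is trivial on $\bZ(G)_p$ — concretely, I will choose $t$ so that $st$ lifts to an element of $\SL_n$-side torus, equivalently so that the linear character $\widehat{st}$ of $\wt G$ restricts appropriately — and (ii) has $t^{\sigma^*}$ not $\bG^{*F}$-conjugate to $t\cdot z$ for any $z\in\bZ(\bG^{*F})_p$. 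For (ii) I would use the same mechanism as in Proposition~\ref{prop:field}: embed $t$ inside $\bZ(\bL_1^{*})_{\Phi_e}^F$ for a maximal $e$-split Levi $\bL_1^*\ge\bL^*$ of $(\bG^*,F_0)$, so that $|\bN_{\bG^*}(\bL_1^*)^F/\bL_1^{*F}|$ is prime to $p$ by Lemma~\ref{lem:norm} (here $\bG$ is of type $\tA$, so the normaliser quotient is a subquotient of a symmetric group, and $p>e$ is automatic in the relevant cases since $p\mid\bZ(G)$ forces $e=1$ or $2$). But since the $\Phi_e$-part of $\bZ^\circ(\bL^*)$ has rank $\ge 2$, I actually have a rank-$\ge2$ torus's worth of $p$-elements to play with, and I would exploit the explicit torus description of \cite[Ex.~1.5.6, Ex.~1.7.4]{GM20}, thinking of elements of the torus as tuples of eigenvalues summing (multiplicatively) to $1$; then one picks two coordinates carrying the "large" $p$-part coming from $\Phi_e(q)$ against a balancing coordinate, exactly as in the $k=2$ and $k=3$ analyses in the proof of Proposition~\ref{prop:TypeA}. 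This produces a $p$-element $t$ lying on the $\SL_n$-side (so the $\bZ(G)_p$-triviality is automatic), of order $p$, with $t^{\sigma^*}$ not $\bG^{*F}$-conjugate to any $\bZ(\bG^{*F})_p$-twist of $t$; and since inner-diagonal automorphisms fix every semisimple class of $G^*$ (as $\Inndiag(G^*)$ is induced by $(\wt\bG^*)^F$), the element $\tau$ does not help the adversary, so $st$ and $(st)^{\sigma^*\tau^*}$ are non-conjugate.

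Having done this, the endgame is formal: by Lemma~\ref{lem:p-elt}, $\cE(G,st)\cap\irr b\ne\emptyset$ (using $t\in\bZ(\bL^*)^F$ a $p$-element); by \cite[Prop.~7.2]{Tay18} the series $\cE(G,st)$ is not $\sigma$-stable, indeed not $\sigma\tau$-stable, because its labelling class $st$ is moved; hence $\irr b$ contains a character in $\cE(G,st)$ not fixed by $\sigma\tau$. Finally, every member of $\cE(G,st)$ is trivial on $\bZ(G)_p$: indeed by \cite[11.1(d)]{bon06} all characters of $\cE(G,st)$ lie over a single character $\omega_{st}$ of $\bZ(G)$, and since I arranged $st$ to lift to the $\SL_n$-dual torus — equivalently, $st\in[\wt G^*,\wt G^*]$ in the identification used in the proof of Proposition~\ref{prop:TypeA}(b) — the character $\omega_{st}$ is trivial on $\bZ(G)_p$. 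I expect the main obstacle to be the bookkeeping in step (i)+(ii) combined: showing one can \emph{simultaneously} keep $st$ on the simply connected side \emph{and} get the $\sigma^*$-class to move, which is exactly the kind of eigenvalue-determinant juggling that occupies the hardest pages of the proof of Proposition~\ref{prop:TypeA}; the rank-$\ge2$ hypothesis on $\bZ^\circ(\bL)_{\Phi_e}$ is precisely what makes this possible, and verifying it gives enough coordinates in every relevant configuration (including when $\cent{\bG^*}{s}$ forces $\bL^*$ into an awkward shape) will be the technical heart of the argument.
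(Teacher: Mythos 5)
Your overall strategy coincides with the paper's: use the rank~$\ge2$ hypothesis on $\bZ^\circ(\bL^*)_{\Phi_e}$ to find two maximal $e$-split Levi subgroups $\bL_1^*,\bL_2^*\ge\bL^*$, form a determinant-one combination $t=t_1^{a_1}t_2^{a_2}$ of generators $t_i$ of the Sylow $p$-subgroups of $\bZ(\bL_i^*)^F$ so that $t\in[G^*,G^*]$ (whence triviality on $\bZ(G)_p$, via \cite[Prop.~4.5]{NT2} in the paper's version), and then feed $st$ into Lemma~\ref{lem:p-elt} and \cite[Prop.~7.2]{Tay18}. But the non-conjugacy step, which you correctly identify as the technical heart, has two problems. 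First, an element $t$ ``of order $p$'', as you propose, cannot work: $p\mid(q-\eps)$ and $q=q_0^p$ force $p\mid(q_0-\eps)$, so the map induced by $F_0$ fixes every element of order $p$ in these tori; the $t_i$ must generate the full Sylow $p$-subgroups, of order $(q-\eps)_p=p\,(q_0-\eps)_p$, and only then is $t^{\sigma^*}=t^{q_0}\ne t$. Relatedly, the eigenvalue-multiplicity comparison you import from Proposition~\ref{prop:TypeA} cannot distinguish $t$ from $t^{q_0}$ --- they have identical multiplicity patterns --- so one is forced back to the normaliser/automiser mechanism of Proposition~\ref{prop:field}: an element $g\in G^*$ with $t^g=t^{q_0}$ normalises $\bC_{\bG^*}(t)$ and induces on $\langle t\rangle$ an automorphism of order divisible by $p$, which is impossible provided the automiser of $\bC_{\bG^*}(t)$ is a $p'$-group. (Your alternative of placing $t$ in $\bZ(\bL_1^*)^F$ for a single maximal $e$-split Levi and quoting Lemma~\ref{lem:norm} is not available for the determinant-one element, which lives in the rank~$\ge2$ torus $\bZ(\bL^*)$ and not in either $\bZ(\bL_i^*)$.)

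Second, and more substantively, that automiser is \emph{not} always a $p'$-group, and this is exactly where the abelian-defect hypothesis is used --- not, as you suggest, to make $\bL$ proper (which already follows since $\bZ^\circ(\SL_n)$ is trivial). Writing $\bC_{\GL_n(\eps q)}(\tilde t)$ as a product of two or three $\GL$ factors, its automiser has order at most~$2$ except when $p=3$ and all three factors have equal rank $n/3$, in which case it is $\fS_3$ and contains elements of order~$3$ that could conjugate $t$ to $t^{q_0}$. The paper disposes of this case as follows: if $\bL^*$ itself has type $\tA_{n/3-1}^3$, then by \cite[Lemma~4.16]{CE99} the defect groups of $b$ are a non-trivial extension of $\bZ(\bL^*)_3^F$ by a Sylow $3$-subgroup of the automiser of $\bL^F$, hence non-abelian, contradicting the hypothesis; otherwise $\bL^*$ has smaller rank and one may choose $\bL_1^*,\bL_2^*$ so that $\bC_{\bG^*}(\tilde t)$ avoids the equal-block shape. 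Without this case analysis your argument has a hole precisely at $p=3$, which is the main case of interest since $p\mid n$.
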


\begin{proof}
We have $G=\SL_n(\eps q)$ and $G^*=\PGL_n(\eps q)$, with $\eps\in\{\pm1\}$.
The assumption on $|\bZ(\bG^F)|$ forces $p|(q-\eps)$. First consider the case
$\eps=1$. Then $e=1$. Let $\bL^*\le\bG^*$ be dual to~$\bL$. Up to replacing
$\bL^*$ by a $\bG^{*F}$-conjugate we may assume that $F_0$ acts on
$\bZ(\bL^*)_{\Phi_1}=\bZ(\bL^*)$ by the $q_0$-power map, where $q=q_0^p$. Now
$\bZ(\bL^*)_{\Phi_1}$ has rank at least two, so there exist at
least two maximal proper 1-split Levi subgroups $\bL_1^*,\bL_2^*\ge\bL^*$, with
$\bZ(\bL_i^*)$ a 1-torus of $\bZ(\bL^*)$ of rank~1. Let $t_i$ be a generator of
the cyclic Sylow $p$-subgroup of $\bZ(\bL_i^*)^F$. Then preimages $\tilde t_i$
of~$t_i$ in $\GL_n(q)$ (under the natural quotient map) can be taken as diagonal
matrices where $\tilde t_i$ has $m_i$ eigenvalues $\zeta$, a primitive $k$th
root of unity with $k=|\bZ(\bL_i^*)^F|_p=|t_i|$ (a $p$-power), and all other
eigenvalues~1, with $1\le m_1<m_2<n$ say. We have
$t_i^\sigma=F_0(t_i)=t_i^{q_0}$, so $t_i$ is not $\sigma$-stable.
\par
Let $a_1,a_2$ be integers not both divisible by~$p$ such that
$m_1a_1+m_2a_2\equiv0\pmod{k}$. Then
$\tilde t:=\tilde t_1^{a_1}\tilde t_2^{a_2}$
has determinant~1, so lies in $\SL_n(q)$, whence
$t:=t_1^{a_1}t_2^{a_2}\in[G^*,G^*]$. By \cite[Prop.~4.5]{NT2} this means that
any character in $\cE(G,st)$, for $s\in L^*$ a semisimple $p'$-element has
$\bZ(G)_p$ in its kernel. Furthermore the class of $t$ is not $\sigma$-stable.
Now $\tilde t$ has eigenvalues $\zeta^{a_1+a_2},\zeta^{a_2}$ and~1, and at
least two of those are distinct by the choice of $a_1,a_2$. Thus, either
$\bC_{\GL_n(q)}(\tilde t)=\GL_{n_1}(q)\times\GL_{n-n_1}(q)$ for a suitable
$1\le n_1<n$, or
$$\bC_{\GL_n(q)}(\tilde t)
  =\GL_{m_1}(q)\times\GL_{m_2-m_1}(q)\times\GL_{n-m_2}(q).$$
All of these have automiser of order at most~2 unless $m_1=m_2-m_1=n/3$ when it
is $\fS_3$. Assume we are in the latter case
and $p=3$. If $\bL^*$ itself is of this form, then the defect groups of $b$ are
non-abelian, being a non-trivial extension of $\bZ(\bL^*)_3^F$ with a Sylow
3-subgroup of the automiser of $\bL^F$ \cite[Lemma~4.16]{CE99}. Otherwise,
$\bL^*$ has strictly smaller
rank and then we may choose the $\bL_i^*$ such that the centraliser of
$\tilde t$ is not of this special form. We may then complete the
argument as in the proof of Proposition~\ref{prop:field}.
\par
If $\eps=-1$ (so $G$ is unitary) we may argue in an entirely similar fashion.
\end{proof}

The proof actually shows that the assumption on abelian defect groups can be
dropped when either $p\ge5$, or $p=3$ and in addition $\bL$ is not of type
$\tA_{n/3-1}^3$.

\begin{prop}   \label{prop:type A}
 Theorem~{\rm\ref{thm:newg}} holds for $\bar S=\PSL_n(\eps q)$,
 $\eps\in\{\pm1\}$ and $p\mid(q-\eps)$.
\end{prop}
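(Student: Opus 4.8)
The plan is to dispose of the contribution of field automorphisms by Proposition~\ref{prop:field type A} and to finish with Proposition~\ref{useful}. First I would fix the notation $G=\SL_n(\eps q)=\bG^F$ with $\bG=\SL_n$, $\tbG=\tbG^*=\GL_n(\eps q)$ and $G^*=\PGL_n(\eps q)$, so that $S=G/Z_0$ for a central subgroup $Z_0\le\bZ(G)\le\bZ(\tbG)$, with $\bZ(S)=\bZ(G)/Z_0$; here $p\mid\gcd(n,q-\eps)=|\bZ(G)|$ and $e\in\{1,2\}$ is the order of $q$ modulo~$p$. Recalling that $\Aut(S)$ is induced by $(\tbG/Z_0)\rtimes\cD$, with $\cD$ generated by field and graph automorphisms and $\tbG/Z_0$ inducing the cyclic group $\Inndiag(\bar S)/\bar S$, I would let $\hat b=b_G(\bL,\la)\subseteq\cE_p(G,s)$ be the block of $G$ dominating $b$, with $s\in G^*$ a semisimple $p'$-element. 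Two preparatory facts would be recorded. \emph{(1)} Abelianity of a defect group of $b$ forces all the weights of the unipotent blocks attached to the factors of $\cent{\tbG^*}{\tilde s}$ to be $<p$ (otherwise some of these defect groups, and then that of $b$, would be non-abelian by the Fong--Srinivasan description; cf.\ \cite{FS82} and the proof of Proposition~\ref{prop:TypeA}); consequently the defect groups of~$\hat b$, of the block $\tilde b$ of $\tbG$ covering~$\hat b$, and of the block of $\tbG/Z_0$ that $\tilde b$ dominates, are all abelian, and non-cyclicity passes from $b$ to~$\hat b$ and~$\tilde b$. \emph{(2)} Since $D$ is non-cyclic and abelian, $\bZ^\circ(\bL)_{\Phi_e}$ has rank $\ge2$: otherwise either $\bL=\bG$ and $\hat b$ has central, hence cyclic, defect, or $\bL$ is maximal proper $e$-split and, as $p>\max\{2,e\}$, Lemma~\ref{lem:norm} gives $p\nmid|\bN_\bG(\bL)^F/\bL^F|$, so the defect group of~$\hat b$ is $\bZ(\bL)_p^F$, again cyclic.

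Next I would rule out field automorphisms. Suppose the relevant overgroup induces a field automorphism of $p$-power order --- that is, $p\mid|HJ/J|$ in~(b), $K/S\not\le\Inndiag(\bar S)/\bar S$ in~(c), or the image of $x$ in $A$ is not inner-diagonal in~(a). In each case, after passing to a suitable $p$-power and conjugating by an inner-diagonal automorphism, the overgroup contains an automorphism $\sigma\tau$ with $\tau$ inner-diagonal and $\sigma$ the order-$p$ field automorphism induced by a Frobenius $F_0$ with $F=F_0^p$. Since $\bZ^\circ(\bL)_{\Phi_e}$ has rank $\ge2$ and $\hat b$ has abelian defect, Proposition~\ref{prop:field type A} provides $\chi\in\Irr(\hat b)$ which is not $\sigma\tau$-stable and is trivial on $\bZ(G)_p$. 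As $\hat b$ dominates a block of $S=G/Z_0$, its central character is trivial on $(Z_0)_{p'}$ as well, so $\chi$ is trivial on all of~$Z_0$ and descends to a character of~$\Irr(b)$ that is moved by $\sigma\tau$. Arguing exactly as in Proposition~\ref{prop:generic case} (using, for~(a), that $A$ has a normal $p$-complement preserving Lusztig series together with a cyclic Sylow $p$-subgroup), this contradicts the $H$-invariance of $\Irr(b)$ in~(b), the hypothesis that every character of $b$ extends to --- hence is invariant under --- $K$ in~(c), and the hypothesis that $p\nmid|A:A_\chi|$ for all $\chi\in\Irr(b)$ in~(a). This proves~(b); in~(c) it leaves only the case $K/S\le\Inndiag(\bar S)/\bar S$, which is cyclic and handled by Lemma~\ref{lem:red (c)} via~(a); and in~(a) it shows that $x$ induces an inner-diagonal automorphism of~$S$.

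It then remains to treat~(a) with $x$ inner-diagonal. Here I would realise $x$ as conjugation by an element of $\tbG/Z_0$, replace it by its $p$-part $\tilde x$ (the $p'$-part lies in $\bZ(\tbG)/Z_0$ and so acts trivially on~$S$), and set $\hat G:=\langle S,\tilde x\rangle\nor\tbG/Z_0$, so that $\hat G/S$ is a cyclic $p$-group. Taking $\hat B$ to be a block of $\hat G$ covering $b$ and covered by the block of $\tbG/Z_0$ dominated from~$\tilde b$, fact~\emph{(1)} shows $\hat B$ has abelian defect groups. Since $b$ is $\hat G$-invariant with defect group~$D$, $\hat G/S$ is a $p$-group, and $\tilde x\in\bN_{\hat G}(D)$ is a $p$-element fixing~$b_D$, Proposition~\ref{useful} yields $[\tilde x,D]=1$, i.e.\ $[x,D]=1$, completing the proof. (If the Sylow $p$-subgroup of $A$ happens to be normal one could instead begin~(a) with Lemma~\ref{lem:red (a)}; this is not needed.)

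I expect the main obstacle to be the two technical inputs of the first paragraph: controlling the defect groups of the associated blocks of $\GL_n(\eps q)$ and $\GL_n(\eps q)/Z_0$ in terms of those of~$b$ through the Fong--Srinivasan parametrisation in type~$\tA$ (showing, in particular, that abelianity of a defect group of $b$ forces all the relevant weights to be $<p$), and verifying that the character supplied by Proposition~\ref{prop:field type A} --- a priori only trivial on $\bZ(G)_p$ --- is in fact trivial on the whole of~$Z_0$ and hence genuinely descends to~$S$. Once these are in place, the argument is a routine assembly of Propositions~\ref{prop:field type A} and~\ref{useful} with the reduction lemmas of Section~\ref{sec:red}.
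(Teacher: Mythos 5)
Your argument follows the paper's route for what the paper calls case~\eqref{case1}, but your preparatory ``fact (1)'' is false, and this is exactly where the real difficulty of the proposition lies. It is not true that abelianity of a defect group of $b$ (a block of the central quotient $S=G/Z_0$) forces the defect groups of the covering block $B$ of $\GL_n(\eps q)$, or even of the dominating block $\tilde b$ of $G=\SL_n(\eps q)$, to be abelian: factoring out a central $p$-subgroup of the defect group can destroy non-abelianness. The precise statement, from the proof of \cite[Thm~5]{BE99}, is that either $B$ has abelian defect groups, or one is in the exceptional configuration $p=3$, $n=3m$, $\cent{\GL_n(\eps q)}{s}\cong\GL_3((\eps q)^m)$ with $((\eps q)^m-1)_3=3$; in the latter case $B$ has non-abelian defect groups of order $3^4$, $\tilde b$ has defect groups of order at least $3^3$, and $b$ can nonetheless have abelian non-cyclic defect groups of order $3^2$ after passing to a quotient by a central subgroup of order divisible by~$3$. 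Your final paragraph, which feeds the abelianity of the defect groups of $\hat B\le\tbG/Z_0$ into Proposition~\ref{useful}, therefore breaks down in that case, and no variant of Proposition~\ref{useful} will close it, since the hypothesis of abelian defect one level up is genuinely unavailable.

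The paper disposes of the exceptional case by a separate degree computation: with $|H/\bar S|=3$ induced by a $3$-element $g\in\GL_n(\eps q)$ and $G_1=\langle G,g\rangle$, the height-zero property of $b$ (known by \cite{BE99}) forces every $\theta\in\Irr(b)$, being $g$-invariant, to extend to $G_1$ with $3$-part of degree $d=|\GL_n(\eps q)|_3/3^4$; on the other hand the semisimple character $\chi_{st}$ with $t=\diag(1,\omega,\omega^2)\in\SL_3((\eps q)^m)$ is trivial on $Z$ and its constituents on $G_1$ lie in the unique block $B_1$ over $\tilde b$ and have $3$-part $3d$, a contradiction. You should add this case, and also justify the field-automorphism step differently: there the saving observation is that a field automorphism of order $p$ forces $q$ to be a $p$-th power, hence $p^2\mid(q-\eps)$, which is incompatible with $((\eps q)^m-1)_3=3$ and so places you in case~\eqref{case1}, where Proposition~\ref{prop:field type A} legitimately applies. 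The rest of your outline (the rank-$\ge2$ observation, the descent of the non-invariant character to $S$, the reductions via Lemmas~\ref{lem:red (a)} and~\ref{lem:red (c)}, and the use of Proposition~\ref{useful} in case~\eqref{case1}) matches the paper.
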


\begin{proof}
By our previous reductions and Proposition~\ref{prop:generic case} we may
assume $S$ is a central quotient of $G=\SL_n(\eps q)$. Let $b$ be a $p$-block
of $S$ with abelian non-cyclic defect and $\tilde b$ the $p$-block of~$G$
dominating it. Then by the proof of \cite[Thm~5]{BE99}, one of the following
holds for any block $B$ of $\GL_n(\eps q)$ lying above $\tilde b$:
\begin{equation}\label{case1} 
 B \mbox{ has abelian defect groups; or}
\end{equation} 
\begin{equation}\label{case2} 
\begin{array}{l} p=3,~n=3m, \mbox{ and }\Irr(B)\subseteq\cE_3(\GL_n(\eps q),s),\
\mbox{with }C:=\bC_{\GL_n(\eps q)}(s)\cong\GL_3((\eps q)^m)\\ 
\mbox{for some semisimple }3'\mbox{-element }
 s \in \GL_n(\eps q), \mbox{ and }((\eps q)^m-1)_3=3.
 \end{array}
\end{equation}

Now first assume that $H$ or $K$ induce an automorphism $\gamma=\sigma\tau$,
where $\sigma$ is a field automorphism of $S$ of order~$p$, and
$\tau\in J=\Inndiag(S)$. That is, $\sigma$ is induced by a field automorphism
$F_0$ of $\bG$, where after conjugation we may assume $F=F_0^p$. As
$p|(q-\eps)$ and $q$ is a $p$th power, in fact we have $p^2|(q-\eps)$, and so
we are in case~\eqref{case1}. Also note that if $\tilde b=b_G(\bL,\la)$ then
$\bZ^\circ(\bL)_{\Phi_e}$ has rank at least~2 since $\tilde b$
has non-cyclic defect. But then by Proposition~\ref{prop:field type A} the
block $\tilde b$ contains an irreducible character of $G$ not fixed by~$\gamma$
and trivial on $\bZ(G)_p$, hence a character of~$b$. Thus the assumptions of
any of the parts (a), (b), and~(c) in Theorem \ref{thm:newg} are not satisfied.
\par
So we may now assume that the $p$-elements in $H$ and $K$ only induce diagonal
automorphisms. In particular, this establishes \ref{thm:newg}(b) and shows
that $K/S$ is cyclic. Hence by Lemma~\ref{lem:red (c)} it suffices to
prove~\ref{thm:newg}(a); moreover, using Lemma~\ref{lem:red (a)} we may assume
$H/\bar S$ is a $p$-group contained in $J/\bar S$.

Suppose we are in the situation of~\eqref{case1}. Since $\GL_n(\eps q)$ induces
all diagonal automorphisms on~$G$ and thus on~$S$ and since $H/\bar S$ is a
cyclic $p$-group, we can find a $p$-element $g \in \GL_n(\eps q)$ such that
$G_1:= \langle G,g \rangle$ induces $H$ while acting on $S$. Furthermore,
$G \lhd G_1 \leq\GL_n(\eps q)$ and $G_1/G$ is a $p$-group. Since every
character in $b$ is $H$-invariant, $\tilde b$ is $G_1$-invariant, and since
the defect groups of any $G_1$-block lying above $\tilde b$ are abelian,
the statement follows from Proposition \ref{useful}. 
\par
Finally we consider the situation of~\eqref{case2}. Recall our hypothesis that
the defect group $D$ of $b$ is abelian but non-cyclic. As shown in the proof of
\cite[Thm 5]{BE99}, $|D| \leq 3^2$, whence $|D|=3^2$. On the other hand, $B$
has defect groups of order $3^4$, and $(q-\eps)_3 = 3$, so the defect groups of
$\tilde b$ have order (at least) $3^3$. It follows that $S$ is a quotient
of~$G$ by a central subgroup $Z$ of order $z$ divisible by $3$. By
Lemma~\ref{lem:red (a)} we may assume $|H/\bar S|=(q-\eps)_3=3$, and that $H$
is induced by the conjugation action of
$G_1:= \langle G,g \rangle \leq \GL_n(\eps q)$ for some $3$-element~$g$. As
$G_1/G$ is a $3$-group, $\tilde b$ lies under a unique $3$-block $B_1$ of $G_1$
and so $B_1$ lies under~$B$. Since BHZ holds for $b$ by \cite{BE99}, every
character $\theta\in\Irr(b)$ has height zero, so the $3$-part of its degree is 
$$d:=|\PSL_n(\eps q)|_3/3^2 = |\GL_n(\eps q)|_3/3^4.$$
By assumption, $\theta$ is $g$-invariant, so, viewed as $G$-character, it
extends to a character of~$G_1$, which is still trivial at~$Z$. Thus the
degrees of all characters in $\Irr(B_1)$ that are trivial at~$Z$ have
$3$-part~$d$. 

By \cite[Thm 1]{BE99}, $\Irr(B)$ consists of all characters in
$\cE(\GL_n(\eps q),st)$, for $t$ a $3$-element in $C = \GL_3((\eps q)^m)$. As
$\theta$ is trivial at $Z$ and lies under some such character, using
\cite[Prop.~4.5]{NT2} we see that the order of $\det(st)$ divides $(q-\eps)/z$.
But $s$ is $3'$ and $t$ is a $3$-element, so the order of $\det(s)$ divides 
$(q-\eps)/z$. Let $\omega\in\FF_{q^2}^\times$ be of order $3$, and consider 
$$t := \diag(1,\omega,\omega^2) \in \SL_3((\eps q)^m) < C.$$ 
Then $t$ centralises $s$, and, as $\det(t)=1$, $\det(st)=\det(s)$ has order
dividing $(q-\eps)/z$. Again by \cite[Prop.~4.5]{NT2}, the semisimple character
$\chi_{st}\in\cE(\GL_n(\eps q),st)$ is trivial at~$Z$. Any irreducible
constituent $\psi$ of the restriction of $\chi_{st}$ to $G_1$ is also
trivial at~$Z$. By uniqueness of $B_1$, we have $\psi \in \Irr(B_1)$.
Since $G_1$ has $3'$-index in $\GL_n(\eps q)$,
$$\psi(1)_3=\chi_{st}(1)_3= |\GL_n(\eps q)|_3/|\bC_C(t)|_3
  = |\GL_n(\eps q)|_3/((\eps q)^m-1)^3_3 = |\GL_n(\eps q)|_3/3^3 = 3d,$$
and this is a contradiction.
\end{proof}

\begin{prop}   \label{prop:E6}
 Let $\bar S=\tE_6(\eps q)$, $\eps\in\{\pm1\}$ with $3|(q-\eps)$. Assume that
 BHZ for $p=3$ holds for all groups of order smaller than $|\bar S|$. Then
 Theorem~{\rm\ref{thm:newg}} holds for $\bar S$ at $p=3$.
\end{prop}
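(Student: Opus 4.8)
The plan is to mimic the structure of the proof of Proposition~\ref{prop:type A}, now for $\bG$ of type $\tE_6$. By Propositions~\ref{prop:red 1} and~\ref{prop:exc cov} we may assume $S=G/Z$ is a central quotient of $G=\bG^F$ with $\bG$ simple of simply connected type $\tE_6$; then $Z\le\bZ(G)$ and $|\bZ(G)|=\gcd(3,q-\eps)=3$. Let $\tilde b$ be the $3$-block of $G$ dominating $b$, write $\tilde b=b_G(\bL,\la)$, and let $e\in\{1,2\}$ be the order of $q$ modulo~$3$. Since $D$ is abelian and non-cyclic, neither $b$ nor $\tilde b$ has central (equivalently, as $\bG$ is simple, quasi-central) defect, so $\bL\ne\bG$; and by \cite[Lemma~4.16]{CE99} a defect group of $\tilde b$ is an extension of $\bZ^\circ(\bL)_{\Phi_e}^F$ by a Sylow $3$-subgroup of the relative automiser $\bN_G(\bL,\la)/\bL$, which forces $\bZ^\circ(\bL)_{\Phi_e}$ to have rank at least~$2$ and this automiser to be a $3'$-group.

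The first step is to dispose of field automorphisms. Suppose $H$ (in parts~(a),~(b)) or $K$ (in part~(c)) induces $\gamma=\sigma\tau$, where $\sigma$ is a field automorphism of order~$3$ and $\tau\in J=\Inndiag(\bar S)$; equivalently, some $3$-element of $H$, resp.\ $K$, is not a diagonal automorphism. I would establish the exact analogue of Proposition~\ref{prop:field type A} for $\bG=\tE_6$: using that $\bZ^\circ(\bL)_{\Phi_e}$ has rank at least~$2$, pick two maximal proper $e$-split Levi subgroups $\bL_1^*,\bL_2^*\ge\bL^*$ of the dual group with $\bZ(\bL_i^*)_{\Phi_e}$ of rank~$1$, take $t_i$ a generator of the cyclic $3$-part of $\bZ(\bL_i^*)^F$, and form a product $t=t_1^{a_1}t_2^{a_2}$ of $3$-power order lying in $[G^*,G^*]$ whose $G^*$-class is not $\sigma^*$-stable. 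The key points are that $|\Phi_e(q)|_3>|\Phi_e(q_0)|_3$ when $q=q_0^3$, while by Lemma~\ref{lem:norm} and $p=3>e$ the $F$-fixed normaliser of a maximal $e$-split Levi has index prime to~$3$, so no element of $G^*$ can make an orbit of length~$3$ on $\langle t\rangle$; and semisimple classes being $\Inndiag$-stable, the class of $t$ is not $\gamma^*$-stable either. By Lemma~\ref{lem:p-elt}, for $s\in\bL^{*F}$ a semisimple $3'$-element the series $\cE(G,st)$ meets $\Irr(\tilde b)$, and since $t\in[G^*,G^*]$, \cite[Prop.~4.5]{NT2} shows its characters are trivial on $\bZ(G)_3$, hence lie in $\Irr(b)$; as $\cE(G,st)$ is not $\gamma$-stable this contradicts the hypotheses of~(a),~(b) and~(c). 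So we may assume the $3$-elements of $H$, resp.\ $K$, induce only diagonal automorphisms. This already yields~(b), shows $K/S$ is cyclic --- whence~(c) follows from~(a) by Lemma~\ref{lem:red (c)} --- and, by Lemma~\ref{lem:red (a)}, lets us assume for~(a) that $H/\bar S\cong C_3$.

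For part~(a) I would then argue as in case~\eqref{case1}/\eqref{case2} of Proposition~\ref{prop:type A}. As $\tbG^F$ induces all diagonal automorphisms of $G$ and $H/\bar S$ is cyclic of order~$3$, choose a $3$-element $g\in\tilde G:=\tbG^F$ with $G_1:=\langle G,g\rangle$ inducing $H$ on $S$; then $G\nor G_1\le\tilde G$, $G_1/G\cong C_3$, and $\tilde b$ is $G_1$-invariant since every $\chi\in\Irr(b)$ is. Let $B_1$ be the $3$-block of $G_1$ over $\tilde b$, lying under a block $\tilde B$ of $\tilde G$. If the defect groups of $B_1$ (equivalently of $\tilde B$) are abelian, then Proposition~\ref{useful} (applied to $G\nor G_1$) gives $[x,D]=1$ and we are done. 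The only obstruction --- the $\tE_6$-analogue of case~\eqref{case2} --- is a configuration in which $\tilde b$ lies in $\cE_3(G,s)$ for a quasi-isolated semisimple $3'$-element $s$ such that a type-$\tA$ component of $\bC_{\bG^*}(s)$ over a suitable extension field produces a block $\tilde B$ with non-abelian defect of order $3^4$, while $\tilde b$ has defect of order $3^3$ and $b$ has abelian defect $D\cong C_3\times C_3$. Here one repeats the final paragraph of Proposition~\ref{prop:type A}: $b$ has height zero by \cite{KM13}, so all $\theta\in\Irr(b)$ have a fixed $3$-part $d$; each $\theta$, being $g$-invariant, extends to $G_1$, forcing every $Z$-trivial character of $B_1$ to have $3$-part~$d$; but the semisimple character $\chi_{st}$ attached to a suitable determinant-one $3$-element $t$ commuting with $s$ is $Z$-trivial and its restriction to $G_1$ has $3$-part~$3d$, a contradiction. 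In the remaining possibility, coming from \cite[p.~13]{KM17}, where the relevant Bonnaf\'e--Rouquier correspondent sits in a Levi with derived subgroup of type $\tD_4$ or $\tD_5$, the analysis reduces to Propositions~\ref{prop:D4} and~\ref{prop:generic case}, since those components have centre of order prime to~$3$.

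I expect the main obstacle to be exactly this last configuration: first, showing that it is the only way $\tilde B$ (and hence $B_1$) can fail to have abelian defect while $b$ stays abelian, which requires a complete analysis of defect groups of quasi-isolated $3$-blocks of $\tE_6(\eps q)$; and second, running the degree count, where --- lacking for $\tE_6$ the ready-made structural description that \cite{BE99} provides in type~$\tA$ --- one is forced to invoke the hypothesis that $BHZ$ for $p=3$ holds for all groups of order smaller than $|\bar S|$ (together with the quasi-isolated block data of \cite{KM13,KM17}) to control the heights and defect groups of the smaller groups entering the Bonnaf\'e--Rouquier and Jordan reductions. A secondary difficulty is the $\tE_6$-analogue of Proposition~\ref{prop:field type A}, namely the explicit construction, inside $\bZ^\circ(\bL)_{\Phi_e}$, of a $3$-element of $[G^*,G^*]$ whose conjugacy class is not stable under $\gamma^*$.
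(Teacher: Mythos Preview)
Your field-automorphism argument has a gap. When you form $t=t_1^{a_1}t_2^{a_2}\in[G^*,G^*]$, the normaliser $\bN_{G^*}(\langle t\rangle)$ normalises $\bC_{\bG^*}(t)$, which need not be one of the maximal $e$-split Levis $\bL_i^*$: it can be the rank-$4$ intersection $\bL_1^*\cap\bL_2^*$. In $\tE_6$ one such Levi has type $\tD_4$, whose automiser is $\fS_3$, so Lemma~\ref{lem:norm} no longer excludes a $G^*$-element of order~$3$ conjugating $t$ to $t^{q_0}=F_0(t)$. The paper treats this case separately: by an explicit computation in the reflection representation of $W(\tE_6)$ it finds generators $x,y$ of $\bZ(\bL')^F_3$ on which an order-$3$ automiser element $w$ acts by $x\mapsto y\mapsto(xy)^{-1}$, takes $t:=x^wx^{-1}=x^{-1}y\in[G^*,G^*]$, and checks that the $w$-action and the $q_0$-power map disagree on~$t$.

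For the diagonal case the paper takes a different and cleaner route than your type-$\tA$-style degree count. It separates $S=G$ from $S=\bar S$. For $S=G$ it shows directly (via \cite[Thm~1.2(b)]{KM13} for quasi-isolated blocks and \cite[Lemma~4.16]{CE99} otherwise) that defect groups of $b$ lie in a maximally split torus of $G$, hence so do those of any $\tilde G$-block above, and Proposition~\ref{useful} applies. For $S=\bar S\cong[G^*,G^*]$ it works with the block $\tilde b$ of $G^*$ covering~$b$: if $\tilde b$ is quasi-isolated, then \cite[Prop.~4.3 and Thm~1.2(b)]{KM13} and \cite{En00} force the defect of $b$ to be non-abelian or cyclic, a contradiction; if not, its Bonnaf\'e--Rouquier correspondent $\tilde b_1$ lives in a group of order $<|\bar S|$ with the same non-abelian defect by \cite{BDR}, so the inductive BHZ hypothesis yields a positive-height character in $\tilde b_1$ and hence in $\tilde b$ --- which, since all of $\Irr(b)$ has height zero by \cite{KM13}, forces some $\chi\in\Irr(b)$ to be moved by~$G^*$. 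This is exactly where the inductive BHZ hypothesis enters, as a black box on the smaller correspondent rather than as input to a configuration-by-configuration degree count. Finally, your proposed reduction of the $\tD_4/\tD_5$-Levi configuration to Propositions~\ref{prop:D4} and~\ref{prop:generic case} does not work as stated: those results are about the simple groups $\PSO_8^+(q)$ etc., not about blocks of proper Levi subgroups inside $\tE_6$.
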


\begin{proof}
Let $b$ be a 3-block of~$S$ with non-cyclic abelian defect groups and
$\hat b=b_G(\bL,\la)$ be the block of $G=\bG^F$ dominating it. So $b=\hat b$ if
and only if $S=G$. Again we first show that field automorphisms of order~3
modulo $J:=\Inndiag(\bar S)$ do not point-wise fix all irreducible characters
in $b$. Let first $\eps=1$, so $G=\tE_6(q)$ and $e=1$.
Set $Z:=(\bZ^\circ(\bL^*)^F)_3$, a 3-group of rank at least~2. Assume $F=F_0^3$
and accordingly write $q=q_0^3$.
Now $F_0$ acts by $x\mapsto x^{q_0}$ on $Z$. Let $\bL_i^*$, $i=1,2$, be two
distinct maximal 1-split Levi subgroups of $\bG^*$ containing the dual Levi
$\bL^*\le\bG^*$ of $\bL$ and $t_i\in\bZ(\bL_i^*)^F_3\le Z$ generators of the
Sylow 3-subgroups. Let $a_i\in\ZZ$ not both divisible by~3 be such that
$t:=t_1^{a_1}t_2^{a_2}\in[G^*,G^*]$. Then we have $F_0(t)=t^{q_0}\ne t$.
Note that $t$ has order divisible by~9.
\par
Now $\bL':=\bC_{\bG^*}(t)$ is either a maximal proper 1-split Levi subgroup of
$\bG^*$ or equal to the intersection $\bL_1^*\cap\bL_2^*$, a Levi subgroup of
semisimple rank~4. Assume for a moment that $\bL'$ does not have type $\tD_4$.
Now $\bN_{G^*}(\langle t\rangle)$ normalises $\bC_{\bG^*}(t)=\bL'$. But the
automiser of the latter does not contain elements of order~3 by
Lemma~\ref{lem:norm}, resp. by inspection. Thus, $t$ is not
$\bG^{*F}$-conjugate to $F_0(t)$, and we may complete the argument as in
Proposition~\ref{prop:field type A} to see that $\Irr(\hat b)$ contains
characters trivial on $\bZ(\bG)^F$ in Lusztig series not fixed by $F_0$. If
$\bL'$ has type $D_4$, its automiser is the symmetric group of degree~3. By a
computation in the reflection representation of the Weyl group using \cite{GAP},
there are generators $x,y$ of the Sylow 3-subgroup of $\bZ(\bL')^F$ such that
an element $w$ of order~3 in the automiser acts by $x\mapsto y\mapsto(xy)^{-1}$.
(Note that $\bZ(\bL')^F$ lies in a maximally split torus on which the Weyl
group naturally acts.)
Then $t:=x^wx^{-1}=x^{-1}y$ is a 3-element in $[G^*,G^*]$. Now $w$ acts by
$t^w=x^{-1}y^{-2}$, while $F_0$ sends every element in $\bZ(\bL')$ to its
$q$th power. So again, $t$ is not $G^*$-conjugate to $F_0(t)$ and we conclude
as before.
\par
The above result already establishes \ref{thm:newg}(b) and shows that $K/S$ is
cyclic in~\ref{thm:newg}(c). By Lemma~\ref{lem:red (c)}, it remains to prove
\ref{thm:newg}(a), and we may now assume $H$ only induces diagonal
automorphisms of $S$, and in fact $H=G^*$ using Lemma~\ref{lem:red (a)}.
First assume $S=G$. Let $\bG\hookrightarrow\tbG$ be a regular embedding and
$\tilde G:=\tbG^F$. Then any defect group of $b$ is contained in a maximally
split torus of $G$ and thus defect groups of any block $\tilde b$ of $\tilde G$
above $b$ are contained in a maximally split torus of $\tilde G$ (see
\cite[Thm~1.2(b)]{KM13} for quasi-isolated blocks and \cite[Lemma~4.16]{CE99}
for the others) and so are abelian. Since $\tilde G$ induces all diagonal
automorphisms of $G$, we are done by applying Proposition \ref{useful}
(and arguing as in case~\eqref{case1} of the proof of
Proposition~\ref{prop:type A}).

Finally, assume $S=G/\bZ(G)$ is the simple group (which is isomorphic to
$[G^*,G^*]\le G^*=H$ as duality keeps the root system of type $E_6$). Consider
the block $\tilde b$ of $G^*$
covering $b$. If $\tilde b$ has abelian defect groups, then the statement again
follows from Proposition \ref{useful}. Assume $\tilde b$ has a non-abelian
defect group $\tilde D$, and is not quasi-isolated. Then its Morita-equivalent
Jordan correspondent block $\tilde b_1$, of a group of order less than
$|G^*|/3 = |\bar S|$, also has non-abelian defect by \cite[Thm~1.1]{BDR}. By
assumption, $\tilde b_1$ satisfies BHZ, so contains a character of positive
height. But then so does $\tilde b$ since the Bonnaf\'e--Rouquier Morita
equivalence \cite{BR03} preserves heights. Also by assumption, $b$ has abelian
defect, and hence all characters in $\Irr(b)$ possess height zero by the main
result of~\cite{KM13}.
Now the existence of characters of positive height in $\tilde b$ implies
that $G^*$ does not fix all characters in $\Irr(b)$.  On the other hand, if
$\tilde b$ is quasi-isolated then defect groups of $b$ are non-abelian or
cyclic by \cite[Prop.~4.3 and Thm~1.2(b)]{KM13} and \cite[pp.~353--354]{En00}.
\par
For the twisted type groups, that is $\eps=-1$, entirely similar
considerations apply.
\end{proof}

\begin{cor}   \label{str2}
 Let $p$ be an odd prime and let $S$ be a quasi-simple group. Let
 $\bar S:= S/\bZ(S)$, $\bar S\le H\le \Aut(S)$, and assume 
 $\bO^{p'}(H/\bar S)=H/\bar S$. If $b$ is a $p$-block of $S$ with abelian,
 non-cyclic defect groups such that every character in $b$ is $H$-invariant,
 then $H/\bar S$ is a cyclic $p$-group.
 \end{cor}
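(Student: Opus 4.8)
The plan is to obtain the statement as a short combination of Theorem~\ref{thm:newg}(b) with Proposition~\ref{str}(c). The crucial point is that the standing hypothesis $\bO^{p'}(H/\bar S)=H/\bar S$ upgrades the \emph{a priori} weak conclusion of Theorem~\ref{thm:newg}(b) --- that $HJ/J$ is a $p'$-group, where $J=\Inndiag(\bar S)$ if $S$ is of Lie type and $J=\bar S$ otherwise --- to the much stronger containment $H\le J$. Granting that, Proposition~\ref{str}(c) gives the conclusion immediately, and in particular one does not even need to split into cases according to the Lie type of $S$ (as one would if one tried to argue directly via Proposition~\ref{str}(b)).

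In detail, I would first check that the hypotheses of Theorem~\ref{thm:newg} are met. Since $D$ is non-cyclic it is in particular nontrivial, and being abelian it satisfies $D\le\cent SD$; hence there is a block $b_D$ of $\cent SD$ with defect group $D$ inducing $b$, so all the data required in Theorem~\ref{thm:newg} are in place. Note that the additional assumption imposed there in the case $\bar S=\tE_6(\eps q)$ with $p=3\mid(q-\eps)$ is needed only for parts~(a) and~(c), and hence is irrelevant for the application of part~(b). Thus Theorem~\ref{thm:newg}(b) applies and shows that $HJ/J$ is a $p'$-group. Since $\bar S\le H\cap J$ and $J\nor\Aut(S)$, the subgroup $H\cap J$ is normal in $H$, so $(H\cap J)/\bar S$ is a normal subgroup of $H/\bar S$ whose quotient is isomorphic to $HJ/J$, a $p'$-group. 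As $\bO^{p'}(H/\bar S)$ is the smallest normal subgroup of $H/\bar S$ with $p'$-quotient, this forces $\bO^{p'}(H/\bar S)\le (H\cap J)/\bar S$, and combining with the hypothesis $\bO^{p'}(H/\bar S)=H/\bar S$ yields $H/\bar S=(H\cap J)/\bar S$, i.e.\ $H\le J$. If $S$ is not of Lie type this says $H=\bar S$, so $H/\bar S$ is trivial, hence a cyclic $p$-group; and if $S$ is of Lie type then $H\le\Inndiag(\bar S)$, so Proposition~\ref{str}(c) --- whose standing hypothesis $\bO^{p'}(H/\bar S)=H/\bar S$ holds --- shows that $H/\bar S$ is a cyclic $p$-group.

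There is no real obstacle left in this argument: all the genuine difficulty has been absorbed into Theorem~\ref{thm:newg}(b) (and, in the easy cases, into Proposition~\ref{str}(c)). The only steps requiring care are the verification that Theorem~\ref{thm:newg}(b) is available under precisely the present hypotheses --- in particular that the $\tE_6$ caveat, which concerns parts~(a) and~(c), does not interfere --- and the elementary fact that a normal subgroup with $p'$-index contains the $p'$-residual, which is immediate from the definition.
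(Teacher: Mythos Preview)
Your proof is correct and follows essentially the same route as the paper: deduce from the $H$-invariance of $\Irr(b)$ that $HJ/J$ is a $p'$-group, combine this with $\bO^{p'}(H/\bar S)=H/\bar S$ to force $H\le J$, and then apply Proposition~\ref{str}(c). The only organisational difference is that the paper first disposes of the generic Lie types directly via Proposition~\ref{str}(b) and only invokes the specific Propositions~\ref{prop:generic case}, \ref{prop:D4}, \ref{prop:type A}, \ref{prop:E6} for types $\tA_n$, $\tw2\tA_n$, $\tD_4$ ($p=3$), and $\tE_6(\eps q)$ ($p=3$), whereas you invoke the packaged Theorem~\ref{thm:newg}(b) uniformly; the underlying content is identical, and your observation that the $\tE_6$ caveat in Theorem~\ref{thm:newg} applies only to parts~(a) and~(c) is exactly what makes this shortcut legitimate.
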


\begin{proof}
The statement follows from Proposition~\ref{str}(b) unless $\bar S$ is of type
$\tA_n$, $\tw2\tA_n$, or $p = 3$ and $\bar S$ is of type $\tD_4$ or 
$\tE_6(\eps q)$ with $3|(q-\eps)$ and $\eps \in \{\pm 1\}$. 
In the latter cases, for $J:=\Inndiag(S)$ we have $p \nmid |HJ/J|$ by
Propositions~\ref{prop:generic case}, ~\ref{prop:D4},~\ref{prop:type A},
and~\ref{prop:E6}.
By assumption, $H/\bar S$, and hence $HJ/J$, has no non-trivial $p'$-quotient.  
Thus $H \leq J$, and the claim follows from Proposition~\ref{str}(c).
\end{proof}

\numberwithin{equation}{subsection}

\section{Proofs of Theorems A and C}\label{sec:finalpf}

\subsection{Proof of Theorem A}
Now we proceed to prove the ``only if'' implication of Brauer's Height Zero
Conjecture for primes $p>2$. Suppose $B$ is a $p$-block of $G$ with defect group
$D$, and assume all $\chi\in\irr B$ have height zero. We want to show that
$D$ is abelian. We will assume that $G$ is a counterexample to BHZ, first with
$|G/\zent G|$ smallest possible, and then with $|G|$ smallest possible.
By the Gluck--Wolf theorem \cite{GW}, $G$ is not $p$-solvable. Recall that if
$H \le G$ and $N\nor G$, then $|H:\zent H|\le |G:\zent G|$ and
$|G/N:\zent{G/N}|\le |G: \zent G|$.
\medskip

\iitem{Step 1.}~~$B$ is a quasi-primitive block; that is, if $N \nor G$
 and $e$ is a block of $N$ covered by~$B$, then $e$ is $G$-invariant.
\medskip

This follows by Fong--Reynolds (\cite[Thm~9.14]{N}) and induction.
\medskip

\iitem{Step 2.}~~If $N$ is a proper normal subgroup of $G$, then $D\cap N$ is
 abelian. In particular, $\Oh{p'}G=G$, $D$ is not contained in any proper
 normal subgroup of $G$, and $Q:=\oh p G$ is abelian. 
\medskip

Suppose $N$ is a proper normal subgroup of $G$ and $e$ is a $p$-block of~$N$
covered by~$B$. By \cite[Thm~9.26]{N}, we have $D\cap N$ is a defect group
of~$e$, using that $e$ is $G$-invariant (by Step 1). Let $\xi \in \irr e$.
By \cite[Thm~9.4]{N}, there is some $\chi\in\irr B$ (of height zero) lying
over~$\xi$. By \cite[Lemma~2.2]{Mu1}, we have that $\xi$ has height zero.
Hence $D\cap N$ is abelian by minimality of $G$.

Now, if $\Oh {p'}G < G$, then $D=D \cap \Oh{p'}G$ is abelian by the previous
claim applied to $N=\bO^{p'}(G)$, a contradiction. Finally, since $Q=\oh{p}G<G$
and $Q\le D$, the claim applied to $N=Q$ shows that $Q=Q\cap D$ is abelian.
\medskip

Now let 
$$C:=\cent GQ,$$ 
so that $Q \leq C$. 
\medskip

\iitem{Step 3.}~~We have that $\zent G$ has $p'$-order. 
 In particular, we may assume that $C<G$ or that $Q=1$.
 Also, if $L \nor G$ is a non-trivial $p$-group, then $D/L$ is abelian.
\medskip

Suppose that $1<L$ is a normal $p$-subgroup of $G$. Then there is a block
$\bar B$ of $G/L$ contained in $B$ with defect group $D/L$ by
\cite[Thm~9.9(b)]{N}. By the definition of heights, all irreducible
characters in $\bar B$ have height zero.
Thus $D/L$ is abelian, by BHZ applied to~$G/L$.

Assume now that $1<L$ is a central $p$-subgroup. Let $\delta\in\irr D$, and let
$\nu \in \irr L$ be under~$\delta$. Let $\chi\in\irr B$ over~$\nu$. Since
$\chi$ has height zero, it follows that $\nu$ extends to some linear
$\tilde\nu\in\irr D$, by \cite[Thm 4.4]{Mu0}, using that $\nu$ is
$G$-invariant. By Gallagher's Corollary~6.17 of \cite{Is},
we have $\delta=\tilde\nu \beta$, for some $\beta\in\irr{D/L}$.
Since $D/L$ is abelian by the previous paragraph,
we conclude that $\delta(1)=1$. Hence $D$ is abelian, contrary to the choice of
$G$. Hence $p \nmid |\bZ(G)|$.
\medskip

%

\iitem{Step 4.}~~If $Z:=\oh{p'}G$, then $Z=\zent G$ and $Z$ is cyclic.

\medskip
Let $\theta \in \irr Z$ such that the $p$-block $\{\theta \}$ of $Z$ is covered
by $B$.
We know that $\theta$ is $G$-invariant. We prove this step using the language
of $\theta$-blocks and character triples (see \cite{R}.) We have that
$(G,Z,\theta)$ is a character triple. By Problems 8.12 and 8.13 of \cite{N},
there exists an ordinary-modular character triple $(G^*,Z^*,\theta^*)$
isomorphic to
$(G,Z, \theta)$, which we can construct as in \cite[Thm~3.4]{R}.  Hence $Z^*$
has order not divisible by $p$ and is central in~$G^*$. Since $G/Z$ is
isomorphic to $G^*/Z^*$, we have that $Z^*=\oh{p'}{G^*}$. If 
$$\sigma: \irr{G|\theta} \rightarrow \irr{G^*|\theta^*}$$ 
denotes the associated standard bijection,
we have $\sigma(\irr B)=\irr{B^*}$ for a unique $p$-block $B^*$ of $G^*$.
In particular, $\irr B=\irr{B|\theta}$ is a $\theta$-block.
Since $\sigma(\chi)(1)=\chi(1)/\theta(1)$, then $\sigma(\chi)(1)_p=\chi(1)_p$
for $\chi \in \irr B$, and so all characters in $\irr{B^*}$ have the same
height (zero). 

Let $D^*$ be a defect group for $B^*$. We show next that $D$ and $D^*$ are
isomorphic, hence if $D^*$ is abelian, then so is $D$. Since 
$$|G:D|_p=\chi(1)_p=\sigma(\chi)(1)_p=|G^*:D^*|_p$$ 
and $|G|_p=|G^*|_p$, we have that $|D|=|D^*|$. If $D_\theta/Z$ is a
$\theta$-defect group of $B$, then, by \cite[Def.~4.1]{R}, we have $D_\theta/Z \cong D^*Z^*/Z^* \cong D^*$. By \cite[Thm~5.1]{R}, we have that $D_\theta /Z \leq DZ/Z$, replacing $D$ by a $G$-conjugate, if necessary. By comparing orders, we have that $D_\theta/Z=DZ/Z$ is isomorphic to $D$. Therefore $D$ and $D^*$ are isomorphic. 

Notice that $|G^*:\zent{G^*}| \le |G^*:Z^*|=|G:Z| \le |G:\zent G|$, using that
$\zent G \leq Z$ (by Step 3). Therefore, if $|G^*:\zent{G^*}|< |G:\zent G|$,
then we are done by applying BHZ to~$G^*$. In the case of equality, we have
$Z=\zent G$.
Finally, we show that $Z$ is cyclic.  Let $\{\lambda\}$ be the block
of $Z$ covered by $B$, where $\lambda \in \irr Z$. Let $K=\ker(\lambda)$.
Hence, $K$ is contained in $\ker(\chi)$ for all $\chi \in\irr B$. If $K>1$,
then we apply \cite[Thm~9.9(c)]{N} and BHZ to $G/K$. The choice of $G$ shows
that $K=1$ and therefore, that $Z$ is cyclic.
\medskip

From now on, let $\bE(X)$ denote the layer of a finite group $X$.

\medskip

\iitem{Step 5.}~~We have that $C$ is not $p$-solvable and $\bE(G) \neq 1$.
\medskip
 
Assume that $C$ is $p$-solvable. As $\oh{p'}C=Z$ is central, we have
$\oh p{C/Z}=QZ/Z$. Since $C$ is $p$-solvable, we have $\cent {C/Z}{QZ/Z}=QZ/Z$.
This implies $C=QZ$. By \cite[Lemma 3.4]{NT1}, then $\irr B=\irr{G|\lambda}$
for some $\lambda\in\irr Z$. Let $P\in\syl pG$, and let
$\hat\lambda=1_P\otimes \lambda \in \irr{P \times Z}$.
Then $\hat\lambda^G$ has $p'$-degree, and therefore
it contains some $p'$-degree irreducible constituent $\chi$.
Now, $\chi$ lies over $\lambda$, and therefore $\chi \in \irr B$. We conclude
that $P$ is a defect group of $B$. By hypothesis, $p$ does not divide $\tau(1)$
for every $\tau \in \irr B$, whence $P$ (and hence $D$) is abelian by the main
result of \cite{NT2}. Since $G$ is a minimal counterexample, $C$ is not
$p$-solvable. 

Next, assume that $\bE(G) = 1$. Then $\bE(C)=1$ and 
$\bF^*(C) = \bF(C)= \bO_{p'}(\bF(C)) \times \bO_p(C)$. Now, $\bO_p(G) = Q \leq \bO_p(C) \leq \bO_p(G)$, whence
$\bO_p(C)=Q$ is central in $C$, and $\bO_{p'}(\bF(C)) \leq \bO_{p'}(G) = Z$ is central in $G$. It follows 
that $C = \bC_C(\bF^*(C)) \leq \bF^*(C)$, and so $C$ is $p$-solvable, a contradiction to the previous conclusion.
\medskip

\iitem{Step 6.}~~Let $S$ be a component of $G$, and let $N$ be the normal
 subgroup of $G$ generated by the $G$-conjugates of $S$. Let $e$ be the block
 of $N$ covered by $B$, and let $b$ be the only block of $S$ that is covered
 by~$e$. Then $e$ is not nilpotent.
 In particular, there are $\alpha,\beta\in\irr b$ with different degrees.
\medskip

First notice that $b$ is the only block covered by $e$, because $N$ is a
central product of the different $G$-conjugates of $S$. In particular, every
irreducible character of $S$ is $N$-invariant.
Suppose that $e$ is nilpotent. Let $N_1=N\zent G$, and let $e_1$ be the unique
block of $N_1$ covered by $B$ and covering $b$. It is clear that $e_1$ is
nilpotent, using the definition of nilpotent blocks. (See
\cite[Lemma~7.5]{Sa}.) We have that
$D_1:=D\cap N_1$ is a defect group of $e_1$ (see \cite[Thm~9.26]{N}).
By Theorem~\ref{kp}, there is a finite group $L'$ with
$|L':\zent{L'}|<|G:\zent G|$, where $L'$ has a block $B'$ with defect group $D$
and such that all the irreducible characters of $B'$ have height zero.
Therefore $D$ is abelian by BHZ applied to $L'$. Thus $e$ is not nilpotent. 

If all the irreducible characters in $b$ have the same degree, then all the
irreducible characters in $e$, a central product of the $G$-conjugates of $b$,
also have the same degree. Then by \cite[Prop.~1 and Thm~3]{OT}, we have
$D \cap N$ is abelian with inertial index one. By Brou\'e--Puig
\cite[1.ex.3]{BP}, the block $e$ is nilpotent, and we are again done.
\medskip

\medskip
\iitem{Step 7.}~~Let $S$ be any component of $G$. Suppose $B'$ is the
$\bE(G)$-block covered by $B$ and $b$ is the $S$-block covered by $B'$. Then
$b$ has non-cyclic defect groups. In particular, no component of $G$ has cyclic
Sylow $p$-subgroups. 
\medskip

By Step 6, $b$ has non-central defect groups (since any block with central
defect groups is nilpotent, by \cite[1.ex.1]{BP}).
Suppose that $b$ has cyclic defect groups. 
Consider the central product $N$ of the different $G$-conjugates of $S$,
so that $N \nor G$. Let $D_1:=D\cap N$, so that $G=\norm G{D_1}N$. Let $b_0$ be
the block of $N$ covered by $B$.
Let $b_1$ be the Brauer first main correspondent of $b_0$, which is the block
of $\norm N{D_1}$ that induces $b_0$. Let $B_1$ be the
unique block of $\norm G{D_1}$ that covers $b_1$ and induces $B$ (by the
Harris--Kn\"orr Theorem 9.28 of \cite{N}), which is a block with defect
group~$D$. By Theorem \ref{cyciAM}, we conclude that all the irreducible
characters of $B_1$ have height zero. If $\norm G{D_1}<G$, by BHZ applied to
$\norm G {D_1}$ we will have that $D$ is abelian, a contradiction. Hence
$D_1 \nor G$. In this case, $D_1 \leq Q$. Since $[Q,N]=1$, we conclude that
$D_1\leq \zent N$, and $b_0$ is nilpotent, in contradiction with Step 6.
\medskip

\iitem{Step 8.}~~All components of $G$ are normal in $G$.
\medskip

Suppose that $S_1$ is a non-normal component, and write the normal subgroup $N$
in Step~6 as $N = S_1*S_2*\cdots*S_m$, a central product of $m >1$ components,
where $G/N$ permutes $S_1,\ldots,S_m$ transitively. In particular,
$S_1, \ldots,S_m$ are isomorphic to each other, and 
we fix an isomorphism between $S_1$ and any $S_i$. 

Again by Step 1, $B$ covers a unique block $e$ of $N$ which is $G$-invariant, 
and $e$ is then the central product of blocks $b_i$ of $S_i$, $1\le i\le m$.
By Step 6, each $\irr{b_i}$ contains characters of different degrees.
Furthermore, as before, the block $e$ has defect group $D \cap N$,
which is abelian by Step 2. It follows that $b_i$ has abelian defect groups, which are non-cyclic by Step 7.
By Theorem~\ref{newmandi}, $\Irr(b_i)$ contains at least three
$\Aut(S_i)$-orbits, say of $\al_i$, $\beta_i$, and~$\gamma_i$.
For each $i$ and given the fixed isomorphism between $S_1$ and $S_i$, we can view $\al_1$, $\beta_1$, and $\gamma_1$
as $S_i$-characters. Relabeling $\al_i$, $\beta_i$, and $\gamma_i$ if necessary, we may assume that
\begin{equation}\label{eq:3c}
  \begin{array}{l}
  \alpha_i \mbox{ is not }\Aut(S_i)\mbox{-conjugate to }\beta_1\mbox{ or }\gamma_1,\\
  \beta_i \mbox{ is not }\Aut(S_i)\mbox{-conjugate to }\alpha_1\mbox{ or }\gamma_1,\mbox{ and}\\
  \gamma_i \mbox{ is not }\Aut(S_i)\mbox{-conjugate to }\alpha_1\mbox{ or }\beta_1.
  \end{array}
\end{equation}
Since $m > 1$, the (transitive) permutation action of $G$ on the set
$\{S_1,\ldots,S_m\}$ is non-trivial, and so the kernel $K$ of this action
is a proper normal subgroup of $G$ containing~$N$,
and from Step 2 we have $\bO^{p'}(G/K)=G/K$.
Now applying Theorem~\ref{orbits} to $G/K$ we obtain a partition 
$$\{S_1, \ldots,S_m\} = \Delta_1 \sqcup \Delta_2 \sqcup \Delta_3$$
such that $\cap^3_{i=1}\Stab_{G/N}(\Delta_i)$ has index divisible by $p$ in $G/N$. Setting 
$$\theta:= \theta_1 \otimes \theta_2 \otimes \cdots \otimes \theta_m,$$
where $\theta_i = \alpha_i$ if $S_i \in \Delta_1$, $\theta_i = \beta_i$ if $S_i \in \Delta_2$, and 
$\theta_i=\gamma_i$ if $S_i \in \Delta_3$,
we then see that $B$ covers the $N$-block of $\theta$. 

Consider any $g \in G$ that fixes $\theta$, and suppose
that $S_1^g = S_j$. Note that $\theta|_{S_j}$ is a multiple of $\theta_j$, and $\theta^g|_{S_j}$ is a multiple of 
$\alpha_1^g$. Again using the fixed isomorphism $S_1 \cong S_j$ and writing every $x^g$ with $x \in S_1$
as $x^\sigma$ for a suitable $\sigma \in \Aut(S_j)$,
we can write $\alpha_1^g$ as $\alpha_1^\sigma$, and so $\theta_j=\alpha_1^\sigma$ and thus $\alpha_1$ and $\theta_j$ are
$\Aut(S_j)$-conjugate. Since neither $\beta_j$ nor $\gamma_j$ are $\Aut(S_j)$-conjugate to $\alpha_1$ by \eqref{eq:3c}, 
$\theta_j$ must be $\alpha_j$, which means $S_j \in \Delta_1$ by the choice of the $\theta_i$'s. 
This argument, applied to any $S_i$, shows that $g$ stabilises the partition $\Delta_1 \sqcup \Delta_2 \sqcup \Delta_3$. 
Thus 
$$G_\theta/N \leq \bigcap^3_{i=1}\Stab_{G/N}(\Delta_i),$$ 
and so $p$ divides $|G:G_\theta|$. But this contradicts Proposition \ref{amaz}.
\medskip

\iitem{Step 9.}~~If $S$ is a (normal by Step 8) component of $G$, then $\zent S$ is of $p'$-order.
Also, if $D$ is any defect group of $B$ and $R:=D\cap S$, then
 $G=S\cent GR$, and $[D,R]=1$.
Furthermore, every $\alpha \in \irr b$ (where $b$ is the $S$-block covered by $B$) is $G$-invariant, and extends to $DS$. Moreover, $G/S\cent GS$ is
a $p$-group.\goodbreak
\medskip

First we prove that there is a defect group $D$ of $B$ satisfying that
$[D, D\cap S]=1$ and $G=S\cent G{S\cap D}$. Let $b$ be the block of $S$ covered
by~$B$. Notice that $\cent GS \leq K:=G[b] \nor G$, the Dade group, by the
definition of $G[b]$. (See \cite{Mu3}, for instance.) Also, notice that $R=D\cap S$
is a defect group of $b$, and that $G=S\norm GR$, by the Frattini argument.
Let $b_R$ be a block of $\cent SR$ with defect group $R$ inducing $b$, and let
$T=\norm GR_{b_R}$. By Lemma~\ref{bD}, there is a defect group $D^*$ of $B$
such that $D^* \cap S=R$ and $D^* \leq T$. For the sake of notation, we assume
that $D^*=D$. Let
$H:=G/\cent GS$ and $\bar S=S\cent GS/\cent GS$. By Proposition~\ref{str},
we have that $G/S\cent GS$ has a normal $p$-complement $U/S\cent GS$.
Since $|G:U|$ is a power of $p$, and $B$ covers a unique block of $U$, we have
$G=UD$, by \cite[Thm 9.17]{N}.
Now, every $\tau \in \irr b$ has the property that $|G:G_\tau|$ is coprime
to~$p$, by Proposition \ref{amaz}. Let $x \in D$. Let $W=U\langle x\rangle$.
Since $|G:G_\tau|$ is $p'$, then $G=UG_\tau$, so $|W:W_\tau|$ is also $p'$ for
every $\tau \in \irr b$. So the hypotheses of Theorem~\ref{thm:newg}(a) are
satisfied with the group $W/\cent GS$ and every $p$-element $x \in D$.
Moreover, by the choice of $G$, BHZ holds for all finite groups $X$
with $|X/\bZ(X)| < |G/\bZ(G)|$, in particular for all groups of order less than
$|S/\bZ(S)|$. We conclude by Theorem \ref{thm:newg}(a) that $[x,R]=1$, for
$x \in D$. Hence $D \leq S\cent GR \nor G$.
If $ S\cent GR<G$, we are done by Step 2. 

Now, let $D_1:=D\cap K$. By \cite[Thm~3.5(ii)]{Mu3}, we have $D_1=R\cent DR=RD$
and we conclude that $D=D_1$. Hence $K=G$ by Step 2 (recall that $K$ is normal
in $G$). Therefore every $\alpha \in \irr b$ is $G$-invariant
(by \cite[Lemma 3.2(a)]{KoS1}, for instance). 
By Corollary \ref{str2},
we have that $H/\bar S$ is a $p$-group. Thus $G/S\cent GS$ is a $p$-group.

Let $\theta \in \irr b$. Then $\theta$ lies under some
$\chi \in \irr B$, which by hypothesis has height zero. By \cite[Thm~4.4]{Mu0},
we have $\theta$ extends to $SD^g$ for some $g \in G$.
Therefore $\theta=\theta^{g^{-1}}$ extends to $SD$, as claimed.

Now, we prove that if $g\in G$, then $[D^g, D^g \cap S]=1$ and $G=S\cent G{D^g \cap S}$. Indeed, since $S$ is normal in $G$, we have that
$[D^g, D^g \cap S]= [D, D\cap S]^{g}=1$.
The second part follows because $D^g \cap S$ is a defect group of $b$, and
therefore $D^g \cap S=R^s$ for some $s\in S$. Thus $G=S\cent GR=S\cent G{R^s}$.

Suppose $1<Z_p$ is a Sylow $p$-subgroup of $\zent S$. By Step 3, $D/Z_p$ is
abelian. Since $Z_p \leq \oh p S \leq D\cap S$, we have that $SD/S$ is abelian.
In this case, the block $\tilde b$ of $SD$ that covers $b$ has defect group $D$
(by \cite[Probl.~9.4]{N}, using that $b$ is $G$-invariant) and, as we saw, all
irreducible characters of $b$ extend to $SD$. Using that and the fact that
$SD/S$ is abelian, it follows that all irreducible characters in $\tilde b$
restrict irreducibly to $S$. In this case, we easily check that all the
irreducible characters in $\tilde b$ have height zero.
If $SD<G$, then we are done by minimality of $G$. So we may assume that
$SD=G$.  Since $[D,R]=1$, we have that $Z_p \leq \zent G$. By Step 3, this is
not possible. We conclude that $Z_p=1$ and thus $p \nmid |\bZ(S)|$.
\medskip

\iitem{Final Step.} From Step 9, we conclude that
\begin{equation}\label{eq:gk}
  G/K \mbox{  is a }p\mbox{-group}, 
\end{equation}
where  
$$K:= \bigcap_{S {\rm{~component~of~}}G} S\cent GS = \bE(G)\bC_G(\bE(G)) = E\bC_G(E),~E:=\bE(G)Z.$$ 
We also have $F=EQ$ for 
$$F:=\bF^*(G) = \bF(G)\bE(G)=(Q \times Z)*\bE(G).$$
If $F=G$, then $G$ is a central product of an abelian group with quasi-simple
groups. However in this case BHZ holds for $G$ by the quasi-simple case
\cite{KM17}, using Corollary~\ref{central}. Hence we may assume $F < G$.

Next we show that 
\begin{equation}\label{eq:kf}
  K/F \mbox{ is a }p'\mbox{-group}. 
\end{equation}
Indeed, let $T:=\bC_G(E)$ which contains $Q$.  Let $c_1$ be the block of $T$
covered by $B$, using Step 1. By minimality of $G$, $c_1$ has an abelian defect
group $D_1$. Note that 
\begin{equation}\label{eq:ctq}
  \bC_T(Q) = \bC_G(F) \leq  F \cap T=QZ.
\end{equation}  
Hence, by \cite[Lemma 3.4]{NT1} we have that $c_1=\Irr(T|\lambda)$ for the
character $\la \in \Irr(Z)$ that lies under $B$.
By inducing $1_{P_1}\times\la\in\Irr(P_1Z)$ to $T$, where $P_1\in \Syl_p(T)$
contains $Q$, we see that there is some $p^\prime$-degree irreducible character
$\nu$ of $T$ over $\la$. As $\nu$ lies in $c_1$, $c_1$ has maximal defect, and 
hence $D_1 \in \Syl_p(T)$. Hence the abelian $p$-group $D_1$ centralizes $Q$,
and so $D_1 \in \bC_T(Q)=QZ$, i.e. $D_1=Q$, proving \eqref{eq:kf}. 

By Step 1, there is a unique block $e$ of $E$ and a unique block $f$ of $F$
covered by $B$; in particular, $e$ is covered by $f$. Let $D$ be any defect
group of $B$. Then $D \cap F$ is abelian by Step~2 (since $F<G$). Let
$$R:=D\cap E,$$ 
so that $R$ is a defect group of the $E$-block $e$. Write
$$E=Z*S_1* \cdots *S_n,$$ 
where $S_i$, $1 \leq i \leq n$, are the components of $G$, which all are normal
in $G$ by Step 8. Since $D_i:= D \cap S_i$ is a defect group of the unique
$S_i$-block $b_i$ covered by $e$, by Corollary \ref{central} we have
$R = D_1 \times \cdots \times D_n$
(being a direct product since $\bZ(E)=Z$ is a $p'$-group).
By Step~9, we have that $[D,R]=1$. Also, since $[Q,E]=1$ and the $\zent{S_i}$
are $p'$-groups by Step~9, we have
\begin{equation}\label{eq:qe}
  Q\cap E=1 \mbox{ and }F = Q \times E.
\end{equation}

Let $c_2$ be the unique block of $C=\cent GQ$ covered by $B$, using Step 1.
By \cite[Cor. 9.21]{N}, we have that $B$ is the unique block of $G$ that
covers~$c_2$. By \eqref{eq:ctq}, $C \cap K=\cent KQ=F$.  Since $K/F$ is a
normal Hall $p$-complement of $G/F$ by \eqref{eq:gk} and \eqref{eq:kf}, we have
that $C/F$ is a $p$-group. Therefore $C/E$ is a $p$-group by \eqref{eq:qe}. By
\cite[Cor.~9.6]{N}, $c_2$ is the only block of $C$ covering $e$.

Now, fix some $\rho \in \Irr(e)$, and consider any $\chi \in \Irr(G|\rho)$. We
claim that $\chi$ lies in $B$. If $\gamma \in \irr{C}$ lies under $\chi$ and
over $\rho$, then we see that $\gamma$ lies in $c_2$. In particular $\chi$ lies
in a block that covers $c_2$, and therefore $\chi$ lies in $B$, as claimed. 
Recall that $G/K$ is a $p$-group by \eqref{eq:gk}, and by Step 1, any block
$c_3$ of $K$ covered by $B$ is $G$-invariant. By \cite[Cor.~9.6]{N}, 
$B$ is the unique block of $G$ that covers $c_3$. By \cite[Cor.~9.18]{N}, and
using the height zero hypothesis, for every $\tau\in\irr{c_3}$ that lies under
$\chi$ we have $p \nmid \chi(1)/\tau(1)$. But $K/F$ is a $p'$-group
by~\eqref{eq:kf} and $F= E \times Q$ by \eqref{eq:qe} with $Q$ abelian. It
follows that $p \nmid \chi(1)/\rho(1)$. Hence $G/E$ has abelian Sylow
$p$-subgroups by \cite[Thm A]{NT2}, and so
\begin{equation}\label{eq:de}
  D/(D \cap E) \mbox{ is abelian}.
\end{equation}

Now, if $Q>1$, then $D/Q$ is abelian by Step 3, and since $D/(D\cap E)$ is
abelian by \eqref{eq:de}, we obtain $[D,D] \leq Q \cap E=1$ using
\eqref{eq:qe}, i.e., $D$ is abelian, and we arrive at a contradiction.
Thus we have shown
\begin{equation}\label{eq:q}
  Q=1.
\end{equation}

For each $i$, recall that $b_i$ is the unique $S_i$-block covered by $e$. Then
any $\theta_i\in\irr{b_i}$ is $G$-invariant and extends to
$$H_i:=S_iD$$ 
by Step 9. We also have that $H_i/S_i$ is a $p$-group, and $[D,D]\leq D\cap E$
by \eqref{eq:de}. If $d \in D \cap E$, we can write $d = xs \in E = S_i\bC_E(S_i)$ with $s \in S_i$ and $x \in \bC_E(S_i)$. Thus
$x = ds^{-1} \in H_i$ centralises $S_i$, i.e., $x \in \bC_{H_i}(S_i)$ and $d \in S_i\bC_{H_i}(S_i)$. It follows that
$[D,D] \leq D\cap E \leq S_i\bC_{H_i}(S_i)$, and so
\begin{equation}\label{eq:hi}
  H_i/S_i\bC_{H_i}(S_i) \mbox{ is an abelian }p\mbox{-group}. 
\end{equation}
We also note that $\bC_{H_i}(S_i)$ has a normal $p$-complement ---
indeed, $\bC_{H_i}(S_i) \cap S_i = \bZ(S_i)$ is a $p'$-group (by Step 9), and
$\bC_{H_i}(S_i)/\bZ(S_i) \hookrightarrow H_i/S_i$ is a $p$-group.
So $\bC_{H_i}(S_i) = \bZ(S_i) \rtimes Q_i$ for a Sylow $p$-subgroup $Q_i$ of
$\bC_{H_i}(S_i)$. But then $Q_i \leq \bC_{H_i}(S_i)$ centralises
$\bZ(S_i)$, so in fact
$$\bC_{H_i}(S_i) = \bZ(S_i) \times Q_i$$
and $Q_i=\bO_p(\bC_{H_i}(S_i)) \lhd H_i$.
Note that $\bO_p(H_i) \cap S_i \leq \bO_p(S_i)=1$, implying $\bO_p(H_i)=Q_i$.
Recall that the unique block $\tilde{b_i}$ of $H_i$ that covers $b_i$ has
defect group $D$, by \cite[Lemma~2.2]{Mu0}; in particular, $Q_i \lhd D$.

Now $S_i$ naturally embeds in 
$$L_i:= H_i/Q_i$$ 
as a normal subgroup. Our goal now is to apply Theorem~\ref{thm:newg}(c) to
$L_i$ with respect to the quasi-simple group 
$$\bar{S_i}:=S_iQ_i/Q_i \cong S_i$$
and the block $\bar b_i$ of $\bar S_i$ which is naturally isomorphic to $b_i$.
If $yQ_i \in L_i$ centralises $S_iQ_i/Q_i$ (modulo $Q_i$), then $[y,S_i] \leq Q_i$.  As $S_i \nor H_i$, we must have $[y,S_i] \sbs S_i \cap Q_i=1$, so $y \in \bC_{H_i}(S_i) = \bZ(S_i) \times Q_i$. Thus 
$\bC_{L_i}(\bar S_i) = \bZ(\bar S_i)$, and 
\begin{equation}\label{eq:li}
  L_i/\bar S_i \cong H_i/S_iQ_i = H_i/S_i\bC_{H_i}(S_i)
    \mbox{ is an abelian }p\mbox{-group}
\end{equation}
by \eqref{eq:hi}.

Next we show that each $\theta_i \in \irr{\bar b_i}$ extends to $L_i$. By
Step~9, $\theta_i$, considered as a character of $S_i$, has an extension
$\hat\theta_i$ to $H_i$. Restricting to $S_iQ_i=S_i \times Q_i$, we have 
$$\hat\theta_i|_{S_i \times Q_i} = \theta_i \otimes \lambda$$
for a unique linear character $\lambda \in \Irr(Q_i)$. In particular, $\lambda$
is $H_i$-invariant. Recall that $D_i=D \cap S_i$, and we have
$D_i \cap Q_i \leq S_i \cap Q_i = 1$, so we can view $\lambda$ as a character
of $D_iQ_i$ which is trivial at $D_i$, and $D$-invariant. Since the characters
in $\irr{b_i}$ are $H_i$-invariant, they are also $H_i$-invariant when
considered as characters of $\bar b_i$.  By Corollary \ref{str2} applied to
$L_i$ with respect to the block $\bar{b_i}$, we have that 
$$H_i/S_i\bC_{H_i}(S_i) = S_iD/(S_i \times Q_i) \cong D/(D_i \times Q_i)$$
is cyclic. Hence $\lambda$ extends to a (linear) character $\nu$ of
$D/D_i \cong H_i/S_i$. Now, viewing $\nu$ as a linear character of~$H_i/S_i$,
we have that $\hat\theta_i\bar\nu$ restricts to $\theta_i$ on $S_i$ and trivial
on $Q_i$, and thus $\theta_i$ extends to $L_i$, as wanted.

Since $b_i$ and $\bar{b_i}$ are isomorphic, in particular $D_iQ_i/Q_i$ is a
defect group of~$\bar b_i$. 
Since $D$ is a defect group of $\tilde{b_i}$, by Lemma \ref{elem},
we have that $D/Q_i$ is a defect group of the block $\bar b_i$ of $L_i=H_i/Q_i$.
As $\theta_i$ extends to $L_i$ for every $\theta_i \in \irr{\bar b_i}$,
and $L_i/\bar{S_i}$ is an abelian $p$-group by \eqref{eq:li}, we see that
every character in $\bar b_i$ has height $0$. 
Applying Theorem~\ref{thm:newg}(c) to the block $\bar b_i$ of $\bar{S_i}$ (which
again holds in the case $p=3$ and $S_i$ is of type $\tE_6$ or $\tw2\tE_6$, by
minimality of $G$), we see that $D/Q_i$ is abelian, and thus
$$[D,D] \leq Q_i \leq \bC_G(S_i).$$
This is true for all components $S_i$, so $[D,D]$ centralises $E=\bE(G)Z$.
Since $Q=1$ by \eqref{eq:q}, we have $E=F$. It follows that 
$$[D,D] \leq \bC_G(E) = \bC_G(F) \leq F,$$ 
and hence $[D,D] \leq \bZ(F)$, and the latter group is a $p'$-group, again
because $\oh pG=Q=1$ by \eqref{eq:q}. Consequently, $[D,D]=1$, contrary to the
choice of $G$ as a minimal counterexample.
\qed

\subsection{Proof of Theorem C}
Having proved Theorem A, and hence BHZ for the prime~$p$, we now see that the
extra assumption for types $\tE_6(\eps q)$ in Theorem~\ref{thm:newg}(a) is
always satisfied. Hence Theorem~C follows from Theorem~\ref{thm:newg}(a), if
the defect group $D$ is not cyclic.
So suppose now that we have a quasi-simple group $S$ with $\zent S$ cyclic and
$p'$-prime, that $\sigma$ is a $p$-power order automorphism of $S$ fixing all
the irreducible characters of a block $b$ of $S$ with cyclic defect group~$D$,
and that $\sigma$ stabilizes a block $b_D$ of $\cent GD$ that induces $b$. We
want to show that $\sigma$ fixes the elements of $D$. Let $N=\norm GD$. Then
$b_N=(b_D)^N$ is the Brauer First Main correspondent of $b$.
By \cite{KoS}, we know that $b$ satisfies the Alperin--McKay inductive
condition. In particular, there is a bijection $\irr b \rightarrow \irr{b_N}$
that commutes with the action of $\sigma$. Hence, all irreducible characters in
$b_N$ are $\sigma$-invariant too. By \cite[Thm~9.12]{N}, there is a unique
irreducible character $\theta \in \irr{b_D}$ with $D \sbs \ker\theta$. 
In particular, $\theta$ is $\sigma$-invariant. Also, the stabiliser $T$ of
$b_D$ in $N$ is the stabiliser of $\theta$ in~$N$.
Also $T/\cent GD$ is a $p'$-group by \cite[Thm~9.22]{N}. If $b_T$ is the
Fong--Reynolds block of $T$ covering $b_D$ corresponding to $b_N$, then all
irreducible characters in $b_T$ are $\sigma$-invariant, by the uniqueness in
the Fong--Reynolds correspondence. 
Notice that $b_T$ is the only block of $T$ that covers $b_D$, by
\cite[Cor.~9.21]{N}. Finally, let $\lambda\in\irr D$ be with $o(\lambda)=|D|$,
and consider the irreducible character $\theta_\lambda \in \irr{b_D}$,
constructed in \cite[Thm~9.12]{N}. Let $\eta \in \irr{T|\theta_\lambda}$, which
necessary belongs to $b_T$ and is therefore $\sigma$-invariant. 
Since $T/\cent GD$ is a $p'$-group and $o(\sigma)$ is a power of~$p$, we have
that some $T$-conjugate of $\theta_\lambda$ is $\sigma$-invariant, by a
counting argument. Since $\theta$ is $T$-invariant, then
$(\theta_\lambda)^t=\theta_{\lambda^t}$ for $t\in T$.
Hence, we deduce that $\mu=\lambda^t$ is $\sigma$-invariant for some $t\in T$.
Since $o(\mu)=|D|$, we have that $\mu$ is faithful.
Therefore, $\mu(d^\sigma)=\mu(d)$ for all $d\in D$ implies that $d^\sigma=d$
for all $d\in D$. \qed
\medskip

We conclude the paper with two remarks. First, the assumption $p>2$ is crucial
for our approach: 
the conclusions of Proposition~\ref{str} and Corollary~\ref{str2}, which play a
key role at various steps of the proof of Theorem~A, do not hold when $p=2$.
\par
Secondly, we would like to point out that Brauer's Height Zero Conjecture
implies its so-called \emph{projective} version, as shown in \cite{Sa2}, as well
as the version for $\theta$-blocks in \cite{R}.


\end{document}